\def\Gal{\operatorname{Gal}}
\newcommand{\pfrac}[2]{%
  \vcenter{\tabskip0pt\offinterlineskip\halign{%
  \strut##&\hfill\hskip1pt##\hskip1pt\hfill&##\cr
  &$#1$&\vrule\cr\noalign{\hrule}\vrule&$#2$&\cr}}%
}
\newtheorem{lemma}{Lemma}[section]
\newtheorem{theorem}[lemma]{Theorem}
\newtheorem{prop}[lemma]{Proposition}
\newtheorem{obs}[lemma]{Observation}
\newtheorem{corollary}[lemma]{Corollary}
\theoremstyle{definition}
\newtheorem{notation}[lemma]{Notation}
\newtheorem{definition}[lemma]{Definition}
\newtheorem{example}[lemma]{Example}
\newtheorem{recipe}[lemma]{Recipe}
\begin{document}
\begin{titlepage}
\begin{center}
    \begin{figure}
        \centering
        \includegraphics[width = 15cm]{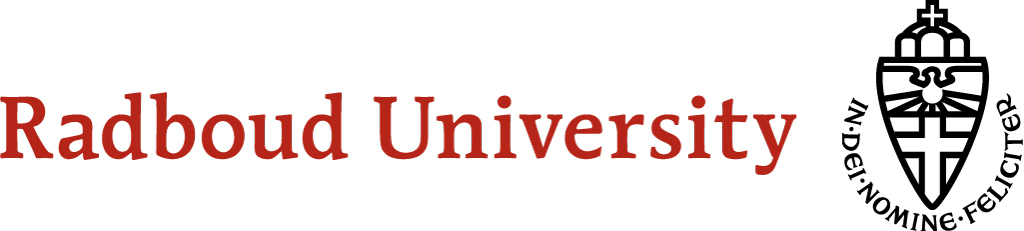}
    \end{figure}
    \large{Master's Thesis in Mathematics}\\[1cm]
    \hrule\vspace*{0.4cm}
    \huge{\textbf{Fractions, Functions and Folding}}\\[0.4cm]
    \large{\textbf{A Novel Link between Continued Fractions, Mahler Functions and Paper Folding}}\\[0.4cm]
    \hrule\vspace*{0.4cm}
\begin{minipage}{0.4\textwidth}
\begin{flushleft} \large
\emph{Author:}\\
\Large{Joris \textsc{Nieuwveld}} 
\end{flushleft}
\end{minipage}
~
\begin{minipage}{0.4\textwidth}
\begin{flushright} \large \large
\emph{Supervisor:} \\
Prof. Wadim \textsc{Zudilin} \\[0.2cm]
\emph{Second reader:} \\
Dr. Wieb \textsc{Bosma} 
\end{flushright}
\end{minipage}\\[1cm]
\begin{figure}[h]
    \centering
    \includegraphics[width = 8cm]{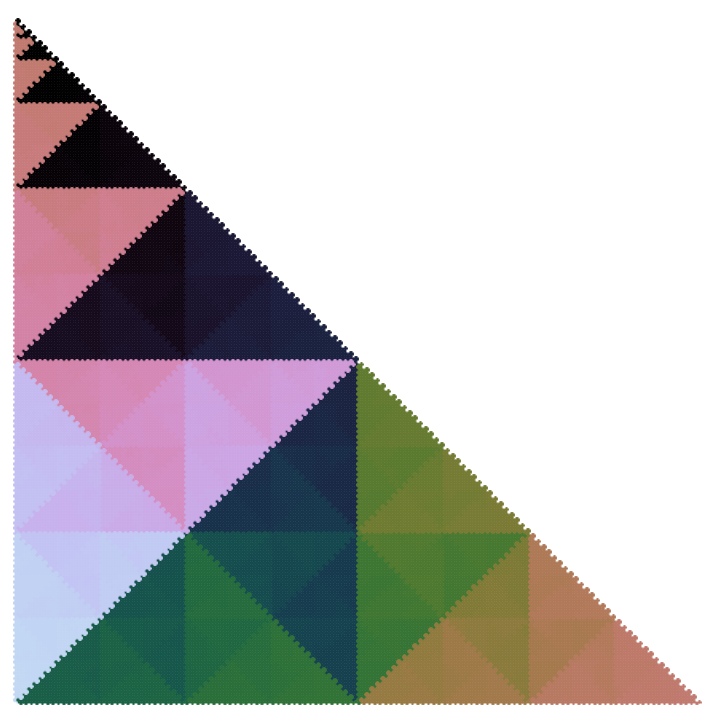}
\end{figure}
\vspace*{\fill}
\emph{Date: }\today
\end{center}
\end{titlepage}
\newpage
\begin{flushright}
\begin{minipage}{7 cm}
\textit{The front page illustration depicts the two folding curves described in Subsection \ref{paperfolding curve rho subsection} after 16 iterations. They merge to form a triangle. The first folding curve starts brownish pink and turns into pale blue and belongs to $-\overleftarrow{\boldsymbol{w}_{14}}$, while the second starts black and fades to blue, green and copper and belongs to $\boldsymbol{w}_{15}$.}
\end{minipage}
\end{flushright}
\vspace*{12cm}
\subsection*{Abstract}
Repeatedly folding a strip of paper in half and unfolding it in straight angles produces a fractal: the dragon curve. Shallit, van der Poorten and others showed that the sequence of right and left turns relates to a continued fraction that is also a simple infinite series. We construct a Mahler function from two functions of Dilcher and Stolarsky with similar properties. It produces a predictable irregular continued fraction that admits a regular continued fraction and a shape resembling the dragon curve. Furthermore, we discuss numerous variations on this theme.
\tableofcontents

\chapter*{Introduction}
\addcontentsline{toc}{chapter}{Introduction}
The simple functional equation $H(x) = H(x^2) + x H(x^4)$ seems to have few interesting properties. A solution has a power series expansion
$$H(x) = 1+x+x^2+x^4+x^5+x^8+x^9+x^{10} +x^{16}+x^{17}+x^{18}+x^{20}+x^{21}+x^{32} + \dotsm$$
whose coefficients are all 0 and 1. The series contains long strings of zeros that end at powers of 2, and the number of coefficients 1 relates to the Fibonacci numbers. However, a more profound beauty is hidden inside $H$. When investigating the functional equation at $x \mapsto x^{-1}$, one finds out that its solutions ultimately are two functions from Dilcher and Stolarsky \cite{dilcher2009stern}. Mimicking these two functions, an irregular continued fraction in a spirit of Ramanujan's \cite[(9.29)]{borwein2014neverending} can be found for $\rho(x):= \frac{H(x)}{H(x^2)}$:
$$
1 + \cfrac{x}{1 + \cfrac{x^2}{1 + \cfrac{x^4}{1 + \cfrac{x^8}{\ddots}}}}.
$$
This gives a recipe for defining $\rho$ at many points on the unit circle, where $H$ remains undefined. Only, most strikingly, $\rho$ also allows to be written as a \emph{regular} continued fraction whose partial quotients are, apart for possibly one, all $\pm x$ and form a sequence related to the paperfolding dragon. The present thesis is a mathematical exposition of numerous features of $H$ and $\rho$ and their close relatives.\medskip

Chapter 1 introduces four intertwining Mahler functions that exist within the unit disk. Dilcher and Stolarsky \cite{dilcher2009stern} recently constructed the first two, $F$ and $G$. The third, $H$, emerges when studying their system of Mahler equations at infinity. Getting only one power series instead of two to solve the system is already an interesting phenomenon. The fourth, $I$, is the generating function of the Baum-Sweet sequence and satisfies the never before observed relation $xF(x^3) + G(x^3) = I(x)$. Next, we construct two continued fractions. The first, $\lambda$, was already studied by Dilcher and Stolarsky and relates to $F$ and $G$; the second continued fraction is $\rho$. We prove that $\lambda(x) = x\rho(x^{-3})$ whenever both sides make sense and that both assume algebraic values at certain roots of unity.\medskip

Chapter 2 begins with recalling Shallit's and van der Poorten's link between continued fractions, $f(x) = \sum_{n=0}^{\infty}x^{2^n-1}$ and the paperfolding dragon \cite{van1992folded}. This earliest example of a folded continued fraction will illustrate how $\rho(x)$ above corresponds to two regular continued fractions whose partial quotients are all, except for possibly one, equal to $x$ and $-x$. These continued fractions coincide within the unit disk but differ outside. Like the paperfolding dragon, the signs of the partial quotients produce fractals, and their generating functions induce algebraically independent Mahler functions. The second half of the chapter focuses on more general folded continued fractions. Examples like the paperfolding dragon and $\rho$ are rare because many others converge to algebraic functions near the origin. We discuss several cases. Finally, an observation analogous to a result of Cohn \cite{cohn1996symmetry} as well as a simple method to find identities involving Fibonacci numbers are presented.\medskip

In Chapter 3, we study Mahler functions in combination with the Hadamard product, which is the termwise multiplication of power series. Many well-known spaces of functions are closed under this operation, including many large subspaces of Mahler functions. We recall a few of these subspaces, expand them slightly and show that the entire space of Mahler functions does not have this property. Finally, we classify the rational functions whose Hadamard product with each Mahler function is Mahler.

\subsection*{Acknowledgements}
First and foremost, I am extremely grateful to Wadim Zudilin for his excellent guidance, willingness to read and respond to my dozens of ``quick notes'' and enthusiasm for the various unconventional tangents I chose within this research. I also want to thank my former roommate Dion for his ability to listen to my ramblings and encouragement. Finally, I thank my parents for their continued support and the interest they have taken in (the graphical parts) of my thesis.
\chapter{Four curious power series}
In this chapter, we investigate a rabbit hole of functions and the remarkable connections between them. We introduce the functions $F$, $G$, $H$ and $I$, proof a few simple properties and connect these functions with continued fractions.

\section{Defining four intertwining power series}

In this section, four so-called Mahler functions are introduced and linked together. Before defining what a Mahler function is, the first two functions are given. 

\subsection{Two power series}
The two power series called $F$ and $G$ are defined as the unique power series around $q=0$ that satisfy $F(0) = G(0) = 1$ and the system of equations
\begin{align}\label{defFG}\begin{split}
        F(q) &= G(q^2) + qF(q^4),\\
        G(q) &= qF(q^2) + G(q^4).
\end{split}\end{align}
These power series are
\begin{align}\begin{split}
    F(q) &= \sum_{n=0}^{\infty} a_nq^n = 1+q+q^2+q^5+q^6+q^8+q^9+q^{10}+ \dotsm \quad\text{and}\\
    G(q) &= \sum_{n=0}^{\infty} b_nq^n = 1+q+q^3+q^4+q^5+q^{11}+q^{12}+q^{13} + \dotsm.
\end{split}\end{align}
They were constructed by Dilcher and Stolarsky in \cite{dilcher2009stern} as limits of so-called Stern polynomials. Here we use the system of equations to define $F$ and $G$ rather than the original construction because the system has a unique power series solution. Dilcher and Stolarsky also proved in \cite[Proposition 5.1]{dilcher2009stern} that the mixed system of $F$ and $G$ can be split into two systems:
\begin{align}\label{FG long form}\begin{split}
    F(q) &= (1+q+q^2)F(q^4) - q^{4}F(q^{16}) \quad \text{and}\\
    qG(q) &= (1+q+q^2)G(q^4) - G(q^{16}).\end{split}
\end{align}
This makes $F$ and $G$ transparently in a known class functions because of the following definition:
\begin{definition}\label{definition Mahler function}
Let $k \ge 2$. An analytic function $f \in \mathbb{C}[[q]]$ is called a $\boldsymbol{k}$\textbf{-Mahler function} if there exist a $d \ge 0$ and polynomials $A(q),A_0(q),A_1(q),\dots,A_d(q) \in \mathbb{C}[q]$ such that $A_0(q)A_d(q) \ne 0$ and 
\begin{align}\label{GeneralMahlerForm}
    A(q) + A_0(q)f(q) + A_1(q)f(q^k) + \dots + A_d(q)f\big(q^{k^d}\big) = 0.
\end{align}
The (linear) functional equation above, solved by a $k$-Mahler function, is called a \textbf{$\boldsymbol{k}$-Mahler equation}. Here $d$ is said to be the \textbf{degree} of $f$. The $k$-Mahler equation is called \textbf{homogeneous} if $A(q) = 0$ and  \textbf{inhomogeneous} otherwise. If $k$ is clear from the context, it is often omitted from the terminology.
\end{definition}
From \eqref{FG long form}, we see that $F$ and $G$ satisfy homogeneous $4$-Mahler equations of degree 2. Note that this implies that $F$ and $G$ also satisfy $2$-Mahler equations of degree $4$.\medskip

By comparing the coefficients of these power series, the uniqueness of the expansions can be shown from the system \eqref{defFG} through a recursion for the coefficients:

\begin{align}\label{CoefficientFG}
  a_n =
  \begin{cases}
    b_{\frac{n}{2}} & \text{if }n \equiv 0 \mod 2, \\
    a_{\frac{n-1}{4}} &\text{if } n \equiv 1 \mod 4, \\
    0 & \text{if }n \equiv 3 \mod 4
  \end{cases} \quad\text{and} \quad
    b_n =
  \begin{cases}
    a_{\frac{n-1}{2}} &\text{if } n \equiv 1 \mod 2, \\
    b_{\frac{n}{4}} &\text{if } n \equiv 0 \mod 4, \\
    0 & \text{if }n \equiv 2 \mod 4.
  \end{cases}
\end{align}

The recursions show that the coefficients of $F$ and $G$ are automatic, a property we do not study in this thesis. In particular, the recursions imply that all coefficients of $F$ and $G$ are either $0$ or $1$ by induction. This property is given its own name:
\begin{definition}
A \textbf{$\boldsymbol{\{0, 1\}}$-power series} is a power series whose coefficients are all $0$ or $1$.
\end{definition}
Furthermore, by \cite[Proposition 5.2]{dilcher2009stern}, $F$ and $G$  satisfy \begin{align}\label{Cross relation F and G}
G(q)G(q^2) - qF(q)F(q^2) = 1 \quad\text{and}\quad F(q)G(q^4) - qG(q)F(q^4) = 1.
\end{align} 
Such relations falsely suggest there are more algebraic relations between $F(q)$, $G(q)$, $F(q^2)$, $G(q^2)$. 
\begin{definition}
Let $S$ be a set of functions over $\mathbb{C}$ in variable $q$. The \textbf{transcendence degree} of $S$ is the cardinality of the largest subset $S' \subseteq S$ such that the fields $\mathbb{C}(q)(S)$ and $\mathbb{C}(q)(S')$ coincide. The notation used for this is $\operatorname{tr\,deg}(S)$.
\end{definition}
A more general notion of transcendence degree exists but is not used here. Obtaining the transcendence degree involving all possible $F(q^d)$ and $G(q^d)$ for all positive integers $d$ turns out to be a rather difficult question, but when only studying powers of $2$, this is somewhat doable. For example, $F\big(q^{2^d}\big)$ can be written as a linear combination over $\mathbb{C}[q]$ of $F(q), G(q), F(q^2)$ and $G(q^2)$ by simply iterating the equations of the system $\eqref{defFG}$. Therefore,
\begin{align*}
    &\operatorname{tr\,deg}\big(F(q), G(q), F(q^2), G(q^2), F(q^4), G(q^4), F(q^4), G(q^4),F(q^8), G(q^8),\dots\big)\\
    &\quad= \operatorname{tr\,deg}\big(F(q), G(q), F(q^2), G(q^2)\big) \le 4.
\end{align*}
Adding $G(q)G(q^2) - qF(q)F(q^2) = 1$ \eqref{Cross relation F and G} to this conclusion lowers the upper bound to 3. In 2015, Bundschuh and V\"a\"an\"anen proved in \cite{bundschuh2015transcendence} that the transcendence degree is indeed equal to 3, and that there are no algebraic relations between any three of $F(q)$, $G(q)$, $F(q^2)$ and $G(q^2)$. There is a related result on the transcendence of these functions \cite{brent2016algebraic}: $$\operatorname{tr\,deg}\big(F(q), F(q^2), F'(q), F'(q^2)\big) = 4.$$

To conclude the subsection, notice that $F$ and $G$ are defined everywhere within the unit disk as $\{0, 1\}$-power series. However, both cannot be analytically continued any further. That is, the unit circle forms an impassable boundary \cite[Lemma 1]{brent2016algebraic}.
\subsection[Defining the third function]{Defining the third function, \texorpdfstring{$\boldmath{H}$}{H}}
As the unit circle forms an impassable boundary for $F$ and $G$, what happens with the system of equations at the other side of the unit circle seems interesting. To do this, view the system \eqref{defFG} at infinity by formally defining $\tilde{F}(q) := F(q^{-1})$ and $\tilde{G}(q) := G(q^{-1})$ for all $0 < |q| < 1$. This produces the system of equations
\begin{align*}
    \tilde{F}(q) &= \tilde{G}(q^2) + q^{-1}\tilde{F}(q^4), \\
    \tilde{G}(q) &= q^{-1}\tilde{F}(q^2) + \tilde{G}(q^4),
\end{align*}
which does not have a solution in power series. Fortunately, it has a power series solution in $q^{\frac{1}{3}}$, which one extracts by defining $\hat{F}(q) := q^{\frac{2}{3}}\tilde{F}(q)$ and $\hat{G}(q) := q^{\frac{1}{3}}\tilde{G}(q)$. Then 
\begin{align*}
        \hat{F}(q) &= \hat{G}(q^2) + q\hat{F}(q^4), \\
    \hat{G}(q) &= \hat{F}(q^2) + q\hat{G}(q^4).
\end{align*}
This system has a lot of symmetry. Like $F$ and $G$, we demand $\hat{F}(0) = \hat{G}(0) = 1$ and that both have a power series expansion. Then there is a unique power series solution $H(q)=\hat{F}(q) = \hat{G}(q)$. This can be most easily seen as follows: Define $A(q) = \sum_{n=0}^{\infty}x_nq^n = \hat{F}(q) - \hat{G}(q)$. Then $A(q) = -A(q^2) +qA(q^4)$, which gives the following recursion for $x_n$:
\begin{align*}
  x_n =
  \begin{cases}
    -x_{\frac{n}{2}} &\text{if } n \equiv 0 \mod 2, \\
    x_{\frac{n-1}{4}} & \text{if }n \equiv 1 \mod 4, \\
    0 & \text{if }n \equiv 3 \mod 4.
  \end{cases}
\end{align*}
As $x_0 = A(0) = \hat{F}(0)-\hat{G}(0) = 0$, all $x_n$ are 0 by induction, giving that $A(q)$ is the zero function. Thus, $H(q)=\hat{F}(q) = \hat{G}(q)$ is indeed well-defined and satisfies
\begin{align}\label{DefH}
    H(q) = H(q^2) + qH(q^4).
\end{align}
This gives us
\begin{align*}
    H(q) = \sum_{n=0}^{\infty}c_nq^n = 1+q+q^2+q^4+q^5+q^8+q^9+q^{10} +q^{16}+\dotsm.
\end{align*}
The $2$-Mahler solution of equation \eqref{DefH} that satsifies $H(0)=1$ is taken as definition of $H(q)$. In particular, $H$ is a Mahler function. Like for $F$ and $G$, the Mahler equation induces a recursion for the coefficients of the power series:
\begin{align}\label{CoeffiecientsH}
  c_n =
  \begin{cases}
    c_{\frac{n}{2}} &\text{if } n \equiv 0 \mod 2, \\
    c_{\frac{n-1}{4}} &\text{if } n \equiv 1 \mod 4, \\
    0 &\text{if } n \equiv 3 \mod 4.
  \end{cases}
\end{align}
From this recursion, it follows that $H(q)$ is also a $\{0, 1\}$-power series. The sequence $(c_n)_{n=0}^{\infty}$ is already known: $c_n = 1$ if and only if $n$ is in A003714 on the OEIS \cite{OEIS}, the so-called `Fibbinary' sequence, the numbers without consecutive 1's in their binary expansion.  This can easily be verified with the recursive properties in \eqref{CoeffiecientsH}. Another property of this sequence is that $c_n + \binom{3n}{n} \equiv 1 \mod 2$ \cite{OEIS}. Again, mirroring $F$ and $G$, $H$ can be written as a 4-Mahler function.
\begin{prop}
For all $q$ such that $|q| < 1$, $H$ also satisfies
\begin{align*}
    H(q) = (1 +q+q^2)H(q^4)-q^6H(q^{16}).
\end{align*}
\end{prop}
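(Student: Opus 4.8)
The goal is a purely formal manipulation: the claimed $4$-Mahler equation should follow from iterating the defining $2$-Mahler equation \eqref{DefH} and eliminating the intermediate iterates. The plan is to write down \eqref{DefH} together with its images under $q \mapsto q^2$ and $q \mapsto q^4$, and then to solve the resulting small linear system for $H(q)$ in terms of $H(q^4)$ and $H(q^{16})$ alone, discarding the unwanted iterates $H(q^2)$ and $H(q^8)$.

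Concretely, I would first record the three instances
\begin{align*}
    H(q) &= H(q^2) + qH(q^4), \\
    H(q^2) &= H(q^4) + q^2H(q^8), \\
    H(q^4) &= H(q^8) + q^4H(q^{16}).
\end{align*}
The last of these lets me express $H(q^8) = H(q^4) - q^4 H(q^{16})$. Substituting this into the middle equation eliminates $H(q^8)$ and yields $H(q^2) = (1+q^2)H(q^4) - q^6 H(q^{16})$. Feeding this back into the first equation eliminates $H(q^2)$ and collects the coefficient of $H(q^4)$ as $1 + q + q^2$, leaving exactly $H(q) = (1+q+q^2)H(q^4) - q^6 H(q^{16})$, as claimed.

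There is essentially no conceptual obstacle here; the only thing to watch is the exponent bookkeeping when iterating $q \mapsto q^{2^j}$, so that the powers of $q$ multiplying each $H$-term are tracked correctly, and in particular that the single remaining term $q^6 H(q^{16})$ comes out with the right coefficient after the two substitutions. As a sanity check one can verify the identity at the level of power series coefficients: both sides are $\{0,1\}$-power series governed by the Fibbinary recursion \eqref{CoeffiecientsH}, and comparing the first several coefficients confirms the relation. Alternatively, since the derived equation has leading coefficient $1$ at $q=0$, matching its power series solution with $H(0)=1$ against the unique solution of \eqref{DefH} gives a slightly more pedantic justification, but the direct elimination above is cleaner.
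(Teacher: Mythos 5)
Your proof is correct and follows essentially the same route as the paper: both iterate the defining equation \eqref{DefH} under $q \mapsto q^2$ and $q \mapsto q^4$ and eliminate $H(q^2)$ and $H(q^8)$ by substitution, arriving at $H(q) = (1+q+q^2)H(q^4) - q^6H(q^{16})$. The only difference is cosmetic (you eliminate $H(q^8)$ before $H(q^2)$, while the paper adds the relation $H(q^4)-H(q^8)-q^4H(q^{16})=0$ in one display), so nothing further is needed.
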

\begin{proof}
From the system \eqref{DefH}, it follows that $H(q^2) = H(q^4) + q^2H(q^8)$ and $H(q^4) = H(q^8) + q^4H(q^{16})$, so \begin{align*}
    H(q)& = (H(q^4) + q^2H(q^8)) + qH(q^4) + q^2(H(q^4) - H(q^8) - q^4H(q^{16})) \\
    &= (1 +q+q^2)H(q^4) -q^6H(q^{16}).\qedhere
\end{align*}
\end{proof}
The sole difference between $H$ and $F$ is the presence of a factor $-q^6$ in place of $-q^4$ in the 4-Mahler equations. This difference  changes the power series expansions, and thus the functions, fundamentally.\medskip

By iterating equation \eqref{DefH}, it follows that $\operatorname{tr\,deg}\big(H(q), H(q^2), H(q^4), H(q^8),\dots\big) \le 2$. This is remarkable, as the system on the other side of the unit circle has transcendence degree 3. It turns out that this upper bound is sharp:
\begin{prop}\label{prop H has }$\operatorname{tr\,deg}\big(H(q), H(q^2), H(q^4), H(q^8),\dots\big) = 2$.
\end{prop}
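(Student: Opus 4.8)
The upper bound $\le 2$ is already in hand: iterating \eqref{DefH} gives $H(q^4) = (H(q) - H(q^2))/q$, and inductively every $H(q^{2^n})$ lies in $\mathbb{C}(q)\big(H(q), H(q^2)\big)$, so it suffices to prove that $H(q)$ and $H(q^2)$ are \emph{algebraically independent} over $\mathbb{C}(q)$. I would first record that $H$ is transcendental over $\mathbb{C}(q)$. By the dichotomy for Mahler functions (an analytic $k$-Mahler function that is algebraic over $\mathbb{C}(q)$ is in fact rational), it is enough to show $H \notin \mathbb{C}(q)$. This is immediate from lacunarity: the Fibbinary support of $H$ has density $0$, so its coefficient sequence contains arbitrarily long runs of zeros, whereas a non-polynomial rational function cannot, its coefficients obeying a fixed linear recurrence. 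As $H$ has infinitely many nonzero coefficients, it is not rational, hence transcendental.

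For the algebraic independence I would pass to the linear system satisfied by the pair. Writing $\mathbf f = (H(q), H(q^2))^{\mathsf T}$ and letting $\phi$ denote the Mahler substitution $q \mapsto q^2$, equation \eqref{DefH} together with the tautology $H(q^2) = \phi H(q)$ yields
$$\mathbf f(q) = A(q)\,\mathbf f(q^2), \qquad A(q) = \begin{pmatrix} 1 & q \\ 1 & 0 \end{pmatrix}, \qquad \det A(q) = -q \neq 0,$$
so $\mathbf f$ solves an invertible $2$-dimensional Mahler system whose operator is $L = 1 - \phi - q\phi^2$. The plan is to deduce algebraic independence from the largeness of the associated difference Galois group $G \subseteq \mathrm{GL}_2$: since $\mathbf f$ is one column of a fundamental solution, $\operatorname{tr\,deg}_{\mathbb C(q)} \mathbb C(q)\big(H(q), H(q^2)\big)$ equals the dimension of the $G$-orbit of this vector, which is $2$ as soon as $G \supseteq \mathrm{SL}_2$. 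Equivalently and more hands-on, $H(q)$ and $H(q^2)$ are algebraically independent iff all monomials $H(q)^aH(q^2)^b$ are linearly independent over $\mathbb C(q)$; since $\phi$ preserves total degree, the degree-$m$ monomials span the solution space of the symmetric power $\mathrm{Sym}^m A$, and $G \supseteq \mathrm{SL}_2$ forces $\mathrm{Sym}^m$ to carry no trivial subrepresentation for $m \ge 1$, i.e.\ no $\mathbb C(q)$-linear relation at any degree.

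To obtain $G \supseteq \mathrm{SL}_2$ I would use the classification of algebraic subgroups of $\mathrm{GL}_2$ and eliminate the alternatives. First, \emph{irreducibility} of $L$: if $L$ had an order-one right factor then $H(q^2) = r(q)H(q)$ for some $r \in \mathbb C(q)$, which one refutes by comparing the leading behaviour forced by \eqref{DefH} against the gap structure of $H$ (the ratio $H(q^2)/H(q)$ inherits unbounded zero-runs and cannot be rational). Irreducibility rules out $G$ lying in a Borel. It then remains to exclude the imprimitive and finite subgroups acting irreducibly, which amounts to showing there is no quadratic relation $a\,H(q)^2 + b\,H(q)H(q^2) + c\,H(q^2)^2 \in \mathbb C(q)$ with $(a,b,c) \ne 0$; this I would verify directly on $\mathrm{Sym}^2 A$, again using the lacunary coefficient pattern to preclude the required rational invariant. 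With the Borel, torus-normalizer and finite cases removed, the only connected irreducible possibility left is $G \supseteq \mathrm{SL}_2$, completing the argument.

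The main obstacle is precisely this last step: converting the qualitative statement that $H$ is lacunary and transcendental into the quantitative one that \emph{no} low-degree relation exists, namely irreducibility of $L$ and the absence of a quadratic invariant. Controlling a single degree is routine, but one must ensure the symmetric-power analysis closes off every case in the subgroup classification at once; eliminating the normalizer-of-a-torus (imprimitive) case is the subtle point, since it does not follow from irreducibility alone and genuinely requires the arithmetic of the Fibbinary coefficients. As a fallback one could instead attempt a direct descent, taking a relation $P(q, H(q), H(q^2)) = 0$ of minimal $Y$-degree, applying $\phi$ and substituting $H(q^4) = (H(q)-H(q^2))/q$ to produce a second relation, then eliminating to contradict minimality; but making that elimination terminate is itself the crux and, in my view, is harder to control than the Galois-theoretic bookkeeping.
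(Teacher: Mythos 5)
Your upper bound and the transcendence of $H$ are fine, but the heart of the proposition --- the algebraic independence of $H(q)$ and $H(q^2)$ over $\mathbb{C}(q)$ --- is not actually established in your argument: it is deferred. The two steps on which everything rests, namely (i) irreducibility of $L$ (no relation $H(q^2)=r(q)H(q)$ with $r\in\mathbb{C}(q)$) and (ii) exclusion of the imprimitive and finite irreducible subgroups (no invariant $aH(q)^2+bH(q)H(q^2)+cH(q^2)^2\in\mathbb{C}(q)$, and more generally no rational invariant in any symmetric power), are only described as things you ``would verify'' using the ``lacunary coefficient pattern''; no such verification is given, and you yourself flag the normalizer-of-a-torus case as the crux. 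There is also a foundational gap upstream: the assertion that $\operatorname{tr\,deg}$ of $\mathbb{C}(q)\big(H(q),H(q^2)\big)$ equals the dimension of the $G$-orbit of the solution vector is not an off-the-shelf theorem. The Mahler substitution $q\mapsto q^2$ is an endomorphism, not an automorphism, of $\mathbb{C}(q)$, so the difference Galois theory must be set up over a suitable extension (e.g.\ $\bigcup_{n}\mathbb{C}(q^{1/n})$), and even then the dimension count applies to a full fundamental matrix; passing from that to the transcendence degree of one specific solution column requires results of the type proved by Roques and Adamczewski--Faverjon for Mahler systems, which you would need to invoke precisely. So the proposal is a plausible research plan in the spirit of the Galois-theoretic literature on order-two Mahler systems, but as a proof it has genuine holes exactly where the difficulty lies.

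For comparison, the paper closes the argument with a short reduction rather than any group-theoretic analysis: from $H(q)=H(q^2)+qH(q^4)$ one gets $qH(q^3)=q^{-1}\big(q^2H(q^6)\big)+q^4H(q^{12})$, so $\tilde{I}(q):=qH(q^3)$ satisfies $\tilde{I}(q)=q^{-1}\tilde{I}(q^2)+\tilde{I}(q^4)$, the Mahler equation of a twist of the Baum--Sweet generating function, for which $\operatorname{tr\,deg}\big(\tilde{I}(q),\tilde{I}(q^2)\big)=2$ is known by Nishioka's example. Any polynomial relation $P\big(q,H(q),H(q^2)\big)=0$ then yields $P\big(q^3,q^{-1}\tilde{I}(q),q^{-2}\tilde{I}(q^2)\big)=0$ after the substitution $q\mapsto q^3$, a contradiction. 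If you want to salvage your approach, that substitution trick is the missing ingredient: it replaces your entire case analysis by one already-known instance of algebraic independence.
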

Proposition \ref{prop H has } can be proven with a conventional theorem as in \cite{nishioka1990new}, but a short-cut exists within the same paper for this particular case. This short-cut is given in the next subsection.
\subsection[The fourth power series: I]{The fourth power series: $I$}
The existence of $H$ and its accompanying transcendence degree is remarkable but can be explained by studying it once more at infinity. Thus, let $\tilde{H}(q)$ satisfy
$$
\tilde{H}(q) = \tilde{H}(q^2) + q^{-1}\tilde{H}(q^4).
$$
Then there is no power series in $q$ that satisfies this equation, but there is one in $q^{\frac{1}{3}}$. To extract this solution, define $I(q) = q\tilde{H}(q^3)$ such that
\begin{align*}
    qI(q) = q^{2}I(q^2) +q^{-3+4}I(q^4).
\end{align*}
Cleaning this up gives that the newly found Mahler equation
\begin{align}\label{defI}
    I(q) = qI(q^2) + I(q^4),
\end{align}
which has a unique power series solution when setting $I(0)=1$:
\begin{align*}
    I(q) &= \sum_{n=0}^{\infty} d_nq^n = 1 + q + q^3 + q^4 + q^7 + q^9 + q^{12} + q^{15} + q^{16} + \dotsm.
\end{align*}
The coefficients of $I$ satisfy 
\begin{align}\label{CoeffiecientsI}
  d_n =
  \begin{cases}
    d_{\frac{n-1}{2}} &\text{if } n \equiv 1 \mod 2, \\
    d_{\frac{n}{4}} &\text{if } n \equiv 0 \mod 4, \\
    0 &\text{if } n \equiv 3 \mod 4,
  \end{cases}
\end{align}
and so $I$ is also a $\{0, 1\}$-power series.\medskip

In contrast to $H$, $I$ is a well-known function: It is the generating function of the Baum-Sweet sequence \cite[A086747]{OEIS}. This sequence is defined as ``$d_n = 1$ if the binary representation of $n$ contains no block of consecutive zeros of odd length; otherwise $d_n = 0$.'' For $n=0$, this definition is ambiguous, as the binary representation of 0 can be defined as the empty string or 0. The OEIS is in the minority defining $d_0=0$ while Baum and Sweet \cite{baum1976continued}, Nishioka \cite{nishioka1990new} and Wikipedia \cite{wikiBaumSweet} use $d_0=1$. Hence the following definition is chosen.
\begin{definition}
The \textbf{Baum-Sweet sequence} $(d_n)_{n=0}^{\infty}$ is defined by $d_0 = 1$, and for all $n \ge 1$, $d_n = 0$ when the binary representation contains a block of zeros of odd length and 1 otherwise.
\end{definition}
Using equation \eqref{defI}, it is easily verified that $I$ is indeed the generating function of the Baum-Sweet sequence. We remark that Baum and Sweet introduced their sequence as the unique root of $I(q)^3 +q^{-1}I(q) + 1 = 0$ in $\mathbb{F}_2((q^{-1}))$ \cite{baum1976continued}.\medskip

Like the three other functions, $I$ can also be written in terms of $I(q^4)$ and $I(q^{16})$.
\begin{prop}\label{propBiggerFormI}
For all $q$ such that $|q| < 1$, $I(q)$ satisfies $$q^3I(q)  = (1+q^3+q^6)I(q^4) - I(q^{16}).$$
\end{prop}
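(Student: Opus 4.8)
The plan is to derive the $4$-Mahler relation for $I$ by the same elimination strategy used in the proof of the proposition on $H$: iterate the defining $2$-Mahler equation \eqref{defI} to bring in $I(q^4)$, $I(q^8)$ and $I(q^{16})$, and then eliminate the unwanted term $I(q^8)$. Everything is forced by \eqref{defI}, so there is no real creativity required beyond bookkeeping the powers of $q$.

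First I would record the three instances of \eqref{defI} that I need, obtained by the substitutions $q \mapsto q$, $q \mapsto q^2$ and $q \mapsto q^4$, namely $I(q) = qI(q^2) + I(q^4)$, then $I(q^2) = q^2 I(q^4) + I(q^8)$, and finally $I(q^4) = q^4 I(q^8) + I(q^{16})$. Substituting the second of these into the first eliminates $I(q^2)$ and yields
\begin{align*}
    I(q) = q\big(q^2 I(q^4) + I(q^8)\big) + I(q^4) = (1+q^3)I(q^4) + qI(q^8).
\end{align*}

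At this stage the only undesired term is $I(q^8)$, and the third instance is exactly what removes it: solving $I(q^4) = q^4 I(q^8) + I(q^{16})$ for $I(q^8) = q^{-4}\big(I(q^4) - I(q^{16})\big)$ and substituting gives
\begin{align*}
    I(q) = (1+q^3)I(q^4) + q^{-3}\big(I(q^4) - I(q^{16})\big) = \big(1 + q^3 + q^{-3}\big)I(q^4) - q^{-3}I(q^{16}).
\end{align*}
Multiplying through by $q^3$ then produces precisely $q^3 I(q) = (1+q^3+q^6)I(q^4) - I(q^{16})$, as claimed.

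The only point that warrants a moment of care—and the nearest thing to an obstacle—is the transient appearance of the negative power $q^{-3}$ when $I(q^8)$ is solved for and substituted. Since $I$ is a genuine power series this could in principle signal an error, but the factor clears perfectly upon multiplying by $q^3$, confirming that the final identity is a bona fide polynomial Mahler relation of degree $2$ in base $4$ (equivalently, degree $4$ in base $2$), exactly parallel to the relations recorded for $F$, $G$ and $H$. I would therefore present the two substitutions above in a single short display and conclude.
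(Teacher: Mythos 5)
Your proof is correct and follows essentially the same route as the paper: both iterate the defining equation \eqref{defI} to express $I(q^2)$ in terms of $I(q^4)$ and $I(q^8)$, eliminate $I(q^8)$ via the instance at $q^4$ (introducing the transient $q^{-3}$), and clear denominators by multiplying through by $q^3$. The only difference is presentational—the paper substitutes both prepared identities $qI(q^2)$ and $qI(q^8)$ in one display rather than in two steps—so nothing of substance separates the two arguments.
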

\begin{proof}
From \eqref{defI}, extract $qI(q^2) = q^3I(q^4) + qI(q^8)$ and $qI(q^8) = q^{-3}I(q^4) - q^{-3}I(q^{16})$. Thus,
\begin{align*}
    I(q) = q^3I(q^4) + qI(q^8) + I(q^4) + q^{-3}I(q^4) - q^{-3}I(q^{16}) - qI(q^8) = (q^{-3}+1+q^3)I(q^4) - q^{-3}I(q^{16})
\end{align*}
and by multiplying with $q^3$, the result follows.
\end{proof}
While $H(q)$ and $F(q^3)$ had almost the same 4-Mahler equation, $I(q)$ and $G(q^3)$ satisfy exactly the same 4-Mahler equation. Moreover, $qF(q^3)$ also satisfies this 4-Mahler equation. This is remarkable, but also paves the way for an identity that, to the best of our knowledge, has not yet been observed.
\begin{prop}\label{propFGH}
For all $q$ such that $|q| < 1$, $I(q) = qF(q^3) + G(q^3)$.
\end{prop}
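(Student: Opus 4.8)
The plan is to exploit the fact, foreshadowed in the text, that each of the three building blocks $I(q)$, $qF(q^3)$ and $G(q^3)$ solves one and the same homogeneous $4$-Mahler equation, namely
$$q^3 f(q) = (1+q^3+q^6)f(q^4) - f(q^{16}).$$
Since this equation is linear, the combination $qF(q^3)+G(q^3)$ solves it as well, and so does $I(q)$; hence the difference $D(q) := I(q) - qF(q^3) - G(q^3)$ satisfies the same equation with zero right-hand side. The whole identity then reduces to showing $D \equiv 0$, which I would obtain by arguing that a power-series solution of this Mahler equation is pinned down by only its two lowest coefficients, and then checking that those vanish for $D$.

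First I would verify the three Mahler equations. For $I$ this is exactly Proposition~\ref{propBiggerFormI}. For $G(q^3)$, substitute $q \mapsto q^3$ in the second line of \eqref{FG long form} to get $q^3 G(q^3) = (1+q^3+q^6)G(q^{12}) - G(q^{48})$, and observe that with $g(q):=G(q^3)$ one has $g(q^4)=G(q^{12})$ and $g(q^{16})=G(q^{48})$, so $g$ solves the target equation verbatim. For $qF(q^3)$, substitute $q\mapsto q^3$ in the first line of \eqref{FG long form} to get $F(q^3) = (1+q^3+q^6)F(q^{12}) - q^{12}F(q^{48})$, multiply through by $q^4$, and note that with $h(q):=qF(q^3)$ this reads $q^3 h(q) = (1+q^3+q^6)h(q^4) - h(q^{16})$. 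By linearity $D$ then solves the homogeneous equation.

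The crux is the uniqueness step. Writing $D(q)=\sum_{n\ge 0} e_n q^n$ and extracting the coefficient of $q^m$ in the homogeneous equation, I would rearrange it into the recursion
$$e_n = [\,4 \mid n{+}3\,]\,e_{(n+3)/4} + [\,4 \mid n\,]\,e_{n/4} + [\,4 \mid n{-}3\,]\,e_{(n-3)/4} - [\,16 \mid n{+}3\,]\,e_{(n+3)/16},$$
valid for $n\ge 2$ (here $[\cdot]$ is the Iverson bracket). The essential point to check is that for $n \ge 2$ every index appearing on the right, the largest being $(n+3)/4$, is strictly smaller than $n$; this is where the threshold $n\ge 2$ comes from and is the only genuinely technical verification. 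It shows that any power-series solution is uniquely determined by $e_0$ and $e_1$, so the solution space is two-dimensional. Finally I would compute these two coefficients for $D$: since $F(q^3)$ and $G(q^3)$ involve only exponents divisible by $3$ while $qF(q^3)$ involves only exponents $\equiv 1 \pmod 3$, the constant term of $D$ is $I(0)-G(0)=1-1=0$ and its linear coefficient is $1-1-0=0$. With $e_0=e_1=0$ the recursion forces $e_n=0$ for all $n$, giving $D\equiv 0$ and hence $I(q)=qF(q^3)+G(q^3)$ throughout $|q|<1$. The main obstacle is thus not any single computation but correctly establishing the forward (index-decreasing) nature of the recursion that guarantees two initial coefficients suffice; once that is in place the remaining substitutions and coefficient checks are routine.
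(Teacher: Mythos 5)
Your proof is correct, but it takes a genuinely different route from the paper's. The paper never touches the $4$-Mahler equations: it works directly with the mixed system \eqref{defFG}, computing in two lines that $J(q):=qF(q^3)+G(q^3)$ satisfies $J(q)=qJ(q^2)+J(q^4)$, i.e.\ exactly the defining $2$-Mahler equation \eqref{defI} of $I$; since the uniqueness of the power-series solution of \eqref{defI} with constant term $1$ was already established when $I$ was defined (via the coefficient recursion \eqref{CoeffiecientsI}), checking $J(0)=1$ finishes the proof with no further work. You instead follow the route the paper only foreshadows in the sentence preceding the proposition: you use the decoupled $4$-Mahler equations \eqref{FG long form} together with Proposition \ref{propBiggerFormI}, which forces you to supply a uniqueness statement that the paper does not need, namely that a power-series solution of $q^3f(q)=(1+q^3+q^6)f(q^4)-f(q^{16})$ is pinned down by $e_0$ and $e_1$. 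Your verification of that step is sound: the coefficient identity you extract holds for all $n\ge 0$, is index-decreasing precisely when $(n+3)/4<n$, i.e.\ for $n\ge 2$, and the two free coefficients of the difference $D$ indeed vanish ($1-1=0$ in degrees $0$ and $1$), so strong induction kills $D$. The trade-off: the paper's argument is shorter because the needed uniqueness comes for free from the degree-$2$ system and only one initial coefficient must be matched; yours is self-contained at the level of the separated equations (it never invokes the mixed system at all) at the cost of a two-dimensionality lemma and a second coefficient check. One pedantic point: your argument shows the solution space is \emph{at most} two-dimensional, which is all you use; calling it two-dimensional outright would require exhibiting that the recursion is consistent for arbitrary $(e_0,e_1)$, which you need not do.
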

\begin{proof}
As $F(q) = G(q) + qF(q^4)$ and $G(q) = qF(q) + G(q^4)$, $qF(q^3)+G(q^3) = qG(q^6) + q^4F(q^{12})+ q^3F(q^6) + G(q^{12}) = q(q^2F(q^6)+G(q^6)) + (q^4F(q^{12})+G(q^{12}))$. Then $qF(q^3) + G(q^3)$ follows the same Mahler equation as $I(q)$. As \eqref{defI} gives a unique power series when $I(0)=1$ is fixed, verifying the constant terms on both sides gives the proposition.
\end{proof}
Due to the fame of the Baum-Sweet sequence, transcendence results are already known for $I$.
\begin{theorem}[Example in \cite{nishioka1990new}]
$I(q)$ and $I(q^2)$ are algebraically independent over $\mathbb{C}[q]$.
\end{theorem}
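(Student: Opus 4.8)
The plan is to recast the pair $\bigl(I(q),I(q^2)\bigr)$ as the solution of a rank-$2$ \emph{linear} Mahler system and then to read off its transcendence degree (note that algebraic independence over $\mathbb{C}[q]$ and over $\mathbb{C}(q)$ are the same statement). Write $f_1=I(q)$ and $f_2=I(q^2)$, and let $\sigma$ denote the substitution $q\mapsto q^2$. Then $\sigma f_1=f_2$, while \eqref{defI} gives $\sigma f_2=I(q^4)=I(q)-qI(q^2)=f_1-qf_2$. Hence
\begin{align*}
\sigma\begin{pmatrix}f_1\\ f_2\end{pmatrix}=B(q)\begin{pmatrix}f_1\\ f_2\end{pmatrix},\qquad B(q)=\begin{pmatrix}0&1\\ 1&-q\end{pmatrix},
\end{align*}
with $\det B=-1\in\mathbb{C}(q)^{\times}$, so $B\in GL_2(\mathbb{C}(q))$. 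The characteristic polynomial of $B$ is $\lambda^2+q\lambda-1$, which is irreducible over $\mathbb{C}(q)$ because $q^2+4$ is not a square in $\mathbb{C}[q]$; consequently $B$ stabilises no line and the system admits no nonzero proper $\sigma$-invariant $\mathbb{C}(q)$-subspace. This irreducibility is the structural feature that should force the transcendence degree to be maximal.

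Before exploiting it I would pin down $\operatorname{tr\,deg}\ge 1$, i.e.\ that $I$ is transcendental over $\mathbb{C}(q)$. Here I would invoke the rational--transcendental dichotomy for Mahler functions: a $k$-Mahler function algebraic over $\mathbb{C}(q)$ is automatically rational. It then suffices to check $I\notin\mathbb{C}(q)$, equivalently that the coefficient sequence of $I$ is not eventually periodic (a rational power series with bounded coefficients has an eventually periodic coefficient sequence). This is standard, since the Baum--Sweet sequence is not ultimately periodic, so $I$ is transcendental.

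To upgrade this to algebraic independence of $f_1$ and $f_2$ I would run the minimal-relation argument adapted to the linear system above, which is exactly the content specialised from Nishioka's theorem in \cite{nishioka1990new}. Suppose for contradiction that $P\in\mathbb{C}(q)[X,Y]\setminus\{0\}$ has minimal total degree $d$ with $P(f_1,f_2)=0$. Applying $\sigma$ and substituting $\sigma f_1=f_2$, $\sigma f_2=f_1-qf_2$ produces a second vanishing polynomial $Q(X,Y):=(\sigma P)(Y,\,X-qY)$ with $Q(f_1,f_2)=0$; since $(X,Y)\mapsto(Y,X-qY)$ is an invertible linear change of variables (with matrix $B$) it preserves total degree, so $\deg Q=d$ and minimality forces $Q=c(q)P$ for some $c(q)\in\mathbb{C}(q)^{\times}$. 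Comparing top-degree forms gives $(\sigma P_d)(Y,X-qY)=c(q)P_d(X,Y)$, so the binary form $P_d$ is, up to $\sigma$ and scalars, an eigenvector of the linear action of $B$ on forms. Factoring $P_d$ over $\overline{\mathbb{C}(q)}$, its roots must be a $\sigma$-stable union of eigendirections of $B$; because the two eigendirections are conjugate over $\mathbb{C}(q)$ and irrational, this pins $P_d$ down to a power of the norm form, after which the constraint on $c(q)$ together with the lower-order terms must be shown inconsistent unless $P\equiv 0$.

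The main obstacle is precisely this last inconsistency, which is a \emph{non-resonance} statement about the eigenvalues $\lambda_{1,2}=\tfrac{-q\pm\sqrt{q^2+4}}{2}$ of $B$. Separating transcendence degree $2$ from transcendence degree $1$ amounts to showing that the scalar solutions $g_i$ of $\sigma g_i=\lambda_i g_i$ are multiplicatively independent modulo $\mathbb{C}(q)^{\times}$: no nontrivial power product $g_1^{a}g_2^{b}$ lies in $\mathbb{C}(q)$, equivalently no relation $\lambda_1^{a}\lambda_2^{b}=\sigma(r)/r$ holds for some $r\in\overline{\mathbb{C}(q)}^{\times}$ and $(a,b)\neq(0,0)$. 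Using $\lambda_1\lambda_2=-1$ and $\lambda_1+\lambda_2=-q$, one reduces to a single valuation/Diophantine check on these algebraic functions; settling it rules out the toric degeneracy of the associated difference Galois group and yields $\operatorname{tr\,deg}=2$, which is the assertion. I expect this resonance bookkeeping, rather than the setup, to be where the real work lies.
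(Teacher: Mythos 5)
Your setup is sound: the rank-two system $\sigma(f_1,f_2)^T=B(f_1,f_2)^T$ with $B=\begin{pmatrix}0&1\\1&-q\end{pmatrix}$ is correct, and the reduction of the transcendence of $I$ itself to the non-periodicity of the Baum--Sweet sequence via the rational--transcendental dichotomy is fine. But the proposal does not prove the theorem, and you say so yourself: the entire content of the statement --- separating transcendence degree $2$ from transcendence degree $1$ --- is packed into the ``non-resonance'' claim that no power product $\lambda_1^{a}\lambda_2^{b}$ with $(a,b)\neq(0,0)$ is of the form $\sigma(r)/r$, and you leave that verification open (``I expect this resonance bookkeeping\dots to be where the real work lies''). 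Irreducibility of the characteristic polynomial $\lambda^2+q\lambda-1$ rules out an invariant line but not the imprimitive (dihedral/toric) degeneracy, so by your own accounting the argument stops exactly at the step that distinguishes the theorem from its possible failure. Two smaller gaps: the claim that minimality forces $Q=c(q)P$ needs justification (it can be repaired: the ideal of all relations of $(f_1,f_2)$ over $\mathbb{C}(q)$ is prime, and if the transcendence degree were $1$ it would have height one in the UFD $\mathbb{C}(q)[X,Y]$, hence be principal; take $P$ to be its generator). Also, the assertion that the twisted invariance of the root set of the top form $P_d$ under $B$ and $\sigma$ forces those roots to be eigendirections of $B$ is not automatic --- it requires a degree/height argument, since $\sigma$ doubles degrees in $q$ while $B$ moves points in $\mathbb{P}^1\bigl(\overline{\mathbb{C}(q)}\bigr)$ by a bounded amount.

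For comparison: the paper does not prove this statement at all; it is quoted verbatim as an example from Nishioka's paper \cite{nishioka1990new}, and the thesis only reuses that proof once more (with $z=q^{-1}$) to obtain Lemma \ref{lemma tilde I alg ind}. So what you are attempting is a reconstruction of Nishioka's own argument; the machinery you outline (linear Mahler systems, minimal relations, exclusion of toric degeneracies) is the right family of tools, but a complete proof must actually carry out the multiplicative-independence computation you defer --- for instance, the relation $\lambda_1\lambda_2=-1$ already demands a concrete check that $-1$ is not of the form $\sigma(r)/r$, and the general $(a,b)$ case is where the substance lies.
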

This theorem gives the short-cut needed to reach the transcendence degree of $H(q)$ and $H(q^2)$. Define the Mahler function $\tilde{I}(q)$ as the unique power series whose linear term is 1 and that satisfies
\begin{align*}
    \tilde{I}(q) = q^{-1}\tilde{I}(q^2) + \tilde{I}(q^4).
\end{align*}
The transcendence degree of this function can easily be established:
\begin{lemma}\label{lemma tilde I alg ind}
$\operatorname{tr\,deg}\big(\tilde{I}(q), \tilde{I}(q^2)\big) = 2$.
\end{lemma}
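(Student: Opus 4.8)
The statement asks for algebraic independence of $\tilde I(q)$ and $\tilde I(q^2)$ over $\mathbb{C}(q)$. One inequality is immediate: iterating $\tilde I(q) = q^{-1}\tilde I(q^2) + \tilde I(q^4)$ expresses every $\tilde I\big(q^{2^k}\big)$ as a $\mathbb{C}(q)$-rational combination of $\tilde I(q)$ and $\tilde I(q^2)$, so $\operatorname{tr\,deg}\big(\tilde I(q), \tilde I(q^2)\big) \le 2$. Thus the whole content is the lower bound, namely that no nonzero $P \in \mathbb{C}(q)[X, Y]$ satisfies $P\big(\tilde I(q), \tilde I(q^2)\big) = 0$. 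My plan is to deduce this from Nishioka's theorem, quoted above, that $I(q)$ and $I(q^2)$ are algebraically independent, exploiting that $\tilde I$ is nothing but $I$ read at infinity.

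To make the link precise I would write both functions as rank-two Mahler systems for the operator $q \mapsto q^2$. From $I(q) = qI(q^2)+I(q^4)$ one gets $I(q^4) = I(q) - qI(q^2)$, hence
\begin{align*}
\begin{pmatrix} I(q^2)\\ I(q^4)\end{pmatrix} = A(q)\begin{pmatrix} I(q)\\ I(q^2)\end{pmatrix}, \qquad A(q) = \begin{pmatrix} 0 & 1\\ 1 & -q\end{pmatrix},
\end{align*}
and exactly the same manipulation on $\tilde I(q) = q^{-1}\tilde I(q^2) + \tilde I(q^4)$ produces the system with matrix $A(q^{-1})$. The two systems are therefore interchanged by the field automorphism $\sigma\colon q \mapsto q^{-1}$ of $\mathbb{C}(q)$, which fixes $\mathbb{C}$ and preserves algebraic (in)dependence over $\mathbb{C}(q)$. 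Concretely, a hypothetical relation $P\big(\tilde I(q), \tilde I(q^2)\big) = 0$ should, after applying $\sigma$ and matching it against the $A$-system, be turned into a relation for a solution of the Baum--Sweet system, contradicting the cited theorem.

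The delicate point --- and the step I expect to cost the real work --- is that $\sigma$ does not send the power series $\tilde I(q)$ to the power series $I(q)$: the $\sigma$-image of $I$ is the Laurent series $I(q^{-1})$ in $q^{-1}$, a genuinely different solution of the same functional equation, and correspondingly $A(q^{-1})$ acquires a pole at $q=0$, so Nishioka's criterion (which wants the system regular with $A(0)$ invertible) does not transport verbatim. I see two ways to bridge this and would pursue whichever is cleaner. The first is to argue that algebraic independence of a pair $\big(g(q), g(q^2)\big)$ is intrinsic to the Mahler system rather than to the chosen solution $g$, the governing difference Galois group being carried isomorphically by $\sigma$, so that independence for one solution forces it for all. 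The second, more hands-on route is to regularise the $\tilde I$-system at the origin by a gauge transformation $\tilde{\mathbf f} \mapsto T(q)\tilde{\mathbf f}$ --- note $\det A(q^{-1}) = -1$, so it is only the pole and not a vanishing determinant that obstructs regularity --- and then re-run Nishioka's functional method, whose hypotheses are now symmetric under $q \leftrightarrow q^{-1}$, directly on $\tilde I$. Either way the lower bound follows. I note in passing that the same circle of ideas yields the clean identity $\tilde I(q) = qH(q^3)$ (both sides being the unique power-series solution, normalised to linear term $1$), which is the bridge by which this lemma will in turn give $\operatorname{tr\,deg}\big(H(q), H(q^2)\big) = 2$; I keep it out of the present argument to avoid circularity.
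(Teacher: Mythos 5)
Your second route is exactly the paper's proof: the paper's entire argument for this lemma reads ``Apply the proof of the last example in \cite{nishioka1990new} with $z = q^{-1}$.'' That is, the paper also declines to use Nishioka's \emph{theorem} as a black box and instead re-runs his functional argument on the inverted equation --- and your explanation of \emph{why} the black box is unavailable (the substitution $q \mapsto q^{-1}$ sends the power series $\tilde{I}(q)$ to a solution of the Baum--Sweet equation living in $\mathbb{C}[[q^{-1}]]$, a genuinely different solution from the power series $I(q)$, so the quoted theorem says nothing about it) is precisely the point the paper leaves implicit. Your setup (trivial upper bound by iteration, the conjugate systems $A(q)$ and $A(q^{-1})$, the observation that $q\mapsto q^{-1}$ commutes with $f(q)\mapsto f(q^2)$) is correct, and since the paper itself delegates the re-execution of Nishioka's method to the reader, your Route 2 sits at a comparable level of completeness.

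The genuine defect is that you offer Route 1 as an equally acceptable alternative, and it is not: algebraic independence of the pair $\big(g(q),g(q^2)\big)$ is a property of the particular solution $g$, not of the Mahler system alone. The Baum--Sweet equation is homogeneous, so $g \equiv 0$ solves it with a trivially dependent pair; and excluding that degenerate case does not rescue the claim, since one can manufacture a single homogeneous Mahler equation admitting nonzero solutions of both kinds --- for instance, a common left multiple of the annihilators of $I(q)$ and of $\frac{1}{1-q}$ in the skew ring $\mathbb{C}(q)\langle\phi\rangle$, $\phi\colon f(q)\mapsto f(q^2)$, is satisfied both by $I(q)$, whose pair is independent, and by $\frac{1}{1-q}$, whose pair is dependent. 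So ``independence for one solution forces it for all'' is false in general. The correct statement concerns fundamental solution matrices and difference Galois groups, and to apply it to $\tilde{I}$ you would need to know that $\tilde{I}$ generates a Picard--Vessiot extension of full transcendence degree for its system --- which is essentially the lemma itself, or else requires purity-type theorems (Philippon, Adamczewski--Faverjon, Roques) far heavier than Nishioka's direct argument and not cited in your sketch. Strike Route 1 and commit to Route 2, which is what the paper does.
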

\begin{proof}
Apply the proof of the last example in \cite{nishioka1990new} with $z = q^{-1}$. 
\end{proof}
From here, $H$ can be reached directly.
\begin{proof}[Proof of Proposition \ref{prop H has }]
We have $qH(q^3) = q^{-1} \cdot q^2H(q^6) + q^4H(q^{12})$, and the linear term of $qH(q^3)$ is $H(0) = 1$, hence $\tilde{I}(q) = qH(q^3)$. If $H(q)$ and $H(q^2)$ were not algebraically independent over $\mathbb{C}[q]$, then there would exist a polynomial $P \in \mathbb{C}[q, H(q), H(q^2)]$ such that $P(q, H(q), H(q^2)) = 0$. Then $$P(q^3, H(q^3), H(q^6)) = P(q^3, q^{-1}\tilde{I}(q), q^{-2}\tilde{I}(q^2)) = 0,$$
which contradicts Lemma \ref{lemma tilde I alg ind}.\end{proof}
\section{Continued fractions}
In their work on the power series $F$ and $G$, Dilcher and Stolarsky \cite{dilcher2009stern} already noticed that $F$ and $G$ have a connection with continued fractions. For $H$ and $I$ similar constructions exist, but the one for $H$ has another remarkable property -- a form as a folded continued fraction that will be described in Chapter 2. To state and prove all these connections, a crash course on continued fractions is given. As $q$ is often used in this context, $x$ serves as the main variable in the upcoming sections. All results and notations are from \cite{borwein2014neverending}.
\subsection{A crash course on continued fractions}
\begin{definition}
A \textbf{finite continued fraction} is an expression of the form \begin{align*}
    a_0+\dfrac{1}{a_1+\dfrac{1}{a_2+\dfrac{1}{\phantom{\bigg(}^{^{\ddots}}+\dfrac{1^{\phantom{\big(}}}{ a_{n-1} + \dfrac{1}{a_n} }}}}
\end{align*}
with $n \ge 0$, $a_0 \in \mathbb{Z}$, $a_i \in \mathbb{Z}_{> 0}$ for $i = 1,2, \dots, n$. This is denoted by $[a_0;\:a_1,\:a_2,\:\dots,\:a_n]$, and the $a_i$ are called the \textbf{partial quotients} of the continued fraction.
\end{definition}
For example, the continued fraction $[1 ;\: 2,\: 3]$ is $ 1 + \frac{1}{2+\frac{1}{3}}= 1 + \frac{3}{7} = \frac{10}{7}$. Instead of $\mathbb{Z}$ and $\mathbb{Z}_{> 0}$, other sets of numbers or even polynomial rings can be used but issues with dividing by $0$ may then arise. If there are none, the entire construction can still be used. The advantage of using $\mathbb{Z}$ and $\mathbb{Z}_{>0}$ is that there is now a unique continued fraction with $a_n \ne 1$ for each rational number and no division by zero problems occurs \cite{borwein2014neverending}.\medskip

The partial quotients can also be seen as variables. Let $p_n = p_n(a_0, a_1,\dots,a_n)$ and $q_n = q_n(a_0, a_1,\dots,a_n)$ be polynomials in $n+1$ variables defined by $p_0 = a_0$, $q_0 = 1$, 
\begin{align*}
     p_n &= a_0p_{n-1}(a_1, a_2,\dots,a_{n})+q_{n-1}(a_1, a_2,\dots,a_{n}) \quad\text{and}\\
     q_n &= p_{n-1}(a_1, a_2,\dots,a_{n-1}).
\end{align*}
Then one can show by induction that  $\frac{p_n}{q_n} = [a_0;\: a_1,\: a_2,\:\dots,\: a_n]$. To simplify some statements, define $p_{-1} = 1$ and $q_{-1} = 0$. The following lemma will often be used.
\begin{lemma}[Key Lemma, Lemma 2.8 in \cite{borwein2014neverending}]\label{lemma key lemma continued fractions}
For each $n \ge 1$,
\begin{align*}
    \begin{pmatrix}
        p_n & p_{n-1} \\q_n &q_{n-1} 
    \end{pmatrix}=
    \begin{pmatrix}
        a_0 & 1 \\ 1 & 0
    \end{pmatrix}
        \begin{pmatrix}
        a_1 & 1 \\ 1 & 0
    \end{pmatrix}
    \dots
        \begin{pmatrix}
        a_{n-1} & 1 \\ 1 & 0
    \end{pmatrix}
        \begin{pmatrix}
        a_n & 1 \\ 1 & 0
    \end{pmatrix}.
\end{align*}
\end{lemma}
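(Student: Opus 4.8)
The plan is to prove the identity by induction on $n$, peeling off the \emph{leftmost} factor, since the recursions defining $p_n$ and $q_n$ are themselves front recursions (they strip $a_0$ and shift the remaining arguments). Throughout I write $M(a) = \begin{pmatrix} a & 1 \\ 1 & 0 \end{pmatrix}$ and abbreviate the claimed product by $\Pi(a_0,\dots,a_n) = M(a_0)M(a_1)\cdots M(a_n)$. Using the conventions $p_{-1} = 1$ and $q_{-1} = 0$ introduced above, the base case $n = 0$ is immediate: $\Pi(a_0) = M(a_0)$ is literally $\left(\begin{smallmatrix} p_0 & p_{-1} \\ q_0 & q_{-1}\end{smallmatrix}\right)$ because $p_0 = a_0$ and $q_0 = 1$. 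Equivalently one checks $n=1$ by a single $2\times2$ multiplication, recovering $p_1 = a_0a_1+1$ and $q_1 = a_1$.

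For the inductive step I would factor $\Pi(a_0,a_1,\dots,a_n) = M(a_0)\cdot\Pi(a_1,\dots,a_n)$ and apply the inductive hypothesis to the length-$n$ tail $(a_1,\dots,a_n)$, treated as a fresh sequence. This gives
$$\Pi(a_1,\dots,a_n) = \begin{pmatrix} p_{n-1}(a_1,\dots,a_n) & p_{n-2}(a_1,\dots,a_{n-1}) \\ q_{n-1}(a_1,\dots,a_n) & q_{n-2}(a_1,\dots,a_{n-1}) \end{pmatrix}.$$
Left-multiplying by $M(a_0)$ then updates the two columns independently, and the crucial point is that this update is exactly the front recursion. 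Indeed, the first column becomes $\big(a_0 p_{n-1}(a_1,\dots,a_n) + q_{n-1}(a_1,\dots,a_n),\ p_{n-1}(a_1,\dots,a_n)\big)^{\!\top}$, which by the defining recursions equals $(p_n, q_n)$ evaluated at $(a_0,\dots,a_n)$; the second column becomes the same combination one index lower, hence $(p_{n-1}, q_{n-1})$ evaluated at $(a_0,\dots,a_{n-1})$. Reassembling yields precisely $\left(\begin{smallmatrix} p_n & p_{n-1} \\ q_n & q_{n-1}\end{smallmatrix}\right)$, closing the induction.

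The main obstacle is purely bookkeeping: the recursions evaluate $p$ and $q$ at shifted argument tuples, so the induction must be phrased uniformly over all sequences (so that the tail $(a_1,\dots,a_n)$ genuinely qualifies as an input to the hypothesis), and one must track which index and which argument list each matrix entry carries. The conceptual content is the single observation that left-multiplication by $M(a_0)$ realises the front recursion simultaneously on both columns, the first at index $n$ and the second at index $n-1$. I would also note the alternative of first deriving the familiar back recursion $p_n = a_n p_{n-1} + p_{n-2}$ and peeling $M(a_n)$ from the right; this avoids the argument shifts but requires establishing that recursion first, so peeling from the left is the more economical route given the definitions already in hand.
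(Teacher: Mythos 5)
Your proof is correct. There is nothing in the paper to compare it against: the paper states this Key Lemma as a quotation of Lemma 2.8 of \cite{borwein2014neverending} and supplies no proof of its own, so your argument stands on its own merits.

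On those merits: the left-peeling induction is exactly the right choice here, because the paper defines $p_n$ and $q_n$ by the front recursion $p_n = a_0\,p_{n-1}(a_1,\dots,a_n) + q_{n-1}(a_1,\dots,a_n)$, $q_n = p_{n-1}(a_1,\dots,a_n)$, and left multiplication by $M(a_0)$ implements precisely that update on both columns simultaneously. You also handle the one genuine subtlety correctly, namely that the inductive hypothesis must be quantified over all sequences so that it may be applied to the shifted tail $(a_1,\dots,a_n)$; an induction phrased only for a fixed sequence would not close. Your closing remark about the alternative right-peeling route is also well taken in the context of this paper: the back recursion $p_n = a_np_{n-1}+p_{n-2}$ appears here as Lemma \ref{lemma recursion p_n and q_n}, stated as a \emph{consequence} of the Key Lemma, so peeling $M(a_n)$ from the right would invert the paper's logical order and require proving that recursion independently first. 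One trivial bookkeeping note: the paper's displayed definition of $q_n$ reads $p_{n-1}(a_1,\dots,a_{n-1})$, an apparent typo for $p_{n-1}(a_1,\dots,a_n)$ (the argument count is otherwise wrong); your proof silently uses the corrected form, which is the intended one.
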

The matrix $\begin{pmatrix} p_n & p_{n-1} \\q_n &q_{n-1}  \end{pmatrix}$ is called the \textbf{matrix of continuants} of the continued fraction. A few important consequences are:
\begin{lemma}[Lemma 2.9 in \cite{borwein2014neverending}]\label{lemma recursion p_n and q_n}
Let $n\ge 1$. Then $p_n = a_np_{n-1} + p_{n-2}$ and $q_n = a_nq_{n-1} + q_{n-2}$.
\end{lemma}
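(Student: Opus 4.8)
The plan is to read the recursion straight off the Key Lemma (Lemma \ref{lemma key lemma continued fractions}), which already expresses the matrix of continuants as a product of elementary $2 \times 2$ matrices. The recursion for $p_n$ and $q_n$ is precisely what one obtains by peeling the final factor off this product, so no new machinery is needed.

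Concretely, for $n \ge 1$ I would write the matrix of continuants as
\[
\begin{pmatrix} p_n & p_{n-1} \\ q_n & q_{n-1} \end{pmatrix}
= \left[\begin{pmatrix} a_0 & 1 \\ 1 & 0 \end{pmatrix}\cdots\begin{pmatrix} a_{n-1} & 1 \\ 1 & 0 \end{pmatrix}\right]\begin{pmatrix} a_n & 1 \\ 1 & 0 \end{pmatrix},
\]
and then observe that, by the Key Lemma applied at index $n-1$, the bracketed product of the first $n$ factors is itself the matrix of continuants one step earlier, namely $\begin{pmatrix} p_{n-1} & p_{n-2} \\ q_{n-1} & q_{n-2} \end{pmatrix}$.

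The second step is the routine multiplication
\[
\begin{pmatrix} p_{n-1} & p_{n-2} \\ q_{n-1} & q_{n-2} \end{pmatrix}\begin{pmatrix} a_n & 1 \\ 1 & 0 \end{pmatrix}
= \begin{pmatrix} a_n p_{n-1} + p_{n-2} & p_{n-1} \\ a_n q_{n-1} + q_{n-2} & q_{n-1} \end{pmatrix},
\]
after which I would compare the left-hand column with that of $\begin{pmatrix} p_n & p_{n-1} \\ q_n & q_{n-1} \end{pmatrix}$: the top entries give $p_n = a_n p_{n-1} + p_{n-2}$ and the bottom entries give $q_n = a_n q_{n-1} + q_{n-2}$, as claimed. (The right-hand column furnishes only the trivially true identities $p_{n-1} = p_{n-1}$ and $q_{n-1} = q_{n-1}$, confirming consistency.)

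The single point that needs care — and the only real obstacle — is the base case $n = 1$, where the bracketed product degenerates to the lone matrix $\begin{pmatrix} a_0 & 1 \\ 1 & 0 \end{pmatrix}$ and the stated recursion references $p_{-1}$ and $q_{-1}$. Here I would invoke the conventions $p_{-1} = 1$ and $q_{-1} = 0$ fixed just before the Key Lemma, under which $\begin{pmatrix} p_0 & p_{-1} \\ q_0 & q_{-1} \end{pmatrix} = \begin{pmatrix} a_0 & 1 \\ 1 & 0 \end{pmatrix}$ holds as an identity since $p_0 = a_0$ and $q_0 = 1$. With this convention in hand the factorization above is valid uniformly for every $n \ge 1$, and the argument goes through without any separate case distinction.
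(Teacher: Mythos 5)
Your proof is correct, and it follows exactly the route the paper intends: the paper states this lemma without proof, citing it as one of the ``important consequences'' of the Key Lemma (Lemma \ref{lemma key lemma continued fractions}), and your peeling-off of the final matrix factor $\begin{pmatrix} a_n & 1 \\ 1 & 0 \end{pmatrix}$ is precisely that derivation. Your handling of the base case $n=1$ via the conventions $p_{-1}=1$, $q_{-1}=0$ also matches the conventions the paper fixes just before stating the Key Lemma, so nothing is missing.
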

\begin{lemma}[Theorem 2.14 in \cite{borwein2014neverending}]\label{lemma cross product p_n and q_n}
For all $n \ge 0$, $q_np_{n-1} - p_nq_{n-1} = (-1)^n$.
\end{lemma}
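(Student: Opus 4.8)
The plan is to deduce this cross-product identity from the Key Lemma (Lemma~\ref{lemma key lemma continued fractions}) by taking determinants, which is the cleanest route since it converts the whole claim into the multiplicativity of the determinant together with a sign count. The one thing to watch is that the quantity $q_np_{n-1} - p_nq_{n-1}$ in the statement is the \emph{negative} of the determinant $\det\!\begin{pmatrix} p_n & p_{n-1} \\ q_n & q_{n-1}\end{pmatrix} = p_nq_{n-1} - p_{n-1}q_n$, so a single sign flip at the end is needed; and since the Key Lemma is stated only for $n \ge 1$, the case $n = 0$ must be handled separately against the conventions $p_{-1} = 1$, $q_{-1} = 0$.

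First I would dispose of the base case $n = 0$ by direct substitution. Using $p_0 = a_0$, $q_0 = 1$, $p_{-1} = 1$, and $q_{-1} = 0$, one computes
\begin{align*}
q_0p_{-1} - p_0q_{-1} = 1\cdot 1 - a_0\cdot 0 = 1 = (-1)^0,
\end{align*}
so the identity holds at $n = 0$.

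Next, for $n \ge 1$, I would apply Lemma~\ref{lemma key lemma continued fractions}, which expresses the matrix of continuants as the product
\begin{align*}
\begin{pmatrix} p_n & p_{n-1} \\ q_n & q_{n-1}\end{pmatrix}
= \begin{pmatrix} a_0 & 1 \\ 1 & 0\end{pmatrix}\begin{pmatrix} a_1 & 1 \\ 1 & 0\end{pmatrix}\cdots\begin{pmatrix} a_n & 1 \\ 1 & 0\end{pmatrix},
\end{align*}
a product of exactly $n+1$ factors (indices $0$ through $n$). Each factor $\begin{pmatrix} a_i & 1 \\ 1 & 0\end{pmatrix}$ has determinant $a_i\cdot 0 - 1\cdot 1 = -1$, so by multiplicativity of the determinant the product has determinant $(-1)^{n+1}$. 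Equating this with the determinant of the left-hand side gives $p_nq_{n-1} - p_{n-1}q_n = (-1)^{n+1}$, and multiplying through by $-1$ yields $q_np_{n-1} - p_nq_{n-1} = (-1)^{n+2} = (-1)^n$, which is exactly the claim for $n \ge 1$.

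I do not expect any serious obstacle here; the argument is essentially a bookkeeping exercise. The only genuine points of care are the sign conversion between the stated expression and the determinant, and confirming that the product in the Key Lemma really contains $n+1$ matrices (so that the determinant is $(-1)^{n+1}$ rather than $(-1)^n$) — an off-by-one error there would flip the parity and break the result. As an alternative that sidesteps the Key Lemma entirely, one could instead prove the identity by induction on $n$ using the recursions $p_n = a_np_{n-1} + p_{n-2}$ and $q_n = a_nq_{n-1} + q_{n-2}$ from Lemma~\ref{lemma recursion p_n and q_n}: substituting these into $q_np_{n-1} - p_nq_{n-1}$ collapses the $a_n$-terms and leaves $-(q_{n-1}p_{n-2} - p_{n-1}q_{n-2})$, reducing the case $n$ to the case $n-1$ with a sign change and propagating the $(-1)^n$ pattern from the base case. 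I would present the determinant proof as the main argument and perhaps remark on the inductive alternative.
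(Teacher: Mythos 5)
Your proof is correct: the base case $n=0$ checks out against the conventions $p_{-1}=1$, $q_{-1}=0$, and for $n\ge 1$ taking determinants in Lemma~\ref{lemma key lemma continued fractions} (a product of $n+1$ factors, each of determinant $-1$) gives exactly $q_np_{n-1}-p_nq_{n-1}=(-1)^n$ after the sign flip you note. The paper itself offers no proof, importing the lemma from \cite{borwein2014neverending}, and your determinant argument is precisely the standard one given there, so there is nothing to reconcile.
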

The next ingredient is \textbf{infinite continued fractions}. These are simply defined as the limit of a sequence of their finite truncations. Famously, the continued fraction $[1;\:1,\:1,\:1,\:1,\:1,\:1,\:\dots]$ is equal to $\frac{1 + \sqrt{5}}{2}$, the golden ratio. An infinite continued fraction such that $a_0 \in \mathbb{Z}$ and $a_i \in \mathbb{Z}_{\ge 1}$ for all $i \ge 1$ always converges. In the opposite direction, for each real irrational number, there is a unique infinite continued fraction that converges to it \cite{borwein2014neverending}. Less conventionally, we also define that a sequence of truncated continued fractions $(b_n)_{n=0}^{\infty} = ([a_0;\:a_1,\:\dots,\:a_n])_{n=0}^{\infty}$ is \textbf{parity partial converging} if the limits $(b_{2n})_{n=0}^{\infty}$ and $(b_{2n+1})_{n=0}^{\infty}$ exist (but perhaps differ).\medskip

To describe a continued fraction associated to $H$, a slightly more general notion has to be introduced: the irregular continued fraction.
\begin{definition}
Let $n \ge 0$. Then an \textbf{irregular continued fraction} is an expression of the from 
\begin{align*}
    a_0+\dfrac{b_1}{a_1+\dfrac{b_2}{a_2+\dfrac{b_3}{\phantom{\bigg(}^{^{\ddots}}+\dfrac{b_{n-1}^{\phantom{\big(}}}{ a_{n-1} + \dfrac{b_n}{a_n} }}}}
\end{align*}
 denoted by 
\begin{align*}
    a_0+\pfrac{b_1}{a_1} + \pfrac{b_2}{a_2}+ \pfrac{b_3}{a_3}
+\dots+\pfrac{b_n}{a_n}.\end{align*}\end{definition}
As for usual continued fractions, we similarly define infinite irregular fractions, when the limit of their truncations exists, and the same holds for parity partial convergence.
\subsection[Continued fractions related to F, G, H and I]{Continued fractions related to $\boldmath{F}$, $\boldmath{G}$, $\boldmath{H}$ and $\boldmath{I}$}
For all $n \ge 0$, define the two continued fractions
\begin{align}\label{defCn}\begin{split}
    \lambda_n(x) &:= [x;\: x^2,\: x^4,\: x^8, \:\dots,\: x^{2^{n}}] \quad \text{and}\\
\lambda^+_n(x) &:= [x;\: x^2,\: x^4,\: x^8, \:\dots,\: x^{2^{n}}+1]
\end{split}\end{align}
and the irregular, `upside down', continued fraction 
\begin{align}\label{defrhon}
    \rho_n(x) := 1+\pfrac{x}{1}+\pfrac{x^{2}}{1}+\pfrac{x^{4}}{1} + \dots +\pfrac{x^{2^{n-2}}}{1} + \pfrac{x^{2^{n-1}}}{1}.
\end{align}
These two continued fractions turn out to be connected with the four Mahler functions discussed before. Both $\lambda_n$ and $\rho_n$ are defined for many $n$ on a large subset of $\mathbb{C}$ but not everywhere.
\begin{example}\label{counterexample C_n zeta_12 undefined}
Let $\zeta_{12}$ denote $\exp(\frac{2\pi i}{12})$. Then $$\lambda_3(\zeta_{12} ) = \zeta_{12} +\pfrac{1}{\zeta_{12} ^2}+\pfrac{1}{\zeta_{12}^4} = \zeta_{12} +\pfrac{\zeta_{12} ^4}{\zeta_{12}^6+1} = \zeta_{12}  +\frac{\zeta_{12}^4}{0}$$
is not defined.
\end{example}
Naturally, only assign a value to $\lambda_n(x)$ and $\rho_n(x)$ when such problems do not occur for a particular $x$. The partial convergents of $\lambda_n$ and $\rho_n$ take elegant forms. To find them, define, for all $n \ge 0$,
\begin{align*}
    F_n(x) := \sum_{n=0}^{2^n-1} {a_n}x^n, &\quad\quad G_n(x) := \sum_{n=0}^{2^n-1} {b_n}x^n,\\
    H_n(x) := \sum_{n=0}^{2^n-1} {c_n}x^n,&\quad\quad I_n(x) := \sum_{n=0}^{2^n-1} {d_n}x^n.
\end{align*}
If $n < 0$, set $F_n(x) = G_n(x) = H_n(x) = I_n(x) = 1$. Clearly, $\lim_{n \to \infty} F_n(x)$ equals $F(x)$ for $|x| < 1$, and such limits similarly exist for the other three cases. We have the following Mahler-like recursions.
\begin{lemma}\label{lemma recursions for F_n, G_n, H_n, and I_n}
Let $n \ge 1$. Then
\begin{align*}
    F_n(x) = G_{n-1}(x^2) + xF_{n-2}(x^4), &\quad G_n(x) = xF_{n-1}(x^2) + G_{n-2}(x^4), \\
    H_n(x) = H_{n-1}(x^2) + xH_{n-2}(x^4), &\quad I_n(x) = xI_{n-1}(x^2) + I_{n-2}(x^4).
\end{align*}\end{lemma}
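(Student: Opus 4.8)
The plan is to prove all four identities by a single uniform mechanism, namely by unfolding each truncation as a finite sum, splitting its index set modulo $4$, and reindexing. Writing $F_n(x) = \sum_{m=0}^{2^n-1} a_m x^m$ (and similarly for $G_n, H_n, I_n$, using $m$ for the summation index to avoid the clash with the subscript $n$), I would partition $\{0, 1, \dots, 2^n-1\}$ into the four residue classes modulo $4$ and rewrite each surviving coefficient via the coefficient recursions \eqref{CoefficientFG}, \eqref{CoeffiecientsH} and \eqref{CoeffiecientsI}. The structural point that makes everything work is that in each of the four sequences exactly one residue class modulo $4$ carries a vanishing coefficient --- namely $m \equiv 3 \pmod{4}$ for $a_m$ and $c_m$, and $m \equiv 2 \pmod{4}$ for $b_m$ and $d_m$ --- so that only two of the classes survive, producing a two-term rather than three-term relation.

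Concretely, for $H_n$ I would first separate even and odd indices. The even indices $m = 2j$ give $c_{2j} = c_j$, whose contribution $\sum_j c_j (x^2)^j$ reindexes to $H_{n-1}(x^2)$; splitting the odd indices once more modulo $4$, the class $m = 4j+1$ gives $c_{4j+1} = c_j$ and, after pulling out a single factor $x$, contributes $x\sum_j c_j (x^4)^j = x H_{n-2}(x^4)$, while the class $m \equiv 3 \pmod{4}$ drops out. This gives $H_n(x) = H_{n-1}(x^2) + x H_{n-2}(x^4)$. The other three cases go through identically, simply reading off from the relevant recursion which class vanishes and where the factor $x$ enters: for $F_n$ the even class uses $a_{2j} = b_j$, so it produces $G_{n-1}(x^2)$ and couples the two sequences, while the $m \equiv 1 \pmod{4}$ class produces $x F_{n-2}(x^4)$; for $G_n$ and $I_n$ the roles of the even and odd classes are interchanged, the odd indices now carrying the factor $x$ and yielding $x F_{n-1}(x^2)$ and $x I_{n-1}(x^2)$ respectively, while the class $m \equiv 0 \pmod{4}$ yields $G_{n-2}(x^4)$ and $I_{n-2}(x^4)$.

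The one step needing genuine care --- the main obstacle --- is the bookkeeping of the summation limits, so that each reindexed subsum has exactly the right length to be a truncation rather than overshooting or falling short. The key arithmetic fact is that $2^n - 1 \equiv 3 \pmod{4}$ for every $n \ge 2$: the largest even index is then $2^n - 2 = 2(2^{n-1}-1)$, so after $j = m/2$ the even part runs over $0, \dots, 2^{n-1}-1$ and is exactly $H_{n-1}(x^2)$; and the largest index congruent to $1$ is $2^n - 3 = 4(2^{n-2}-1)+1$, so the $m = 4j+1$ part runs over $0, \dots, 2^{n-2}-1$ and is exactly $x H_{n-2}(x^4)$. Finally I would treat $n = 1$ separately, since there the modulo-$4$ split degenerates and the relation relies on the convention $F_{-1} = G_{-1} = H_{-1} = I_{-1} = 1$; here one checks directly, for instance $H_1(x) = c_0 + c_1 x = 1 + x = H_0(x^2) + x H_{-1}(x^4)$, and the analogous constant-term computations settle $F_1, G_1$ and $I_1$. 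Everything beyond these range checks is routine reindexing.
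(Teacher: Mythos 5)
Your proposal is correct and is exactly the paper's (one-line) proof carried out in full detail: the paper simply asserts that all four identities ``follow from the recursion formulas'' \eqref{CoefficientFG}, \eqref{CoeffiecientsH} and \eqref{CoeffiecientsI}, which is precisely your split of indices modulo $4$ with reindexing, range bookkeeping, and the $n=1$ check via the convention for negative indices. As a side remark, your identification of the vanishing class for $d_m$ as $m \equiv 2 \pmod 4$ is the right one, and it silently corrects a typo in the paper's stated recursion \eqref{CoeffiecientsI}, which lists $m \equiv 3 \pmod 4$ there.
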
\begin{proof}
All these relations follow from the recursion formulas \eqref{CoefficientFG}, \eqref{CoeffiecientsH} and \eqref{CoeffiecientsI}.\end{proof}
With these expressions, the convergents of the three continued fractions can be written explicitly.
\begin{prop}\label{PropDivisionContFrac} For all $n \ge 0$,
\begin{enumerate}
    \item $\rho_n(x) = \frac{H_{n}(x)}{H_{n-1}(x^2)}$.
    \item $\lambda_n^+ = \frac{I_{n}(x)}{I_{n-1}(x^2)}$.
    \item $\lambda_n(x)$ equals $\frac{xF_n(x^3)}{G_{n-1}(x^6)}$ for even $n$ and $\frac{G_n(x^3)}{x^2 F_{n-1}(x^6)}$ for odd $n$.
\end{enumerate}
\end{prop}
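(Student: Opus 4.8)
The plan is to prove all three identities simultaneously by induction on $n$, exploiting the \emph{self-similarity} of these continued fractions under the substitution $x \mapsto x^2$. The key first step is to peel off the leading term. Directly from \eqref{defrhon}, everything below the first fraction bar of $\rho_n(x)$ is again a continued fraction of the same shape, one step shorter and with $x$ replaced by $x^2$; that is, for $n \ge 1$,
\begin{equation*}
\rho_n(x) = 1 + \frac{x}{\rho_{n-1}(x^2)}.
\end{equation*}
The same peeling applies to the regular continued fractions in \eqref{defCn}: dropping the leading partial quotient $x$ and noting that the remaining quotients $x^2, x^4, \dots$ are exactly those of the corresponding fraction at argument $x^2$ gives, for $n \ge 1$,
\begin{equation*}
\lambda_n(x) = x + \frac{1}{\lambda_{n-1}(x^2)}, \qquad \lambda_n^+(x) = x + \frac{1}{\lambda_{n-1}^+(x^2)}.
\end{equation*}
I would treat each identity as an equality of rational functions in $\mathbb{C}(x)$, so that the caveat about points where an intermediate denominator vanishes (as in Example \ref{counterexample C_n zeta_12 undefined}) is automatically sidestepped and re-enters only upon specializing $x$.

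The inductive step then matches these folding recursions against the Mahler-like recursions of Lemma \ref{lemma recursions for F_n, G_n, H_n, and I_n}. For part (1), substituting the induction hypothesis $\rho_{n-1}(x^2) = H_{n-1}(x^2)/H_{n-2}(x^4)$ and clearing denominators turns the right-hand side into
\begin{equation*}
1 + \frac{x\,H_{n-2}(x^4)}{H_{n-1}(x^2)} = \frac{H_{n-1}(x^2) + x\,H_{n-2}(x^4)}{H_{n-1}(x^2)},
\end{equation*}
and the numerator is precisely $H_n(x)$ by the recursion $H_n(x) = H_{n-1}(x^2) + x\,H_{n-2}(x^4)$. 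Part (2) is identical in structure: the hypothesis $\lambda_{n-1}^+(x^2) = I_{n-1}(x^2)/I_{n-2}(x^4)$ together with $I_n(x) = x\,I_{n-1}(x^2) + I_{n-2}(x^4)$ produces $\lambda_n^+(x) = I_n(x)/I_{n-1}(x^2)$ after combining over the common denominator $I_{n-1}(x^2)$. The base cases are verified directly under the conventions of \eqref{defCn}, \eqref{defrhon} and $F_{-1} = G_{-1} = H_{-1} = I_{-1} = 1$.

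Part (3) runs the same way but carries extra parity bookkeeping, because the formula for $\lambda_n$ alternates between an $F$-numerator and a $G$-numerator, which is exactly mirrored by the way the recursions of Lemma \ref{lemma recursions for F_n, G_n, H_n, and I_n} swap $F$ and $G$ at each step. I would prove the even and odd statements together in one induction. If $n$ is even, then $n-1$ is odd and the hypothesis reads $\lambda_{n-1}(x^2) = G_{n-1}(x^6)/(x^4 F_{n-2}(x^{12}))$; substituting into $\lambda_n(x) = x + 1/\lambda_{n-1}(x^2)$ and clearing denominators gives numerator $x(G_{n-1}(x^6) + x^3 F_{n-2}(x^{12}))$, whose inner bracket is $F_n(x^3)$ by $F_n(y) = G_{n-1}(y^2) + y F_{n-2}(y^4)$ evaluated at $y = x^3$. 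The odd case is the mirror image, using $G_n(y) = y F_{n-1}(y^2) + G_{n-2}(y^4)$ at $y = x^3$, and in both cases the powers of $x$ line up to reproduce the claimed expressions; the base cases $n=0$ and $n=1$ are immediate.

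The conceptual crux — and the one place where a careless approach goes wrong — is that the \emph{standard} convergent recursion of Lemma \ref{lemma recursion p_n and q_n} relates $p_n, q_n$ to $p_{n-1}, q_{n-1}$ in the \emph{same} variable $x$, whereas the recursions of Lemma \ref{lemma recursions for F_n, G_n, H_n, and I_n} change the argument to $x^2$ and $x^4$. These cannot be matched term by term, so one must not try to identify $H_n(x)$ with the ordinary continuant $p_n$. The folding recursions resolve this precisely because peeling the leading term rescales $x \mapsto x^2$, which is exactly the substitution appearing in the Mahler recursions; the remaining work is then only the parity and power-of-$x$ accounting in part (3), which I expect to be the main (purely computational) obstacle.
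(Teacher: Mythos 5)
Your proposal is correct and follows essentially the same route as the paper's proof: induction on $n$ using the peeling recursions $\rho_n(x) = 1 + x/\rho_{n-1}(x^2)$, $\lambda_n(x) = x + 1/\lambda_{n-1}(x^2)$, $\lambda_n^+(x) = x + 1/\lambda_{n-1}^+(x^2)$, matched against the Mahler-like recursions of Lemma \ref{lemma recursions for F_n, G_n, H_n, and I_n}, with the same parity bookkeeping in part (3). Your explicit framing of the identities in $\mathbb{C}(x)$ and the warning not to confuse $H_n(x)$ with the continuant $p_n$ of Lemma \ref{lemma recursion p_n and q_n} are sensible glosses, but the argument itself is the one in the paper.
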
\begin{proof}\begin{enumerate}
\item For $n=0$, $\rho_0(x) = 1 = \frac{H_0(x)}{H_{-1}(x^2)}$. Then use induction and Lemma \ref{lemma recursions for F_n, G_n, H_n, and I_n} for the truncated sums:
\begin{align*}
    \rho_{n+1}(x) = 1 + \frac{x}{\rho_{n}(x^2)} = 1 + \frac{x}{H_n(x^2)/H_{n-1}(x^4)} = \frac{xH_{n-1}(x^4) +H_n(x^2)}{H_n(x^2)} = \frac{H_{n+1}(x)}{H_n(x^2)}.
\end{align*}
\item For $n=0$, $\lambda^+_0(x) = 1 = \frac{I_0(x)}{ I_{-1}(x^2)}$; again by induction and Lemma \ref{lemma recursions for F_n, G_n, H_n, and I_n},
\begin{align*}
    \lambda^+_{n+1}(x) = x + \frac{1}{\lambda^+_{n}(x^2)} = x + \frac{1}{I_n(x^2)/I_{n-1}(x^4)} = \frac{I_{n-1}(x^4) +xI_n(x^2)}{I_n(x^2)} = \frac{I_{n+1}(x)}{I_n(x^2)}.
\end{align*}
\item The statement holds true for $n=0$ by construction as $F_0(x) = G_{-1}(x) = 1$ and $\lambda_0(x) = x$. Assume that $n > 0$ and $\lambda_{n}(x) = \frac{p_n(x)}{q_n(x)}$ and apply Lemma \ref{lemma recursions for F_n, G_n, H_n, and I_n}:
\begin{align*}
    \lambda_{n+1}(x) = x + \cfrac{1}{\lambda_n(x^2)} = x + \cfrac{q_n(x^2)}{p_n(x^2)} = \cfrac{xp_n(x^2)+q_n(x^2)}{p_n(x^2)}.
\end{align*}
For even $n$, $q_{n+1}(x) = p_n(x^2) = G_{n-1}(x^6)$ and 
\begin{align*}
    p_{n+1}(x) = x\cdot x^3F_n(x^6) + G_{n-1}(x^{12}) = G_{n+1}(x^3)
\end{align*}
and, for odd $n$, $q_{n+1}(x) = p_n(x^2) = x^2F_{n-1}(x^6)$ and
\begin{align*}
    p_{n+1}(x) = xG_n(x^6) + x^4F_{n-1}(x^{12}) = xF_{n+1}(x^3).
\end{align*}
Thus, by induction the statement follows.\qedhere\end{enumerate}\end{proof}

Now, wherever possible, one can also form (irregular) infinite continued fractions by passing to the limits $$\rho(x) := \lim_{n \to \infty}\rho_n(x), \quad \lambda^+(x) := \lim_{n \to \infty}\lambda^+_n(x)\quad \text{and}\quad \lambda(x) := \lim_{n \to \infty}\lambda_n(x).$$ As $F(x), G(x), H(x)$ and $I(x)$ are defined only for $|x|<1$, the limits of the first two exist for all $|x|<1$ and are equal to $\frac{H(x)}{H(x^2)}$ for $\rho(x)$ and $\frac{I(x)}{I(x^2)}$ for $\lambda^+(x)$. Dilcher and Stolarsky found a parity partial limit for $\lambda(x)$.
\begin{prop}[Propositions 6.3 and 7.1 of \cite{dilcher2009stern}]\label{proppartialconv} For all $x$ such that $0 < |x| < 1$,
\begin{align*}
    \lim_{\substack{n \to \infty \\ \text{$n$ even}}} \lambda_n(x) &= \frac{xF(x^3)}{G(x^6)} \quad\text{and}\\
    \lim_{\substack{n \to \infty \\ \text{$n$ odd}}} \lambda_n(x)&=\frac{G(x^3)}{x^2F(x^6)}.
\end{align*}
Moreover, if $|x|>1$, $\lim_{n \to \infty} \lambda_n(x)$ also exists and is unique.
\end{prop}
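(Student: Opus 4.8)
The plan is to treat the inside and the outside of the unit circle separately, since the mechanism is completely different in each.

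For $0<|x|<1$ everything reduces to Proposition \ref{PropDivisionContFrac}. Its third item gives the closed forms $\lambda_{n}(x)=\frac{xF_{n}(x^{3})}{G_{n-1}(x^{6})}$ for even $n$ and $\lambda_{n}(x)=\frac{G_{n}(x^{3})}{x^{2}F_{n-1}(x^{6})}$ for odd $n$. Since $|x|<1$ forces $|x^{3}|<1$ and $|x^{6}|<1$, the truncations converge, $F_{n}(x^{3})\to F(x^{3})$, $G_{n}(x^{3})\to G(x^{3})$, $F_{n-1}(x^{6})\to F(x^{6})$ and $G_{n-1}(x^{6})\to G(x^{6})$. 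Letting $n\to\infty$ through the even, resp. odd, integers and using continuity of division away from a vanishing denominator, I would read off the two stated parity limits $\frac{xF(x^{3})}{G(x^{6})}$ and $\frac{G(x^{3})}{x^{2}F(x^{6})}$ at once. No continued-fraction machinery beyond the explicit convergents is needed here.

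For $|x|>1$ the series $F(x^{3}),G(x^{3}),\dots$ diverge, so I would argue directly from the matrix of continuants. Writing $\lambda_{n}(x)=p_{n}/q_{n}$ with $a_{k}=x^{2^{k}}$, Lemma \ref{lemma cross product p_n and q_n} gives $\lambda_{n}-\lambda_{n-1}=\frac{(-1)^{n+1}}{q_{n-1}q_{n}}$, so the \emph{full} sequence converges as soon as $\sum_{n}\lvert q_{n-1}q_{n}\rvert^{-1}<\infty$; that single limit is then automatically the common value of the even and odd subsequences, which is the uniqueness asserted in the ``moreover''. Thus everything hinges on a lower bound for $\lvert q_{n}\rvert$. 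From Lemma \ref{lemma recursion p_n and q_n}, $q_{n}=x^{2^{n}}q_{n-1}+q_{n-2}$, whence $\lvert q_{n}\rvert\ge\lvert x\rvert^{2^{n}}\lvert q_{n-1}\rvert-\lvert q_{n-2}\rvert$.

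The key step, in the spirit of Pringsheim's convergence criterion, is then: fix $R=\lvert x\rvert>1$ and choose $N$ with $R^{2^{N}}\ge 2$, so that the tail starting at index $N$ has all its partial quotients $x^{2^{k}}$ $(k\ge N)$ of modulus $\ge 2$, the full fraction being a fixed M\"obius image of this tail. For the tail denominators $Q_{m}$ I would show by induction that $\lvert Q_{m}\rvert\ge\lvert Q_{m-1}\rvert$, and hence $\lvert Q_{m}\rvert\ge(\lvert a_{N+m-1}\rvert-1)\lvert Q_{m-1}\rvert$; since the partial quotients grow like $R^{2^{k}}$, the $\lvert Q_{m}\rvert$ grow doubly exponentially, the increments $\lvert Q_{m-1}Q_{m}\rvert^{-1}$ are summable, and so the tail, and therefore $\lambda_{n}(x)$, converges. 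The main obstacle is precisely this lower bound near the circle, $1<R\le\sqrt{2}$, where the first few partial quotients have modulus below $2$ and the naive induction fails to start; the remedy is the reduction to a tail above, together with the bookkeeping needed to guarantee that the denominators are eventually nonzero, so that the convergents and the connecting M\"obius map are genuinely defined. Finally, as a sanity check rather than a proof ingredient, I would note that normalising $q_{n}=x^{2^{n+1}-2}r_{n}$ turns the recursion into $r_{n}=r_{n-1}+x^{-3\cdot 2^{n-1}}r_{n-2}$, whose limit is the Fibbinary series $H(x^{-6})$; this identifies the limiting value with $x\,H(x^{-3})/H(x^{-6})$ and foreshadows the identity $\lambda(x)=x\rho(x^{-3})$ promised in the introduction.
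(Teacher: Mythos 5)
First, a point of orientation: the thesis never proves this proposition --- it is quoted as Propositions 6.3 and 7.1 of \cite{dilcher2009stern} --- so there is no internal proof to compare against, and your argument must stand as a self-contained one. As such it is essentially correct, and it is the natural proof inside this thesis's framework. For $0<|x|<1$, Proposition \ref{PropDivisionContFrac}(3) is established before the present statement, so invoking it is non-circular, and letting $n\to\infty$ through each parity turns the truncations $F_n,G_n$ into $F,G$; this is exactly the mechanism the thesis itself uses to pass to the limits for $\rho$ and $\lambda^+$. For $|x|>1$, your Pringsheim-style argument is sound: the telescoping identity from Lemma \ref{lemma cross product p_n and q_n}, the reduction to a tail all of whose partial quotients have modulus at least $2$, the induction $|Q_m|\ge|Q_{m-1}|$ yielding $|Q_m|\ge(|a|-1)|Q_{m-1}|$, and the resulting doubly exponential growth do give convergence of the tail, and the fixed M\"obius map transports this to $\lambda_n(x)$.

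Two loose ends remain, both of which you flag but do not close, and both of which are really imprecisions of the statement rather than of your proof. Inside the disk, the displayed limits are literal only where $G(x^6)$, resp.\ $F(x^6)$, is nonzero; relation \eqref{Cross relation F and G} at $q=x^3$, namely $G(x^3)G(x^6)-x^3F(x^3)F(x^6)=1$, shows numerator and denominator cannot vanish simultaneously, so at a zero of the denominator the corresponding subsequence diverges to $\infty$. Outside the disk, the value $M(T)$ can be the pole of the M\"obius map $M$, and this genuinely happens: by Theorem \ref{theorem rho and c equal} and the observed zeros of $H$, the function $\lambda$ has poles in $|x|>1$, so ``exists and is unique'' can only mean convergence in $\hat{\mathbb{C}}$; your continuity-of-M\"obius-maps phrasing delivers exactly that, provided you state it on the Riemann sphere rather than promising a finite limit. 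Finally, your closing ``sanity check'' is more than a check: the normalisations $p_n=x^{2^{n+1}-1}H_n(x^{-3})$ and $q_n=x^{2^{n+1}-2}H_{n-1}(x^{-6})$ are precisely Lemma \ref{lemmaHn(q)Fn(q-1)} and Proposition \ref{proposition rho_n and C_n} of the thesis, and they give the $|x|>1$ case in one line, $\lambda_n(x)=xH_n(x^{-3})/H_{n-1}(x^{-6})\to xH(x^{-3})/H(x^{-6})$ independently of parity, which is the shortest complete proof available with the paper's own tools and avoids the tail estimates entirely (away from the zeros of $H(x^{-6})$).
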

Thus, the extra +1 in the last partial quotient of $\lambda^+_n$ creates usual convergence instead of parity partial convergence if $|x| < 1$! On the other side of the unit circle, for $|x|>1$, its extra role is lost as $x^{2^n}$ grows quickly, giving that $\lambda(x) = \lambda^+(x)$ for these $|x|>1$. So the oddity of different behaviour inside and outside the unit circle of the Mahler functions is carried out in the continued fractions.\medskip

Due to their continued-fraction forms, $\rho$, $\lambda^+$ and $\lambda$ also satisfy Mahler-like identities:
\begin{align*}
    \rho(x) = 1 +\frac{x}{\rho(x^2)}, \quad \lambda^+(x) = x +\frac{1}{\lambda^+(x^2)} \quad \text{and}\quad {\lambda}(x) = x +\frac{1}{{\lambda}(x^2)}.
\end{align*}
These are not authentic Mahler equations, and as the examples of $\lambda$ and $\lambda^+$ show, the solutions to the equations are often not unique. Adamczewski studied $\lambda$ in \cite{adamczewski2010non}, where he denoted it with $\mathcal{C}$.\medskip

As mentioned before, $\rho$ and $\lambda$ are also closely related, and now this can be clearly stated. This gives an explicit link between $H$ and $F, G$, which were originally defined on disjoint domains.
\begin{theorem}\label{theorem rho and c equal}
Let $x \in \mathbb{C}$ such that $x \ne 0$ and $\lambda(x)$ exists. Then $\lambda(x) = x\rho(x^{-3})$ where equality is understood as that for $0 < |x| < 1$, $$\lim_{n \to \infty}\lambda_{2n}(x) = \lim_{n \to \infty} x\rho_{2n}(x^{-3})\quad\text{and}\quad\lim_{n \to \infty}\lambda_{2n+1}(x) = \lim_{n \to \infty} x\rho_{2n+1}(x^{-3})$$ and for $|x| > 1$, $$\lim_{n \to \infty}\lambda_{n}(x) = \lim_{n \to \infty} x\rho_{n}(x^{-3}).$$
\end{theorem}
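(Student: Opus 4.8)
The plan is to reduce the entire statement to a single \emph{finite} rational-function identity,
\begin{equation}\label{finiteidentity}
  \lambda_n(x) = x\,\rho_n(x^{-3}) \qquad \text{for every } n \ge 0,
\end{equation}
and then to read off the three displayed limit assertions by passing to the appropriate subsequences. The decisive observation is that, once \eqref{finiteidentity} is available, the two sides of each limit are \emph{literally the same} sequence of rational functions evaluated at $x$; the only remaining content is the existence of the limits, which is exactly what Proposition \ref{proppartialconv} supplies. So almost all of the work is concentrated in \eqref{finiteidentity}.

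To prove \eqref{finiteidentity} I would induct on $n$, using the self-similarity of both continued fractions recorded in the proof of Proposition \ref{PropDivisionContFrac}, namely the rational-function recursions
\begin{align*}
  \lambda_{n+1}(x) = x + \frac{1}{\lambda_n(x^2)} \qquad\text{and}\qquad \rho_{n+1}(x) = 1 + \frac{x}{\rho_n(x^2)}.
\end{align*}
Writing $\sigma_n(x) := x\,\rho_n(x^{-3})$, the base case is immediate since $\sigma_0(x) = x\cdot 1 = x = \lambda_0(x)$. For the step I would substitute $x^{-3}$ into the $\rho$-recursion, multiply by $x$, and use $\rho_n(x^{-6}) = \rho_n\big((x^2)^{-3}\big) = \sigma_n(x^2)/x^2$ to get
\begin{align*}
  \sigma_{n+1}(x) = x + \frac{x^{-2}}{\rho_n(x^{-6})} = x + \frac{x^{-2}}{\sigma_n(x^2)/x^2} = x + \frac{1}{\sigma_n(x^2)}.
\end{align*}
Thus $\sigma_n$ obeys precisely the $\lambda$-recursion with the same initial term, so the induction hypothesis $\sigma_n = \lambda_n$ gives $\sigma_{n+1} = \lambda_{n+1}$, establishing \eqref{finiteidentity}. (One could instead verify \eqref{finiteidentity} by substituting the explicit expressions of Proposition \ref{PropDivisionContFrac} and checking polynomial identities between $H_n(x^{-3})$ and $F_n(x^3),G_n(x^3)$, but the recursion argument is far cleaner and avoids the parity case split.)

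With \eqref{finiteidentity} secured, the limit statements are a formality. For $|x|>1$ the point $x^{-3}$ lies \emph{inside} the unit circle, so $\rho(x^{-3}) = H(x^{-3})/H(x^{-6})$ is a genuine value; Proposition \ref{proppartialconv} gives that $\lim_n \lambda_n(x)$ exists, and since $x\rho_n(x^{-3})$ is the identical sequence, its limit exists and equals $\lambda(x)$. For $0<|x|<1$ the point $x^{-3}$ lies outside the circle, where $\rho$ converges only parity-partially; but Proposition \ref{proppartialconv} furnishes the two parity limits of $\lambda_n(x)$, and \eqref{finiteidentity} transfers them verbatim to the even and odd subsequences of $x\rho_n(x^{-3})$, which is exactly the asserted equality.

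I expect the main obstacle to be not the induction, which is short, but the careful bookkeeping of \emph{where} \eqref{finiteidentity} may be specialised. The identity holds in the field $\mathbb{C}(x)$, whereas the hypothesis only assumes that $\lambda(x)$ exists; as Example \ref{counterexample C_n zeta_12 undefined} shows, an individual truncation can fail to be defined at particular roots of unity (an intermediate division by zero) even where the reduced rational function is regular and the limits exist. I would therefore need to check that the denominators in the $\lambda_n$- and $\rho_n(x^{-3})$-representations vanish at the same arguments, so that evaluating \eqref{finiteidentity} at an admissible $x$ is unambiguous, and to confirm that the assumption ``$\lambda(x)$ exists'' already excludes the bad $x$ for the $\rho$-side as well. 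This pole-matching, together with keeping the two regimes $|x|<1$ and $|x|>1$ cleanly separated, is the only genuinely delicate part of the argument.
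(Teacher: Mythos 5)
Your proof is correct, but it is not the paper's proof; it is precisely the alternative that the paper mentions and deliberately declines immediately after its own argument (``For another proof, one can also produce an explicit equivalence transformation of irregular continued fractions''). You establish the finite identity $\lambda_n(x) = x\rho_n(x^{-3})$ by showing that $\sigma_n(x) := x\rho_n(x^{-3})$ obeys the same recursion $\sigma_{n+1}(x) = x + 1/\sigma_n(x^2)$ and the same initial value as $\lambda_n$; the paper instead derives the same finite identity from explicit formulas for the convergents: Lemma \ref{lemmaHn(q)Fn(q-1)} (the reciprocal-argument relation $H_n(x) = x^{2(2^n-1)/3}F_n(x^{-1})$ for even $n$, resp.\ $x^{(2^{n+1}-1)/3}G_n(x^{-1})$ for odd $n$), which feeds into Proposition \ref{proposition rho_n and C_n} and is then combined with Proposition \ref{PropDivisionContFrac}. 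Both routes finish identically, reading the three limit assertions off Proposition \ref{proppartialconv}. As for what each buys: yours is shorter, needs no parity case split, and avoids the exponent bookkeeping entirely (incidentally sidestepping a misprint in the paper's odd-$n$ case of Proposition \ref{proposition rho_n and C_n}, where the factor $x$ is placed in the denominator rather than the numerator and is then silently compensated inside the theorem's proof); it is also the same manipulation the paper itself later uses at roots of unity in Proposition \ref{prop rho lambda equal on X}. The paper's longer route is chosen on purpose: it exhibits the identity as inherited from the power series $F$, $G$ and $H$, i.e.\ as the continued-fraction shadow of the relation between the Mahler systems at $x$ and at $x^{-1}$, which is the thematic point of the chapter. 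Your closing concern about pole-matching (continued-fraction evaluation versus evaluation of the reduced rational function at a specific $x$, cf.\ Example \ref{counterexample C_n zeta_12 undefined}) is legitimate but puts you at no disadvantage: the paper's proof passes over exactly the same issue without comment.
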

This theorem also explains the dichotomy of the two parity partial limits of $\lambda(x)$ on one side of the unit circle. This correlates with the fact that $\operatorname{tr\,deg}\big(F(x), G(x), F(x^2), G(x^2)\big) = 3$ and $\operatorname{tr\,deg}\big(H(x), H(x^2)\big) = 2$. To prove this theorem rigorously, some work is required.
\begin{lemma}\label{lemmaHn(q)Fn(q-1)}
Let $n \ge 0$. Then for all $x \in \mathbb{C}\setminus\{0\}$,
\begin{align*}
    H_n(x) = \begin{cases}
    x^{2\frac{2^n-1}{3}}F_n(x^{-1}) & \text{if } n \text{ is even},\\
    x^{\frac{2^{n+1}-1}{3}}G_n(x^{-1}) & \text{if } n \text{ is odd}.
    \end{cases}
\end{align*}\end{lemma}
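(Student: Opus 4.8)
The plan is to prove the claim by induction on $n$, using the truncation recursions of Lemma~\ref{lemma recursions for F_n, G_n, H_n, and I_n} together with the reciprocal substitution $x\mapsto x^{-1}$. Since the recursion $H_n(x)=H_{n-1}(x^2)+xH_{n-2}(x^4)$ reaches back two steps, I would take $n=0$ and $n=1$ as base cases and run the inductive step for $n\ge 2$. The base cases are immediate: $H_0(x)=1=x^{0}F_0(x^{-1})$, and since $\tfrac{2^{2}-1}{3}=1$ and $G_1(x)=1+x$, we get $x^{1}G_1(x^{-1})=x(1+x^{-1})=1+x=H_1(x)$. I would also record at the outset that all exponents are non-negative integers: for even $n$ one has $2^n\equiv 1\pmod 3$, so $2\tfrac{2^n-1}{3}\in\mathbb{Z}_{\ge 0}$, while for odd $n$ one has $2^{n+1}\equiv 1\pmod 3$, so $\tfrac{2^{n+1}-1}{3}\in\mathbb{Z}_{\ge 0}$; this makes both sides honest Laurent polynomials and the identity meaningful.

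For the inductive step I would split on the parity of $n$, as both the asserted formula and the choice of $F$ versus $G$ in the hypothesis depend on it. Suppose $n$ is even, so that $n-1$ is odd and $n-2$ is even. Substituting the induction hypotheses into $H_n(x)=H_{n-1}(x^2)+xH_{n-2}(x^4)$ gives
$$H_n(x)=x^{2\frac{2^n-1}{3}}G_{n-1}(x^{-2})+x^{\,1+8\frac{2^{n-2}-1}{3}}F_{n-2}(x^{-4}).$$
On the other hand, evaluating $F_n(x)=G_{n-1}(x^2)+xF_{n-2}(x^4)$ at $x^{-1}$ and multiplying by $x^{2\frac{2^n-1}{3}}$ yields
$$x^{2\frac{2^n-1}{3}}F_n(x^{-1})=x^{2\frac{2^n-1}{3}}G_{n-1}(x^{-2})+x^{\,2\frac{2^n-1}{3}-1}F_{n-2}(x^{-4}).$$
The $G_{n-1}$-terms coincide outright, and the $F_{n-2}$-terms coincide once the single exponent identity $1+8\frac{2^{n-2}-1}{3}=2\frac{2^n-1}{3}-1$ is checked, both sides equalling $\frac{2^{n+1}-5}{3}$. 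The odd case is entirely parallel: with $n-1$ even and $n-2$ odd, the same substitution transforms $H_n$ into $x^{\frac{2^{n+1}-1}{3}}G_n(x^{-1})$ via $G_n(x)=xF_{n-1}(x^2)+G_{n-2}(x^4)$, the governing exponent identity now being $4\frac{2^{n-1}-1}{3}=\frac{2^{n+1}-1}{3}-1$.

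The only genuine bookkeeping — and the step most likely to generate exponent errors — is keeping the parity switching consistent: which function the induction hypothesis produces at the indices $n-1$ and $n-2$, and how the prefactor exponents scale under $x\mapsto x^2$ and $x\mapsto x^4$. I do not anticipate any conceptual obstacle beyond verifying the two elementary exponent identities above; once they are in hand, both parity cases close and the induction is complete. The entire content is carried by the self-similar structure of the truncations and the reciprocal substitution $x\mapsto x^{-1}$.
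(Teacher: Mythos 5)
Your proposal is correct and follows essentially the same route as the paper's proof: induction with base cases $n=0,1$, substitution of the inductive hypotheses into the truncation recursion $H_n(x)=H_{n-1}(x^2)+xH_{n-2}(x^4)$ from Lemma~\ref{lemma recursions for F_n, G_n, H_n, and I_n}, and recombination via the recursion for $F_n$ (resp.\ $G_n$) evaluated at $x^{-1}$, with the same exponent bookkeeping. The only difference is cosmetic: the paper steps forward from $n-1,n$ to $n+1$, while you step from $n-2,n-1$ to $n$.
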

\begin{proof}
As always, use induction on $n$. For $n = 0$, $H_0(x) = 1 = F_0(x)$, and so the claim holds. Similarly, for $n=1$, $H_1(x) = 1 + x = x(1 + x^{-1}) = xG_1(x^{-1})$. Now apply the identities from Lemma \ref{lemma recursions for F_n, G_n, H_n, and I_n}
to get, for even $n$, 
\begin{align*}
    H_{n+1}(x) &= H_{n}(x^2) + xH_{n-1}(x^4) = (x^2)^{2\frac{2^n-1}{3}}F_n(x^{-2}) + x(x^4)^{\frac{2^{n-1+1}-1}{3}}G_n(x^{-4}) \\&= 
    x^{-1}x^{\frac{2^{n+2}-1}{3}}F_n(x^{-2}) +  x^{\frac{2^{n+2}-1}{3}}G_n(x^{-4}) = x^{\frac{2^{n+2}-1}{3}}G_{n+1}(x^{-1})
\end{align*}
and, for odd $n$,
\begin{align*}
    H_{n+1}(x) &= H_{n}(x^2) + x H_{n-1}(x^4) = (x^2)^{2\frac{2^{n+1}-1}{3}}G_n(x^{-2}) + x(x^4)^{2\frac{2^{n-1}-1}{3}}F_n(x^{-4}) \\&= 
    x^{2\frac{2^{n+1}-1}{3}}G_n(x^{-2}) +  x^{-1}\cdot x^{2\frac{2^{n+1}-1}{3}}F_n(x^{-4}) = x^{2\frac{2^{n+1}-1}{3}}F_{n+1}(x^{-1}).\qedhere
\end{align*}\end{proof}
\begin{prop}\label{proposition rho_n and C_n}
Let $n\ge 0$. Then
\begin{align*}
    \rho_n(x) = \frac{H_n(x)}{H_{n-1}(x^2)} = \begin{cases}
    \dfrac{F_n(x^{-1})}{G_{n-1}(x^{-2})} & \text{if }n \text{ is even}, \\
    \dfrac{G_n(x^{-1})}{x F_{n-1}(x^{-2})} & \text{if }n \text{ is odd}.
    \end{cases}
\end{align*}\end{prop}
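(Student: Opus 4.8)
The first equality $\rho_n(x)=\tfrac{H_n(x)}{H_{n-1}(x^2)}$ is nothing but Proposition~\ref{PropDivisionContFrac}(1), so the plan is to take that ratio as given and evaluate it by inserting the closed forms of Lemma~\ref{lemmaHn(q)Fn(q-1)} into the numerator and the denominator. Because Lemma~\ref{lemmaHn(q)Fn(q-1)} already carries all the inductive content, no new induction is required: the proof is a substitution followed by a balancing of powers of $x$. The one structural subtlety to keep in mind is that numerator and denominator fall under \emph{opposite} cases of the parity split, since $H_n(x)$ matches the parity of $n$ while $H_{n-1}(x^2)$ uses the other case evaluated at $x^2$; replacing the argument by $x^2$ both turns $x^{-1}$ into $x^{-2}$ and doubles the exponent of the leading power of $x$.

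For even $n$ I would use $H_n(x)=x^{2(2^n-1)/3}F_n(x^{-1})$, and, as $n-1$ is odd, $H_{n-1}(x^2)=(x^2)^{(2^n-1)/3}G_{n-1}(x^{-2})=x^{2(2^n-1)/3}G_{n-1}(x^{-2})$. The two leading prefactors are literally equal, so they cancel and the ratio reduces to $\tfrac{F_n(x^{-1})}{G_{n-1}(x^{-2})}$, which is precisely the even line of the statement. This half needs no more than writing the two exponents next to one another.

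For odd $n$ I would use $H_n(x)=x^{(2^{n+1}-1)/3}G_n(x^{-1})$, and, as $n-1$ is even, $H_{n-1}(x^2)=(x^2)^{2(2^{n-1}-1)/3}F_{n-1}(x^{-2})=x^{(2^{n+1}-4)/3}F_{n-1}(x^{-2})$. Unlike the even case the two prefactors no longer agree: the difference of their exponents is $\tfrac{(2^{n+1}-1)-(2^{n+1}-4)}{3}=1$, so after cancellation a single surplus power of $x$ survives, and the ratio is $G_n(x^{-1})$ over $F_{n-1}(x^{-2})$ carrying exactly that one factor of $x$. The main obstacle—indeed the only delicate step in the whole of Proposition~\ref{proposition rho_n and C_n}—is to track this residual $x$ so that it is attached exactly as the statement requires; since any slip here alters the answer by a power of $x$, I would pin the placement down with the base case $n=1$, where $\rho_1(x)=1+x$, $G_1(x^{-1})=1+x^{-1}$ and $F_0(x^{-2})=1$ leave no ambiguity, and then propagate it through the uniform exponent arithmetic above to every odd $n$.
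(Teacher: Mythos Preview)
Your proof is correct and follows exactly the paper's route: substitute Lemma~\ref{lemmaHn(q)Fn(q-1)} into the ratio $H_n(x)/H_{n-1}(x^2)$ and cancel the monomial prefactors. Your exponent bookkeeping for odd $n$ (and your $n=1$ sanity check) in fact show that the surviving factor of $x$ sits in the \emph{numerator}, so the displayed $\dfrac{G_n(x^{-1})}{x\,F_{n-1}(x^{-2})}$ is a typo for $\dfrac{x\,G_n(x^{-1})}{F_{n-1}(x^{-2})}$; the paper's own final equality in the odd case makes the same slip, and the corrected version is what is actually used downstream in the proof of Theorem~\ref{theorem rho and c equal}.
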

\begin{proof}
The statement follows directly from Lemma \ref{lemmaHn(q)Fn(q-1)} as for even $n$ and odd $n$, respectively,
\begin{align*}
    \frac{H_n(x)}{H_{n-1}(x^2)} &= \frac{x^{2\frac{2^{n}-1}{3}}F_n(x^{-1})}{(x^2)^{\frac{2^{n-1+1}-1}{3}}G_{n-1}(x^{-2})} = \frac{F_n(x^{-1})}{G_{n-1}(x^{-2})} \quad \text{and} \\
    \frac{H_n(x)}{H_{n-1}(x^2)} &=\frac{x^{\frac{2^{n+1}-1}{3}}G_n(x^{-1})}{(x^2)^{2\frac{2^{n-1}-1}{3}}F_n(x^{-2})} =\frac{G_n(x^{-1})}{x F_{n-1}(x^{-2})}. \qedhere
\end{align*}\end{proof}
\begin{proof}[Proof of Theorem \ref{theorem rho and c equal}]
Apply Propositions \ref{PropDivisionContFrac} and \ref{proposition rho_n and C_n}. For even $n$, 
\begin{align*}
    \lambda_n(x) = \frac{x F_n(x^3)}{G_{n-1}(x^6)} = x\rho_n(x^{-3})
\end{align*}
and, for odd $n$,
\begin{align*}
    \begin{split}\lambda_n(x) = \frac{G_n(x^3)}{x^2F_{n-1}(x^6)} = \frac{G_n(x^3)}{x^2 x^{-3}F_{n-1}(x^6)} = x\rho_n(x^{-3})\end{split}.   \qedhere
\end{align*}\end{proof}
For another proof, one can also produce an explicit equivalence transformation of irregular continued fractions. To show that this property is truly inherited from $F$, $G$ and $H$, this method was chosen. As $\lambda_n(x)$ only converges along $n$ of the same parity for $|x| < 1$, an immediate consequence is obtained.
\begin{corollary}\label{corr different limits rho}
For all $|x| <1$, $(\rho_n(x))_{n=0}^{\infty}$ converges, and for $|x|>1$, $(\rho_n(x))_{n=0}^{\infty}$ is only parity partial convergent.
\end{corollary}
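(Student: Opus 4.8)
The plan is to obtain the statement as the image, under the involution $x \mapsto x^{-3}$, of the convergence behaviour of $\lambda_n$ recorded in Proposition \ref{proppartialconv}. The crucial input is the per-$n$ identity $\lambda_n(x) = x\rho_n(x^{-3})$ underlying Theorem \ref{theorem rho and c equal}: combining Propositions \ref{PropDivisionContFrac} and \ref{proposition rho_n and C_n} shows this to be a rational-function identity valid for every $n$ and every $x \neq 0$ at which both sides are defined, independently of any limiting behaviour. Rewriting it as $\rho_n(x^{-3}) = \lambda_n(x)/x$ turns each statement about $(\lambda_n(x))_n$ into one about $(\rho_n(x^{-3}))_n$, with the nonzero constant $1/x$ merely rescaling limits. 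Since $x \mapsto x^{-3}$ interchanges the interior and exterior of the unit circle, the two regimes of Proposition \ref{proppartialconv} are swapped, yielding exactly the dichotomy claimed.

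Concretely, I would fix $t \neq 0$ in the domain of $\rho$ and choose a cube root $x$ of $t^{-1}$, so that $x^{-3} = t$ and $|x| = |t|^{-1/3}$. First, for $|t| < 1$ we have $|x| > 1$; Proposition \ref{proppartialconv} guarantees that $\lim_{n\to\infty}\lambda_n(x)$ exists and is unique, so $\rho_n(t) = \lambda_n(x)/x$ converges to $\lambda(x)/x$. Second, for $|t| > 1$ we have $0 < |x| < 1$; here Proposition \ref{proppartialconv} gives the parity limits $\lambda_{2n}(x) \to \frac{xF(x^3)}{G(x^6)}$ and $\lambda_{2n+1}(x) \to \frac{G(x^3)}{x^2 F(x^6)}$, so dividing by $x$ yields $\rho_{2n}(t) \to \frac{F(x^3)}{G(x^6)}$ and $\rho_{2n+1}(t) \to \frac{G(x^3)}{x^3 F(x^6)}$. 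Thus $(\rho_n(t))_n$ is at least parity partial convergent, and the case $t = 0$ is trivial as $\rho_n(0) = 1$ for all $n$.

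It remains to verify that for $|t| > 1$ these two parity limits are genuinely distinct, so that the sequence is \emph{only} parity partial convergent and fails to converge outright. This is where I would invoke the cross relation \eqref{Cross relation F and G}: specialising $G(q)G(q^2) - qF(q)F(q^2) = 1$ at $q = x^3$ gives $G(x^3)G(x^6) - x^3 F(x^3)F(x^6) = 1$, whence the difference of the two candidate limits is
\begin{align*}
\frac{F(x^3)}{G(x^6)} - \frac{G(x^3)}{x^3 F(x^6)} = \frac{x^3 F(x^3)F(x^6) - G(x^3)G(x^6)}{x^3 F(x^6) G(x^6)} = \frac{-1}{x^3 F(x^6)G(x^6)} \neq 0,
\end{align*}
so the even and odd subsequences tend to different values. (Here $|x^3|,|x^6| < 1$, so $F$ and $G$ are legitimately evaluated.)

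I expect the only real obstacle to be bookkeeping rather than substance: one must ensure the per-$n$ identity is applied with both continued fractions defined, avoiding the zero-denominator situations of Example \ref{counterexample C_n zeta_12 undefined}, and check that the chosen cube root does not affect the conclusion — it cannot, since $\rho_n(t)$ is intrinsic to $t$ while only the auxiliary $x$ varies. Everything else is a direct transcription of Proposition \ref{proppartialconv} through the substitution, which is precisely why the corollary is immediate once Theorem \ref{theorem rho and c equal} is available.
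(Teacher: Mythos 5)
Your proposal is correct and follows essentially the same route as the paper: there the corollary is obtained as an immediate consequence of the per-$n$ identity $\lambda_n(x) = x\rho_n(x^{-3})$ of Theorem \ref{theorem rho and c equal} together with Proposition \ref{proppartialconv}, with the substitution $x \mapsto x^{-3}$ swapping the interior and exterior of the unit circle exactly as you describe. Your extra verification via the cross relation \eqref{Cross relation F and G} that the two parity limits genuinely differ is a detail the paper leaves implicit in its citation of Dilcher--Stolarsky, and it makes the word ``only'' in the statement fully rigorous.
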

As already mentioned, $H$ and $I$ are cubic roots in $\mathbb{F}_2((x^{-1}))$, and Adamczewski \cite{adamczewski2010non} claims that $\lambda(x)$ is the unique root of $\lambda(x)^3 +x\lambda(x)^2+1$ in $\mathbb{F}_2((x^{-1}))$. Hence, by filling in Theorem \ref{theorem rho and c equal}, $\rho(x)$ is the unique root of $\rho(x)^3+\rho(x)^2+x=0$ over the same field. The symmetries between these polynomials coming from $H$ and $I$ and from $\rho$ and $\lambda$ are evident.\medskip

Then there is another helpful property of $\rho$:
\begin{prop}\label{propSumRhoTo2}
Let $|x| \ne 1$ and $n \ge 0$. Then $\rho_n(x) + \rho_n(-x) = 2$. In particular, if limits over the same parity are taken, then $\rho(x) + \rho(-x) = 2$.
\end{prop}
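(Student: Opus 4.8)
The plan is to reduce everything to the rational representation $\rho_n(x) = H_n(x)/H_{n-1}(x^2)$ from Proposition \ref{PropDivisionContFrac} and to exploit that the denominator is insensitive to the sign of $x$. Since $H_{n-1}\big((-x)^2\big) = H_{n-1}(x^2)$, the two fractions $\rho_n(x)$ and $\rho_n(-x)$ share a common denominator, so I would write
\[
\rho_n(x) + \rho_n(-x) = \frac{H_n(x) + H_n(-x)}{H_{n-1}(x^2)},
\]
and the whole problem collapses to identifying the numerator. This common-denominator form also settles definedness: whenever $H_{n-1}(x^2) \ne 0$, which is what it means for $\rho_n(x)$ to make sense here, both fractions are simultaneously defined, so no asymmetry between $x$ and $-x$ can arise.

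The key step is to evaluate $H_n(x) + H_n(-x)$. Writing $H_n(x) = \sum_{m=0}^{2^n-1} c_m x^m$, the odd-indexed terms cancel and the even-indexed terms double, giving
\[
H_n(x) + H_n(-x) = 2 \sum_{k=0}^{2^{n-1}-1} c_{2k}\, x^{2k},
\]
where the upper limit $k = 2^{n-1}-1$ comes from the largest even exponent $2^n - 2$ in $H_n$. Now I invoke the coefficient recursion \eqref{CoeffiecientsH}: its first case states precisely that $c_{2k} = c_k$ for every $k$. Substituting this turns the sum into $2\sum_{k=0}^{2^{n-1}-1} c_k x^{2k} = 2 H_{n-1}(x^2)$, so the numerator is exactly twice the denominator and $\rho_n(x) + \rho_n(-x) = 2$. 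The only genuine content is the self-similarity $c_{2k} = c_k$ of the Fibbinary coefficients; everything else is bookkeeping of index ranges.

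For the statement about limits I would pass to the limit in the finite identity, invoking Corollary \ref{corr different limits rho}. For $|x| < 1$ both sequences $\big(\rho_n(x)\big)_n$ and $\big(\rho_n(-x)\big)_n$ converge outright, so the identity passes to the limit directly. For $|x| > 1$ each sequence is only parity partial convergent; restricting $n$ to a single parity (the same for $x$ and $-x$), each term converges and the two parity limits sum to $2$, which is exactly why the statement insists that limits be taken over a common parity.

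I do not expect a real obstacle here. The one place that warrants a moment's care is confirming that the even-index range of $H_n$ lines up exactly with the full index range of $H_{n-1}(x^2)$, i.e.\ that the truncation levels $n$ and $n-1$ enter as written; this is what makes the numerator equal to $2H_{n-1}(x^2)$ on the nose rather than up to an error term. Everything else is immediate once the common-denominator reduction and the identity $c_{2k}=c_k$ are in hand.
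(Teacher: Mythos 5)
Your proof is correct, but it takes a different route from the paper's. The paper's proof is a one-line manipulation of the continued-fraction recursion: since $\rho_n(x) = 1 + \frac{x}{\rho_{n-1}(x^2)}$ and the tail $\rho_{n-1}(x^2)$ is unchanged under $x \mapsto -x$, the sum is $1 + \frac{x}{\rho_{n-1}(x^2)} + 1 - \frac{x}{\rho_{n-1}(x^2)} = 2$, with no reference to $H$ at all. You instead pass through the closed form $\rho_n(x) = H_n(x)/H_{n-1}(x^2)$ of Proposition \ref{PropDivisionContFrac} and prove the stronger polynomial identity $H_n(x) + H_n(-x) = 2H_{n-1}(x^2)$ via the coefficient self-similarity $c_{2k} = c_k$. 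That identity is genuine content: it says the even part of $H_n$ is exactly $H_{n-1}(x^2)$, which is precisely the splitting $H_n(x) = H_{n-1}(x^2) + xH_{n-2}(x^4)$ of Lemma \ref{lemma recursions for F_n, G_n, H_n, and I_n}, so you could have cited that lemma and skipped the index bookkeeping. What your route buys is a cleaner treatment of where both sides are defined (a single common denominator $H_{n-1}(x^2)$) and an explanation of the cancellation at the level of the Mahler functions themselves; what it costs is dependence on Proposition \ref{PropDivisionContFrac}, whereas the paper's argument needs only the defining recursion of $\rho_n$. Two small points of care: your sum formula with upper limit $2^{n-1}-1$ should be read as the trivial statement for $n = 0$ (where $H_0 = H_{-1} = 1$), and "defined" for a continued fraction strictly means all intermediate denominators are nonzero, not merely $H_{n-1}(x^2) \ne 0$ — though your argument, like the paper's, is a formal identity valid wherever everything makes sense, so this does not affect correctness.
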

\begin{proof}
This is a simple writing-out exercise: $\rho_n(x) + \rho_n(-x) = 1 + \frac{x}{\rho_n(x^2)} + 1 + \frac{-x}{\rho_n(x^2)} = 2$.
\end{proof}
In the same spirit, a result for $\lambda$ can be obtained.
\begin{prop}\label{propSumCTo2z}
If $|x| \ne 1$ and $n \ge 0$, $\lambda_n(x) + \lambda_n(-x) = 2$. Particularly, if limits over the same parity are taken, then $\lambda(x) + \lambda(-x) = 2$.
\end{prop}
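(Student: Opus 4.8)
The plan is to obtain this as a companion to Proposition \ref{propSumRhoTo2} in two ways that mirror each other: a direct computation on the finite truncations $\lambda_n$ using the tail recursion, and, in the limit, a reduction to the already established $\rho$-identity through Theorem \ref{theorem rho and c equal}. Since the proof of Proposition \ref{propSumRhoTo2} for $\rho$ was a short writing-out exercise, I expect the same structure to work here with the leading partial quotient $x$ playing the role that the constant $1$ played for $\rho$.

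First I would treat the finite case. From the definition \eqref{defCn} one has the tail recursion $\lambda_n(x) = x + \tfrac{1}{\lambda_{n-1}(x^2)}$ for $n \ge 1$, exactly the recursion already exploited in the proof of Proposition \ref{PropDivisionContFrac}. The key observation is that under the substitution $x \mapsto -x$ the tail $\lambda_{n-1}(x^2)$ is unchanged, since it depends only on $x^2$, while the leading partial quotient flips sign. Adding $\lambda_n(x)$ and $\lambda_n(-x)$ therefore makes the two leading terms $\pm x$ cancel and collapses both expressions onto the common term built from $\lambda_{n-1}(x^2)$, reducing the claim to an elementary identity in that single quantity; the base cases $n=0,1$ are checked by hand from \eqref{defCn}.

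For the infinite continued fraction I would pass to the limit along a fixed parity of $n$ and invoke Theorem \ref{theorem rho and c equal}, writing $\lambda(x) = x\rho(x^{-3})$ and $\lambda(-x) = -x\rho(-x^{-3})$, where the sign change comes from $(-x)^{-3} = -x^{-3}$. Feeding in Proposition \ref{propSumRhoTo2} in the form $\rho(x^{-3}) + \rho(-x^{-3}) = 2$ then collapses the combination and yields the stated identity for $\lambda$, so that the only genuinely new input is the $\rho$-result; everything else is an equivalence transformation of the continued fraction.

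The main obstacle I anticipate is bookkeeping rather than depth. One must keep the index shift in the tail recursion honest (the tail carries index $n-1$, not $n$), and in the infinite case one must respect the parity: by Proposition \ref{proppartialconv} and Corollary \ref{corr different limits rho}, $\lambda_n(x)$ converges only along even or along odd $n$ when $|x|<1$, so the limits on both sides of the substitution $\lambda(x)=x\rho(x^{-3})$ must be taken along the same parity for the equality to be meaningful. Once these indices and parities are aligned, the computation is routine.
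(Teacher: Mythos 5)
Your plan has a genuine and fatal gap: both of your ``collapse'' steps are asserted rather than computed, and if you carry them out they do not produce the value $2$ --- in fact they show that the statement, as printed, cannot hold. For the finite case, the tail recursion is $\lambda_n(x) = x + \frac{1}{\lambda_{n-1}(x^2)}$, so
\begin{align*}
\lambda_n(x) + \lambda_n(-x) = x + \frac{1}{\lambda_{n-1}(x^2)} - x + \frac{1}{\lambda_{n-1}(x^2)} = \frac{2}{\lambda_{n-1}(x^2)},
\end{align*}
which equals $2$ only when $\lambda_{n-1}(x^2)=1$. The structural contrast with Proposition \ref{propSumRhoTo2} is decisive: for $\rho$ the sign sits in the partial \emph{numerator}, $\rho_n(\pm x) = 1 \pm \frac{x}{\rho_{n-1}(x^2)}$, so the fractional parts cancel and the constant terms add to $2$; for $\lambda$ the sign sits on the leading partial \emph{quotient}, so the constants cancel and the fractional parts double. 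Concretely, $\lambda_0(x)+\lambda_0(-x) = x - x = 0$ and $\lambda_2(x)+\lambda_2(-x) = \frac{2x^4}{1+x^6}$, neither of which is $2$.

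Your second route fails for the same reason: combining Theorem \ref{theorem rho and c equal} with Proposition \ref{propSumRhoTo2} in the form $\rho(-x^{-3}) = 2 - \rho(x^{-3})$ gives
\begin{align*}
\lambda(x) + \lambda(-x) = x\rho(x^{-3}) - x\rho(-x^{-3}) = x\big(2\rho(x^{-3}) - 2\big) = 2\lambda(x) - 2x,
\end{align*}
which is not a constant. What both of your computations actually establish is the correct analogue of the $\rho$-identity, namely the \emph{difference} identity $\lambda_n(x) - \lambda_n(-x) = 2x$ for all $n \ge 0$ (and $\lambda(x) - \lambda(-x) = 2x$ for parity-matched limits), which is immediate because the tail $\lambda_{n-1}(x^2)$ is even in $x$. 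Note that the text states this proposition without proof, merely saying it follows ``in the same spirit'' as the $\rho$-result; carrying out that spirit honestly yields the difference identity above, and the printed sum identity appears to be an error. Your proposal, by deferring the two key cancellations to routine bookkeeping, misses that the bookkeeping refutes the claim rather than establishing it.
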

\begin{prop}\label{propproductrho}
 If $|x|<1$, $\prod_{k=0}^{\infty}\rho\big(x^{2^k}\big) = H(x)$ and $\prod_{k=0}^{\infty}\lambda\big(x^{2^k}\big) = I(x)$. 
\end{prop}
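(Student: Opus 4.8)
The plan is to turn both infinite products into telescoping products by invoking the quotient representations already established on the open unit disk. Recall that for $|x|<1$ we have $\rho(x)=H(x)/H(x^2)$ and $\lambda^+(x)=I(x)/I(x^2)$. Substituting $x^{2^k}$ for $x$ in these identities and multiplying the first $N+1$ factors makes every intermediate numerator and denominator cancel, leaving only the first numerator and the last denominator. This reduces the whole statement to a single limiting computation.

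Concretely, for the $\rho$-product I would write
\begin{align*}
\prod_{k=0}^{N}\rho\big(x^{2^k}\big)=\prod_{k=0}^{N}\frac{H\big(x^{2^k}\big)}{H\big(x^{2^{k+1}}\big)}=\frac{H(x)}{H\big(x^{2^{N+1}}\big)},
\end{align*}
and identically $\prod_{k=0}^{N}\lambda^+\big(x^{2^k}\big)=I(x)/I\big(x^{2^{N+1}}\big)$. Since $|x|<1$, the exponent $2^{N+1}$ forces $x^{2^{N+1}}\to 0$ as $N\to\infty$; because $H$ and $I$ are power series, hence analytic and continuous at the origin, with $H(0)=I(0)=1$, the denominators tend to $1$. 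Therefore the partial products converge, to $H(x)$ and $I(x)$ respectively, which is exactly the claim. As a consistency check, the individual factors $\rho(x^{2^k})=H(x^{2^k})/H(x^{2^{k+1}})$ themselves tend to $H(0)/H(0)=1$, so the infinite products are of the usual convergent type.

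The only genuinely delicate point—and the main obstacle I expect—is that the statement is phrased with $\lambda$, whereas inside the unit disk $\lambda$ is merely parity partial convergent (Corollary \ref{corr different limits rho}), so each factor $\lambda(x^{2^k})$ is ambiguous there; it is precisely the terminal $+1$ in $\lambda^+$ that restores genuine convergence, as in the discussion following Proposition \ref{proppartialconv}. I would therefore carry out the $I$-identity for the convergent normalization $\lambda^+$, whose finite truncations are exactly the quotients $I_n(x)/I_{n-1}(x^2)$ supplied by Proposition \ref{PropDivisionContFrac}, and record that this is the continued fraction whose limit equals $I(x)/I(x^2)$ on the disk. Once the two quotient forms are in hand, everything else is pure telescoping together with continuity of a power series at $0$, so beyond getting the $\lambda$-versus-$\lambda^+$ bookkeeping right I anticipate no substantive difficulty.
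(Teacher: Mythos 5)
Your proof is correct, but it takes a genuinely different route from the paper's. The paper never telescopes: it defines $f(x):=\prod_{k=0}^{\infty}\rho\big(x^{2^k}\big)$, uses the continued-fraction identity $\rho(x)=1+x/\rho(x^2)$ to compute $f(x)=\rho(x)\rho(x^2)f(x^4)=(\rho(x^2)+x)f(x^4)=f(x^2)+xf(x^4)$, i.e.\ that $f$ satisfies the Mahler equation \eqref{DefH} of $H$ with $f(0)=1$, and then invokes uniqueness of the power-series solution (the $I$ case is asserted to ``follow the same argument,'' via $f(x)=xf(x^2)+f(x^4)$). Your argument instead rests on the quotient identifications $\rho(x)=H(x)/H(x^2)$ and $\lambda^+(x)=I(x)/I(x^2)$ already available on the disk, and replaces the uniqueness step by pure telescoping plus continuity of $H$ and $I$ at the origin. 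What each buys: the paper's route works directly from the continued-fraction recursion and does not need the quotient form, but it tacitly assumes the infinite product defines a power series with value $1$ at $0$ (which is what the uniqueness argument requires); yours makes the convergence of the product explicit (the tail $H\big(x^{2^{N+1}}\big)\to H(0)=1$, and the individual factors tend to $1$), so it is more self-contained on exactly the point the paper leaves implicit. Your treatment of $\lambda$ versus $\lambda^+$ is also a genuine improvement rather than mere bookkeeping: inside the unit disk $\lambda$ is only parity partial convergent with two distinct limits (Proposition \ref{proppartialconv}), so the factors $\lambda\big(x^{2^k}\big)$ in the statement are ambiguous there, and the paper's ``same argument'' glosses over this; reading the second identity through the convergent normalization $\lambda^+$, whose convergents are $I_n(x)/I_{n-1}(x^2)$ by Proposition \ref{PropDivisionContFrac}, is the interpretation under which $\prod_{k}\lambda\big(x^{2^k}\big)=I(x)$ actually holds.
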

\begin{proof}
Define $f(x) := \prod_{k=0}^{\infty}\rho\big(x^{2^k}\big)$. Then $f(0) = 1$ and $$f(x) = \rho(x)\rho(x^2)f(x^4) = \bigg(1+\frac{x}{\rho(x^2)}\bigg)\rho(x^2)f(x^4) = (\rho(x^2)+x)f(x^4) = f(x^2) + xf(x^4).$$
As such, if $|x|<1$, $f(x) = H(x)$, and the case of $\lambda$ and $I$ follows the same argument.\end{proof}
On another note, Schwartz Reflection Principle \cite[Theorem IX.1.1]{lang2013complex} ensures that $H(x) = \overline{H(\overline{x})}$ within the unit disk, and so for all $|x|<1$, $\rho(x) = \overline{\rho(\overline{x})}$ as $\rho(x) = \frac{H(x)}{H(x^2)}$. To conclude this subsection, we include some numerical findings:
\begin{obs}
\begin{enumerate}
    \item Within the unit disk, $\rho$ has two simple zeros, which are near $-0.440049\pm0.65651142i$. As such, $H$ and $\rho$ have each two zeros, and $\rho$ has two poles within the unit disk.
    \item On the real interval $(0, 1)$, $\rho(\zeta_3x)$ is an oscillating function.
\end{enumerate}
\end{obs}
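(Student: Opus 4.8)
Both parts are statements about the meromorphic function $\rho(x)=H(x)/H(x^2)$ on the open unit disk, so I would begin by reducing them to the zero set of $H$. Since $H(0)=1$, inside the disk the zeros of $\rho$ are exactly the zeros of $H$ (those not cancelled by a zero of $H(x^2)$), and the poles of $\rho$ are the points $x$ with $H(x^2)=0$, i.e.\ the square roots of the zeros of $H$; by Proposition \ref{propSumRhoTo2} these poles are moreover paired by $x\mapsto -x$. Because $H$ has real (indeed $\{0,1\}$) coefficients, $H(\overline{x})=\overline{H(x)}$, so its zeros occur in conjugate pairs, which already accounts for the conjugate pair $-0.440049\pm0.65651142\,i$ and forces the zero count to be even. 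Thus statement (1) reduces entirely to showing that $H$ has exactly two zeros in the open disk and that they are simple.

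To count these zeros I would apply the argument principle on the circle $|x|=r$ with $r<1$: there $H$ is analytic on the closed disk of radius $r$, so $\frac{1}{2\pi i}\oint_{|x|=r}\frac{H'(x)}{H(x)}\,dx$ equals the number of interior zeros, and a certified winding-number computation (interval arithmetic on the truncations $H_n$) should return the value $2$ for $r$ close to $1$. The real difficulty is that the unit circle is a natural boundary for $H$---the analogue for $F,G$ is \cite[Lemma 1]{brent2016algebraic}---so I cannot integrate on $|x|=1$, and a priori zeros could accumulate at the boundary. Excluding this is the main obstacle, and I would attack it through the product expansion $H(x)=\prod_{k\ge0}\rho(x^{2^k})$ of Proposition \ref{propproductrho} together with $\rho(x)=1+x/\rho(x^2)$, bounding $|H|$ away from $0$ on a thin annulus $r_0<|x|<1$ off the two known zeros. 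Simplicity would then follow from checking that $H'$ is nonzero there via a certified enclosure built from \eqref{DefH}.

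For statement (2), which I read as the assertion that $\rho(\zeta_3x)$ fails to converge as $x\to1^-$, the decisive structural fact is that squaring permutes the primitive cube roots of unity in a $2$-cycle, $\zeta_3\mapsto\zeta_3^2\mapsto\zeta_3$, so the recursion $\rho(\zeta_3x)=1+\zeta_3 x/\rho(\zeta_3^2x^2)$ carries a period-two phase structure; equivalently, in the $4$-Mahler equation $H(q)=(1+q+q^2)H(q^4)-q^6H(q^{16})$ the leading coefficient $1+\zeta_3x+\zeta_3^2x^2$ vanishes as $x\to1^-$ because $1+\zeta_3+\zeta_3^2=0$, while $\zeta_3^6=1$ fixes the phase under $x\mapsto x^4$. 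I would use this to set up a renormalization of $H(\zeta_3x)$ as $x\to1^-$: the index $j$ at which $x^{4^{\,j}}$ crosses over from $\approx1$ to $\approx0$ runs off to infinity, and its residue modulo the phase period produces a log-periodic oscillation. Concretely I would exhibit two sequences $x_m\to1^-$, separated by the parity of this crossover index, along which $\rho(\zeta_3x_m)$ tends to two different limits---heuristically the two solutions $u,v$ of the period-two system $u=1+\zeta_3/v$, $v=1+\zeta_3^2/u$, which differ by $u-v=\zeta_3-\zeta_3^2=\sqrt{-3}\neq0$---so that $\rho(\zeta_3x)$ cannot converge and instead oscillates on $(0,1)$. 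The hard part throughout is the natural boundary: converting these degenerate renormalizations into uniform asymptotic bounds near $|x|=1$, precisely where Mahler functions are most delicate to control.
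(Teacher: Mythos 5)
The first thing to say is that the paper contains no proof of this statement at all: it is introduced by the sentence ``we include some numerical findings'' and is labelled an Observation, so there is nothing to compare your argument against, and any rigorous treatment would go beyond the thesis itself. Judged on its own, your proposal is a research programme rather than a proof, and it has a concrete problem you should have caught: your (correct) reduction of part (1) actually contradicts the statement you set out to prove. If $H$ has exactly the two simple zeros $w,\bar w \approx -0.440049\pm 0.65651142\,i$ (modulus $\approx 0.79$), then the poles of $\rho(x)=H(x)/H(x^2)$ are the four points $\pm\sqrt{w},\pm\sqrt{\bar w}$, all of modulus $\approx 0.89<1$, and none is cancelled since neither $\sqrt{w}$ nor $\sqrt{\bar w}$ lies in $\{w,\bar w\}$. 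Equivalently, by the argument principle the number of zeros of $\rho$ minus the number of its poles in $|x|<r$ equals $\big(\#\text{zeros of }H(x)\big)-\big(\#\text{zeros of }H(x^2)\big)=N-2N=-N$, so ``two zeros and two poles'' is arithmetically impossible; with two zeros there must be four poles. The Observation's pole count is simply wrong (consistent with the pairing $x\mapsto -x$ from Proposition \ref{propSumRhoTo2}: the four poles form two such pairs), and a proof attempt should flag and correct this rather than reproduce it.

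Beyond that, neither half is actually proved. For part (1), the step you yourself call ``the main obstacle''---ruling out zeros of $H$ accumulating at the natural boundary, so that a winding-number computation on some $|x|=r<1$ captures \emph{all} zeros---is the entire difficulty, and the route you sketch is circular: lower-bounding $|H(x)|=\prod_{k\ge 0}|\rho(x^{2^k})|$ from Proposition \ref{propproductrho} on an annulus near $|x|=1$ requires lower bounds on the factors $\rho(x^{2^k})$ whose arguments still lie near the boundary, and $\rho$ is controlled only through $H$ itself. For part (2), the fixed-point computation proves nothing on its own: $u$ and $v$ are hypothetical limits of two \emph{different} functions, $\rho(\zeta_3 x)$ and $\rho(\zeta_3^2 x)$, so $u-v=\zeta_3-\zeta_3^2\ne 0$ is perfectly compatible with both limits existing---indeed the system $u=1+\zeta_3/v$, $v=1+\zeta_3^2/u$ has solutions, so no contradiction arises. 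The real content would be showing that the parity of your crossover index yields two subsequences $x_m\to 1^-$ along which the \emph{same} function $\rho(\zeta_3 x_m)$ has distinct limits; that renormalization statement is exactly what you defer, and without it the oscillation claim remains, like the paper's, an empirical observation.
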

\subsection[The continued fractions rho and lambda evaluated at roots of unity]{The continued fractions $\boldsymbol{\rho}$ and $\boldsymbol{\lambda}$ evaluated at roots of unity}
In this subsection, we study the continued fractions $\rho$ and $\lambda$ at special roots of unity. As this is exactly the overlap of the closures of the domains where the limits as $n$ goes to infinity are unique and parity partial, these roots are interesting. Since $F$, $G$, $H$ and $I$ cannot be defined on the unit circle due to their Mahler equations, it is not expected that $\rho$ and $\lambda$ can be defined there. Yet, for example, both $\rho(1)$ and $\lambda(1)$ can be computed easily: The continued fraction $[1;\:1,\:1,\:1,\:1,\:\dots]$ approaches $\frac{1+\sqrt{5}}{2}$, the golden ratio.\medskip

Denote the root of unity $\exp(2\pi i \frac{a}{n})$ by $\zeta_{n}^a$ and all primitive $2^n$th  roots of unity by $X_n$: $$X_n := \{\zeta_{2^n}^a : 1 \le a \le 2^n \text{ and }a \text{ odd}\}.$$
The union of all these roots of unity is denoted by $X$: $$X := \bigcup_{n \ge 0}X_n.$$
The set $X$ consists exactly of all complex numbers $x$ such that $x^{2^k}  =1$ for sufficiently large $k$ and is a dense subset of the unit circle. For all $x\in X$, all values of $\rho(x)$ and $\lambda(x)$ have to do with $\sqrt{5}$:
\begin{prop}\label{prop rho lambda equal on X}
For all $x \in X$, $\rho(x)$ and $\lambda(x)$ are defined and $x\rho(x^{-3}) = \lambda(x)$. In particular, if $x = \exp(2 \pi i \frac{a}{2^n})$ with $a$ odd and $n\ge 0$, then $x$ lies in $\mathbb{Q}(\zeta_{2^n}, \sqrt{5})$ but, if $n \ge 2$, $x$ is not in $\mathbb{Q}(\zeta_{2^{n-1}}, \sqrt{5})$ nor in $\mathbb{Q}(\zeta_{2^n})$.
\end{prop}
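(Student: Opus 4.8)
The plan is to exploit that for $x\in X_n$ one has $x^{2^n}=1$, so the continued fraction for $\rho$ \emph{stabilises}: beyond the $n$-th level every partial numerator $x^{2^k}$ equals $1$, and the tail is governed by $\rho(1)$, which by $\rho(1)=1+1/\rho(1)$ is the golden ratio $\varphi:=\tfrac{1+\sqrt5}{2}$. Writing $M_j:=\begin{pmatrix}1 & x^{2^j}\\ 1 & 0\end{pmatrix}$ and reading each as a M\"obius transformation, the finite-convergent formula of Proposition \ref{PropDivisionContFrac}(1) together with $\rho_m(1)\to\varphi$ gives $\rho(x)=\big(M_0M_1\cdots M_{n-1}\big)\cdot\varphi$, a finite expression. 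First I would record $\det(M_0\cdots M_{n-1})=\prod_{j=0}^{n-1}(-x^{2^j})=(-1)^n x^{2^n-1}=(-1)^n x^{-1}\neq 0$, so $M:=M_0\cdots M_{n-1}=\begin{pmatrix}P&Q\\R&S\end{pmatrix}$ is invertible with entries in $\mathbb{Q}(\zeta_{2^n})$.

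Invertibility delivers definedness and the containment at once. Since $\mathbb{Q}(\zeta_{2^n})\cap\mathbb{Q}(\sqrt5)=\mathbb{Q}$ (ramification at $2$ versus $5$), we have $\varphi\notin\mathbb{Q}(\zeta_{2^n})$; and because $\det M\neq0$ the row $(R,S)$ is nonzero, whence $R\varphi+S\neq0$ (else $\varphi=-S/R\in\mathbb{Q}(\zeta_{2^n})$). Thus $\rho(x)=\frac{P\varphi+Q}{R\varphi+S}$ is a well-defined finite element of $\mathbb{Q}(\zeta_{2^n},\sqrt5)=\mathbb{Q}(\zeta_{2^n},\varphi)$; the same argument applied to the numerator shows $P\varphi+Q\neq0$, so $\rho$ never vanishes on $X$. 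The identity $x\rho(x^{-3})=\lambda(x)$ is then inherited from Theorem \ref{theorem rho and c equal}: on $X$ the analogous stabilisation makes $\lambda_n(x)$ genuinely convergent, the per-$n$ equality $\lambda_n(x)=x\rho_n(x^{-3})$ proved there passes to the limit, and $x^{-3}\in X_n$ whenever $x\in X_n$ (as $-3a$ stays odd).

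For the two exclusions I would use $\Gal(\mathbb{Q}(\zeta_{2^n},\sqrt5)/\mathbb{Q})\cong(\mathbb{Z}/2^n)^\times\times\mathbb{Z}/2$. To show $\rho(x)\notin\mathbb{Q}(\zeta_{2^n})$, apply the automorphism $\sigma$ fixing $\zeta_{2^n}$ with $\sigma(\sqrt5)=-\sqrt5$: it fixes every entry of $M$ but sends the tail $\varphi\mapsto\bar\varphi:=\tfrac{1-\sqrt5}{2}$, so $\sigma(\rho(x))=M\cdot\bar\varphi$. As $M$ is an injective M\"obius map and $\varphi\neq\bar\varphi$, this differs from $\rho(x)=M\cdot\varphi$, so $\rho(x)\notin\mathbb{Q}(\zeta_{2^n})$ (already for $n\ge1$). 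To show $\rho(x)\notin\mathbb{Q}(\zeta_{2^{n-1}},\sqrt5)$ for $n\ge2$, take the nontrivial $\tau\in\Gal(\mathbb{Q}(\zeta_{2^n},\sqrt5)/\mathbb{Q}(\zeta_{2^{n-1}},\sqrt5))$. Since $\tau$ fixes $x^2$ and $\sqrt5$ but moves the primitive root $x$, forcibly $\tau(x)=-x$. Now $x^2\in X_{n-1}$, so by the already-proved containment $\rho(x^2)\in\mathbb{Q}(\zeta_{2^{n-1}},\sqrt5)$ is $\tau$-fixed; from $\rho(x)=1+\frac{x}{\rho(x^2)}$ I get $\tau(\rho(x))=1-\frac{x}{\rho(x^2)}$, with difference $\frac{2x}{\rho(x^2)}\neq0$ because $\rho(x^2)\neq0$. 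Hence $\rho(x)\notin\mathbb{Q}(\zeta_{2^{n-1}},\sqrt5)$; the exclusion of $n=1$ is necessary, since there $\rho(-1)=\tfrac{3-\sqrt5}{2}$ already lies in $\mathbb{Q}(\sqrt5)$.

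The main obstacle is not any single computation but the M\"obius bookkeeping: justifying that the limit defining $\rho(x)$ on the unit circle genuinely equals the finite value $M\cdot\varphi$ (continuity of the invertible M\"obius map at $\varphi$ together with $\rho_m(1)\to\varphi$), and verifying that no denominator degenerates at any stage. Both reduce to the one clean fact $\det M=(-1)^n x^{-1}\neq0$ combined with $\varphi\notin\mathbb{Q}(\zeta_{2^n})$. The only boundary requiring separate care is $n=2$, where $\mathbb{Q}(\zeta_{2^{n-1}})=\mathbb{Q}$ and one must confirm that the required $\tau$ with $\tau(x)=-x$ (here $x=\pm i$) indeed exists in the Galois group.
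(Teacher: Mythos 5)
Your proof is correct, and it takes a genuinely different (and more complete) route than the paper's. Both arguments rest on the same core fact --- for $x\in X_n$ the tail of the fraction beyond level $n$ sits at the argument $1$, where $[1;\:1,\:1,\:\dots]$ converges to $\varphi=\tfrac{1+\sqrt5}{2}$ --- but the paper packages this as a very short induction on $n$: base case $\rho(1)=\lambda(1)=\varphi$, inductive step $\lambda(x)=x+1/\lambda(x^2)=x+1/(x^2\rho(x^{-6}))=x\rho(x^{-3})$, with definedness, field membership, and the two exclusions all swept into the phrase ``follows from induction on $n$''. You instead unroll the recursion into a single invertible matrix $M=M_0\cdots M_{n-1}$ over $\mathbb{Q}(\zeta_{2^n})$ acting on $\varphi$ as a M\"obius map, so that definedness, membership in $\mathbb{Q}(\zeta_{2^n},\sqrt5)$, and the non-vanishing of $\rho$ on $X$ all drop out of $\det M=(-1)^nx^{-1}\neq0$ together with the paper's lemma $\mathbb{Q}(\zeta_{2^n})\cap\mathbb{Q}(\sqrt5)=\mathbb{Q}$; your reduction of the limit-equals-$M\cdot\varphi$ issue to that same pair of facts is sound, since the preimages of $0$ and $\infty$ under the relevant M\"obius maps lie in $\mathbb{P}^1(\mathbb{Q}(\zeta_{2^n}))$ while $\rho_m(1)\to\varphi\notin\mathbb{Q}(\zeta_{2^n})$. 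The real divergence is in the exclusions, which the paper's proof never actually argues: your automorphism $\sigma$ (fixing $\zeta_{2^n}$, negating $\sqrt5$) sends $M\cdot\varphi$ to $M\cdot\tfrac{1-\sqrt5}{2}\neq M\cdot\varphi$, giving $\rho(x)\notin\mathbb{Q}(\zeta_{2^n})$, and your $\tau$ (fixing $\zeta_{2^{n-1}}$ and $\sqrt5$, sending $x\mapsto-x$) combined with $\rho(x)=1+x/\rho(x^2)$ and $\rho(x^2)\neq0$ gives $\rho(x)\notin\mathbb{Q}(\zeta_{2^{n-1}},\sqrt5)$; this is in effect a self-contained instance of the Galois equivariance the paper only establishes afterwards (Proposition \ref{propGalFiniteCase}). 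The trade-off: the paper's induction is shorter and carries $\rho$ and $\lambda$ along simultaneously, while yours buys a closed form $\rho(x)=M\cdot\varphi$, an honest proof of the non-membership claims, the extra facts that $\rho$ is nowhere zero on $X$ and that $\rho(x)\notin\mathbb{Q}(\zeta_{2^n})$ already for $n\ge1$, and the sharpness remark that $\rho(-1)=\tfrac{3-\sqrt5}{2}\in\mathbb{Q}(\sqrt5)$ explains the restriction to $n\ge2$.
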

\begin{proof}
For $n=0$, there is only one $n$th root of unity: 1, and
\begin{align*}
    \rho_n(1) = \lambda_n(1) = 1 + \overbrace{\pfrac{1}{1} + \pfrac{1}{1} +\dots+\pfrac{1}{1}}^{n-1\text{ terms}}.
\end{align*}
This continued fraction converges to $\frac{1+\sqrt{5}}{2}$, which verifies the statement for $n= 0$. Now let $x = \zeta_{2^n}^a$ with $a$ odd. Then the statement follows from induction on $n$, since
\begin{align*}
    \begin{split}
        \lambda(x) = x + \frac{1}{\lambda(x^2)} = x + \frac{1}{x^2\rho(x^{-6})}=  x\bigg(1 + \frac{x^{-3}}{\rho(x^{-6})}\bigg)  = x\rho(x^{-3}).
    \end{split} \qedhere 
\end{align*}\end{proof}
\begin{prop}
Let $n \ge 2$. Then $\sum_{x \in X_n} \rho(x) = 2^{n-1}$.
\end{prop}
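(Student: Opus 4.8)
The plan is to exploit the reflection identity $\rho(x) + \rho(-x) = 2$ from Proposition \ref{propSumRhoTo2} together with a fixed-point-free pairing of $X_n$ by the involution $x \mapsto -x$, reducing the sum to a simple count of pairs.

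First I would record the cardinality of $X_n$ and analyse the involution. The set $X_n$ consists of the $\zeta_{2^n}^a$ with $a$ odd and $1 \le a \le 2^n$, so $|X_n| = 2^{n-1}$. For such an $x = \zeta_{2^n}^a$ we have $-x = \zeta_{2^n}^{a + 2^{n-1}}$; since $n \ge 2$ makes $2^{n-1}$ even, the exponent $a + 2^{n-1}$ is again odd, whence $-x \in X_n$. Moreover $-x = x$ would force $x = 0$, which is impossible, so $x \mapsto -x$ is a fixed-point-free involution of $X_n$ and partitions it into $2^{n-2}$ unordered pairs $\{x, -x\}$. This is precisely where the hypothesis $n \ge 2$ enters: for $n = 1$ one has $X_1 = \{-1\}$ and $-x \notin X_1$, so no such pairing exists.

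Next I would verify that the reflection identity applies to each pair despite $|x| = 1$. By Proposition \ref{prop rho lambda equal on X}, $\rho(x)$ is defined for every $x \in X$, its value being obtained by iterating $\rho(y) = 1 + \frac{y}{\rho(y^2)}$ down to the base value $\rho(1) = \frac{1+\sqrt5}{2}$; in particular the well-definedness of $\rho(x)$ forces $\rho(x^2)$ to be defined and nonzero for every $x \in X_n$. Since $(-x)^2 = x^2$, the same one-line computation as in the proof of Proposition \ref{propSumRhoTo2} now gives
\[
\rho(x) + \rho(-x) = \Bigl(1 + \frac{x}{\rho(x^2)}\Bigr) + \Bigl(1 + \frac{-x}{\rho(x^2)}\Bigr) = 2
\]
for each $x \in X_n$. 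Summing this over the $2^{n-2}$ pairs then yields
\[
\sum_{x \in X_n} \rho(x) = 2^{n-2} \cdot 2 = 2^{n-1},
\]
as claimed.

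The only genuine subtlety — and the step I would treat most carefully — is the transition from the $|x| \ne 1$ statement of Proposition \ref{propSumRhoTo2} to its use on the unit circle. I must invoke Proposition \ref{prop rho lambda equal on X} both to guarantee that $\rho$ is well-defined at every point of $X_n$ and to ensure that no denominator $\rho(x^2)$ vanishes, so that the cancellation $\frac{x}{\rho(x^2)} + \frac{-x}{\rho(x^2)} = 0$ is legitimate rather than a $\frac{0}{0}$ artifact. Once this well-definedness is secured, everything else is bookkeeping about the involution and the cardinality of $X_n$.
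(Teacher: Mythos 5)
Your proof is correct and follows essentially the same route as the paper: both rest on the reflection identity $\rho(x)+\rho(-x)=2$ of Proposition \ref{propSumRhoTo2} together with the stability of $X_n$ under $x\mapsto -x$ for $n\ge 2$, and whether one sums over $2^{n-2}$ unordered pairs or writes $2\sum_{x\in X_n}\rho(x)=\sum_{x\in X_n}\big(\rho(x)+\rho(-x)\big)=2^{n-1}\cdot 2$ as the paper does is immaterial. If anything, you are more careful than the paper, which invokes Proposition \ref{propSumRhoTo2} (stated for $|x|\ne 1$) directly on the unit circle without the justification via Proposition \ref{prop rho lambda equal on X} that you supply.
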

\begin{proof}
Apply Proposition \ref{propSumRhoTo2} to get $2\sum_{x \in X_n} \rho(x) = \sum_{x \in X_n} \big(\rho(x)+\rho(-x)\big) = 2^{n-1}\cdot 2$.
\end{proof}

The maps $\rho$ and $\lambda$ have some more well-behaved properties when viewed as maps from $X_n$ to $\mathbb{Q}(\zeta_{2^n}, \sqrt{5})$. A little bit of Galois theory is required for the next part. Let $\Gal(L:K)$ denote the Galois group of the field extension $L : K$.
\begin{lemma}
For all $n \ge 0$, $\mathbb{Q}(\zeta_{2^n}) \cap \mathbb{Q}(\sqrt{5}) = \mathbb{Q}$.
\end{lemma}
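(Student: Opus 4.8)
The plan is to reduce the statement to the single assertion that $\sqrt{5} \notin \mathbb{Q}(\zeta_{2^n})$. Since the extension $\mathbb{Q}(\sqrt{5}) : \mathbb{Q}$ has degree $2$, any intersection $\mathbb{Q}(\zeta_{2^n}) \cap \mathbb{Q}(\sqrt{5})$ is a subfield of $\mathbb{Q}(\sqrt{5})$, so by the tower law it equals either $\mathbb{Q}$ or all of $\mathbb{Q}(\sqrt{5})$. It is $\mathbb{Q}$ precisely when $\sqrt{5}$ fails to lie in $\mathbb{Q}(\zeta_{2^n})$. The cases $n=0$ and $n=1$ are trivial, as then $\zeta_{2^n} = \pm 1$ and $\mathbb{Q}(\zeta_{2^n}) = \mathbb{Q}$, so from now on I would assume $n \ge 2$.

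First I would recall the classical fact that $\sqrt{5}$ already lives in a cyclotomic field, namely $\mathbb{Q}(\zeta_5)$: the quadratic Gauss sum $\sum_{a=1}^{4} \left(\tfrac{a}{5}\right)\zeta_5^a$ evaluates to $\sqrt{5}$, so $\mathbb{Q}(\sqrt{5})$ is the unique quadratic subfield of $\mathbb{Q}(\zeta_5)$ (its fixed field under the index-$2$ subgroup of squares in $\Gal(\mathbb{Q}(\zeta_5):\mathbb{Q}) \cong (\mathbb{Z}/5\mathbb{Z})^\times$). Next I would invoke the standard intersection formula for cyclotomic fields, $\mathbb{Q}(\zeta_m) \cap \mathbb{Q}(\zeta_{m'}) = \mathbb{Q}(\zeta_{\gcd(m,m')})$, which follows from the linear disjointness of cyclotomic extensions of coprime conductor. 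Applying it with $m = 2^n$ and $m' = 5$ yields $\mathbb{Q}(\zeta_{2^n}) \cap \mathbb{Q}(\zeta_5) = \mathbb{Q}(\zeta_{\gcd(2^n,5)}) = \mathbb{Q}(\zeta_1) = \mathbb{Q}$.

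Putting these together closes the argument: were $\sqrt{5}$ to lie in $\mathbb{Q}(\zeta_{2^n})$, it would then lie in $\mathbb{Q}(\zeta_{2^n}) \cap \mathbb{Q}(\zeta_5) = \mathbb{Q}$, contradicting the irrationality of $\sqrt{5}$. Hence $\sqrt{5} \notin \mathbb{Q}(\zeta_{2^n})$ and the intersection must be $\mathbb{Q}$. As an alternative to the intersection formula, one could instead run a ramification argument: only the prime $2$ ramifies in $\mathbb{Q}(\zeta_{2^n})$, so every quadratic subfield of $\mathbb{Q}(\zeta_{2^n})$ is unramified away from $2$ and is therefore one of $\mathbb{Q}(i)$, $\mathbb{Q}(\sqrt{2})$, $\mathbb{Q}(\sqrt{-2})$; since $\mathbb{Q}(\sqrt{5})$ ramifies at $5$, it is not among them.

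The only genuine content here is the two imported facts—that $\sqrt{5}$ generates the quadratic subfield of $\mathbb{Q}(\zeta_5)$, and the coprime-conductor intersection formula (equivalently, the classification of quadratic subfields of $2$-power cyclotomic fields). Both are entirely standard, so I do not expect a real obstacle; the main decision is presentational, namely how much of this cyclotomic Galois theory to reprove versus cite, so as to keep the lemma self-contained without bloating it.
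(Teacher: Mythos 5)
Your proposal is correct, and your main route is genuinely different from the paper's. The paper establishes $\sqrt{5}\notin\mathbb{Q}(\zeta_{2^n})$ by ramification in rings of integers: it cites that $2$ is the only ramified prime of $\mathcal{O}_{\mathbb{Q}(\zeta_{2^n})}$ while $5$ is the only ramified prime of $\mathcal{O}_{\mathbb{Q}(\sqrt{5})}$ (its discriminant being $5$), so $\mathbb{Q}(\sqrt{5})$ cannot sit inside $\mathbb{Q}(\zeta_{2^n})$; it then finishes exactly as you do, noting that a quadratic field has only the two subfields $\mathbb{Q}$ and itself. Your primary argument instead stays inside the Galois theory of cyclotomic fields: the Gauss sum places $\sqrt{5}$ in $\mathbb{Q}(\zeta_5)$, and the coprime-conductor formula $\mathbb{Q}(\zeta_{2^n})\cap\mathbb{Q}(\zeta_5)=\mathbb{Q}$ (linear disjointness, i.e.\ multiplicativity of $\varphi$) forces $\sqrt{5}\notin\mathbb{Q}(\zeta_{2^n})$. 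What each buys: your route needs no rings of integers, discriminants, or ramification theory, only standard cyclotomic Galois theory, so it is more self-contained at the level of field theory; the paper's route is shorter as a citation-based argument given access to the number-theory facts it quotes, and your closing ``alternative'' sketch (quadratic subfields of $\mathbb{Q}(\zeta_{2^n})$ are unramified away from $2$, hence among $\mathbb{Q}(i)$, $\mathbb{Q}(\sqrt{2})$, $\mathbb{Q}(\sqrt{-2})$) is in essence the paper's own proof, made slightly more explicit.
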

\begin{proof}Because $2$ is the only ramified prime of $\mathcal{O}_{\mathbb{Q}(\zeta_{2^n})}$ \cite[Theorem 3.12]{stevenhagen2012number} and $5$ is the discriminant of $\mathbb{Q}(\sqrt{5})$ \cite[Exercise 4.9]{stevenhagen2012number} and so the only ramified prime of $\mathcal{O}_{\mathbb{Q}(\sqrt{5})}$ \cite[Theorem 4.14]{stevenhagen2012number}, we have $\mathcal{O}_{\mathbb{Q}(\sqrt{5})}\not\subset \mathcal{O}_{\mathbb{Q}(\zeta_{2^n})}$. Thus, $\sqrt{5} \notin \mathcal{O}_{\mathbb{Q}(\zeta_{2^n})}\subset\mathbb{Q}(\zeta_{2^n})$. Now, as $\#\Gal(\mathbb{Q}(\sqrt{5}):\mathbb{Q})=2$, $\mathbb{Q}(\sqrt{5})$ has only two subfields, $\mathbb{Q}$ and $\mathbb{Q}(\sqrt{5})$, and so $\mathbb{Q}(\zeta_{2^n}) \cap \mathbb{Q}(\sqrt{5}) = \mathbb{Q}$.
\end{proof}
\begin{prop}\label{propGalFiniteCase}
Let $\sigma \in \Gal(\mathbb{Q}(\zeta_{2^n}, \sqrt{5}) :\mathbb{Q}(\sqrt{5}))$ and $\bar{\sigma}$ its restriction to $\Gal(\mathbb{Q}(\zeta_{2^n}):\mathbb{Q})$. Then $\sigma(\rho(x)) = \rho(\bar{\sigma}(x))$ for all $x \in X$.
\end{prop}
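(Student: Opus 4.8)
The plan is to run an induction on the order of $x$, using the functional equation $\rho(x) = 1 + \frac{x}{\rho(x^2)}$ as the engine and exploiting that $\sigma$ is a field automorphism. First I would pin down the domain: for $\sigma(\rho(x))$ and $\bar\sigma(x)$ to make sense we need $x \in \mathbb{Q}(\zeta_{2^n})$ and $\rho(x) \in \mathbb{Q}(\zeta_{2^n},\sqrt5)$, so it suffices to treat those $x \in X$ with $x^{2^n} = 1$, i.e. the $2^n$th roots of unity; the general $x \in X$ is covered by choosing the level $n$ accordingly. Two preliminaries then get recorded. By the preceding Lemma, $\mathbb{Q}(\zeta_{2^n})$ and $\mathbb{Q}(\sqrt5)$ are linearly disjoint over $\mathbb{Q}$, so restriction is an isomorphism $\Gal(\mathbb{Q}(\zeta_{2^n},\sqrt5):\mathbb{Q}(\sqrt5)) \to \Gal(\mathbb{Q}(\zeta_{2^n}):\mathbb{Q})$, $\sigma \mapsto \bar\sigma$; in particular $\sigma$ fixes $\sqrt5$, agrees with $\bar\sigma$ on every $2$-power root of unity, and $\bar\sigma$ permutes the primitive roots of each fixed order, so $\bar\sigma(x)$ lies in the same $X_m$ as $x$. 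Moreover, Proposition \ref{prop rho lambda equal on X} guarantees that $\rho$ is defined throughout $X$ with values in $\mathbb{Q}(\zeta_{2^n},\sqrt5)$, so all the quantities below are legitimate and no denominator vanishes.

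For the base case I would take $x \in \{1,-1\}$ (the orders $1$ and $2$). Here $\rho(1) = \frac{1+\sqrt5}{2}$ and, by Proposition \ref{propSumRhoTo2}, $\rho(-1) = 2 - \rho(1) = \frac{3-\sqrt5}{2}$, both of which lie in $\mathbb{Q}(\sqrt5)$ and are therefore fixed by $\sigma$. Since $\bar\sigma(\pm1) = \pm1$, the desired identity $\sigma(\rho(x)) = \rho(\bar\sigma(x))$ holds trivially on the base level.

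For the inductive step, take $x$ of order $2^m$ with $2 \le m \le n$, so that $x^2$ has order $2^{m-1}$ and the inductive hypothesis applies to $x^2$. Applying the automorphism $\sigma$ to the functional equation, substituting $\sigma(x) = \bar\sigma(x)$, using the inductive hypothesis in the form $\sigma(\rho(x^2)) = \rho(\bar\sigma(x^2)) = \rho(\bar\sigma(x)^2)$ (the last equality because $\bar\sigma$ is a ring homomorphism), and finally reading the functional equation backwards at the point $\bar\sigma(x)$, I obtain
\begin{align*}
\sigma(\rho(x)) = \sigma\!\left(1 + \frac{x}{\rho(x^2)}\right) = 1 + \frac{\bar\sigma(x)}{\rho(\bar\sigma(x)^2)} = \rho(\bar\sigma(x)).
\end{align*}
Since $\bar\sigma(x)$ is again a root of order $2^m$, the last step is exactly the functional equation at an admissible argument, and the induction closes.

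The genuinely substantive inputs are all imported: that $\rho$ is everywhere defined on $X$ with values in $\mathbb{Q}(\zeta_{2^n},\sqrt5)$, and that the relation $\rho(x) = 1 + x/\rho(x^2)$ actually holds on the unit circle, where the convergence of the continued fraction is delicate rather than automatic as in the open disk. Granting Proposition \ref{prop rho lambda equal on X}, the rest is a formal induction. The one place I would check with care is that the squaring chain $x, x^2, \dots$ genuinely terminates at $x^{2^{m-1}} = -1$ and then at the base value $\rho(1)$, so that $\rho(x)$ is an honest element of $\mathbb{Q}(\zeta_{2^n},\sqrt5)$ to which $\sigma$ may be applied, and that $\sigma$ commutes with the division in the functional equation — which is automatic for a field automorphism once every quantity involved is known to be finite and the denominators nonzero.
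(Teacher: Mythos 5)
Your proof is correct and is in substance the same as the paper's: the paper uses the identical three inputs (that $\sigma$ is a field homomorphism, that it fixes $\mathbb{Q}(\sqrt{5})$ and hence the golden ratio $\phi$, and that it agrees with $\bar{\sigma}$ on $\mathbb{Q}(\zeta_{2^n})$), and instead of your induction on the order of $x$ it applies $\sigma$ term by term to the \emph{finite} irregular continued fraction obtained by truncating $\rho(x)$ where the powers $x^{2^k}$ reach $-1$ and then $1$, the tail collapsing to a final entry $-1/\phi$; your induction via $\rho(x)=1+x/\rho(x^2)$ is just the recursive rendering of that same term-by-term computation. One small citation fix: Proposition \ref{propSumRhoTo2} is stated under the hypothesis $|x|\neq 1$, so in your base case it is cleaner to obtain $\rho(-1)=1-1/\rho(1)=2-\phi$ directly from the functional equation (the same one-line manipulation) rather than by citing that proposition.
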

\begin{proof}
As ${\sigma}$ acts trivial on $\mathbb{Q}(\sqrt{5})$ and is a ring homomorphism,
\begin{align*}
    \rho(\bar{\sigma}(x)) &= 1 + \pfrac{\bar{\sigma}(x)}{1} + \pfrac{\bar{\sigma}(x)^2}{1} + \pfrac{\bar{\sigma}(x)^4}{1} + \dots + \pfrac{\bar{\sigma}(x)^{2^{n-2}}}{1} + \pfrac{-1}{\phi}\\
    &= 1 + \pfrac{{\sigma}(x)}{1} + \pfrac{{\sigma}(x^2)}{1} + \pfrac{{\sigma}(x^4)}{1} + \dots + \pfrac{{\sigma}\big(x^{2^{n-2}}\big)}{1} + \pfrac{-1}{\sigma(\phi)}\\
    &= \sigma(\rho(x)).\qedhere
\end{align*}\end{proof}
\begin{corollary}
Let $\sigma \in \Gal(\mathbb{Q}(X, \sqrt{5}) : \mathbb{Q}(\sqrt{5}))$ and $\bar{\sigma}$ be its restriction to $\Gal(\mathbb{Q}(X) : \mathbb{Q})$. Then for all $x \in X$, $\sigma(\rho(x)) = \rho(\bar{\sigma}(x))$.
\end{corollary}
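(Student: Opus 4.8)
The plan is to reduce the infinite case to the already-proven finite Proposition \ref{propGalFiniteCase} by restricting $\sigma$ to a suitable finite layer. Fix $x \in X$; by definition $x \in X_n$ for some $n \ge 0$, so $x \in \mathbb{Q}(\zeta_{2^n})$ and, by Proposition \ref{prop rho lambda equal on X}, $\rho(x) \in \mathbb{Q}(\zeta_{2^n}, \sqrt{5})$. Thus everything relevant to this particular $x$ already lives inside the \emph{finite} field $\mathbb{Q}(\zeta_{2^n}, \sqrt{5})$, and the idea is that the infinite Galois action on $\rho(x)$ is controlled layer by layer by the finite statement.

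First I would check that $\mathbb{Q}(\zeta_{2^n}, \sqrt{5})$ is stable under $\sigma$. It is the compositum of the two finite abelian (hence Galois) extensions $\mathbb{Q}(\zeta_{2^n})$ and $\mathbb{Q}(\sqrt{5})$ of $\mathbb{Q}$, so it is itself Galois, and \emph{a fortiori} normal, over $\mathbb{Q}(\sqrt{5})$. Since a normal intermediate field is carried to itself by every element of the ambient Galois group, $\sigma\big(\mathbb{Q}(\zeta_{2^n}, \sqrt{5})\big) = \mathbb{Q}(\zeta_{2^n}, \sqrt{5})$. Hence the restriction $\sigma_n := \sigma|_{\mathbb{Q}(\zeta_{2^n}, \sqrt{5})}$ is a well-defined element of $\Gal(\mathbb{Q}(\zeta_{2^n}, \sqrt{5}) : \mathbb{Q}(\sqrt{5}))$, and I denote by $\bar{\sigma}_n := \sigma_n|_{\mathbb{Q}(\zeta_{2^n})} \in \Gal(\mathbb{Q}(\zeta_{2^n}):\mathbb{Q})$ its further restriction.

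Next I would apply Proposition \ref{propGalFiniteCase} to $\sigma_n$, which yields $\sigma_n(\rho(x)) = \rho(\bar{\sigma}_n(x))$, and then match these with the quantities in the corollary. Because $\rho(x) \in \mathbb{Q}(\zeta_{2^n}, \sqrt{5})$ and $\sigma_n$ is merely $\sigma$ restricted to that field, we get $\sigma(\rho(x)) = \sigma_n(\rho(x))$. Because $x \in \mathbb{Q}(\zeta_{2^n})$, the chain of restrictions gives $\bar{\sigma}(x) = \sigma(x) = \sigma_n(x) = \bar{\sigma}_n(x)$, whence $\rho(\bar{\sigma}(x)) = \rho(\bar{\sigma}_n(x))$. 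Stringing these equalities together gives $\sigma(\rho(x)) = \rho(\bar{\sigma}(x))$, and since $x \in X$ was arbitrary the corollary follows.

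The main obstacle is precisely the stability step: one must guarantee that restricting an automorphism of the infinite extension $\mathbb{Q}(X, \sqrt{5})$ to the finite layer $\mathbb{Q}(\zeta_{2^n}, \sqrt{5})$ produces a genuine automorphism of that layer fixing $\mathbb{Q}(\sqrt{5})$, so that the finite-case proposition is actually applicable. This rests entirely on the normality of the finite subextension over $\mathbb{Q}(\sqrt{5})$; once that is in hand, the remainder is bookkeeping with restriction maps, exploiting the fact that both $x$ and $\rho(x)$ only involve finitely much of $X$ at a time.
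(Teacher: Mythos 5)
Your proposal is correct and takes essentially the same approach as the paper: the paper's proof also reduces to the finite case by restricting $\sigma$ to $\Gal(\mathbb{Q}(\zeta_{2^n},\sqrt{5}):\mathbb{Q}(\sqrt{5}))$ and $\bar{\sigma}$ to $\Gal(\mathbb{Q}(\zeta_{2^n}):\mathbb{Q})$ and then invoking Proposition \ref{propGalFiniteCase}. Your explicit justification that the finite layer is stable under $\sigma$ (via normality of the compositum over $\mathbb{Q}(\sqrt{5})$) is a detail the paper leaves implicit, but it is the same argument.
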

\begin{proof}
This follows from verifying the statement for all $x \in X_n$ on the restrictions $\sigma|_{\mathbb{Q}(\zeta_{2^n}, \sqrt{5}):\mathbb{Q}(\sqrt{5})}$ and $\bar{\sigma}|_{\mathbb{Q}(\zeta_{2^n}):\mathbb{Q}}$ and from Proposition \ref{propGalFiniteCase}.
\end{proof}
\begin{corollary}
Let $n \ge 0$ and $z \in X_n$. Then the Galois conjugates of $\rho(z)$ in $\mathbb{Q}(X_n, \sqrt{5}):\mathbb{Q}(\sqrt{5})$ are all in the set $\{\rho(x):x \in X_n\}$.
\end{corollary}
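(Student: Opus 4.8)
The plan is to show that any Galois conjugate of $\rho(z)$ over $\mathbb{Q}(\sqrt5)$ is obtained by applying some automorphism $\sigma \in \Gal(\mathbb{Q}(X_n,\sqrt5):\mathbb{Q}(\sqrt5))$ to $\rho(z)$, and then to invoke the previous corollary to rewrite $\sigma(\rho(z))$ as $\rho(\bar\sigma(z))$, which lands inside $\{\rho(x) : x \in X_n\}$ because $\bar\sigma$ permutes $X_n$. The key structural fact I would use is that $\mathbb{Q}(X_n,\sqrt5) : \mathbb{Q}(\sqrt5)$ is a \emph{Galois} extension: it is obtained by adjoining $\zeta_{2^n}$ (a primitive $2^n$th root of unity, which generates $X_n$ together with its powers) to $\mathbb{Q}(\sqrt5)$, and cyclotomic extensions are normal and separable. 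Hence every $\mathbb{Q}(\sqrt5)$-embedding of $\mathbb{Q}(X_n,\sqrt5)$ into its algebraic closure is in fact an automorphism, i.e.\ lies in the Galois group.

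First I would fix $z \in X_n$ and note that, by Proposition~\ref{prop rho lambda equal on X}, $\rho(z)$ is defined and lies in $\mathbb{Q}(\zeta_{2^n},\sqrt5) = \mathbb{Q}(X_n,\sqrt5)$. Next, recall the standard fact that the Galois conjugates of an element $\alpha$ of a Galois extension $L:K$ are precisely the elements $\{\sigma(\alpha) : \sigma \in \Gal(L:K)\}$: the conjugates are the roots of the minimal polynomial of $\alpha$ over $K$, all of which lie in $L$ by normality and are realised as images under the Galois group because $\Gal(L:K)$ acts transitively on the roots of each irreducible factor. Applying this with $L = \mathbb{Q}(X_n,\sqrt5)$, $K = \mathbb{Q}(\sqrt5)$ and $\alpha = \rho(z)$, every Galois conjugate of $\rho(z)$ has the form $\sigma(\rho(z))$ for some $\sigma \in \Gal(\mathbb{Q}(X_n,\sqrt5):\mathbb{Q}(\sqrt5))$.

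The final step is to apply the previous corollary (or directly Proposition~\ref{propGalFiniteCase}): its restriction $\bar\sigma \in \Gal(\mathbb{Q}(X_n):\mathbb{Q})$ satisfies $\sigma(\rho(z)) = \rho(\bar\sigma(z))$. Since $\bar\sigma$ is an automorphism of $\mathbb{Q}(\zeta_{2^n})$, it sends the primitive $2^n$th root of unity $z$ to another primitive $2^n$th root of unity, so $\bar\sigma(z) \in X_n$. Therefore $\sigma(\rho(z)) = \rho(\bar\sigma(z)) \in \{\rho(x) : x \in X_n\}$, which is exactly the claim.

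The main obstacle I anticipate is the justification that the extension is Galois and that every conjugate is reached by the Galois group, since Proposition~\ref{propGalFiniteCase} and its corollary only guarantee the \emph{forward} statement $\sigma(\rho(x)) = \rho(\bar\sigma(x))$ and do not by themselves tell us that all conjugates arise this way. This is cleanly handled once one observes that $\mathbb{Q}(X_n,\sqrt5)$ is a compositum of the cyclotomic field $\mathbb{Q}(\zeta_{2^n})$ and $\mathbb{Q}(\sqrt5)$, hence abelian and in particular Galois over $\mathbb{Q}(\sqrt5)$; the preceding lemma that $\mathbb{Q}(\zeta_{2^n}) \cap \mathbb{Q}(\sqrt5) = \mathbb{Q}$ is what makes the restriction map $\sigma \mapsto \bar\sigma$ well-behaved, so no conjugate is lost. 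A minor technical point to verify is that $\rho(z)$ is genuinely defined for every $z \in X_n$ (guaranteed by Proposition~\ref{prop rho lambda equal on X}), so that the whole argument does not run into the division-by-zero issues illustrated in Example~\ref{counterexample C_n zeta_12 undefined}.
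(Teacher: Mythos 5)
Your proof is correct and follows essentially the same route the paper intends: the paper states this corollary without proof as an immediate consequence of Proposition \ref{propGalFiniteCase} and the preceding corollary, and your argument simply makes explicit the two facts that justify this, namely that $\mathbb{Q}(X_n,\sqrt{5}) = \mathbb{Q}(\zeta_{2^n},\sqrt{5})$ is Galois over $\mathbb{Q}(\sqrt{5})$ (so every conjugate is realised by some $\sigma$ in the Galois group) and that the restriction $\bar{\sigma}$ permutes the primitive $2^n$th roots of unity. This is exactly the intended reasoning, carefully filled in.
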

Thus, on $X$, the continued fraction $\rho$ has a pleasant form and is well-defined. From the identity $\lambda(x) = x\rho(x^{-3})$, it follows that similar results are valid for $\lambda(x)$. Then, as $X$ is dense on the unit circle, $\rho$ and $\lambda$ are well-defined on a dense subset of the unit circle.\medskip

On the other hand, Example \ref{counterexample C_n zeta_12 undefined} demonstrates that $\rho$ is not defined at all but finite points of $\zeta_9X$ and $\lambda$ at all but finite points of $\zeta_3X$. Therefore, both $\rho(x)$ and $\lambda(x)$ are also undefined on a dense subset of the unit circle.
\chapter{Folded continued fractions}
As mentioned in the previous chapter, $H$ and $\rho$ have an exceptional connection with continued fractions. Not only can $\rho$ be manipulated into an easy irregular continued fraction but, peculiarly, also into a predictable regular continued fraction. More specifically, it is an example of a so-called folded continued fraction. For comparison, the original folded continued fraction are given. Then the folded continued fraction of $\rho$ is presented and studied, and a few other examples of \emph{algebraic} functions are discussed. At the end of the chapter, two related results are discussed.
\section{An introduction to folded continued fractions}\label{Section Intro into folded cfs}
To be able to deal with folded continued fractions, more notation is required.
\begin{notation} Let $\boldsymbol{w} = [a_0;\: a_1,\: \dots,\: a_n]$ be a continued fraction. Then it can also be seen as a finite sequence, which will be called a \textbf{word}. Let $\boldsymbol{w}$ be such a word.
\begin{itemize}
    \item If $b$ is a number and $\boldsymbol{v}$ another word, then $[\boldsymbol{w},\: b]$ and $[\boldsymbol{w},\: \boldsymbol{v}]$ denote the concatenation of $\boldsymbol{w}$ with $b$ and $\boldsymbol{v}$, respectively.
    \item The word $-\boldsymbol{w}$ denotes the negation of $\boldsymbol{w}$. That is, $[-a_0;\:-a_{1},\:\dots,\:-a_n]$.
    \item The word $\overleftarrow{\boldsymbol{w}}$ denotes the reverse of $\boldsymbol{w}$. That is, $[a_n;\:a_{n-1},\:\dots,\:a_0]$.
    \item The length of $\boldsymbol{w}$ is denoted by $\#\boldsymbol{w}$ and is equal to $n+1$.
\end{itemize}
The \textbf{empty word} is the word containing no elements and is denoted by $[\:]$. It's length is 0.
\end{notation}
The following lemma will be used constantly:
\begin{lemma}\label{Lemma Negative Continued Fraction}
For a continued fraction $\boldsymbol{w} = [a_0;\: a_1,\: \dots,\: a_n]$, we have $[-a_0;\: -a_1,\: \dots, \:-a_n] = -[a_0;\: a_1,\: \dots,\: a_n]$ and $-\overleftarrow{\boldsymbol{w}} = \overleftarrow{-\boldsymbol{w}}$.
\end{lemma}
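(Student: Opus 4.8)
The two asserted identities are of genuinely different character, and I would treat them separately. The second identity, $-\overleftarrow{\boldsymbol{w}} = \overleftarrow{-\boldsymbol{w}}$, is purely combinatorial: it is a statement about the equality of two \emph{words}, not about their values. Reversal permutes the positions of the entries while negation acts entrywise, so the two operations commute. Concretely, both sides unwind directly from the definitions to the same word $[-a_n;\: -a_{n-1},\: \dots,\: -a_0]$, so there is nothing to prove beyond chasing the notation.

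For the first identity, $[-a_0;\: -a_1,\: \dots,\: -a_n] = -[a_0;\: a_1,\: \dots,\: a_n]$, which is an identity of \emph{values}, I would induct on $n$. The base case $n = 0$ is immediate, since $[-a_0] = -a_0 = -[a_0]$. For the inductive step I would use the defining fold $[a_0;\: a_1,\: \dots,\: a_n] = a_0 + \frac{1}{[a_1;\: \dots,\: a_n]}$. Applying this to the negated word and then invoking the induction hypothesis on the shorter tail $[a_1;\: \dots,\: a_n]$ gives
$$[-a_0;\: -a_1,\: \dots,\: -a_n] = -a_0 + \frac{1}{-[a_1;\: \dots,\: a_n]} = -\Big(a_0 + \frac{1}{[a_1;\: \dots,\: a_n]}\Big),$$
which is exactly $-[a_0;\: a_1,\: \dots,\: a_n]$.

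Alternatively, one can argue matrix-theoretically via the Key Lemma (Lemma~\ref{lemma key lemma continued fractions}): writing $D = \operatorname{diag}(1,-1)$, each factor satisfies $\left(\begin{smallmatrix} -a_i & 1 \\ 1 & 0\end{smallmatrix}\right) = -D\left(\begin{smallmatrix} a_i & 1 \\ 1 & 0\end{smallmatrix}\right)D$, and since $D^2 = I$ the sign $(-1)^{n+1}$ factors out of the whole product while the conjugating copies of $D$ cancel internally, leaving $(-1)^{n+1} D \left(\begin{smallmatrix} p_n & p_{n-1} \\ q_n & q_{n-1}\end{smallmatrix}\right) D$ as the matrix of continuants of the negated fraction; reading off the new numerator and denominator shows the quotient $p_n/q_n$ picks up exactly one factor of $-1$.

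The main point to be careful about — rather than any real obstacle — is that both identities must be read as identities of rational functions in the partial quotients, valid wherever the relevant denominators are nonzero, in keeping with the convention that continued fractions over polynomial rings are evaluated only when no division by zero occurs. With that understood, the fold used in the inductive step is legitimate and the argument goes through for arbitrary (not necessarily positive integer) partial quotients, which is precisely the generality needed for the $\pm x$ entries appearing in the sequel.
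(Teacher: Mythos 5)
Your proof is correct and takes essentially the same route as the paper, whose entire proof is the one-line remark that the first identity follows by induction and the second by writing out the definition — your induction on $n$ via the fold $a_0 + \frac{1}{[a_1;\:\dots,\:a_n]}$ and your positional argument that negation and reversal commute are exactly what that remark compresses. The matrix-conjugation alternative via the Key Lemma is also valid (and mirrors computations the paper performs later in the proof of Theorem \ref{theorem Continued Fraction rho x, -x}), but it is a bonus rather than a divergence.
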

\begin{proof}
The first statement follows by induction and the second one by writing out the definition.
\end{proof}
Thus, for a continued fraction $\boldsymbol{w}$, writing $-\boldsymbol{w}$ is unambiguous.
\subsection{An example of a folded continued fraction}
In the late 1970s, Shallit published the first paper \cite{shallit1979simple} on the continued fraction of $\sum_{n=0}^{\infty} x^{-2^k}$ for integers $x$ greater or equal to $3$, which, strangely enough, contained almost only $\pm x$. In the 1980s and 1990s, further work was done by, amongst others, Shallit, van der Poorten, Mend\`ez France and Dekking. In 1992, Shallit and van der Poorten coined the name folded continued fraction. One central result is the so-called Folding Lemma:

\begin{theorem}[Folding Lemma \cite{borwein2014neverending}]
Let $n \ge 0$, $[a_0; a_1, a_2, \dots, a_n]$ be a continued fraction, $\boldsymbol{w} = [a_1, a_2, \dots, a_n]$, $t$ not zero, and $\begin{pmatrix}p_n & p_{n-1} \\q_n & q_{n-1}\end{pmatrix}$ the matrix of continuants of this continued fraction. Then $$p_{2n+1} = q_np_nt + (-1)^n, \quad q_{2n+1} = tq_n^2\quad\text{and}\quad[a_0; \boldsymbol{w}, t, -\overleftarrow{\boldsymbol{w}}] = \frac{p_n}{q_n} + \frac{(-1)^n}{tq_n^2}.$$
\end{theorem}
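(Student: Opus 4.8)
The plan is to establish all three claims through the matrix-of-continuants formalism of Lemma \ref{lemma key lemma continued fractions}, since the Folding Lemma is fundamentally a statement about how the continuant matrix of the folded word $[a_0;\: \boldsymbol{w},\: t,\: -\overleftarrow{\boldsymbol{w}}]$ decomposes. First I would write out the matrix of continuants for the full folded continued fraction as a product of the elementary matrices $\left(\begin{smallmatrix} a & 1 \\ 1 & 0\end{smallmatrix}\right)$ dictated by the Key Lemma. The word has length $2n+2$, so its continuant matrix is a product of $2n+2$ such factors: one block from $\boldsymbol{w}$ (prefixed by $a_0$), the single factor for $t$, and then the block coming from $-\overleftarrow{\boldsymbol{w}}$. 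The whole point is that the last block is the reversal-and-negation of the first block, and the two are tied together by a clean matrix identity.

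The key algebraic step is to understand how reversal and negation act on the continuant matrix. Reversing a word transposes the continuant product up to swapping which corner entries survive; concretely, if $M = \left(\begin{smallmatrix} a_n & 1 \\ 1 & 0\end{smallmatrix}\right)\cdots\left(\begin{smallmatrix} a_1 & 1 \\ 1 & 0\end{smallmatrix}\right)$ is the continuant matrix built from $\boldsymbol{w} = [a_1,\dots,a_n]$ in forward order, then the matrix for $\overleftarrow{\boldsymbol{w}}$ is the transpose $M^{\mathsf T}$, because transposing reverses the order of a product and each elementary factor is symmetric. Negation, by Lemma \ref{Lemma Negative Continued Fraction}, introduces a conjugation by $\operatorname{diag}(1,-1)$ on each factor, which I would track explicitly. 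Combining these, the continuant matrix of $-\overleftarrow{\boldsymbol{w}}$ is expressible in terms of the entries $p_n, p_{n-1}, q_n, q_{n-1}$ of the original matrix of continuants for $[a_0;\:\boldsymbol{w}]$, with controlled signs depending on the parity of $n$.

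With that in hand, I would multiply the three blocks together and read off the top-left and bottom-left entries, which are exactly $p_{2n+1}$ and $q_{2n+1}$ of the folded continued fraction. The parameter $t$ enters only through the single middle factor $\left(\begin{smallmatrix} t & 1 \\ 1 & 0\end{smallmatrix}\right)$, so the $t$-dependence in $p_{2n+1} = q_n p_n t + (-1)^n$ and $q_{2n+1} = t q_n^2$ should fall out after simplification; the term $(-1)^n$ is precisely where I expect the cross-product identity $q_n p_{n-1} - p_n q_{n-1} = (-1)^n$ from Lemma \ref{lemma cross product p_n and q_n} to appear, which is why the sign alternates with $n$. The final evaluation formula then follows immediately by writing $[a_0;\: \boldsymbol{w},\: t,\: -\overleftarrow{\boldsymbol{w}}] = p_{2n+1}/q_{2n+1}$ and dividing.

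The main obstacle will be bookkeeping the signs and the parity dependence correctly when composing reversal with negation: the transpose identity for reversal is clean, but threading the $\operatorname{diag}(1,-1)$ conjugations through the product and correctly identifying which entries of $M^{\mathsf T}$ correspond to $p_n$ versus $p_{n-1}$ (and similarly for $q$) is where an off-by-one or a sign error would most easily creep in. I would guard against this by verifying the decomposition on the smallest case $n = 0$ (where $\boldsymbol{w}$ is empty and the claim reduces to a short direct computation) and $n = 1$ before trusting the general matrix manipulation, and by cross-checking the final $q_{2n+1} = t q_n^2$ against the requirement $t \ne 0$ that guarantees no division-by-zero in the resulting continued fraction.
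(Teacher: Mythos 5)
Your proposal is correct and is essentially the same approach as the paper's: the paper itself quotes the Folding Lemma from the reference without proof, but the computation you outline --- the Key Lemma factorization, the transpose identity for reversal, the $\operatorname{diag}(1,-1)$ conjugation for negation, and Lemma \ref{lemma cross product p_n and q_n} supplying the $(-1)^n$ --- is precisely how the paper executes its own folding argument in the proof of Theorem \ref{theorem Continued Fraction rho x, -x}. One point that your planned $n=1$ sanity check will surface: multiplying the blocks out gives $p_{2n+1}=(-1)^n\big(q_np_nt+(-1)^n\big)$ and $q_{2n+1}=(-1)^n t q_n^2$, so the two continuant identities as displayed in the statement hold only up to a common factor $(-1)^n$ (they are exact for even $n$, which is the only case the paper needs in its own folding proof, where the word length is always even); the quotient, and hence the evaluation formula $[a_0;\,\boldsymbol{w},\,t,\,-\overleftarrow{\boldsymbol{w}}]=\frac{p_n}{q_n}+\frac{(-1)^n}{tq_n^2}$ that is the real content of the lemma, is unaffected by this common sign.
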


The simplest function that can be written as a folded continued fraction is $f(x) := x \sum_{n=0}^{\infty}x^{-2^n}$ due to Shallit and van der Poorten \cite{van1992folded}. It coincides with the continued fraction $\lim_{n \to \infty} [1;\: \boldsymbol{p}_n]$ where $\boldsymbol{p}_0$ is the empty word and $\boldsymbol{p}_{n} = [\boldsymbol{p}_{n-1},\: x,\: -\overleftarrow{\boldsymbol{p}_{n-1}}] $ for $n \ge 1$. Now observe that after each folding iteration, the continued fraction has length $2^n$, and that $p_{0} = q_{0} = 1$. Then, by induction and applying the Folding Lemma with $t = x$, it follows that for all $n \ge 1$,
\begin{align*}
    p_{2^n-1} &= xp_{m}q_{m} +(-1)^{2^n}= x\bigg(\sum_{i=0}^{n-1} x^{2^{n-1}-2^i}\bigg)x^{m-1} +1= \sum_{i=0}^{n} x^{2^{n}-2^i}\quad \text{and}\\
    q_{2^n-1} &= xq_{m}^2 = x\cdot x^{m-1} = x^{2^n-1},
\end{align*}
where $m = 2^{n-1}-1$. Then the continued fraction $[1;\: \boldsymbol{p}_n]$ approaches 
\begin{align*}
    \lim_{n \to \infty}\frac{p_{2^n-1}}{q_{2^n-1}} = \lim_{n \to \infty} \frac{\sum_{i=0}^{n} x^{2^{n}-2^i}}{x^{2^n-1}} = f(x).
\end{align*}
Note that $f$ is also a $2$-Mahler function as $f(x) = xf(x^2) + 1$.\medskip

The recursion for $\boldsymbol{p}_n$ starts with $\boldsymbol{p}_{n-1}$, making the first $2^{n-1}$ terms of $\boldsymbol{p}_n$ and $\boldsymbol{p}_{n-1}$ coincide. Thus, the sequence of words $\boldsymbol{p}_n$ converges to an infinite continued fraction as $n$ goes to infinity:
\begin{align*}
    \lim_{n \to \infty} \boldsymbol{p}_n = [x; x, -x, x, x, -x, -x, x, x, x, -x, -x, x, -x, -x, \dots].
\end{align*}
As all terms are either $x$ or $-x$, knowing the signs of the partial quotients is sufficient to understand the entire continued fraction. The corresponding sequence of signs is famous in popular mathematics: the (regular) \textbf{paperfolding sequence} or dragon curve sequence. Hence the name `folded continued fraction'. In Subsection \ref{subsection spec}, there is a method presented to remove the $-x$ from the continued fraction to achieve a regular continued fraction with only positive partial quotients.\medskip
\subsection{Paperfolding sequences as curves}
The paperfolding sequence is  better known as a fractal, graphically the \textbf{paperfolding dragon} or dragon curve, than as a Mahler function or automatic sequence. As the construction is unknown to many modern mathematicians \cite{tabachnikov2014dragon}, it is explained here again, following the paper of Tabachnikov \cite{tabachnikov2014dragon}.

\begin{recipe}

Take a strip of paper and fold it $n$ times in half in the same direction. After unfolding, $2^n-1$ hills and trenches appear. For example, the result after three folds is drawn in Figure \ref{figure paper folded three times}.\medskip

\begin{figure}[!ht]
    \centering
    \begin{tikzpicture}
        \draw (0.1, 0) -- (0.8, 0);
        \draw (0.8, 0) .. controls (0.95,0.075) and (0.95,0.075) ..  (1, 0.15);
        \draw (1.2, 0) .. controls (1.05,0.075) and (1.05,0.075) ..  (1, 0.15);
        \draw (1.2, 0) -- (1.8, 0);
        \draw (1.8, 0) .. controls (1.95,0.075) and (1.95,0.075) ..  (2, 0.15);
        \draw (2.2, 0) .. controls (2.05,0.075) and (2.05,0.075) ..  (2, 0.15);
        \draw (2.2, 0) -- (2.8, 0);
        \draw (2.8, 0) .. controls (2.95,-0.075) and (2.95,-0.075) ..  (3, -0.15);
        \draw (3.2, 0) .. controls (3.05,-0.075) and (3.05,-0.075) ..  (3, -0.15);
        \draw (3.2, 0) -- (3.8, 0);
        \draw (3.8, 0) .. controls (3.95,0.075) and (3.95,0.075) ..  (4, 0.15);
        \draw (4.2, 0) .. controls (4.05,0.075) and (4.05,0.075) ..  (4, 0.15);
        \draw (4.2, 0) -- (4.8, 0);
        \draw (4.8, 0) .. controls (4.95,0.075) and (4.95,0.075) ..  (5, 0.15);
        \draw (5.2, 0) .. controls (5.05,0.075) and (5.05,0.075) ..  (5, 0.15);
        \draw (5.2, 0) -- (5.8, 0);
        \draw (5.8, 0) .. controls (5.95,-0.075) and (5.95,-0.075) ..  (6, -0.15);
        \draw (6.2, 0) .. controls (6.05,-0.075) and (6.05,-0.075) ..  (6, -0.15);
        \draw (6.2, 0) -- (6.8, 0);
        \draw (6.8, 0) .. controls (6.95,-0.075) and (6.95,-0.075) ..  (7, -0.15);
        \draw (7.2, 0) .. controls (7.05,-0.075) and (7.05,-0.075) ..  (7, -0.15);
        \draw (7.2, 0) -- (7.9, 0);
    \end{tikzpicture}
    \caption{A strip of paper is folded three times and then unfolded.}
    \label{figure paper folded three times}
\end{figure}
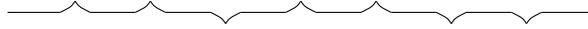
Associating the hills with a $1$ and the trenches with a $-1$ creates a word. For example, after three folds, this word is $[1,\:1,\:-1,\:1,\:1,\:-1,\:-1]$. The paperfolding sequence is also the limit sequence when iterating indefinitely. Recall that the recursive rule of the words is $\boldsymbol{p}_n = [\boldsymbol{p}_{n-1},\: 1,\: -\overleftarrow{\boldsymbol{p}_{n-1}}]$. The first $\boldsymbol{p}_{n-1}$ is the bottom of the paper that remains at the same position during the fold, the 1 is the fold, and the second $\boldsymbol{p}_{n-1}$ is the half of the paper that is folded on top. The minus sign comes from flipping that half upside down and the reverse arrow from laying it in the opposite direction. Thus, the two constructions coincide. When laying the hills and trenches of the folds in right angles, a shape emerges. For example, after three folds, Figure \ref{figure curve three folds} is obtained.\medskip

\begin{figure}[!ht]
    \centering
    \begin{tikzpicture}[scale = 0.5]
        \draw[draw = red, fill=red] (0,0) circle [radius=0.1];
        \draw (0, 0) -- (0, 1);
        \draw (0, 1) -- (1, 1);
        \draw (1, 1) -- (1, 0);
        \draw (1, 0) -- (2, 0);
        \draw (2, 0) -- (2, -1);
        \draw (2, -1) -- (1, -1);
        \draw (1, -1) -- (1, -2);
        \draw (1, -2) -- (2, -2);
    \end{tikzpicture}
    \caption{A strip of paper is folded three times and then unfolded, starting at the red dot.}
    \label{figure curve three folds}
\end{figure}
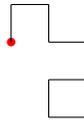
For each number of folds, one can make such a graph, and surprisingly, it is never \textbf{self-crossing}. That is, not a single line is drawn twice, and the curve never self-intersects. If one takes the limit of these folds, infinitely many lines are drawn, and if one takes four of these collections of lines, it covers the entire grid $\mathbb{Z}^2$. Thus, each line between two neighbouring points is covered exactly once. In Figure \ref{fig:Curve5-9-17}, the paperfolding dragon is shown after 5, 9 and 17 iterations, respectively. Due to the unusual shape, this curve was named the dragon curve. Such drawings are called \textbf{folding curves}.
\begin{figure}[H]
\centering
  \includegraphics[width=150mm]{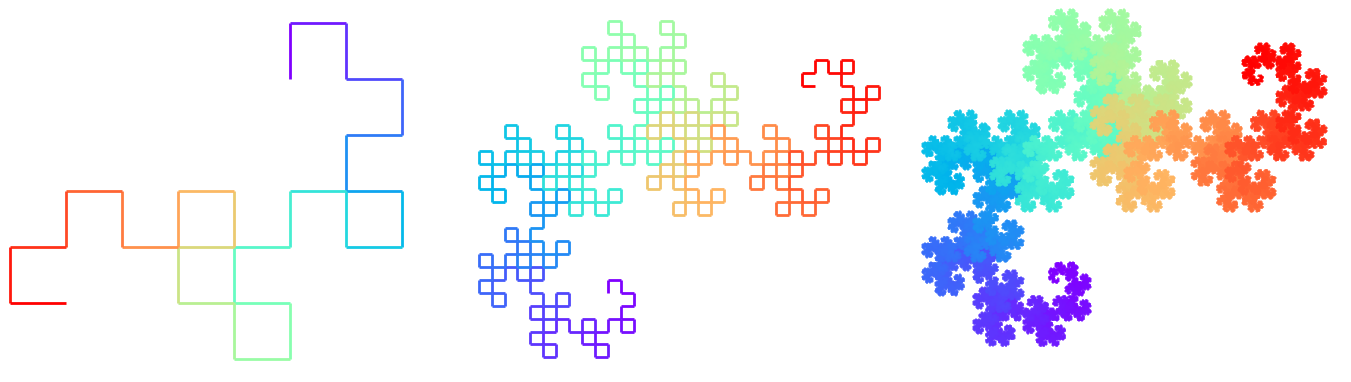}
  \caption{The dragon curve after 5, 9 and 17 iterations, respectively. It starts purple and slowly fades to red following the colours of the rainbow.}
  \label{fig:Curve5-9-17}
\end{figure}
\end{recipe}
\section[The folded continued fraction of rho]{The folded continued fraction of $\boldsymbol{\rho}$}
As seen for the regular paperfolding sequence, Mahler functions and folded continued fractions can sometimes live in the same world. So far, only infinite sums and products seem to have been studied in the literature, such as $\sum_{n=0}^{\infty}2^{-F_n}$ where $F_n$ are the Fibonacci numbers \cite{borwein2014neverending}. Because $\rho(x)$ is defined as an irregular continued fraction, such a regular folded continued fraction is yet unknown.

\subsection[Computing a folded continued fraction for rho]{Computing a folded continued fraction for $\boldsymbol{\rho}$}
For $\rho(x)$, a folded continued fraction exists with a more intricate fold. First, recall that 
\begin{align*}
    \rho_n(x) := 1+\pfrac{x}{1}+\pfrac{x^{2}}{1}+\pfrac{x^{4}}{1} + \dots +\pfrac{x^{2^{n-2}}}{1} + \pfrac{x^{2^{n-1}}}{1}.
\end{align*}
Evaluating the regular continued fractions of these rational functions $\rho_n$ in \textit{Magma} \cite{Magma} gives
\begin{align*}
    \rho_0(x) &= [1],\\
    \rho_1(x) &= [1+x],\\
    \rho_2(x) &= [1;\: x, \:x],\\
    \rho_3(x) &= [x+1;\: -x,\: -x,\: x,\: x],\\
    \rho_4(x) &= [ 1;\:x,\:x,\:x,\:-x,\: -x,\:x,\:-x,\:-x,\:x,\:x],\\
    \rho_5(x) &= [x + 1; -x,-x,x,x,-x,-x,-x, x,x,-x,x,x,x,-x,-x,x,-x,-x,x,x].
\end{align*}
It is already remarkable that almost all partial quotients are $\pm x$. Moreover, the words $\boldsymbol{w}_n$ seem to converge along even indices $n$ and along odd indices $n$. The parity partial convergence of $\rho(x)$ in Corollary \ref{corr different limits rho} for $|x| > 1$ explains the existence of the two distinct continued fractions. A careful look suggests that $\rho(x) = [s_n(x);\: \boldsymbol{w}_n(x)]$ with the recursion
\begin{align}\label{recursion w_n}
    \boldsymbol{w}_{n} =  [\boldsymbol{w}_{n-2},\: (-1)^nx,\: - \overleftarrow{\boldsymbol{w}_{n-2}}, \:(-1)^nx,\: \boldsymbol{w}_{n-1}]
\end{align}
for $\boldsymbol{w}_n = \boldsymbol{w}_n(x)$, where
\begin{align}\label{recursion s_n}
    s_n(x) = \begin{cases}
        1       &\text{if }n \text{ even}, \\
        x +1    &\text{if }n \text{ odd}.
    \end{cases}
\end{align}
This form resembles the folding of $\boldsymbol{p}$. To begin proving the observed recursions \eqref{recursion w_n} and \eqref{recursion s_n}, a few technical lemmas need to be set.
\begin{lemma}\label{lemma H_n nonlinear relation}
For all $n \ge 1$,
\begin{align*}
H_{n-2}(x^2)H_n(x)-H_{n-1}(x)H_{n-1}(x^2) = (-1)^{n-1}x^{2^n-1}.
\end{align*}
\begin{proof}
Use induction. For $n = 1$, the left side evaluates to $1\cdot(x+1) - 1\cdot 1 = x$, which is correct. Now assume that $n \ge 2$. Using $H_n(x) = H_{n-1}(x^2) + xH_{n-2}(x^4)$ for all $n \ge 1$, we obtain
\begin{align*}
& H_{n-2}(x^2)H_n(x)-H_{n-1}(x)H_{n-1}(x^2)\\
   &\quad =H_{n-2}(x^2)\big(H_{n-1}(x^2)+qH_{n-2}(x^4)\big)-\big(H_{n-2}(x^2)+qH_{n-3}(x^4)\big)H_{n-1}(x^2)\\
    &\quad =xH_{n-2}(x^2)H_{n-2}(x^4)-xH_{n-3}(x^4)H_{n-1}(x^2) = -x (-1)^{n-2}(x^2)^{2^{n-1}-1} = (-1)^{n-1}x^{2^n-1}.\qedhere
\end{align*}\end{proof}
\end{lemma}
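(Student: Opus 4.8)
The plan is to prove the identity by induction on $n$, using the folding recursion $H_n(x) = H_{n-1}(x^2) + xH_{n-2}(x^4)$ from Lemma \ref{lemma recursions for F_n, G_n, H_n, and I_n} as the only engine. This is the natural choice: the left-hand side is bilinear in the truncations $H_m$, and the recursion lets me strip one level of folding from both $H_n(x)$ and $H_{n-1}(x)$ at once, so that the degree (in the index $n$) drops and the expression can be compared against a smaller instance of the same identity.

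For the base case I would verify $n = 1$ directly. Using the convention $H_{-1}(x) = 1$ together with $H_0(x) = 1$ and $H_1(x) = 1 + x$, the left-hand side collapses to $(x+1) - 1 = x$, which matches $(-1)^{0}x^{2^1-1}$. For the inductive step (valid for $n \ge 2$) I would substitute $H_n(x) = H_{n-1}(x^2) + xH_{n-2}(x^4)$ and $H_{n-1}(x) = H_{n-2}(x^2) + xH_{n-3}(x^4)$ into the left-hand side. Both resulting products then contain a common term $H_{n-2}(x^2)H_{n-1}(x^2)$, and I would observe that these cancel. What remains is $x$ times the bracket
\[
    H_{n-2}(x^2)H_{n-2}(x^4) - H_{n-3}(x^4)H_{n-1}(x^2).
\]

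The crucial step, and the one I expect to be the only genuine obstacle, is to recognize this surviving bracket as (minus) the very quantity the lemma describes, but evaluated at the argument $y = x^2$ and at the shifted index $m = n-1$. Writing the induction hypothesis as $H_{m-2}(y^2)H_m(y) - H_{m-1}(y)H_{m-1}(y^2) = (-1)^{m-1}y^{2^m-1}$ and substituting $y = x^2$, $m = n-1$, I see that the bracket above is the negative of the left-hand side of that instance, hence equals $(-1)^{n-1}(x^2)^{2^{n-1}-1}$. Multiplying by the leftover factor $x$ and using the exponent identity $1 + 2\big(2^{n-1}-1\big) = 2^n - 1$ then yields $(-1)^{n-1}x^{2^n-1}$, closing the induction. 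The care required here is purely bookkeeping: tracking the sign flip that comes from the interchanged order of the two products, and checking that the exponent arithmetic lines up exactly; everything else is routine algebra that I would not grind through in detail.
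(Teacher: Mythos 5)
Your proposal is correct and matches the paper's proof essentially line for line: the same induction, the same substitution of the recursion into both $H_n(x)$ and $H_{n-1}(x)$, the same cancellation of the common term $H_{n-2}(x^2)H_{n-1}(x^2)$, and the same application of the induction hypothesis at index $n-1$ with argument $x^2$ (a step the paper performs implicitly, while you spell it out). The sign and exponent bookkeeping you describe, $1 + 2(2^{n-1}-1) = 2^n-1$ with the extra factor $(-1)$ from reversing the two products, is exactly what the paper's final line computes.
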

\begin{lemma}\label{Lemma Hn Combinatorial}
For all $n \ge 1$,
\begin{align*}
    H_n(x) &= H_{n-1}(x) +x^{2^{n-1}}H_{n-2}(x).
\end{align*}
\end{lemma}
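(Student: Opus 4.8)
The plan is to prove the identity by reading the coefficients of $H_n(x) = \sum_{m=0}^{2^n-1} c_m x^m$ directly off the Fibbinary description of $(c_m)$: namely, $c_m = 1$ precisely when the binary expansion of $m$ contains no two adjacent $1$'s. First I would subtract $H_{n-1}$ from $H_n$, which leaves $H_n(x) - H_{n-1}(x) = \sum_{m=2^{n-1}}^{2^n-1} c_m x^m$, the block of terms whose exponents have the leading bit (bit $n-1$) set. The goal then reduces to identifying this block with $x^{2^{n-1}} H_{n-2}(x)$.

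Next I would split this block at $m = 2^{n-1} + 2^{n-2}$ according to whether bit $n-2$ is $1$ or $0$. For $m$ in the upper part $[2^{n-1}+2^{n-2},\, 2^n)$ both bits $n-1$ and $n-2$ are set, so $m$ has two adjacent $1$'s and $c_m = 0$; this part contributes nothing. For $m$ in the lower part $[2^{n-1},\, 2^{n-1}+2^{n-2})$ bit $n-1$ is set and bit $n-2$ is clear, so writing $m = 2^{n-1} + m'$ with $0 \le m' < 2^{n-2}$ gives a bijection $m \mapsto m'$ onto $\{0,\dots,2^{n-2}-1\}$ that preserves the no-adjacent-$1$'s property: deleting the leading bit creates no new adjacency exactly because bit $n-2$ of $m$ is $0$. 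Hence $c_m = c_{m'}$, and summing yields $x^{2^{n-1}} \sum_{m'=0}^{2^{n-2}-1} c_{m'} x^{m'} = x^{2^{n-1}} H_{n-2}(x)$, as desired.

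The main obstacle is the small-$n$ bookkeeping, where the partition degenerates: for $n = 1$ the lower part is the single exponent $m=1$ and there is no bit $n-2$, so this case must be checked against the convention $H_{-1} = 1$, giving $H_1 = 1 + x = H_0 + x\,H_{-1}$. A fully algebraic alternative sidesteps this by induction on $n$ using the Mahler recursion $H_n(x) = H_{n-1}(x^2) + xH_{n-2}(x^4)$ of Lemma \ref{lemma recursions for F_n, G_n, H_n, and I_n}. Expanding $H_n$, $H_{n-1}$ and $H_{n-2}$ by this recursion splits the difference $H_n(x) - H_{n-1}(x) - x^{2^{n-1}}H_{n-2}(x)$ into an $x^2$-block and an $x^4$-block; applying the induction hypothesis at index $n-1$ with argument $x^2$ (noting $(x^2)^{2^{n-2}} = x^{2^{n-1}}$) kills the first block, and at index $n-2$ with argument $x^4$ (noting $(x^4)^{2^{n-3}} = x^{2^{n-1}}$) kills the second. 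The inductive step is then clean for $n \ge 3$, with the base cases $n = 1, 2$ verified by hand. Either route is routine once set up; the real content is the bit-level structure exhibited in the first approach.
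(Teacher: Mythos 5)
Your first argument is exactly the paper's own proof: the paper likewise reads the coefficients off the Fibbinary (no adjacent $1$'s) description, notes that exponents $m < 2^{n-1}$ give $H_{n-1}$, and that any surviving $m \ge 2^{n-1}$ must begin with $10$ in binary, so stripping those bits lands in $H_{n-2}$ shifted by $x^{2^{n-1}}$. Your version is correct and merely more explicit about the split at bit $n-2$, the small-$n$ conventions, and the optional inductive alternative via the Mahler recursion.
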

\begin{proof}
Recall that $H_n(x) = \sum_{m = 0}^{2^{n}-1}c_mq^m$, where $c_m = 0$ if the binary expansion of $m$ contains two neighbouring 1's and $c_m=1$ otherwise. Let $m< 2^n$ such that $c_m = 1$. If $m < 2^{n-1}$, then $c_m$ is already in $H_{n-1}$. If $m \ge 2^{n-1}$, the binary expansion of $m$ starts with $10$, and removing these terms gives a number below $2^{n-2}$ corresponding to $H_{n-2}$. This leads to the formula.
\end{proof}
Let $(k_n)_{n=0}^{\infty}$ be the sequence defined by $k_0 = k_1 = 0$ and $k_n = 2k_{n-2} + k_{n-1} + 2$ for all $n\ge 2$.
\begin{lemma}\label{Lemma defining r_n} For all $n \ge 0$, $k_n = \frac{2^{n+1}\pm(-1)^n}{3} - 1$ and $k_n$ is even.
\end{lemma}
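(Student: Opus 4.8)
The plan is to treat this as a routine induction on the two-step recurrence $k_n = k_{n-1} + 2k_{n-2} + 2$, together with the two base values $k_0 = k_1 = 0$. First I would pin down what the ambiguous symbol $\pm(-1)^n$ is meant to say. Since the recurrence obviously keeps the $k_n$ integral, the only reading that stays integral and matches $k_0 = k_1 = 0$ is $+(-1)^n$ (that is, $+1$ for even $n$ and $-1$ for odd $n$), so the claim I will actually prove is
\begin{align*}
    k_n = \frac{2^{n+1} + (-1)^n}{3} - 1.
\end{align*}
A quick sanity check on $k_0 = \frac{2+1}{3} - 1 = 0$ and $k_1 = \frac{4-1}{3} - 1 = 0$ confirms this is the intended formula.

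The cleanest derivation of the closed form is to solve the linear recurrence outright. The associated homogeneous equation $t^2 - t - 2 = (t-2)(t+1) = 0$ has roots $2$ and $-1$, while a constant sequence $k_n \equiv -1$ is a particular solution absorbing the inhomogeneous term $+2$ (indeed $-1 + 2(-1) + 2 = -1$). Hence every solution has the shape $k_n = A\,2^n + B(-1)^n - 1$, and imposing the two initial conditions $A + B - 1 = 0$ and $2A - B - 1 = 0$ forces $A = \tfrac{2}{3}$, $B = \tfrac{1}{3}$, which is exactly the stated formula. If one prefers to avoid solving and instead verify, the same result follows by a two-step induction: one checks the two base cases and then substitutes the formula into the recurrence, where the identity $2\cdot 2^{n-1} + 2^n = 2^{n+1}$ and $2(-1)^{n-2} + (-1)^{n-1} = (-1)^n$ make the inductive step collapse cleanly.

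For the parity assertion I would not use the closed form at all, since reducing the recurrence modulo $2$ is immediate: both $2k_{n-2}$ and the added constant $2$ vanish, giving $k_n \equiv k_{n-1} \pmod 2$. As $k_0 = k_1 = 0$ are even, induction then yields that $k_n$ is even for all $n$ (equivalently, $\tfrac{2^{n+1} + (-1)^n}{3}$ is odd).

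There is no real obstacle here; the only points that demand attention are fixing the meaning of the $\pm$ so the formula remains integral, and remembering that the recurrence is second order, so the induction genuinely needs both $k_0$ and $k_1$ as base cases — equivalently, both constants $A$ and $B$ are determined only after imposing two initial conditions. Beyond that, the argument is elementary arithmetic.
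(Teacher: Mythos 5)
Your proof is correct and takes essentially the same route as the paper, whose entire proof is the one-line assertion that ``both the closed formula and each term being an even integer follow from induction'' --- your two-step induction and the mod-$2$ reduction are precisely the details left implicit there. Your resolution of the ambiguous $\pm$ as $+(-1)^n$ is the intended reading (the paper later invokes the lemma in exactly the form $k_n = \frac{2^{n+1}+(-1)^n}{3}-1$), and the closed-form derivation via the characteristic polynomial $(t-2)(t+1)$ is a harmless, slightly more systematic bonus.
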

\begin{proof}
Both the closed formula and each term being an even integer follow from induction.
\end{proof}
The terms $k_n+1$ form the Jacobsthal sequence, sequence A001045 on the OEIS \cite{OEIS}. This sequence is also relevant to the construction of $F$ and $G$ using Stern polynomials, determines where the long sequences of zero coefficients in their expansions begin and end, and to the degree of $H_n(x)$. This is not coincidental. By construction, $k_{n} = \deg{H_n(x^2)}$, which is the degree of the denominator of $\rho_n(x)$. At this point, only the main theorem remains to be proven.
\begin{theorem}\label{theorem Continued Fraction rho x, -x}
Let $n \ge 0$ and $x$ be a real number. Then $\rho_n(x)$ is expressed as a regular continued fraction by writing $\rho_n(x) = [s_n ;\: \boldsymbol{w}_n]$ for
\begin{align*}
    s_n = \begin{cases}
    1 & \text{if }n \text{ even},\\
    1+x & \text{if }n \text{ odd},
    \end{cases}
\end{align*}
with $\boldsymbol{w}_n$ defined recursively by
\begin{align*}
    \boldsymbol{w}_{n} =
    [\boldsymbol{w}_{n-2},\: (-1)^nx, \:- \overleftarrow{\boldsymbol{w}_{n-2}}, \:(-1)^nx,\:\boldsymbol{w}_{n-1}] \quad\text{for $n \ge 2$},
\end{align*}
$\boldsymbol{w}_{0} = \boldsymbol{w}_{1}$ are empty, the length of $\boldsymbol{w}_n$ is equal to $k_n$, $p_{k_n} = H_n(x)$ and $q_{k_n} = H_{n-1}(x^2)$.
\end{theorem}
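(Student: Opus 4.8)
The plan is to prove the four assertions simultaneously—$\#\boldsymbol{w}_n = k_n$, $p_{k_n} = H_n(x)$, $q_{k_n} = H_{n-1}(x^2)$, and $[s_n;\boldsymbol{w}_n] = \rho_n(x)$—by a single strong induction on $n$, with $n=0,1$ checked by hand so that the recursion \eqref{recursion w_n} can take over from $n=2$. The last assertion comes for free once the first three hold: a regular continued fraction equals the ratio of its continuants, so $[s_n;\boldsymbol{w}_n] = p_{k_n}/q_{k_n} = H_n(x)/H_{n-1}(x^2)$, which is $\rho_n(x)$ by Proposition \ref{PropDivisionContFrac}. Thus I really only need the two continuant identities, as identities of polynomials in $x$, together with the length count. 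The length is immediate: the recursion gives $\#\boldsymbol{w}_n = 2\#\boldsymbol{w}_{n-2} + \#\boldsymbol{w}_{n-1} + 2$, which with $\#\boldsymbol{w}_0 = \#\boldsymbol{w}_1 = 0$ is exactly the recursion defining $k_n$; Lemma \ref{Lemma defining r_n} then records that every $k_n$ is even, a fact I will use constantly.

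To handle the continuants I encode everything through the Key Lemma (Lemma \ref{lemma key lemma continued fractions}): write $M_a = \left(\begin{smallmatrix} a & 1 \\ 1 & 0\end{smallmatrix}\right)$ and, for a word $\boldsymbol{v}$, let $M(\boldsymbol{v})$ be the product of the corresponding elementary matrices, so that the continuant matrix of $[s_n;\boldsymbol{w}_n]$ is $B_n := M_{s_n}M(\boldsymbol{w}_n) = \left(\begin{smallmatrix} p_{k_n} & p_{k_n-1} \\ q_{k_n} & q_{k_n-1}\end{smallmatrix}\right)$. Two symmetries make folding tractable. Since each $M_a$ is symmetric, $M(\overleftarrow{\boldsymbol{v}}) = M(\boldsymbol{v})^{\mathsf T}$; and since $M_{-a} = -JM_aJ$ for $J = \operatorname{diag}(1,-1)$, one gets $M(-\boldsymbol{v}) = (-1)^{\#\boldsymbol{v}}JM(\boldsymbol{v})J$ (compatible with Lemma \ref{Lemma Negative Continued Fraction}). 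Because $\#\boldsymbol{w}_{n-2} = k_{n-2}$ is even, these combine to $M(-\overleftarrow{\boldsymbol{w}_{n-2}}) = JM(\boldsymbol{w}_{n-2})^{\mathsf T}J$, and the recursion \eqref{recursion w_n} becomes the matrix identity $M(\boldsymbol{w}_n) = M(\boldsymbol{w}_{n-2})\,T\,JM(\boldsymbol{w}_{n-2})^{\mathsf T}J\,T\,M(\boldsymbol{w}_{n-1})$ with $T = M_{(-1)^n x}$.

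The decisive point—and the step I expect to be the real engine—is that this fold collapses onto first-column data only. Using $s_{n-2}=s_n$ to rewrite $M_{s_n}M(\boldsymbol{w}_{n-2}) = B_{n-2}$, one is led to $B_n = \bigl(B_{n-2}\,(TJ)\,B_{n-2}^{\mathsf T}\bigr)M_{s_{n-2}}^{-1}J\,T\,M_{s_{n-1}}^{-1}B_{n-1}$. In the block $B_{n-2}(TJ)B_{n-2}^{\mathsf T}$, the entries that would involve the unknown second column of $B_{n-2}$ appear only through $\det B_{n-2} = (-1)^{1+k_{n-2}} = -1$ by Lemma \ref{lemma cross product p_n and q_n}, so the block simplifies to a matrix depending solely on $P := H_{n-2}(x)$ and $Q := H_{n-3}(x^2)$; with $\epsilon = (-1)^n$ its first column is exactly $(\epsilon x PQ + 1,\ \epsilon x Q^2)$, i.e.\ the pair $p_{2k_{n-2}+1},q_{2k_{n-2}+1}$ of the Folding Lemma. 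This is precisely the Folding Lemma resurfacing, and it is what keeps the induction self-contained: the first column of $B_n$ sees $B_{n-1}$ only through its own first column $(H_{n-1}(x),H_{n-2}(x^2))$, so the induction hypothesis need only be the stated claim at $n-1$ and $n-2$, with no auxiliary data about earlier convergents.

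What remains is a bounded computation: expand this explicit product and verify that its first column equals $(H_n(x),\,H_{n-1}(x^2))$. This is where the bookkeeping is heaviest, and it is powered by the Mahler recursion $H_m(x) = H_{m-1}(x^2) + xH_{m-2}(x^4)$ of Lemma \ref{lemma recursions for F_n, G_n, H_n, and I_n} together with the quadratic (Cassini-type) relation of Lemma \ref{lemma H_n nonlinear relation}; the two parities $n$ even/odd (carrying $s_{n-2}=s_n$ and the opposite parity of $s_{n-1}$) ride along in the constants. Once the first column is checked, the continuant identities hold as polynomial identities, the length count is in hand, and the continued-fraction value follows as in the first paragraph. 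The single genuine obstacle is organizing the algebra so that the determinant cancellation is made visible \emph{before} expanding, since that cancellation is exactly what prevents the induction from having to track the full continuant matrix.
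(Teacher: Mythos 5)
Your proposal follows essentially the same route as the paper's own proof: induction with base cases $n=0,1$, the length count via the recursion for $k_n$, the Key Lemma (Lemma \ref{lemma key lemma continued fractions}) to encode everything in continuant matrices, the determinant relation (Lemma \ref{lemma cross product p_n and q_n}) with $k_{n-2}$ even to collapse the dependence on second columns, the observation $s_{n-1}-s_{n-2}=(-1)^n x$, and finally Lemma \ref{lemma H_n nonlinear relation} and Lemma \ref{Lemma Hn Combinatorial} to identify the first column as $(H_n(x),H_{n-1}(x^2))$, concluding via Proposition \ref{PropDivisionContFrac} — your only real difference is packaging the paper's explicit elementary-matrix manipulations into the cleaner identities $M(\overleftarrow{\boldsymbol{v}})=M(\boldsymbol{v})^{\mathsf{T}}$ and $M(-\boldsymbol{v})=(-1)^{\#\boldsymbol{v}}JM(\boldsymbol{v})J$. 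One slip to correct when you write it out: the Folding-Lemma pair $(\epsilon xPQ+1,\ \epsilon xQ^2)$ is the \emph{second} column of the block $B_{n-2}(TJ)B_{n-2}^{\mathsf{T}}$, whose first column is actually $(\epsilon xP^2,\ \epsilon xPQ-1)$; equivalently, the folding pair is the first column of that block multiplied on the right by $M_{s_{n-2}}^{-1}J$ — this is harmless because the trailing factor in your expression for $B_n$ works out to $M_{s_{n-2}}^{-1}JTM_{s_{n-1}}^{-1}=\left(\begin{smallmatrix}0&-1\\1&0\end{smallmatrix}\right)$, which swaps the columns into place, and the full expansion then gives exactly $p_{k_n}=p_l+\epsilon xP(Qp_l-Pq_l)$ and $q_{k_n}=q_l+\epsilon xQ(Qp_l-Pq_l)$ as in the paper.
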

\begin{proof}
The theorem holds true for $n = 0$ and $1$, and then apply induction. Assume that $n \ge 2$. From the recursion, the length of $\boldsymbol{w}_n$ is equal to twice the length of $\boldsymbol{w}_{n-2}$, the length of $\boldsymbol{w}_{n-1}$ and 2 added together. Thus, the length of $\boldsymbol{w}_n$ is indeed $k_n$.\medskip

Now let $l = k_{n-1}$ and $m = k_{n-2}$. Verifying that $\boldsymbol{w}_n$ satisfies this recursion is the most difficult part. Start along the lines of the proof of the Folding Lemma in \cite[Lemma 6.3]{borwein2014neverending}. Define
\begin{align*}
    \rho_{n-1} = [a_0;\: a_1,\: a_2,\:\dots,\:a_l] \quad\text{and}\quad \rho_{n-2} = [b_0;\: b_1,\: b_2,\:\dots,\:b_m],
\end{align*}
where $a_0 = s_{n-1}$ and $b_0 = s_{n-2}$ such that
\begin{align*}
    \boldsymbol{w}_{n-1} = [a_1;\: a_2,\:a_3,\:\dots,\:a_l]\quad\text{and}\quad \boldsymbol{w}_{n-2} = [b_1;\: b_2,\: b_3,\:\dots,\:b_m].
\end{align*}
Then the Key Lemma (Lemma \ref{lemma key lemma continued fractions}) gives
\begin{align*}
    \begin{pmatrix}
    a_0 & 1 \\ 1 & 0
    \end{pmatrix}
        \begin{pmatrix}
    a_1 & 1 \\ 1 & 0
    \end{pmatrix}
    \dots
    \begin{pmatrix}
    a_l & 1 \\ 1 & 0
    \end{pmatrix}
    =
    \begin{pmatrix}
    p_{l} & p_{l-1} \\ q_{l} & q_{l-1}
    \end{pmatrix}.
\end{align*}
Set $t = (-1)^nx$, and compute $[\boldsymbol{w}_{n-2},\: t,\: - \overleftarrow{\boldsymbol{w}_{n-2}}]$ by first dealing with $[- \overleftarrow{\boldsymbol{w}_{n-2}},\: -b_0]$ as follows:
\begin{align*}
    &\begin{pmatrix}
    -b_m & 1 \\ 1 & 0
    \end{pmatrix}\dots
    \begin{pmatrix}
    -b_1 & 1 \\ 1 & 0
    \end{pmatrix}
    \begin{pmatrix}
    -b_0 & 1 \\ 1 & 0
    \end{pmatrix}=
    \Bigg(\begin{pmatrix}
    -b_0 & 1 \\ 1 & 0
    \end{pmatrix}
    \begin{pmatrix}
    -b_1 & 1 \\ 1 & 0
    \end{pmatrix}
    \dots
    \begin{pmatrix}
    -b_m & 1 \\ 1 & 0
    \end{pmatrix} 
    \Bigg)^T \\&\quad =
        \begin{pmatrix}
    (-1)^{m+1}p_m & (-1)^mq_m \\ (-1)^mp_{m-1} & (-1)^{m-1}q_{m-1}
    \end{pmatrix}
    =\begin{pmatrix}
    -p_m & q_m \\ p_{m-1} & -q_{m-1}
    \end{pmatrix},
\end{align*}
because $m = k_{n-1}$ is even by Lemma \ref{Lemma defining r_n}. Then $-\overleftarrow{\boldsymbol{w}_{n-2}}$ is computed as
\begin{align*}
    \begin{pmatrix}
    -b_m & 1 \\ 1 & 0
    \end{pmatrix}
    \dots
    \begin{pmatrix}
    -b_1 & 1 \\ 1 & 0
    \end{pmatrix}= 
    \begin{pmatrix}
    -p_m & q_m \\ p_{m-1} & -q_{m-1}
    \end{pmatrix}
        \begin{pmatrix}
    0 & 1 \\ 1 & b_0
    \end{pmatrix} =
    \begin{pmatrix}
    q_m & b_0q_m-p_m  \\
    -q_{m-1}& p_{m-1} -b_0q_{m-1}
    \end{pmatrix}.
\end{align*}
By the Key Lemma for $[\boldsymbol{w}_{n-2},\: t,\: - \overleftarrow{\boldsymbol{w}_{n-2}}]$, we obtain
\begin{align*}
    &    \begin{pmatrix}
    b_0 & 1 \\ 1 & 0
    \end{pmatrix}
    \dots
    \begin{pmatrix}
    b_m & 1 \\ 1 & 0
    \end{pmatrix}
    \begin{pmatrix}
    t & 1 \\ 1 & 0
    \end{pmatrix}
        \begin{pmatrix}
    -b_m & 1 \\ 1 & 0
    \end{pmatrix}
    \dots
    \begin{pmatrix}
    -b_1 & 1 \\ 1 & 0
    \end{pmatrix}\\
    &\quad =
    \begin{pmatrix}
    p_m & p_{m-1} \\ q_m & q_{m-1}
    \end{pmatrix}
    \begin{pmatrix}
    t & 1 \\ 1 & 0
    \end{pmatrix}
    \begin{pmatrix}
    q_m & b_0q_m-p_m  \\
    -q_{m-1}& p_{m-1} -b_0q_{m-1}
    \end{pmatrix}\\
    &\quad =
    \begin{pmatrix}
    tp_m+p_{m-1}& p_m \\ tq_m+q_{m-1} & q_m
    \end{pmatrix}
    \begin{pmatrix}
    q_m & b_0q_m-p_m  \\
    -q_{m-1}& p_{m-1} -b_0q_{m-1}
    \end{pmatrix}\\
    &\quad =
    \begin{pmatrix}
    (tp_m+p_{m-1})q_m-q_{m-1}p_m& 
    (tp_m+p_{m-1})(b_0q_m-p_m) + p_m(p_{m-1} -b_0q_{m-1})\\
    (tq_m+q_{m-1})q_m-q_{m-1}q_m &
    (tq_m+q_{m-1})(b_0q_m-p_m) +q_m(p_{m-1} -b_0q_{m-1})
    \end{pmatrix}\\
    &\quad =
    \begin{pmatrix}
    tp_mq_m+(-1)^m & tb_0p_mq_m - tp_m^2 +(-1)^mb_0\\
    tq_m^2 & tb_0q_m^2 - tq_mp_m+1
    \end{pmatrix},
\end{align*}
where again $(-1)^m = 1$. Now the Key Lemma is applied to $[t,\: \boldsymbol{w}_{n-1}]$ to deduce
\begin{align*}
    \begin{split}
    &\begin{pmatrix}
    t & 1 \\ 1 & 0
    \end{pmatrix}
    \begin{pmatrix}
    a_1 & 1 \\ 1 & 0
    \end{pmatrix}
        \begin{pmatrix}
    a_2 & 1 \\ 1 & 0
    \end{pmatrix}
    \dots
    \begin{pmatrix}
    a_l & 1 \\ 1 & 0
    \end{pmatrix}
    =
    \begin{pmatrix}
    t & 1 \\ 1 & 0
    \end{pmatrix}
    \begin{pmatrix}
    a_0 & 1 \\ 1 & 0
    \end{pmatrix}^{-1}
    \begin{pmatrix}
    p_l & p_{l-1} \\ q_l & q_{l-1}
    \end{pmatrix}\\
    &\quad =
    \begin{pmatrix}
    t & 1 \\ 1 & 0
    \end{pmatrix}
        \begin{pmatrix}
    0 & 1 \\ 1 & -a_0
    \end{pmatrix}
    \begin{pmatrix}
    p_l & p_{l-1} \\ q_l & q_{l-1}
    \end{pmatrix}
    =
    \begin{pmatrix}
    1 & t-a_0 \\ 0 & 1
    \end{pmatrix}
    \begin{pmatrix}
    p_l & p_{l-1} \\ q_l & q_{l-1}
    \end{pmatrix}=            
    \begin{pmatrix}
    p_l+q_l(t-a_0) & *\\ q_l & *
    \end{pmatrix}.
    \end{split}
\end{align*}
Here, the $*$-terms denote expressions that do not matter for the end result. Note that $s_{n-1} - s_{n-2} = x$ for even $n$ and $s_{n-1} - s_{n-2} = -x$ for odd $n$. It means that $s_{n-1} - s_{n-2} = a_0 - b_0 = (-1)^nx=t$, and hence $t - a_0 = -b_0$. Now the entire recursion $[\boldsymbol{w}_{n-2},\: (-1)^nx,\: - \overleftarrow{\boldsymbol{w}_{n-2}} ,\:(-1)^nx,\:\boldsymbol{w}_{n-1}]$ is computed by multiplying the matrices found for $[\boldsymbol{w}_{n-2},\: t,\: - \overleftarrow{\boldsymbol{w}_{n-2}}]$ and $[t,\: \boldsymbol{w}_{n-1}]$:
\begin{align*}
   &\begin{pmatrix}
    tp_mq_m+1 & tb_0p_mq_m-tp_m^2 +b_0\\
    tq_m^2 & tb_0q_m^2-tq_mp_m+1
    \end{pmatrix}
    \begin{pmatrix}
    p_l-q_lb_0& *\\ q_l & *
    \end{pmatrix}\\
    &\quad =
    \begin{pmatrix}
    ( tp_mq_m+1)(p_l-q_lb_0) +(tp_mq_mb_0 - tp_m^2 + b_0)q_l &*\\
    tq_m^2(p_l - q_lb_0) + (tq_m^2b_0 - tq_mp_m + 1)q_l & *
    \end{pmatrix}\\
    &\quad =
    \begin{pmatrix}
    p_l+tp_mq_mp_l -tp_m^2q_l&*\\
    q_l+tq_m^2p_l-tq_mp_mq_l&*
    \end{pmatrix} =     \begin{pmatrix}
    p_l+(-1)^nxp_m(q_mp_l -p_mq_l)&*\\
    q_l+(-1)^nxq_m(q_mp_l-p_mq_l)&*
    \end{pmatrix}.
\end{align*}
As $l = k_{n-1}$ and $m = k_{n-2}$, the induction hypothesis gives that $p_l = H_{n-1}(x)$, $p_m = H_{n-2}(x)$, $q_l = H_{n-2}(x^2)$ and $p_l = H_{n-3}(x^2)$. Since $n \ge 2$, one can apply Lemma \ref{lemma H_n nonlinear relation} for $n-1$:
\begin{align*}
    q_mp_l-p_mq_l = H_{n-3}(x^2)H_{n-1}(x)-H_{n-2}(x)H_{n-2}(x^2) = -(-1)^{n-1}x^{2^{n-1}-1} = (-1)^nx^{2^{n-1}-1}.
\end{align*}
By Lemma \ref{Lemma Hn Combinatorial},
\begin{align*}
    p_{k_n} = p_l+(-1)^nxp_m(q_mp_l-p_mq_l) &= H_{n-1}(x) + x^{2^{n-1}}H_{n-2}(x) = H_n(x)\quad \text{and}\\
    q_{k_n} = q_l+(-1)^nxq_m(q_mp_l-p_mq_l) &= H_{n-2}(x^2) + x^{2^{n-1}}H_{n-3}(x^2) = H_{n-1}(x^2).
\end{align*}
Thus, the formulas for $p_{k_n}$ and $q_{k_n}$ are valid, and by Proposition \ref{PropDivisionContFrac}, $\rho_n = \frac{p_{k_n}}{q_{k_n}}$. This means that the recursion for $\boldsymbol{w}_n$ and $s_{n-2} = s_n$ are valid.
\end{proof}
\subsection[Specializing the folded continued fraction for rho]{Specializing the folded continued fraction for $\boldsymbol{\rho}$}\label{subsection spec}
For the regular finite continued fraction of a rational number, $[a_0; a_1,\dots, a_l]$, $a_i$ is commonly a positive integer for all $ 1 \le i \le l$. The same holds for all $i \ge 1$ for an infinite continued fraction $[a_0;a_1,a_2,\dots]$ as then there is a one-to-one correspondence between the infinite continued fractions and irrational numbers. Theorem \ref{theorem Continued Fraction rho x, -x} does not give this traditional shape for integer values $x$ as $-x$ shows up too. But one can specialise it into the preferred form as described in \cite{borwein2014neverending} with the following helpful lemma.
\begin{lemma}\label{lemma Ripple 1}
For all $x$, $y$ and $z$ such that both sides of the equation make sense, 
\begin{align*}
    [x;\: -y,\:z] = [x - 1;\: 1,\: y-1,\:-z].
\end{align*}\end{lemma}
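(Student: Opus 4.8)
The plan is to verify this as an identity of rational functions, and the route most consistent with the machinery already in place is to pass through the matrix of continuants via the Key Lemma (Lemma~\ref{lemma key lemma continued fractions}). Recall that the value of a finite continued fraction $[c_0;\,c_1,\dots,c_r]$ equals $p_r/q_r$, where $(p_r,q_r)^{\mathsf T}$ is the first column of the product $\prod_{i=0}^{r}\left(\begin{smallmatrix} c_i & 1\\ 1 & 0\end{smallmatrix}\right)$. Hence it suffices to form this product for the left word $[x;\,-y,\,z]$ (three factors) and for the right word $[x-1;\,1,\,y-1,\,-z]$ (four factors) and to compare the \emph{first} columns: if these agree, the two continued fractions take the same value. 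I would multiply the matrices from the inside out, exactly as in the proof of the Folding Lemma, keeping $x,y,z$ as formal symbols throughout.

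Carrying out the two products, I expect the left one to have first column $(-xyz+x+z,\ 1-yz)^{\mathsf T}$ and the right one to produce the \emph{same} first column, so that both continued fractions collapse to one rational function:
\begin{align*}
[x;\,-y,\,z] = \frac{-xyz+x+z}{1-yz} = \frac{xyz-x-z}{yz-1} = [x-1;\,1,\,y-1,\,-z].
\end{align*}
As a sanity check (and a perfectly acceptable alternative proof), one may instead evaluate each nested fraction directly from the bottom; both sides again telescope to $(xyz-x-z)/(yz-1)$, which I would use as an independent confirmation of the sign bookkeeping.

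The one point requiring care is that the two words have lengths three and four, so their \emph{full} continuant matrices cannot be equal; indeed their second columns differ in sign. This is not an error but the expected parity effect: by Lemma~\ref{lemma cross product p_n and q_n}, $q_rp_{r-1}-p_rq_{r-1}=(-1)^r$, and the extra partial quotient on the right flips the relevant sign, so only the first column is meaningful for the value. The remaining subtlety is the hypothesis ``both sides make sense'': the statement is a rational-function identity that holds wherever no intermediate denominator vanishes—concretely $z\neq 0$ and $yz\neq 1$—so under that proviso no separate convergence or limiting argument is needed. The main (and genuinely mild) obstacle is therefore nothing conceptual but simply tracking the minus signs through the matrix multiplication correctly.
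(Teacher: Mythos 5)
Your proposal is correct and is essentially the paper's own proof: the paper likewise applies the Key Lemma (Lemma~\ref{lemma key lemma continued fractions}) to both words, obtains the matrices with common first column $(x+z-xyz,\ 1-yz)^{\mathsf T}$ and second columns differing only in sign, and concludes by comparing the left columns. Your computed entries match the paper's exactly, so nothing further is needed.
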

\begin{proof}By the Key Lemma, and the matrices of partial convergents of the two continued fractions are
\begin{align*}
    \begin{pmatrix}
    x & 1 \\ 1 & 0 
    \end{pmatrix}
        \begin{pmatrix}
    -y & 1 \\ 1 & 0 
    \end{pmatrix}
        \begin{pmatrix}
    z & 1 \\ 1 & 0 
    \end{pmatrix} &= 
    \begin{pmatrix}
    x+z-xyz & 1-xy \\ 1-yz & -y
    \end{pmatrix} \quad \text{and}\\
    \begin{pmatrix}
    x - 1 & 1 \\ 1 & 0 
    \end{pmatrix}
        \begin{pmatrix}
    1 & 1 \\ 1 & 0 
    \end{pmatrix}
        \begin{pmatrix}
    y - 1 & 1 \\ 1 & 0 
    \end{pmatrix}
        \begin{pmatrix}
    -z & 1 \\ 1 & 0 
    \end{pmatrix} &=  \begin{pmatrix}
    x+z-xyz & xy-1 \\ 1-yz & y
    \end{pmatrix}.
\end{align*}
As the two left entries of the matrices coincide, the lemma follows.
\end{proof}
Now the regular continued fraction can be constructed. A continued fraction $[a_0; a_1, \dots, a_l]$ such that for each $1 \le i \le l$, $a_i \in \{-x,\: x\}$ can be written as a regular continued fraction that only contains $a_0, a_0-1, 1, x-2, x-1$ and $x$ by using Lemma \ref{Lemma Negative Continued Fraction} and Lemma \ref{lemma Ripple 1} repeatedly. For example,
\begin{align*}
    \rho_4(x) &= [ 1;\:x,\:x,\:x,\:-x,\: -x,\:x,\:-x,\:-x,\:x,\:x]\\
    &= [ 1;\:x,\:x,\:x-1,\:1,\:x-1,\:-[-x,\:x,\:-x,\:-x,\:x,\:x]]\\
    &= [ 1;\:x,\:x,\:x-1,\:1,\:x-1,\:x,\:-x,\:x,\:x,\:-x,\:-x]\\
    &= [ 1;\:x,\:x,\:x-1,\:1,\:x-1,\:x-1,\:1,\:x-1,\:-x,\:-x,\:x,\:x]\\
    &= [ 1;\:x,\:x,\:x-1,\:1,\:x-1,\:x-1,\:1,\:x-2,\:1,\:x-1,\:x,\:-x,\:-x]\\
    &= [ 1;\:x,\:x,\:x-1,\:1,\:x-1,\:x-1,\:1,\:x-2,\:1,\:x-1,\:x-1,\:1,\:x-1,\:x].
\end{align*}
This process can also be generalised in three steps:
\begin{enumerate}
    \item For all $0 \le i \le l - 1$ such that $a_i \ne a_{i+1}$, insert a 1 between $a_i$ and $a_{i+1}$.
    \item Replace every $-x$ by $x$.
    \item For each term $x$ in the new sequence, subtract $1$ for each neighbour being equal to $1$.
\end{enumerate}
The length of the continued fraction increases by the number of sign changes in the original sequence, which is bounded by $l-1$. Thus, the new continued fraction has less than $2l$ partial quotients. Also note that for all $m \ge 3$, this method gives a `proper' continued fraction for $\rho_n(m)$ that only contains $1, m-2, m-1$ and $m$. In other words, the two parity partial convergents of the continued fraction of $\rho(m)$ are all among four possible numbers. For example, $m = 5$ gives two continued fractions
\begin{align*}
    [1, 5, 5, 4, 1, 4, 4, 1, 3, 1, 4, 4, 1, 4, 5, 4, 1, 4, 4, 1,\dots]& \quad \text{for even $n$ and} \\
    [5; 1, 4, 4, 1, 4, 4, 1, 4, 5, 4, 1, 4, 4, 1, 3, 1, 4, 5, 4,\dots]& \quad \text{for odd $n$.}
\end{align*}
\subsection[The folding curve of rho]{The folding curve of $\boldsymbol{\rho}$}\label{paperfolding curve rho subsection}
The construction of the folded continued fraction of $\rho$ shares many features with Shallit's and van der Poorten's example discussed in Section \ref{Section Intro into folded cfs}. Hence it is natural to investigate whether this new folded continued fraction shares other properties with the paperfolding sequence. In this subsection, the folding curves of $\rho$ are tackled. As $\rho$ has two convergents, there are two different curves. In Figure \ref{fig:Rho 6,10,14}, a few examples are present.

\begin{figure}[H]
 \centering
  \includegraphics[width=140mm]{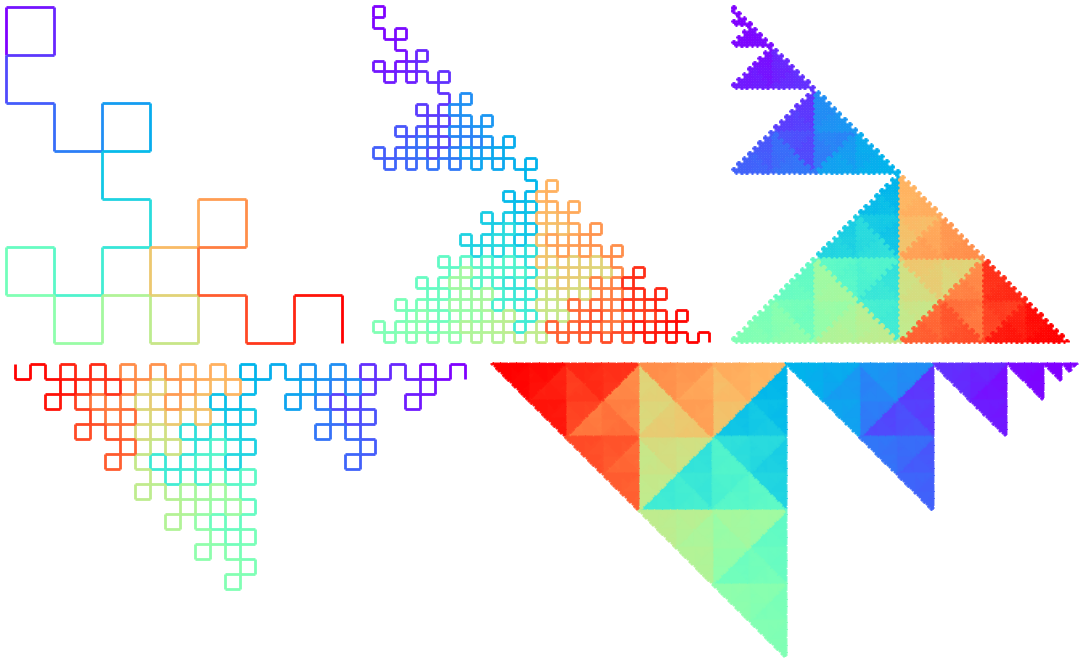}
  \caption{The curves derived from $\rho$ for several values of $n$. On top, after 6, 10 and 14 iterations, respectively, and, on the bottom after 7 and 13 iterations. It starts purple and slowly fades to red following the colours of the rainbow.}
  \label{fig:Rho 6,10,14}
\end{figure}
These curves resemble right isosceles triangles that increase in size. They are not perfect triangles, as the diagonals contain `hooks'. These hooks are small, local features that do not appear on a global scale. At the limit, they resemble true triangles. For one parity of $n$, the side lengths of the triangles grow by a factor of two and one triangle is added at the iteration $n+2$. The side lengths of the triangles of opposite parity differ by $\sqrt{2}$.

\begin{figure}[H]
\centering
  \includegraphics[width=90mm]{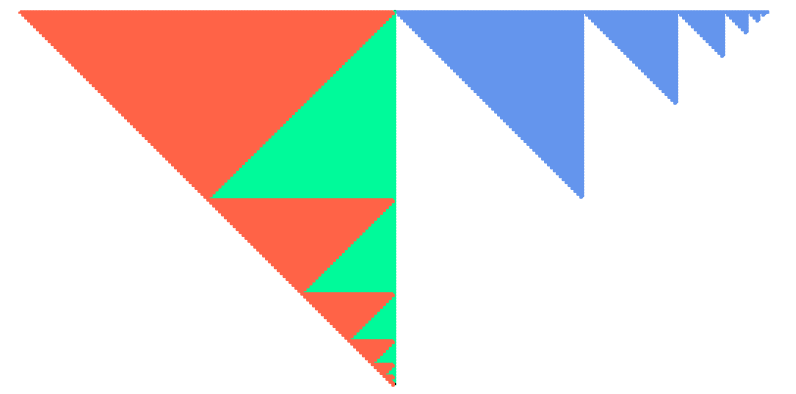}
  \caption{The curve for $\rho_{15}$. Blue is the term $\boldsymbol{w}_{13}$, green is $-\protect\overleftarrow{\boldsymbol{w}_{13}}$ and red is $\boldsymbol{w}_{14}$. Thus, the folding curve  first goes to the right, then up, and the last term completes the largest triangle.}
  \label{fig:Rho fold}
\end{figure}
Computational evidence suggests that these curves are not self-crossing for any $n \ge 0$. In Figure \ref{fig:Rho fold}, the way the fold works is drawn. The pattern is clear.
\begin{obs}
For all $n \ge 0$, the curve induced by the parity of the partial quotients of $\rho$ is not self-crossing. Moreover, by taking eight copies of the two limits and rotating and flipping them, the entire $\mathbb{Z}^2$ grid can be covered.
\end{obs}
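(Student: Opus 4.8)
The plan is to translate the sign data of the partial quotients into turtle graphics over the Gaussian integers and then run two inductions off the recursion \eqref{recursion w_n}: one for non-self-crossing and one for the tiling claim. First I would fix the encoding: identify the plane with $\mathbb{C}$, let a heading be a power of $i$, and let each partial quotient $\pm x$ contribute a unit step followed by a left turn (multiply the heading by $i$) when its sign is $+$ and a right turn (multiply by $-i$) when its sign is $-$. Reading a word $\boldsymbol{w}$ with turn signs $(\epsilon_1,\dots,\epsilon_m)$ from a start point $P$ with heading $h$ then produces a lattice path whose vertices and final heading are explicit sums of powers of $i$. The first thing I would record is the geometric meaning of the two word operations: negation $-\boldsymbol{w}$ flips every $\epsilon_j$ and hence reflects the path, while the composite $-\overleftarrow{\boldsymbol{w}}$ has turn sequence $(-\epsilon_m,\dots,-\epsilon_1)$, which is exactly the turn sequence obtained by traversing the $\boldsymbol{w}$-path backwards. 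Thus $-\overleftarrow{\boldsymbol{w}_{n-2}}$ traces a congruent copy of the $\boldsymbol{w}_{n-2}$-curve, run from its far end; this is the genuine ``fold,'' and the two connecting quotients $(-1)^n x$ in \eqref{recursion w_n} are the two creases.

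For non-self-crossing I would prove, by induction on $n$, a strengthened statement: the $\boldsymbol{w}_n$-curve is a simple lattice path contained in a right isosceles triangle $T_n$, meeting the two legs of $\partial T_n$ only along prescribed edges and entering and leaving at prescribed corner vertices with prescribed headings. The recursion assembles $\boldsymbol{w}_n$ from $\boldsymbol{w}_{n-2}$, a crease, the backward copy $-\overleftarrow{\boldsymbol{w}_{n-2}}$, a second crease, and $\boldsymbol{w}_{n-1}$; using the heading and endpoint formulas I would check that these three subpaths sit in three sub-triangles of $T_n$ with disjoint interiors, the two $\boldsymbol{w}_{n-2}$-copies occupying reflected positions across the fold line and $\boldsymbol{w}_{n-1}$ completing the hypotenuse region, exactly as in Figure \ref{fig:Rho fold}. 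Simplicity of each piece is the induction hypothesis, so the only candidate crossings are between distinct pieces; disjointness of the sub-triangles rules these out except along shared boundary edges, and there I would invoke the boundary-contact part of the hypothesis together with the explicit creases to confirm the junctions glue without retracing an edge. The ``hooks'' noted in the text are precisely the controlled boundary contacts and must be tracked, not ignored.

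For the tiling claim I would pass to the two parity limits. By Corollary \ref{corr different limits rho} there are two limit words, and by the triangle induction each fills a right isosceles triangle, the even and odd triangles differing by the factor $\sqrt{2}$ and a $45^\circ$ rotation. The key sub-claim is that within its triangle the limit curve covers every lattice edge exactly once; I would obtain this from the recursion by showing the edge set exactly doubles at each step (with the creases supplying the new edges) and that no edge is covered twice, which is the non-self-crossing result in the limit together with a counting argument matching $\#\boldsymbol{w}_n = k_n$ against the number of interior edges of $T_n$. Granting edge-exact covering of the triangle, eight suitably rotated and reflected copies drawn from the two parity limits tile the plane with disjoint interiors, since right isosceles triangles tile, so the eight corresponding curves together cover every edge of $\mathbb{Z}^2$ exactly once.

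The main obstacle is the bookkeeping in the non-self-crossing induction: pinning down the exact triangles $T_n$, the corner headings, and the boundary edges precisely enough that the two folded copies of $\boldsymbol{w}_{n-2}$ and the piece $\boldsymbol{w}_{n-1}$ are provably interior-disjoint and glue correctly at the two creases. Because of the hooks the triangles are only approximately filled, so the invariant must allow a thin controlled boundary layer while still forbidding genuine crossings; calibrating this invariant to be simultaneously strong enough to be inductive and weak enough to hold is the delicate point. Everything else—the encoding, the geometry of $-\overleftarrow{\boldsymbol{w}}$, and the passage to the dihedral tiling—should then be routine.
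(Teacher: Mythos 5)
There is nothing in the paper for your proposal to match: the statement you were asked to prove is labelled an \emph{Observation}, and the paper explicitly offers only empirical support (``Computational evidence suggests that these curves are not self-crossing for any $n \ge 0$''), together with Figure \ref{fig:Rho fold} illustrating how the three pieces $\boldsymbol{w}_{n-2}$, $-\overleftarrow{\boldsymbol{w}_{n-2}}$, $\boldsymbol{w}_{n-1}$ assemble. So the real question is whether your proposal constitutes a proof on its own, and it does not. The decisive step --- the strengthened inductive invariant --- is never formulated; you yourself defer it as ``the delicate point.'' Moreover, as stated, your sub-triangle decomposition would fail: the paper notes that the diagonals of these curves carry ``hooks,'' so the three subpaths do \emph{not} sit inside three straight-sided right isosceles triangles with disjoint interiors. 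The copy $-\overleftarrow{\boldsymbol{w}_{n-2}}$ and the piece $\boldsymbol{w}_{n-1}$ meet along a hooked, interlocking common boundary, not along a line segment, so ``disjointness of the sub-triangles rules these out except along shared boundary edges'' is precisely the assertion that needs proof, and the invariant that would make it inductive (a description of the jagged boundary sharp enough to show the hooks of adjacent pieces mesh without edge repetition) is the entire content of the problem, not bookkeeping around it.

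The tiling half inherits the same gap in amplified form. You argue: each parity limit fills a right isosceles triangle edge-exactly, right isosceles triangles tile the plane, hence eight copies cover $\mathbb{Z}^2$. But since the limit curves do not have straight hypotenuses, the eight regions are not triangles and their interlocking along hooked boundaries is exactly what must be verified; the clean statement ``right isosceles triangles tile'' does not transfer. The edge-exactness sub-claim is also only asserted: your counting remark that ``the edge set exactly doubles at each step'' is not even numerically right, since $k_n = k_{n-1} + 2k_{n-2} + 2$ (roughly quadrupling over two steps, matching the doubling of side lengths claimed in the paper), and no comparison of $k_n$ with the number of lattice edges in the region is carried out. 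In short, your plan is a sensible research programme --- and arguably the natural route to upgrading the paper's Observation to a theorem --- but both of its pillars (the hook-aware inductive invariant and the interlocking/exact-covering argument) are missing, so it remains at the same evidentiary level as the paper's own discussion.
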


The appearance of triangles is not surprising as the folding curve of the recursion defined by $\boldsymbol{q}_0 = [\:]$ and $\boldsymbol{q}_n = [\boldsymbol{q}_{n-1},\: (-1)^nx,\: -\overleftarrow{\boldsymbol{q}_{n-1}}]$ converges to a single triangle. Similarly, recursions giving gradually growing dragon curves exist, for example, the folding curve of the recursion defined by $\boldsymbol{v}_0 = \boldsymbol{v}_1 = [\:]$ and $[\boldsymbol{v}_{n-2},\: x, \:- \overleftarrow{\boldsymbol{v}_{n-2}} ,\:(-1)^nx,\:\boldsymbol{v}_{n-1}]$. Both of these folding curves are presented in Figure \ref{fig:alternative fold}. Thus, there is a strong relation between the folding curves of the paperfolding sequence and $\boldsymbol{w}$.
\begin{figure}[H]
\centering
  \includegraphics[width=120mm]{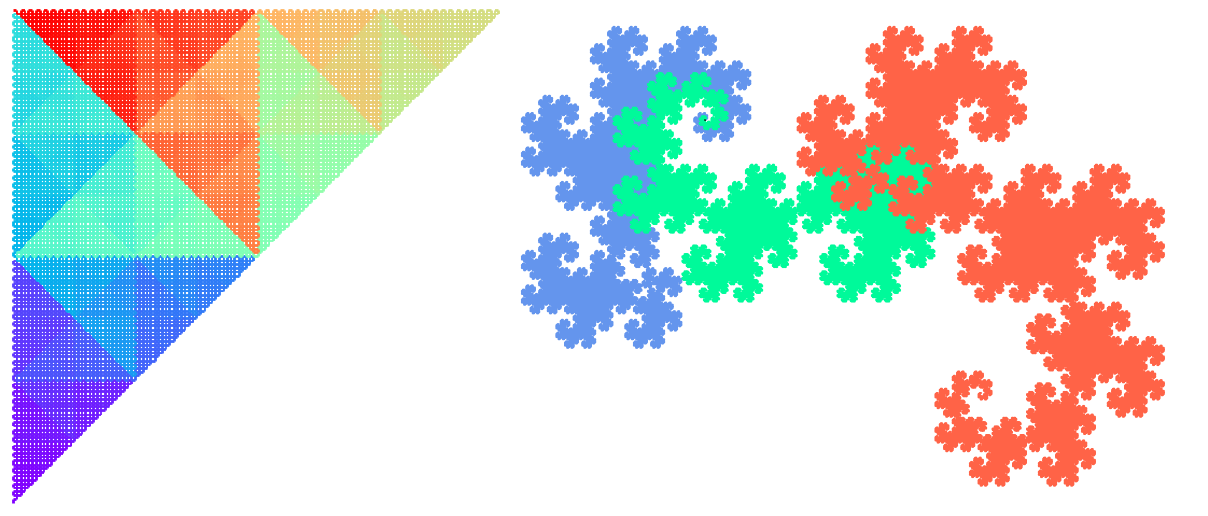}
  \caption{On the left is the curve for $\boldsymbol{q}$ depicted after 14 iterations. It starts purple and slowly fades into red following the colours of the rainbow. On the right is the curve for $\boldsymbol{v}$ after 17 iterations. Blue is the term $\boldsymbol{v}_{n-2}$, green is $-\protect\overleftarrow{\boldsymbol{v}_{n-2}}$ and red is $\boldsymbol{v}_{n-1}$. The red and green pieces together form a mirror paperfolding dragon, just like the red and green curve in Figure \ref{fig:Rho fold} form a triangle.}
  \label{fig:alternative fold}
\end{figure}
To strengthen this connection even further, both $\boldsymbol{q}_n$ and $\boldsymbol{w}_n$ are also observed to be a folded continued fraction of certain functions, respectively, of
\begin{align*}
    x\sum_{n=0}^{\infty} (-1)^nx^{2^n} \quad \text{and} \quad 1 - \pfrac{x}{1}- \pfrac{x^2}{1}- \pfrac{x^4}{1}- \pfrac{x^8}{1} - \dotsm.
\end{align*}
\subsection[The folding sequence of rho as a Mahler function]{The folding sequence of $\boldsymbol{\rho}$ as a Mahler function}\label{Subsection where P(x) is introduced}
The folding recursion for $\boldsymbol{w}$ gives not much structure on the two intertwining sequences consisting solely of $\pm 1$. The Toeplitz construction is another method to generate the paperfolding sequence and also gives a Mahler equation for its generating function. In this subsection, we show that $\boldsymbol{w}$ posesses the same structure. The two main strategies to generate the paperfolding sequence are \cite[Example 5.1.6 and Exercise 5.7]{allouche2003automatic}:
\begin{enumerate}
    \item \textbf{The folding approach.} Let $\boldsymbol{p}_0 = [\:]$ and $\boldsymbol{p}_n = [\boldsymbol{p}_{n-1},\: 1,\:  \overleftarrow{\boldsymbol{p}_{n-1}}]$ for $n \ge 1$. Then the paperfolding sequence is $\lim_{n \to \infty} \boldsymbol{p}_n$. This definition is used to prove the paperfolding lemma.
    \item \textbf{The Toeplitz construction.} Let $\boldsymbol{p}_0 = [\:]$ again be the empty word and for $n\ge 1$,
    \begin{align*}
        \boldsymbol{p}_n = [1,\: \boldsymbol{p}_{n-1}[0],\: -1,\: \boldsymbol{p}_{n-1}[1],\: 1, \boldsymbol{p}_{n-1}[2],\: -1,\: \boldsymbol{p}_{n-1}[3],\:1,\:\boldsymbol{p}_{n-1}[4],\: \dots].
    \end{align*}
    Here, $\boldsymbol{p}_{n-1}[m]$ is the $m$th element of $\boldsymbol{p}_{n-1}$, and the $1$ and $-1$ alternate.
\end{enumerate}
To prove these definitions are equivalent, first show by induction that in the first approach, the subsequence of terms of even index alternate. That is, $\boldsymbol{p}_n[0], \boldsymbol{p}_n[2], \boldsymbol{p}_n[4], \dots$ are $1,-1,1,-1,1,\dots$. Then prove the subsequences $\boldsymbol{q}_n$ of $\boldsymbol{p}_n$ of terms of odd index satisfy $\boldsymbol{q}_n = [\boldsymbol{q}_{n-1},\: 1,\:  \overleftarrow{\boldsymbol{q}_{n-1}}]$. Let $P(x)$ be the generating function of the paperfolding sequence. Then the Toeplitz construction induces that
\begin{align}\label{Equation P not Mahler}
    P(x) = xP(x^2) + \frac{1}{1+x^2}.  
\end{align}
Multiplying both sides with $1+x^2$ shows that $P(x)$ is a Mahler function:
\begin{align*}
    (1+x^2)P(x) = (x+x^3)P(x^2) + 1.  
\end{align*}
The limit of the iteration of equality \eqref{Equation P not Mahler} gives a more direct formula:
\begin{align*}
    P(x) = \sum_{n=0}^{\infty} \frac{x^{2^n-1}}{1+x^{2^n}}.
\end{align*}
The function $P(x)$ is not rational and not even algebraic over $\mathbb{C}[x]$. If $0 < |\alpha| < 1$ is algebraic, then $P(\alpha)$ is a transcendental number \cite{france1981arithmetic, nishioka1997algebraic}.\medskip

Now we want to mimic this idea for $\rho$ and $\boldsymbol{w}$ using Theorem \ref{theorem Continued Fraction rho x, -x}. To recap, the words $\boldsymbol{w}_n$ of the folded continued fraction of $\rho_n(x)$ satisfy $\boldsymbol{w}_0 = \boldsymbol{w}_1 = [\:]$, and for $n\ge2$,
\begin{align*}
    \boldsymbol{w}_n =  [\boldsymbol{w}_{n-2},\: (-1)^nx, \: - \overleftarrow{\boldsymbol{w}_{n-2}},\: (-1)^nx,\:  \boldsymbol{w}_{n-1}].
\end{align*}
 Each word only contains $x$ and $-x$, and the sequence of $\boldsymbol{w}_n$ converges parity partially. Replace $x$ with $1$ and $-x$ with $-1$, to get
\begin{align*}
    \boldsymbol{w}_n =  [\boldsymbol{w}_{n-2},\: (-1)^n,\:  - \overleftarrow{\boldsymbol{w}_{n-2}},\: (-1)^n,\:  \boldsymbol{w}_{n-1}].
\end{align*}
From this recursion, we will compute two generating functions. First introduce some notation:
\begin{notation}
Let $N \ge 0$ and $a = (a_n)_{n=0}^{N}$ and $b = (b_n)_{n=0}^{\infty}$ be sequences. finite and infinite. Then the generating function of $a$, the polynomial $\sum_{n=0}^N a_nx^n$, is denoted by $\operatorname{GF}(a)$ and $\operatorname{GF}(b)$ is written for the power series $\sum_{n=0}^{\infty} b_nx^n$.
\end{notation}
Next, let $\mathcal{F}(x) = \lim_{n \to \infty} \operatorname{GF}(\boldsymbol{w}_{2n})$ and $\mathcal{G}(x) = \lim_{n \to \infty} \operatorname{GF}(\boldsymbol{w}_{2n+1})$, to state the Mahler equations.
\begin{theorem}\label{theorem Mahler equations words Cofrac rho}We have\begin{align*} 
    \mathcal{F}(x) &= x^2\mathcal{F}(x^4)-\frac{2x^6}{1+x^8} +  \frac{1}{1+x^4}  + \frac{x}{1+x^2}\quad\text{and}\\
    \mathcal{G}(x) &= x^4\mathcal{G}(x^4)-\frac{1-x^8}{1+x^8} +\frac{x^2}{1+x^4} -  \frac{x}{1+x^2}.
\end{align*}\end{theorem}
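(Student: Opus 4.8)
The plan is to translate both Mahler equations into statements about the coefficients of $\mathcal{F}$ and $\mathcal{G}$ organised by residue class, and then to prove those coefficient statements by induction on the folding recursion for $\boldsymbol{w}_n$, mirroring the Toeplitz analysis recalled above for the paperfolding sequence. Note first that the limits $\mathcal{F}$ and $\mathcal{G}$ exist coefficientwise because each $\boldsymbol{w}_n$ begins with $\boldsymbol{w}_{n-2}$, so prefixes stabilise along each parity.

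First I would expand the rational functions on the right-hand sides as geometric series. Writing $\mathcal{F}(x)=\sum_{p\ge0}f_px^p$, the term $x^2\mathcal{F}(x^4)$ contributes $f_j$ to the coefficient of $x^{4j+2}$, while $\frac{1}{1+x^4}$, $\frac{x}{1+x^2}$ and $-\frac{2x^6}{1+x^8}$ contribute only to positions $\equiv0\bmod4$, to odd positions, and to positions $\equiv6\bmod8$ respectively. Comparing coefficients class by class, the equation for $\mathcal{F}$ becomes equivalent to the three relations $f_{2k+1}=(-1)^k$, $f_{4j}=(-1)^j$ and $f_{4j+2}=(-1)^jf_j$; the sign $(-1)^j$ in the last relation is precisely what $-\frac{2x^6}{1+x^8}$ produces once $f_{2k+1}=(-1)^k$ is known. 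An identical computation turns the equation for $\mathcal{G}$ into $g_{2k+1}=(-1)^{k+1}$, $g_{4i+2}=(-1)^i$, together with the relations $g_{8i+4}=g_{2i}$ and $g_{8i}=(-1)^{i+1}$ that govern the class $\equiv0\bmod4$. In both cases three residue classes are forced to explicit periodic patterns and a single class carries the self-map to scale $x^4$.

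Next I would prove these coefficient relations for the limit sequences by a joint induction on the even words $\boldsymbol{w}_{2m}$ and the odd words $\boldsymbol{w}_{2m+1}$, carrying as hypothesis the value of every entry of $\boldsymbol{w}_n$ according to the residue of its position modulo $4$. Lemma \ref{Lemma defining r_n} is essential here: $\#\boldsymbol{w}_n=k_n$ is even, and its closed form gives $k_n\equiv2\bmod4$ for even $n$ and $k_n\equiv0\bmod4$ for odd $n$, so the five blocks $\boldsymbol{w}_{n-2},\,(-1)^n,\,-\overleftarrow{\boldsymbol{w}_{n-2}},\,(-1)^n,\,\boldsymbol{w}_{n-1}$ begin at offsets $0,\,k_{n-2},\,k_{n-2}+1,\,2k_{n-2}+1,\,2k_{n-2}+2$, whose residues modulo $4$ are pinned down. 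Tracking each block through the concatenation shows how it feeds the residue classes of $\boldsymbol{w}_n$: the unreversed copies $\boldsymbol{w}_{n-2}$ and $\boldsymbol{w}_{n-1}$ (the latter shifted by $2\bmod4$) transport their classes rigidly, and the two inserted symbols $(-1)^n$ land in predetermined classes. Passing to the limit over even (respectively odd) $n$ then yields the relations for $\mathcal{F}$ (respectively $\mathcal{G}$), and hence the Mahler equations.

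The hard part will be the reversed block $-\overleftarrow{\boldsymbol{w}_{n-2}}$. Because $k_{n-2}$ is even, this block sends a position $P$ of $\boldsymbol{w}_n$ back to source position $2k_{n-2}-P$ of $\boldsymbol{w}_{n-2}$, i.e. to residue $-P\bmod4$, and flips its sign; it therefore interchanges the classes $1\leftrightarrow3$ (consistently turning the ``all $-1$'' class into the ``all $+1$'' class under negation) and maps each of the classes $0$ and $2$ to itself with a sign change. The two genuinely delicate checks are: (a) that the alternating phase of the class $\equiv0\bmod4$ matches across the junctions between the three blocks that feed it, so that globally one still reads $(-1)^j$; and (b) that the class $\equiv2\bmod4$ — fed by the recursive class of $\boldsymbol{w}_{n-2}$ both directly and, negated, through the reversed block, by the recursive class of $\boldsymbol{w}_{n-1}$, and by one inserted symbol — assembles into exactly $f_{4j+2}=(-1)^jf_j$ (and its $\mathcal{G}$-analogue). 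Reconciling these recursive contributions with the lone inserted correction is the crux; once it is verified, the induction closes and the theorem follows.
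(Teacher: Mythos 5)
Your reduction of the theorem to coefficient identities is correct and complete: expanding the rational functions really does turn the equation for $\mathcal{F}$ into $f_{2k+1}=(-1)^k$, $f_{4j}=(-1)^j$, $f_{4j+2}=(-1)^jf_j$, and the one for $\mathcal{G}$ into the four relations you list, and your bookkeeping of block offsets and of the reversed block ($P\mapsto 2k_{n-2}-P$, classes $1\leftrightarrow3$, classes $0,2$ fixed with a sign flip) is accurate. The genuine gap is that the induction is never closed, and the hypothesis you propose to carry (the entries of each $\boldsymbol{w}_n$ organised by residue class) cannot close it at exactly the point you call the crux. Concretely, let $n$ be even and $P=k_{n-2}+4+4s$ a class-$2$ position inside the reversed block, so $\boldsymbol{w}_n[P]=-\boldsymbol{w}_{n-2}[k_{n-2}-4-4s]$. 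Since $k_{n-2}=4k_{n-4}+2$, the target index is $j=(P-2)/4=k_{n-4}+1+s$, while applying your class-$2$ relation to the source index gives $j'=k_{n-4}-1-s$; as $j+j'=2k_{n-4}$ is even, the identity $\boldsymbol{w}_n[P]=(-1)^j\boldsymbol{w}_n[j]$ you must establish is equivalent to
\begin{align*}
\boldsymbol{w}_{n-2}[k_{n-4}+1+s]=-\boldsymbol{w}_{n-2}[k_{n-4}-1-s],
\end{align*}
an anti-palindromic pairing of positions reflected about $k_{n-4}$. No combination of the residue-class relations yields this; it holds only because $\boldsymbol{w}_{n-2}$ itself begins with $[\boldsymbol{w}_{n-4},\:1,\:-\overleftarrow{\boldsymbol{w}_{n-4}}]$, i.e.\ you must unfold the recursion one more level and use the word-level folding structure. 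The same problem recurs at the two inserted symbols (the symbol at position $k_{n-2}$ points at the target $j=k_{n-4}$, which you can only evaluate by recognising it as the inserted symbol of $\boldsymbol{w}_{n-2}$) and for the class $4\bmod 8$ of the odd words; by contrast, your check (a) and the block-$C$ contribution do go through with the stated hypothesis, so the missing idea is localised but decisive.

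It is worth seeing why the paper does not meet this obstacle. Your two recursive relations are the composition of two \emph{linear} cross-parity relations, $f_{2m}=(-1)^mg_{m-1}$ (for $m\ge1$) and $g_{2m}=(-1)^{m+1}f_m$, and it is these, not their composition, that interact cleanly with the folding recursion: the paper encodes them as the twisted-word statement $\boldsymbol{e}_{n+1}=-\boldsymbol{w}_n$ for even $n$ and $[1,\:\boldsymbol{w}_n]=\boldsymbol{e}_{n+1}$ for odd $n$ (Proposition \ref{props recursion w and e}), converts them via the Hadamard-product bookkeeping of Lemma \ref{lemma Hadamard product mathcal F,G} into the coupled equations of Proposition \ref{Prop Simple relations F and G}, and then obtains the theorem by substituting one of those equations into the other. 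Composing the two cross relations first, as you do, is what creates the reflection problem above, because the map $4j+2\mapsto j$ scrambles the block structure while each single cross relation does not. Your plan becomes a proof if you either replace the composed relations by the cross-parity relations in the induction, or supply the unfolding/anti-palindrome lemma displayed above; as written, that step is absent.
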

This result has a lengthy proof, but there has to be a starting point somewhere. Recall that $k_n$ denotes the length of the word $\boldsymbol{w}_n$. Thus, by Lemma \ref{Lemma defining r_n}, $k_0 = k_1 = 0$, $k_n = k_{n-1} + 2k_{n-2} + 2$ for $n \ge 2$. We need a refinement modulo 4.
\begin{lemma}\label{Lemma word length rho}
For all even $n \ge 2$, $k_{n} \equiv 2 \mod 4$ and for all odd $n \ge 1$, $k_{n} \equiv 0 \mod 4$.
\end{lemma}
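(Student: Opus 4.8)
The plan is to reduce the given recurrence modulo $4$ and then run a short alternating induction. The key simplification comes from Lemma \ref{Lemma defining r_n}, which tells us that every $k_n$ is even. Because of this, the middle term of the recurrence $k_n = k_{n-1} + 2k_{n-2} + 2$ is divisible by $4$: indeed $2k_{n-2} \equiv 0 \pmod 4$. Hence, working modulo $4$, the two-step recurrence collapses to the one-step relation $k_n \equiv k_{n-1} + 2 \pmod 4$, with no remaining dependence on $k_{n-2}$.

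With this in hand the argument is immediate. Taking $k_1 = 0$ and $k_2 = 2$ as the base cases (which anchor both parities), each application of $k_n \equiv k_{n-1} + 2 \pmod 4$ simply flips the residue between $0$ and $2$. I would therefore prove by induction on $n$ that $k_n \equiv 0 \pmod 4$ for odd $n \ge 1$ and $k_n \equiv 2 \pmod 4$ for even $n \ge 2$: if $n$ is even then $n-1$ is odd, so the inductive hypothesis $k_{n-1} \equiv 0$ yields $k_n \equiv 0 + 2 = 2 \pmod 4$; and if $n$ is odd then $n-1$ is even, so $k_{n-1} \equiv 2$ yields $k_n \equiv 2 + 2 = 4 \equiv 0 \pmod 4$. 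This closes the induction and gives exactly the claimed congruences.

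There is essentially no obstacle here; the only point that must be got right is the observation that the evenness of $k_{n-2}$ annihilates the $2k_{n-2}$ term modulo $4$. Without invoking Lemma \ref{Lemma defining r_n} one could instead track the pair $(k_{n-1}, k_{n-2}) \bmod 4$ in a genuine two-step induction, but that is needlessly heavier. A fully explicit alternative would be to substitute the closed form $k_n = \tfrac{2^{n+1} + (-1)^n}{3} - 1$ and compute $k_n \bmod 4$ directly, though dealing with the division by $3$ modulo $4$ makes this route marginally more fiddly than the inductive one.
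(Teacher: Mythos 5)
Your proof is correct and follows essentially the same route as the paper: both use Lemma \ref{Lemma defining r_n} (evenness of $k_{n-2}$) to collapse the recurrence to $k_n \equiv k_{n-1} + 2 \pmod 4$ and then run the alternating induction. The base cases $k_1 = 0$ and $k_2 = 2$ are verified correctly, so nothing is missing.
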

\begin{proof}
Use that $k_n$ is even and apply induction on $k_n = 2k_{n-2} + k_{n-1} + 2 \equiv k_{n-1} + 2 \mod 4$.\end{proof}
\begin{prop}\label{prop words w at odd indices}
Let $n \ge 0$ and $0 \le m < k_n$. Then:
\begin{enumerate}
    \item If $n$ is even, then $\boldsymbol{w}_n[m] = 1$ if $m \equiv 1 \mod 4$ and $\boldsymbol{w}_n[m] = -1$ if $m \equiv 3 \mod 4$.
    \item If $n$ is odd, then $\boldsymbol{w}_n[m] = -1$ if $m \equiv 1 \mod 4$ and $\boldsymbol{w}_n[m] = 1$ if $m \equiv 3 \mod 4$.
\end{enumerate}\end{prop}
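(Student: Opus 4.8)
The plan is to prove both cases at once by strong induction on $n$, reading off the five pieces of the recursion $\boldsymbol{w}_n = [\boldsymbol{w}_{n-2},\ (-1)^n,\ -\overleftarrow{\boldsymbol{w}_{n-2}},\ (-1)^n,\ \boldsymbol{w}_{n-1}]$ and tracking each global position modulo $2$ and modulo $4$. For $n = 0, 1$ the words are empty, so the statement holds vacuously and serves as the base. For $n \ge 2$ I would record the index ranges of the blocks using the length identity $k_n = 2k_{n-2} + k_{n-1} + 2$: block $A = \boldsymbol{w}_{n-2}$ sits on positions $0,\dots,k_{n-2}-1$; a single separator sits at position $k_{n-2}$; the reversed block $-\overleftarrow{\boldsymbol{w}_{n-2}}$ sits on $k_{n-2}+1,\dots,2k_{n-2}$; a second separator sits at $2k_{n-2}+1$; and block $C = \boldsymbol{w}_{n-1}$ occupies $2k_{n-2}+2,\dots,k_n-1$. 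The key global fact I would lean on throughout is that each $k_n$ is even (Lemma \ref{Lemma defining r_n}), refined mod $4$ where needed by Lemma \ref{Lemma word length rho}.

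Two of the blocks are transparent. In block $A$ the global index equals the local index, so $\boldsymbol{w}_n[m] = \boldsymbol{w}_{n-2}[m]$; since $n$ and $n-2$ have the same parity, the induction hypothesis for $\boldsymbol{w}_{n-2}$ gives precisely the asserted value at odd $m$. In block $C$ I would write $m = 2k_{n-2}+2+i$ with $i$ the local index in $\boldsymbol{w}_{n-1}$; because $k_{n-2}$ is even one has $2k_{n-2} \equiv 0 \pmod{4}$, hence $m \equiv i + 2 \pmod{4}$, and since $n-1$ has the opposite parity to $n$ this shift of the residue by $2$ is exactly what converts the hypothesis for $\boldsymbol{w}_{n-1}$ into the claim for $\boldsymbol{w}_n$. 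The two singleton separators carry the value $(-1)^n$; the first, at the even position $k_{n-2}$, is not an odd position and so is irrelevant, while the second, at $2k_{n-2}+1 \equiv 1 \pmod{4}$, has value $(-1)^n$, which is exactly the value demanded at positions $\equiv 1 \pmod 4$ for both parities.

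The heart of the argument, and the step I expect to be the main obstacle, is the reversed block $-\overleftarrow{\boldsymbol{w}_{n-2}}$. I would parametrize it by the source index $j$ in $\boldsymbol{w}_{n-2}$, so that $\boldsymbol{w}_n[m] = -\boldsymbol{w}_{n-2}[j]$ with $m + j = 2k_{n-2}$. This single identity does the two things I need simultaneously: it forces $m$ and $j$ to have the same parity, so that odd global positions come exactly from odd positions of $\boldsymbol{w}_{n-2}$ (all that the hypothesis controls), and, since $2k_{n-2} \equiv 0 \pmod{4}$, it gives $m \equiv -j \pmod{4}$. Feeding the residue flip $j \mapsto -j$ together with the overall sign flip of $-\overleftarrow{\cdot}$ back into the induction hypothesis reproduces the claimed values, the sign flip precisely compensating for the interchange of the residues $1$ and $3$.

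The only remaining care is bookkeeping: one must verify that the five blocks partition $\{0,\dots,k_n-1\}$ with no odd position omitted or double counted, which the length recursion and the parities (plus the mod-$4$ refinement to place the separators) guarantee. I would present the write-up as one even-$n$ computation and one odd-$n$ computation, since the target values swap between the two parities; once the position-to-source-index dictionary $m = 2k_{n-2}-j$ (for block $B$) and $m = 2k_{n-2}+2+i$ (for block $C$) is set up, each case reduces to a short check on $j \bmod 4$, respectively $i \bmod 4$.
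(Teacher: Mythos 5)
Your proposal is correct and follows essentially the same route as the paper's proof: induction on $n$ with the vacuous base cases $n=0,1$, splitting $\boldsymbol{w}_n$ into the five blocks of the recursion, and checking each block by tracking global positions modulo $4$ (same-parity inheritance for $\boldsymbol{w}_{n-2}$, a shift by $2$ for $\boldsymbol{w}_{n-1}$, and the separators landing at positions $\equiv 0$ and $\equiv 1 \bmod 4$). The only noteworthy difference is your handling of the reversed block through the reflection identity $m+j=2k_{n-2}$, which requires only the evenness of $k_{n-2}$ (Lemma \ref{Lemma defining r_n}) and is marginally cleaner than the paper's method of first locating the initial entry of $-\overleftarrow{\boldsymbol{w}_{n-2}}$ modulo $4$ via Lemma \ref{Lemma word length rho} and then stepping through it in increments of four.
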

\begin{proof}
Use induction on $n$; the cases $n = 0$ and $n=1$ follow easily. Now assume the statement holds for $n-1$ and $n-2$. Then recall Lemma \ref{Lemma word length rho} and the recursion 
\begin{align*}
    \boldsymbol{w}_n =  [\boldsymbol{w}_{n-2},\: (-1)^n,\:  -\overleftarrow{\boldsymbol{w}_{n-2}},\: (-1)^n,\:  \boldsymbol{w}_{n-1}].
\end{align*}
For all five concatenated pieces of the recursion, the statement will be proven. The first, $\boldsymbol{w}_{n-2}$, follows from $n-2$ and $n$ sharing the same parity. The first $(-1)^n$ appears on position $k_{n-2}$, which is even, hence does not interfere. Now consider the third part, $-\overleftarrow{\boldsymbol{w}_{n-2}}$. If $n$ is even, then $k_n \equiv 2 \mod 4$, so the position of the first term of $-\overleftarrow{\boldsymbol{w}_{n-2}}$ is 3 modulo 4. Then $-\overleftarrow{\boldsymbol{w}_{n-2}}[4k] = -\boldsymbol{w}_{n-2}[k_n-1-4k] = -\boldsymbol{w}_{n-2}[1] = -1$ is on positions $3$ modulo 4 and $-\overleftarrow{\boldsymbol{w}_{n-2}}[4k+2] = -\boldsymbol{w}_{n-2}[k_n-1-4k-2] = -\boldsymbol{w}_{n-2}[3] = 1$ on positions 1 modulo 4. If $n$ is odd, $k_n \equiv 0 \mod 4$, $-\overleftarrow{\boldsymbol{w}_{n-2}}[0]$ is at a position 1 modulo 4. Then $-\overleftarrow{\boldsymbol{w}_{n-2}}[4k] = -\boldsymbol{w}_{n-2}[k_n-1-4k] = -\boldsymbol{w}_{n-2}[3] = 1$ is on positions $1$ modulo 4 and $-\overleftarrow{\boldsymbol{w}_{n-2}}[4k+2] = -\boldsymbol{w}_{n-2}[k_n-4k-3] = -\boldsymbol{w}_{n-2}[1] = 1$ on positions 3 modulo 4. The second $(-1)^n$ appears on an odd position $2k_{n-2} + 1$. By Lemma \ref{Lemma word length rho}, $2k_{n-2} + 1$ is 1 modulo 4, and $(-1)^n$ is thus exactly as required. The last part $\boldsymbol{w}_{n-1}$ begins on position $2k_{n-2} + 1$ which is $2$ modulo 4. If $m \equiv 1 \mod 4$, $\boldsymbol{w}_{n-1}[m]$ is placed on a position that is 3 modulo 4 in $\boldsymbol{w}_n$. Similarly, if $m \equiv 3 \mod 4$, $\boldsymbol{w}_{n-1}[m]$ is added at a position $1$ modulo 4 in $\boldsymbol{w}_n$.\end{proof}

\begin{corollary}\label{corollary GF odd indices} We have
\begin{align*}
    \lim_{n \to \infty} \operatorname{GF}([0,\:\boldsymbol{w}_{2n}[1],\:0,\:\boldsymbol{w}_{2n}[3],\:\dots,\:0,\:\boldsymbol{w}_{2n}[k_{2n}-1]]) &= \frac{x}{1+x^2} \quad\text{and}\\
    \lim_{n \to \infty} \operatorname{GF}([0,\:\boldsymbol{w}_{2n+1}[1],\:0,\:\boldsymbol{w}_{2n+1}[3],\:\dots,\:0,\:\boldsymbol{w}_{2n+1}[k_{2n+1}-1]]) &= \frac{-x}{1+x^2}.
\end{align*}
\end{corollary}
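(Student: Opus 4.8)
The statement concerns the generating function of the odd-indexed entries of $\boldsymbol{w}_n$, with the even-indexed slots zeroed out. By Proposition \ref{prop words w at odd indices}, these odd-indexed entries are completely determined: they alternate in a strictly periodic fashion depending only on $m \bmod 4$ and on the parity of $n$. So the plan is to feed the pattern from Proposition \ref{prop words w at odd indices} directly into a generating-function computation and take the limit. Crucially, the terms at odd positions are \emph{not} built recursively in a complicated way; they form a fixed periodic sequence, which is what makes a closed form available.

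First I would make the position pattern fully explicit. Fix $n$ even. By Proposition \ref{prop words w at odd indices}, for $0 \le m < k_n$ we have $\boldsymbol{w}_n[m] = +1$ when $m \equiv 1 \bmod 4$ and $\boldsymbol{w}_n[m] = -1$ when $m \equiv 3 \bmod 4$. The sequence in Corollary \ref{corollary GF odd indices} keeps these odd-index values in their original positions and inserts $0$ at every even index. Hence, as a sequence indexed by the original position $m$, its nonzero entries are exactly $+1$ at positions $1, 5, 9, \dots$ and $-1$ at positions $3, 7, 11, \dots$. Summing the monomials,
\begin{align*}
\operatorname{GF}\big([0,\,\boldsymbol{w}_{n}[1],\,0,\,\boldsymbol{w}_{n}[3],\,\dots]\big)
= \sum_{j \ge 0} x^{4j+1} - \sum_{j \ge 0} x^{4j+3}
= (x - x^3)\sum_{j \ge 0} x^{4j},
\end{align*}
where the upper limit of the finite sums runs up to position $k_n - 1$. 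For $n$ odd the roles of $+1$ and $-1$ are swapped by Proposition \ref{prop words w at odd indices}, producing the same expression with an overall sign flip.

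Next I would pass to the limit. As $n \to \infty$, the length $k_n \to \infty$ by Lemma \ref{Lemma defining r_n}, so the finite partial sums converge (for the formal/analytic purposes here, on $|x| < 1$) to the full geometric series $\sum_{j \ge 0} x^{4j} = \tfrac{1}{1 - x^4}$. Therefore the even-index limit is
\begin{align*}
(x - x^3)\cdot \frac{1}{1 - x^4} = \frac{x(1 - x^2)}{(1 - x^2)(1 + x^2)} = \frac{x}{1 + x^2},
\end{align*}
and the odd-index limit is its negative, $\tfrac{-x}{1+x^2}$, exactly as claimed. The only subtlety I would be careful about is bookkeeping the position modulo $4$ of the \emph{last} entry, i.e.\ that $k_n - 1$ is always an odd position of the correct residue so no partial period is dropped; this is controlled by Lemma \ref{Lemma word length rho}, which pins down $k_n \bmod 4$. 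That congruence check — confirming the truncation always ends cleanly on a full block — is the one place where care is needed, but it is a finite verification rather than a genuine obstacle, so the whole argument reduces to reading off Proposition \ref{prop words w at odd indices} and summing a geometric series.
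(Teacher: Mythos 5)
Your proof is correct and is exactly the argument the paper intends: the corollary is stated without proof precisely because, as you do, one reads off the periodic $\pm1$ pattern at odd positions from Proposition \ref{prop words w at odd indices} and sums the resulting geometric series $(x-x^3)\sum_{j\ge0}x^{4j}=\frac{x}{1+x^2}$, with the sign flipped for odd $n$. Your worry about the truncation point $k_n-1$ is in fact immaterial, since the limit is coefficient-wise (each coefficient stabilizes once $k_n$ exceeds its index), but noting it does no harm.
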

To continue, it is useful to introduce the terms of $\boldsymbol{w}_n$ of even index with a small twist. Define 
\begin{align*}
    \boldsymbol{e}_n := [\boldsymbol{w}_n[0],\: -\boldsymbol{w}_n[2],\: \boldsymbol{w}_n[4],\: \dots,\: (-1)^m\boldsymbol{w}_n[2m],\: \dots,\: -(-1)^{\frac{k_n}{2}}\boldsymbol{w}_n[k_n-2]].
\end{align*}
For example, as $\boldsymbol{w}_4 = [1,\: 1,\: 1,\: -1,\: -1,\: 1,\: -1,\: -1,\: 1,\: 1]$, we have $\boldsymbol{e}_4 := [1,\:-1,\:-1,\:1,\:1]$. 
\begin{prop}\label{props recursion w and e} Let $n \ge 0$. If $n$ is even, then $\boldsymbol{w}_{n} = -\boldsymbol{e}_{n+1}$, and if $n$ is odd, then $[1,\:\boldsymbol{w}_{n}] = \boldsymbol{e}_{n+1}$.
\end{prop}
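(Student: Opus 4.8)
The plan is to prove the statement by strong induction on $n$, verifying the two base cases $n=0$ and $n=1$ directly (both reduce to comparing the empty word, respectively $[1]$, against $\boldsymbol{e}_1$ and $\boldsymbol{e}_2$), and then carrying out a single inductive step for $n \ge 2$ that handles the even and odd cases in one stroke. The engine of the induction is an auxiliary recursion for the words $\boldsymbol{e}_n$ themselves, namely
\begin{align*}
    \boldsymbol{e}_{n+1} = [\boldsymbol{e}_{n-1},\: -1,\: -\overleftarrow{\boldsymbol{e}_{n-1}},\: -\boldsymbol{e}_n] \qquad (n \ge 2),
\end{align*}
which mirrors the five-block recursion for $\boldsymbol{w}_{n+1}$ one level down. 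Once this is in hand, the proposition follows by substituting the induction hypothesis and comparing with the defining recursion of $\boldsymbol{w}_n$.

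First I would establish the $\boldsymbol{e}$-recursion by reading off the even-indexed entries of $\boldsymbol{w}_{n+1} = [\boldsymbol{w}_{n-1},\: (-1)^{n+1},\: -\overleftarrow{\boldsymbol{w}_{n-1}},\: (-1)^{n+1},\: \boldsymbol{w}_n]$ block by block, recording for each entry its global position and the alternating sign $(-1)^m$ attached to the $m$th term of $\boldsymbol{e}_{n+1}$. The leading block $\boldsymbol{w}_{n-1}$ contributes exactly $\boldsymbol{e}_{n-1}$, by a direct match of definitions; the trailing block $\boldsymbol{w}_n$ starts at the even position $2k_{n-1}+2$ and, since $k_{n-1}$ is even by Lemma \ref{Lemma defining r_n}, contributes $-\boldsymbol{e}_n$; and the second $(-1)^{n+1}$ sits at an odd position and is discarded. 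The key bookkeeping device is the identity $(-1)^{k_{n-1}/2} = (-1)^n$ for $n \ge 2$, which I would extract from Lemma \ref{Lemma word length rho} (it records that $k_{n-1}/2$ is even precisely when $n-1$ is odd). This makes the isolated first $(-1)^{n+1}$ contribute $(-1)^{k_{n-1}/2}(-1)^{n+1} = -1$, independently of the parity of $n$.

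The genuinely delicate block is the middle one, $-\overleftarrow{\boldsymbol{w}_{n-1}}$. Here I would track that it begins at position $k_{n-1}+1$, so its entries falling on even global positions are its odd-indexed entries, which via $-\overleftarrow{\boldsymbol{w}_{n-1}}[2j-1] = -\boldsymbol{w}_{n-1}[k_{n-1}-2j]$ are the negated even-indexed entries of $\boldsymbol{w}_{n-1}$ read in reverse. Re-expressing $\boldsymbol{w}_{n-1}[k_{n-1}-2j]$ through $\boldsymbol{e}_{n-1}$ and feeding in the alternating sign together with $(-1)^{k_{n-1}/2}=(-1)^n$, all the $(-1)^n$ and $(-1)^j$ factors cancel and this block collapses to $-\overleftarrow{\boldsymbol{e}_{n-1}}$. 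Assembling the four surviving blocks yields the claimed recursion. I expect this sign-and-index chase through the reversed block to be the main obstacle; the safest route is to run it once with the parity identity in hand rather than splitting into separate even and odd computations.

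Finally, to close the induction I would substitute the induction hypothesis into the $\boldsymbol{e}$-recursion. In the even case, the hypothesis gives $\boldsymbol{e}_{n-1} = -\boldsymbol{w}_{n-2}$ and $\boldsymbol{e}_n = [1,\boldsymbol{w}_{n-1}]$; using Lemma \ref{Lemma Negative Continued Fraction} to rewrite $-\overleftarrow{(-\boldsymbol{w}_{n-2})} = \overleftarrow{\boldsymbol{w}_{n-2}}$, the right-hand side becomes $-[\boldsymbol{w}_{n-2},\:1,\:-\overleftarrow{\boldsymbol{w}_{n-2}},\:1,\:\boldsymbol{w}_{n-1}]$, which is $-\boldsymbol{w}_n$ since $(-1)^n=1$, giving $\boldsymbol{w}_n = -\boldsymbol{e}_{n+1}$. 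In the odd case, the hypothesis gives $\boldsymbol{e}_{n-1} = [1,\boldsymbol{w}_{n-2}]$ and $\boldsymbol{e}_n = -\boldsymbol{w}_{n-1}$; the same manipulation, now with $\overleftarrow{[1,\boldsymbol{w}_{n-2}]} = [\overleftarrow{\boldsymbol{w}_{n-2}},1]$, turns the right-hand side into $[1,\:\boldsymbol{w}_{n-2},\:-1,\:-\overleftarrow{\boldsymbol{w}_{n-2}},\:-1,\:\boldsymbol{w}_{n-1}] = [1,\boldsymbol{w}_n]$, using $(-1)^n=-1$. Both cases reproduce the defining recursion of $\boldsymbol{w}_n$, completing the induction.
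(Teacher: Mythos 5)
Your proposal is correct and follows essentially the same route as the paper's proof: both establish the auxiliary recursion $\boldsymbol{e}_{n+1} = [\boldsymbol{e}_{n-1},\: -1,\: -\overleftarrow{\boldsymbol{e}_{n-1}},\: -\boldsymbol{e}_{n}]$ by a block-by-block position-and-sign analysis of the defining recursion for $\boldsymbol{w}_{n+1}$ (resting on the parity facts $k_{n-1}$ even and $(-1)^{k_{n-1}/2}=(-1)^n$ from Lemmas \ref{Lemma defining r_n} and \ref{Lemma word length rho}), and then substitute the induction hypothesis at $n-2$ and $n-1$ to recover the defining recursion of $\boldsymbol{w}_n$ in each parity case. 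Your index bookkeeping is in fact tidier than the paper's, which carries a few harmless off-by-one slips (writing $\boldsymbol{w}_n$ where $\boldsymbol{w}_{n-1}$ is meant, and concluding with $\boldsymbol{w}_{n+1}$ rather than $\boldsymbol{w}_n$) in the final substitution step.
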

\begin{proof}
As $k_n$ is even, $\#\boldsymbol{e}_n = \frac{k_n}{2}$, and $k_n = \frac{2^{n+1}+(-1)^n}{3} - 1$ by Lemma \ref{Lemma defining r_n}. Then, if $n$ is even, $k_{n+1} = 2k_{n}$, and if $n$ is odd, $k_{n+1} = 2k_{n}+1$. Thus, the number of terms on both sides agrees for both parities of $n$. The statement holds for $n=0$ and $n=1$ and proceed by induction. If $n$ is even, then $k_{n-1} \equiv 0 \mod 4$, and if $n$ is odd, $k_{n-1} \equiv 2 \mod 4$, so $\frac{k_{n-1}}{2} \equiv n \mod 2$. Thus, $n + 1+\frac{k_{n-1}}{2} \equiv 1 \mod 2$ and $(-1)^{n + 1+\frac{k_{n-1}}{2}} = -1$. Then
\begin{align*}
    &(-1)^{\frac{k_{n-1}}{2}}[\boldsymbol{w}_{n-1}[k_{n-1}-2],\:-\boldsymbol{w}_{n-1}[k_{n-1}-4],\: \dots,\: -(-1)^{\frac{k_{n-1}}{2}}\boldsymbol{w}_{n-1}[0]] \\
    &\quad = (-1)^{\frac{k_{n-1}}{2}}\overleftarrow{[-(-1)^{\frac{k_{n-1}}{2}}\boldsymbol{w}_{n-1}[0],\: (-1)^{\frac{k_{n-1}}{2}}\boldsymbol{w}_{n-1}[2],\: \dots,\:\boldsymbol{w}_{n-1}[k_{n-1}-2]]}\\
    &\quad = (-1)^{\frac{k_{n-1}}{2}}\cdot \big(-(-1)^{\frac{k_{n-1}}{2}}\overleftarrow{\boldsymbol{e}_{n-1}}\big) = -\overleftarrow{\boldsymbol{e}_{n-1}}
\end{align*}
is needed to prove that
\begin{align*}
    \boldsymbol{e}_{n+1} &= [\boldsymbol{w}_n[0],\: -\boldsymbol{w}_n[2],\: \boldsymbol{w}_n[4],\: \dots,\: (-1)^m\boldsymbol{w}_n[2m],\: \dots,\: -(-1)^{\frac{k_n}{2}}\boldsymbol{w}_n[k_n-2]] \\
    &=[\boldsymbol{w}_{n-1}[0],\: \dots,\: -(-1)^{\frac{k_{n-1}}{2}}\boldsymbol{w}_{n-1}[k_{n-1} - 2],\: (-1)^{\frac{k_{n-1}}{2}}(-1)^{n+1},\: -(-1)^{\frac{k_{n-1}}{2}}\overleftarrow{-\boldsymbol{w}_{n-1}}[1],\: \dots,\:\\
    &\quad \overleftarrow{-\boldsymbol{w}_{n-1}}[k_{n-1}-1],\: -\boldsymbol{w}_{n}[0], \boldsymbol{w}_{n}[2],\: \dots ,\: (-1)^{\frac{k_n}{2}}\boldsymbol{w}_{n}[k_n-2]]\\
    &=[\boldsymbol{e}_{n-1},\: (-1)^{n+1+\frac{k_{n-1}}{2}},\: (-1)^{\frac{k_{n-1}}{2}}\boldsymbol{w}_{n-1}[k_{n-1}-2],\: \dots,\: -\boldsymbol{w}_{n-1}[0],\: -\boldsymbol{e}_{n}]\\
    &= [\boldsymbol{e}_{n-1},\:-1,\: -\overleftarrow{\boldsymbol{e}_{n-1}},\: -\boldsymbol{e}_{n}]. 
\end{align*}
Thus, if $n$ is even, 
\begin{align*}
    -\boldsymbol{e}_{n+1} &= -[\boldsymbol{e}_{n-1},\:-1,\: -\overleftarrow{\boldsymbol{e}_{n-1}},\: -\boldsymbol{e}_{n}]\\
    &= -[-\boldsymbol{w}_{n-2}],\: -1,\: \overleftarrow{\boldsymbol{w}_{n-2}},\: -[1,\: \boldsymbol{w}_n]] \\
    &= [\boldsymbol{w}_{n-2},\: 1,\: -\overleftarrow{\boldsymbol{w}_{n-2}},\: 1,\: \boldsymbol{w}_n] = \boldsymbol{w}_{n+1};
\end{align*}
and if $n$ is odd,
\begin{align*}
    \boldsymbol{e}_{n+1} &= [\boldsymbol{e}_{n-1},\:-1,\: -\overleftarrow{\boldsymbol{e}_{n-1}},\: -\boldsymbol{e}_{n}]\\
    &= [[1,\:\boldsymbol{w}_{n-2}],\: -1,\: -\overleftarrow{[1,\:\boldsymbol{w}_{n-2}]},\: \boldsymbol{w}_n] \\
    &= [1,\boldsymbol{w}_{n-2},\: -1,\: -\overleftarrow{\boldsymbol{w}_{n-2}},\: -1,\: \boldsymbol{w}_n] = [1,\:\boldsymbol{w}_{n+1}].\qedhere
\end{align*}
\end{proof}
The Hadamard product of two analytic functions $f$ and $g$ is the termwise multiplication of their power series and is denoted by $f \star g$. We review this notion in greater details in Chapter 3.
\begin{lemma}\label{lemma Hadamard product mathcal F,G}
We have
\begin{align*}
    \frac{1}{1+x} \star \mathcal{F}(x) = \mathcal{F}(x) - \frac{2x}{1+x^2} \quad\text{and}\quad \frac{1}{1+x} \star \mathcal{G}(x) = \mathcal{G}(x) + \frac{2x}{1+x^2}.
\end{align*}
\end{lemma}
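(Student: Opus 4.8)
The plan is to reduce the lemma to the already-established Corollary \ref{corollary GF odd indices} by identifying what the Hadamard product with $\frac{1}{1+x}$ does at the level of coefficients. Since $\frac{1}{1+x} = \sum_{n=0}^{\infty} (-1)^n x^n$, its Hadamard product with any power series $f(x) = \sum_m a_m x^m$ negates every odd-indexed coefficient; in closed form, $\frac{1}{1+x} \star f(x) = \sum_m (-1)^m a_m x^m = f(-x)$. Thus the two claimed identities are equivalent to
\[
  \mathcal{F}(x) - \mathcal{F}(-x) = \frac{2x}{1+x^2} \quad\text{and}\quad \mathcal{G}(-x) - \mathcal{G}(x) = \frac{2x}{1+x^2},
\]
that is, to statements purely about the odd parts of $\mathcal{F}$ and $\mathcal{G}$.

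First I would write $\mathcal{F}(x) = \sum_m a_m x^m$ and $\mathcal{G}(x) = \sum_m b_m x^m$, where the coefficients $a_m$ and $b_m$ are the stabilized values coming from the even- and odd-indexed limiting words $\boldsymbol{w}$ (they stabilize because consecutive same-parity words agree on their common initial segment). The odd part of $\mathcal{F}$ is $\tfrac{1}{2}\big(\mathcal{F}(x) - \mathcal{F}(-x)\big) = \sum_{m \text{ odd}} a_m x^m$, which is exactly the power series appearing on the left of the first line of Corollary \ref{corollary GF odd indices}; by that corollary it equals $\frac{x}{1+x^2}$. Hence $\mathcal{F}(x) - \mathcal{F}(-x) = \frac{2x}{1+x^2}$, which rearranges to $\frac{1}{1+x} \star \mathcal{F}(x) = \mathcal{F}(-x) = \mathcal{F}(x) - \frac{2x}{1+x^2}$.

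The argument for $\mathcal{G}$ is identical but invokes the second line of Corollary \ref{corollary GF odd indices}, whose odd part equals $-\frac{x}{1+x^2}$; this extra minus sign is precisely what converts the subtraction into the addition $\mathcal{G}(x) + \frac{2x}{1+x^2}$ appearing in the statement. No serious obstacle remains: the only points warranting a word of care are the bookkeeping of signs and the justification that the Hadamard product commutes with the coefficientwise limit defining $\mathcal{F}$ and $\mathcal{G}$. Both are immediate, since the Hadamard product acts on each coefficient separately and those coefficients stabilize along each parity, so one never needs to interchange a genuine analytic limit with the termwise operation.
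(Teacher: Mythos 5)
Your proof is correct and takes essentially the same route as the paper: both arguments come down to the alternating pattern of the odd-indexed coefficients of $\boldsymbol{w}$, the paper checking coefficients termwise via Proposition \ref{prop words w at odd indices} (even coefficients are untouched, odd ones flip sign), while you package the same computation as the identity $\frac{1}{1+x}\star f(x)=f(-x)$ and then cite Corollary \ref{corollary GF odd indices}, which is just that proposition's content restated as the closed form of the odd parts. The reformulation via $f(-x)$ is a slightly cleaner way to say it, but the substance is identical.
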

\begin{proof}
Let $\mathcal{F}(x) = \sum_{m=0}^{\infty}a_mx^m$ and $n \ge 0$. Then for even $n$, the left side of the equation is $1 \cdot a_n$ and the right side is $a_n + 0 = a_n$. If $n = 2m+1$ is odd, then Proposition \ref{prop words w at odd indices} gives that the left side is $-1 \cdot(-1)^m$ and the right side is $(-1)^m - 2(-1)^m = -(-1)^m$. For $\mathcal{G}$ a similar argument is used.
\end{proof}
\begin{prop}\label{Prop Simple relations F and G} The following functional equations are valid:
\begin{align*}
    \mathcal{F}(x)& =-x^2\mathcal{G}(x^2)-\frac{2x^4}{1+x^4} + 1+ \frac{x}{1+x^2} \quad \text{and}\quad \mathcal{G}(x) = -\mathcal{F}(x^2)+\frac{2x^2}{1+x^4} -  \frac{x}{1+x^2}.
\end{align*}
\end{prop}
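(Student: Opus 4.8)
The plan is to split each of $\mathcal{F}$ and $\mathcal{G}$ into its even-power and odd-power parts and treat the two parts separately. The odd-power parts have already been pinned down in Corollary \ref{corollary GF odd indices}: the contribution of the odd-indexed entries of $\boldsymbol{w}_{2n}$ converges to $\frac{x}{1+x^2}$, and that of $\boldsymbol{w}_{2n+1}$ converges to $\frac{-x}{1+x^2}$. So it remains to express the even-power parts of $\mathcal{F}$ and $\mathcal{G}$ through one another.

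For the even-power parts I would use the twisted sequence $\boldsymbol{e}_n$ from Proposition \ref{props recursion w and e}. Since $\boldsymbol{e}_n[m] = (-1)^m\boldsymbol{w}_n[2m]$, the two factors of $(-1)^m$ cancel and the even-power part $\sum_m \boldsymbol{w}_n[2m]x^{2m}$ of $\operatorname{GF}(\boldsymbol{w}_n)(x)$ equals $\operatorname{GF}(\boldsymbol{e}_n)(-x^2)$. Now Proposition \ref{props recursion w and e} rewrites $\operatorname{GF}(\boldsymbol{e}_{n+1})$ in terms of $\operatorname{GF}(\boldsymbol{w}_n)$: for odd $n$ it gives $\operatorname{GF}(\boldsymbol{e}_{n+1}) = 1 + x\operatorname{GF}(\boldsymbol{w}_n)$, and for even $n$ it gives $\operatorname{GF}(\boldsymbol{e}_{n+1}) = -\operatorname{GF}(\boldsymbol{w}_n)$. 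Applying the first with $n = 2k-1$ and substituting $y = -x^2$ shows the even-power part of $\operatorname{GF}(\boldsymbol{w}_{2k})(x)$ is $1 - x^2\operatorname{GF}(\boldsymbol{w}_{2k-1})(-x^2)$; applying the second with $n = 2k$ shows the even-power part of $\operatorname{GF}(\boldsymbol{w}_{2k+1})(x)$ is $-\operatorname{GF}(\boldsymbol{w}_{2k})(-x^2)$. Passing to the limit and adding back the odd-power parts from Corollary \ref{corollary GF odd indices} yields
\begin{align*}
    \mathcal{F}(x) &= 1 - x^2\mathcal{G}(-x^2) + \frac{x}{1+x^2} \quad\text{and}\\
    \mathcal{G}(x) &= -\mathcal{F}(-x^2) - \frac{x}{1+x^2}.
\end{align*}

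The final step converts the arguments $-x^2$ into $x^2$. Writing $\mathcal{G} = \mathcal{G}_{\mathrm{ev}} + \mathcal{G}_{\mathrm{odd}}$, the even part is unchanged under $x^2 \mapsto -x^2$ while the odd part flips sign, so $\mathcal{G}(-x^2) = \mathcal{G}(x^2) - 2\mathcal{G}_{\mathrm{odd}}(x^2)$; since $\mathcal{G}_{\mathrm{odd}}(y) = \frac{-y}{1+y^2}$ by Corollary \ref{corollary GF odd indices}, this gives $\mathcal{G}(-x^2) = \mathcal{G}(x^2) + \frac{2x^2}{1+x^4}$, and symmetrically $\mathcal{F}(-x^2) = \mathcal{F}(x^2) - \frac{2x^2}{1+x^4}$. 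Substituting these into the two displayed identities produces exactly the claimed equations. I expect the main obstacle to be purely bookkeeping: keeping the index parity, the reindexing $\boldsymbol{w}_n[2m] = (-1)^m\boldsymbol{e}_n[m]$, and the substitution $y = -x^2$ consistent, so that the sign twist built into $\boldsymbol{e}_n$ and the sign flip coming from converting $-x^2$ to $x^2$ are not conflated.
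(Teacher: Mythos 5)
Your proof is correct and follows essentially the same route as the paper: both split $\operatorname{GF}(\boldsymbol{w}_n)$ into its odd-index part (Corollary \ref{corollary GF odd indices}) and its even-index part, recover the latter from $\boldsymbol{e}_n$ via Proposition \ref{props recursion w and e}, and then undo the $(-1)^m$ twist before passing to the limit. The only difference is notational: where you substitute $y=-x^2$ and use the even/odd-function identity $\mathcal{G}(-y)=\mathcal{G}(y)-2\mathcal{G}_{\mathrm{odd}}(y)$, the paper performs the identical coefficientwise sign flip as a Hadamard product with $\frac{1}{1+x^2}$ together with its Lemma \ref{lemma Hadamard product mathcal F,G}, so your version bypasses the Hadamard formalism but is operationally the same argument.
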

\begin{proof}
From the definition of $\boldsymbol{e}_n$ and Corollary \ref{corollary GF odd indices} one obtains that, for each $n \ge 0$,
\begin{align*}
    \operatorname{GF}(\boldsymbol{w}_n)(x) &= \operatorname{GF}([\boldsymbol{w}_n[0],0, \boldsymbol{w}_n[2],0, \dots, \boldsymbol{w}_n[k_n-2], 0])(x)\\ &+ \operatorname{GF}([0, \boldsymbol{w}_n[1],0, \boldsymbol{w}_n[3], \dots,0, \boldsymbol{w}_n[k_n-1]])(x) \\
    &= \frac{1}{1+x^2} \star \operatorname{GF}(\boldsymbol{e}_n)(x^2) + (-1)^n\operatorname{GF}([0,1,0,-1,0,1,\dots,0,(-1)^{k_n}]).
\end{align*}
Then inserting Corollary \ref{corollary GF odd indices} and Proposition \ref{props recursion w and e} gives
\begin{align*}
    \mathcal{F}(x)& = \lim_{n \to \infty}\operatorname{GF}(\boldsymbol{w}_{2n})(x) \\
    &= \lim_{n \to \infty}\frac{1}{1+x^2} \star  \operatorname{GF}(\boldsymbol{e}_{2n})(x^2) + (-1)^{2n}\operatorname{GF}([0,1,0,-1,0,1,\dots,0,(-1)^{k_{2n}}])\\
    &=\frac{1}{1+x^2} \star \lim_{n \to \infty} \operatorname{GF}([1, \boldsymbol{w}_{2n-1}])(x^2) +  \frac{x}{1+x^2} =\frac{1}{1+x^2} \star (1+x^2\mathcal{G}(x^2)) +  \frac{x}{1+x^2}\\
    &=\frac{1}{1+x^2}\star x^2\mathcal{G}(x^2) + 1+ \frac{x}{1+x^2}=x^2\bigg(\frac{-1}{1+x^2}\star G(x^2)\bigg) + 1+ \frac{x}{1+x^2}\\
    &=-x^2\bigg(\mathcal{G}(x^2)+\frac{2x^2}{1+x^4}\bigg) + 1+ \frac{x}{1+x^2} = -x^2\mathcal{G}(x^2)-\frac{2x^4}{1+x^4} + 1+ \frac{x}{1+x^2},
\end{align*}
where the second-to-last step is due to Lemma \ref{lemma Hadamard product mathcal F,G}. The other identity follows similarly:
\begin{align*}
    \mathcal{G}(x)& = \lim_{n \to \infty}\operatorname{GF}(\boldsymbol{w}_{2n+1})(x) \\&= \lim_{n \to \infty}\frac{1}{1+x^2} \star  \operatorname{GF}(\boldsymbol{e}_{2n+1})(x^2) + (-1)^{2n+1}\operatorname{GF}([0,1,0,-1,0,1,\dots,0,(-1)^{k_{2n+1}}])\\
    &=\frac{1}{1+x^2} \star \lim_{n \to \infty} \operatorname{GF}(-\boldsymbol{w}_{2n})(x^2) -  \frac{x}{1+x^2} =\frac{1}{1+x^2} \star (-\mathcal{F}(x^2)) -  \frac{x}{1+x^2} \\
    &= -\bigg(\mathcal{F}(x^2)-\frac{2x^2}{1+x^4}\bigg) -  \frac{x}{1+x^2}=-\mathcal{F}(x^2)+\frac{2x^2}{1+x^4} -  \frac{x}{1+x^2}.\qedhere\end{align*}
\end{proof}
Now the individual Mahler equations for $\mathcal{F}$ and $\mathcal{G}$ can be computed easily. 
\begin{proof}[Proof of Theorem \ref{theorem Mahler equations words Cofrac rho}] Combine the two equalities of Proposition \ref{Prop Simple relations F and G} to deduce
\begin{align*}
    \mathcal{F}(x) &= -x^2\mathcal{G}(x^2)-\frac{2x^4}{1+x^4} + 1+ \frac{x}{1+x^2}\\
    &= -x^2\bigg(-\mathcal{F}(x^4)+\frac{2x^4}{1+x^8} -  \frac{x^2}{1+x^4}\bigg) - \frac{2x^4}{1+x^4} + 1+ \frac{x}{1+x^2}\\
   &= x^2\mathcal{F}(x^4)-\frac{2x^6}{1+x^8} -  \frac{x^4}{1+x^4}  + \frac{x}{1+x^2}
\end{align*}
and
\begin{align*}
    \mathcal{G}(x) &= -\mathcal{F}(x^2)+\frac{2x^2}{1+x^4} -  \frac{x}{1+x^2} \\
    &=-\bigg(-x^4\mathcal{G}(x^4)-\frac{2x^8}{1+x^8} + 1+ \frac{x^2}{1+x^4}  \bigg)+\frac{2x^2}{1+x^4} -  \frac{x}{1+x^2} \\
    &=x^4\mathcal{G}(x^4)-\frac{1-x^8}{1+x^8} +\frac{x^2}{1+x^4} -  \frac{x}{1+x^2}.\qedhere
\end{align*}\end{proof}
Like $F$ and $G$, the functions $\mathcal{F}$ and $\mathcal{G}$ can be connected with a simple Mahler function.
\begin{corollary}
Let $\mathcal{I}(x) = \mathcal{F}(x^3) - x^2\mathcal{G}(x^3)$. Then $\mathcal{I}(x)$ is 2-Mahler.
\end{corollary}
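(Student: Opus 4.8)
The plan is to substitute $x \mapsto x^3$ into the two Mahler equations for $\mathcal{F}$ and $\mathcal{G}$ supplied by Theorem \ref{theorem Mahler equations words Cofrac rho}, then to form the combination $\mathcal{I}(x) = \mathcal{F}(x^3) - x^2\mathcal{G}(x^3)$ and watch the transcendental parts collapse. After the substitution, $\mathcal{F}(x^3)$ is expressed through $\mathcal{F}(x^{12})$ and $\mathcal{G}(x^3)$ through $\mathcal{G}(x^{12})$, so $\mathcal{I}(x)$ becomes a $\mathbb{C}(x)$-linear combination of $\mathcal{F}(x^{12})$ and $\mathcal{G}(x^{12})$ plus a rational correction; concretely, the homogeneous part works out to $x^6\mathcal{F}(x^{12}) - x^{14}\mathcal{G}(x^{12})$.

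The key observation — the analogue for $\mathcal{F},\mathcal{G},\mathcal{I}$ of Proposition \ref{propFGH} — is that this combination factors as $x^6\bigl(\mathcal{F}(x^{12}) - x^8\mathcal{G}(x^{12})\bigr)$, and since $\mathcal{I}(x^4) = \mathcal{F}(x^{12}) - x^8\mathcal{G}(x^{12})$ by the very definition of $\mathcal{I}$, it equals $x^6\mathcal{I}(x^4)$. This single step is the one that genuinely uses that the coefficients $1$ and $-x^2$ in $\mathcal{I}(x) = \mathcal{F}(x^3) - x^2\mathcal{G}(x^3)$ were chosen precisely so as to send the $\mathcal{F}$/$\mathcal{G}$ combination at level $x^{12}$ to a scalar multiple of the same combination at level $x^4$. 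I expect this matching of exponents to be the main (and essentially only) conceptual obstacle; once it is in place, everything else is rational-function bookkeeping.

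The leftover terms assemble into a single rational function, namely
$$R(x) = \frac{x^2 - 2x^{18} - x^{26}}{1+x^{24}} + \frac{1 - x^8}{1+x^{12}} + \frac{x^3 + x^5}{1+x^6},$$
so that $\mathcal{I}(x) = x^6\mathcal{I}(x^4) + R(x)$. To finish, I would clear denominators: letting $D(x)$ be a common denominator of $R$ (a polynomial, e.g. a multiple of $(1+x^6)(1+x^{12})(1+x^{24})$), the product $D(x)R(x)$ is a polynomial, and multiplying through yields $D(x)\mathcal{I}(x) - x^6 D(x)\mathcal{I}(x^4) - D(x)R(x) = 0$ with polynomial coefficients and polynomial inhomogeneity. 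This is an inhomogeneous equation in the iterates $\mathcal{I}(x)$ and $\mathcal{I}(x^4) = \mathcal{I}(x^{2^2})$; since $4 = 2^2$, it is in particular a $2$-Mahler equation in the sense of Definition \ref{definition Mahler function} (of degree $d = 2$, with the $\mathcal{I}(x^2)$-coefficient vanishing and both $A_0 = D$ and $A_2 = -x^6 D$ nonzero), which proves the claim. A final sanity check of the constant terms, using $\mathcal{F}(0) = 1$ and $\mathcal{G}(0) = -1$, gives $\mathcal{I}(0) = 1 = R(0)$ and guards against an off-by-one slip in assembling $R$.
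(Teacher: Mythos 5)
Your proof is correct, and every computational claim in it checks out: the substituted equations, the identification $x^6\mathcal{F}(x^{12})-x^{14}\mathcal{G}(x^{12})=x^6\bigl(\mathcal{F}(x^{12})-x^8\mathcal{G}(x^{12})\bigr)=x^6\mathcal{I}(x^4)$, the rational remainder $R(x)$, and the final appeal to Definition \ref{definition Mahler function} (which indeed only requires $A_0A_d\ne0$, so the vanishing coefficient of $\mathcal{I}(x^2)$ is harmless). The route differs mildly from the paper's: the paper substitutes $x\mapsto x^3$ into the \emph{coupled} relations of Proposition \ref{Prop Simple relations F and G}, which express $\mathcal{F}(x)$ through $\mathcal{G}(x^2)$ and $\mathcal{G}(x)$ through $\mathcal{F}(x^2)$, and so lands in one line on the degree-one equation $\mathcal{I}(x)=x^2\mathcal{I}(x^2)+r(x)$ with $r(x)=1-\frac{2x^8+2x^{12}}{1+x^{12}}+\frac{x^3+x^5}{1+x^6}$; you instead substitute into the \emph{decoupled} equations of Theorem \ref{theorem Mahler equations words Cofrac rho}, which relate $\mathcal{F}(x)$ to $\mathcal{F}(x^4)$ and $\mathcal{G}(x)$ to $\mathcal{G}(x^4)$, and obtain the degree-two equation $\mathcal{I}(x)=x^6\mathcal{I}(x^4)+R(x)$. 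Your equation is exactly the paper's iterated once (one can check $R(x)=r(x)+x^2r(x^2)$), which is unsurprising since the Theorem itself was derived by iterating the Proposition. What the paper's choice buys is economy and a slightly sharper conclusion — a first-order inhomogeneous $2$-Mahler equation linking consecutive levels $x$ and $x^2$ — whereas your version skips the level $x^2$ entirely; for the bare claim of $2$-Mahlerness both are equally conclusive, and your explicit denominator-clearing and constant-term check ($\mathcal{F}(0)=1$, $\mathcal{G}(0)=-1$, $\mathcal{I}(0)=1=R(0)$) actually spell out steps the paper leaves implicit.
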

\begin{proof}
Combining the functions obtained in Proposition \ref{Prop Simple relations F and G} gives
\begin{align*}
    \mathcal{F}(x^3) - x^2\mathcal{G}(x^3) &= x^2\big(\mathcal{F}(x^6) - x^4\mathcal{G}(x^6)\big) -\frac{2x^8+2x^{12}}{1+x^{12}} + \frac{x^3+x^5}{1+x^6} + 1.\qedhere
\end{align*}
\end{proof}
In the same vain, the transcendence of $\mathcal{F}$ and $\mathcal{G}$ can be studied as was done for $F$ and $G$. First, an upper bound of the transcendence degree is obtained.
\begin{corollary}
The transcendence degree of $\{\mathcal{F}(x), \mathcal{G}(x), \mathcal{F}(x^2), G(x^2), \mathcal{F}(x^4), \mathcal{G}(x^4), \dots \}$ is at most $2$.
\end{corollary}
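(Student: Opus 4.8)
The plan is to mimic the argument given earlier in the excerpt for the system $\{F(q^{2^d}), G(q^{2^d})\}$: iterate the two functional equations of Proposition~\ref{Prop Simple relations F and G} to show that every member of the set is already contained in the field $\mathbb{C}(x)(\mathcal{F}(x), \mathcal{G}(x))$, which is generated over $\mathbb{C}(x)$ by only two functions. Since $\operatorname{tr\,deg}$ of the whole set then equals $\operatorname{tr\,deg}\big(\mathcal{F}(x), \mathcal{G}(x)\big) \le 2$, the corollary follows.

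First I would rewrite Proposition~\ref{Prop Simple relations F and G} so that it \emph{descends} in the argument instead of ascending. Solving the second identity for $\mathcal{F}(x^2)$ yields
\[
\mathcal{F}(x^2) = -\mathcal{G}(x) + \frac{2x^2}{1+x^4} - \frac{x}{1+x^2},
\]
and solving the first identity for $\mathcal{G}(x^2)$ yields
\[
\mathcal{G}(x^2) = \frac{1}{x^2}\left(1 - \mathcal{F}(x) + \frac{x}{1+x^2} - \frac{2x^4}{1+x^4}\right).
\]
In each case the right-hand side is a rational function of $x$ together with a $\mathbb{C}(x)$-linear expression in $\mathcal{F}(x)$ and $\mathcal{G}(x)$, so both $\mathcal{F}(x^2)$ and $\mathcal{G}(x^2)$ lie in $\mathbb{C}(x)(\mathcal{F}(x), \mathcal{G}(x))$.

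Next I would run an induction on $d$ to prove that $\mathcal{F}(x^{2^d}), \mathcal{G}(x^{2^d}) \in \mathbb{C}(x)(\mathcal{F}(x), \mathcal{G}(x))$ for all $d \ge 0$, the base case $d=0$ being trivial. Substituting $x \mapsto x^{2^d}$ into the two displayed relations expresses $\mathcal{F}(x^{2^{d+1}})$ and $\mathcal{G}(x^{2^{d+1}})$ through $\mathcal{G}(x^{2^d})$ and $\mathcal{F}(x^{2^d})$ together with rational functions of $x^{2^d}$ (hence of $x$); by the induction hypothesis the latter two lie in $\mathbb{C}(x)(\mathcal{F}(x), \mathcal{G}(x))$, so the former two do as well. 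Therefore $\mathbb{C}(x)\big(\{\mathcal{F}(x^{2^d}), \mathcal{G}(x^{2^d}) : d \ge 0\}\big) = \mathbb{C}(x)(\mathcal{F}(x), \mathcal{G}(x))$, and a field generated over $\mathbb{C}(x)$ by two elements has transcendence degree at most $2$.

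There is no genuine obstacle here, since the argument is a direct transcription of the one already used for $F$ and $G$; the only points requiring care are purely bookkeeping. One must check that solving for $\mathcal{G}(x^2)$ introduces only the harmless factor $1/x^2 \in \mathbb{C}(x)$, and that the inhomogeneous rational terms never leave $\mathbb{C}(x)$ under the substitution $x \mapsto x^{2^d}$. Both are immediate, so the only content of the proof is the rearrangement of Proposition~\ref{Prop Simple relations F and G} and the induction bookkeeping.
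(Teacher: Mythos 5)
Your proposal is correct and is essentially the paper's own argument: the paper likewise applies Proposition \ref{Prop Simple relations F and G} to conclude that adjoining $\mathcal{F}(x^{2^d}), \mathcal{G}(x^{2^d})$ never enlarges the field $\mathbb{C}(x)(\mathcal{F}(x),\mathcal{G}(x))$, written there as a terse chain of equal transcendence degrees. Your version merely makes explicit the solving of the two identities for $\mathcal{F}(x^2)$ and $\mathcal{G}(x^2)$ and the induction on $d$ that the paper leaves implicit.
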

\begin{proof}
Apply Proposition \ref{Prop Simple relations F and G}:
\begin{align*}
    2 &\ge \operatorname{tr\,deg}(\mathcal{F}(x), \mathcal{G}(x)) = \operatorname{tr\,deg}(\mathcal{F}(x), \mathcal{G}(x), \mathcal{F}(x^2), \mathcal{G}(x^2)) \\
    &= \operatorname{tr\,deg}(\mathcal{F}(x), \mathcal{G}(x), \mathcal{F}(x^2), \mathcal{G}(x^2), \mathcal{F}(x^4), \mathcal{G}(x^4))=\dots = \operatorname{tr\,deg}(\mathcal{F}(x), \mathcal{G}(x), \mathcal{F}(x^2), \mathcal{G}(x^2), \dots).\:\:\:\:\qedhere
\end{align*}
\end{proof}
Proving that $\mathcal{F}$ and $\mathcal{G}$ are transcendental over $\mathbb{C}[x]$ gives a first lower bound. A standard approach would be to apply Theorem 1.3 of \cite{nishioka2006mahler}: If $\mathcal{F}(x)$ is algebraic over $\mathbb{C}$, then  $\mathcal{F}(x)$ is in $\mathbb{C}(x)$. Then a contradiction is reached by showing that there are no coprime polynomials $a(x)$ and $b(x)$ such that $\frac{a(x)}{b(x)}$ satisfies the equations of Theorem \ref{theorem Mahler equations words Cofrac rho}. Fortunately, a short-cut exists for both $\mathcal{F}$ and $\mathcal{G}$.
\begin{prop}
The functions $\mathcal{F}$ and $\mathcal{G}$ are both transcendental over $\mathbb{C}[x]$.
\end{prop}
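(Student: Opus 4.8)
The plan is to reduce to a single function and then exploit the boundedness of the coefficients. First I would record that it suffices to prove $\mathcal{F}$ transcendental: by Proposition \ref{Prop Simple relations F and G} the combination $\mathcal{G}(x)+\mathcal{F}(x^2)$ lies in $\mathbb{C}(x)$, so $\mathcal{G}$ is algebraic over $\mathbb{C}(x)$ if and only if $\mathcal{F}(x^2)$ is, and the substitution $x\mapsto x^{2}$ does not affect algebraicity (if $\mathcal{F}(x^2)$ is algebraic over $\mathbb{C}(x)$ it is fixed by $x\mapsto -x$, hence algebraic over the fixed field $\mathbb{C}(x^2)$, i.e.\ $\mathcal{F}$ is algebraic; the converse is immediate). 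Thus $\mathcal{F}$ and $\mathcal{G}$ are transcendental together, and I may concentrate on $\mathcal{F}$.

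The short-cut for $\mathcal{F}$ is Fatou's theorem. By construction $\mathcal{F}=\lim_{n\to\infty}\operatorname{GF}(\boldsymbol{w}_{2n})$, and every entry of every $\boldsymbol{w}_n$ is $\pm 1$, so all coefficients of $\mathcal{F}$ lie in $\{-1,1\}$; in particular they are bounded integers that never vanish, whence the radius of convergence of $\mathcal{F}$ equals $1$ and $\mathcal{F}$ is not a polynomial. Fatou's theorem asserts that an integer power series of radius of convergence $1$ which is algebraic over $\mathbb{Q}(x)$ must be rational; equivalently, if it is not rational it is transcendental over $\mathbb{Q}(x)$. Moreover, since $\mathcal{F}\in\mathbb{Q}[[x]]$, any $\mathbb{C}(x)$-linear dependence among $1,\mathcal{F},\mathcal{F}^2,\dots$ can be cleared of denominators and reduces, coefficient by coefficient in $x^n$, to a homogeneous linear system over $\mathbb{Q}$; this system has a nonzero complex solution exactly when it has a nonzero rational one, so algebraicity over $\mathbb{C}(x)$ and over $\mathbb{Q}(x)$ coincide. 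It therefore remains only to rule out that $\mathcal{F}$ is rational.

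To exclude rationality I would argue through the coefficient sequence. A rational power series has coefficients obeying a linear recurrence with constant coefficients, and when these are bounded integers the sequence is eventually periodic. By the definition of $\boldsymbol{e}_n$ the coefficient of $x^{2m}$ in $\mathcal{F}$ equals $(-1)^m$ times the $m$-th entry of $\boldsymbol{e}=\lim_{n\to\infty}\boldsymbol{e}_{2n}$. Passing to the even-indexed terms of an eventually periodic sequence and twisting by the period-two factor $(-1)^m$ again yields an eventually periodic sequence, so rationality of $\mathcal{F}$ would force $\boldsymbol{e}$ to be eventually periodic. But $\boldsymbol{e}$ obeys the paperfolding-type recursion $\boldsymbol{e}_{n+1}=[\boldsymbol{e}_{n-1},\,-1,\,-\overleftarrow{\boldsymbol{e}_{n-1}},\,-\boldsymbol{e}_{n}]$ of Proposition \ref{props recursion w and e}; that is, it is a folding sequence, and such sequences are not eventually periodic. (The identical argument applied to $\lim_{n\to\infty}\boldsymbol{e}_{2n+1}$ settles $\mathcal{G}$ directly, should one wish to bypass the reduction above.)

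The main obstacle is this last assertion, the non-periodicity of the folding sequence $\boldsymbol{e}$. I would establish it by the standard self-similarity argument for paperfolding sequences \cite{allouche2003automatic}: the even-position subsequence of $\boldsymbol{e}$ reproduces $\boldsymbol{e}$ while the odd-position subsequence is the strictly alternating sequence of fold symbols, so an eventual period $T=2^{a}q$ with $q$ odd would be pushed down by the self-similarity to an odd period, incompatible with the alternating odd-position pattern. As a fallback that avoids the combinatorics entirely, one can instead invoke the conventional Mahler argument of \cite[Theorem 1.3]{nishioka2006mahler}: algebraicity of $\mathcal{F}$ would already force $\mathcal{F}\in\mathbb{C}(x)$, yet iterating the functional equation of Theorem \ref{theorem Mahler equations words Cofrac rho} generates poles at all $2^{k}$-th roots of $-1$, far too many for a rational function, contradicting $\mathcal{F}\in\mathbb{C}(x)$.
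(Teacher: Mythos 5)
Your strategy is genuinely different from the paper's and, in outline, viable: the paper obtains non-periodicity of the $\pm1$ sign sequence from the transcendence of $\rho(8)=2\lambda(\frac{1}{2})$ (Theorem \ref{theorem rho and c equal}, Adamczewski's theorem, and Lagrange's theorem on periodic continued fractions) and then invokes \cite[Theorem 1.3]{nishioka2006mahler}, whereas you replace Nishioka by Fatou's theorem --- legitimate here, since every coefficient of $\mathcal{F}$ is $\pm1$ --- and try to prove the non-periodicity purely combinatorially. Your reduction of $\mathcal{G}$ to $\mathcal{F}$ via Proposition \ref{Prop Simple relations F and G}, the passage between algebraicity over $\mathbb{C}(x)$ and over $\mathbb{Q}(x)$, and the step ``rational with bounded integer coefficients implies eventually periodic coefficients'' are all correct.

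The gap is in the one claim everything rests on: that the folding sequence is not eventually periodic. Your justification misdescribes the self-similar structure. It is false that ``the even-position subsequence of $\boldsymbol{e}$ reproduces $\boldsymbol{e}$'': by Proposition \ref{props recursion w and e}, $\boldsymbol{e}=\lim_n\boldsymbol{e}_{2n}=[1,\:\lim_n\boldsymbol{w}_{2n-1}]$, so what the even positions of $\mathcal{F}$'s coefficient sequence reproduce (after the $(-1)^m$ twist) is a shift of $\mathcal{G}$'s coefficient sequence, and symmetrically the even positions of $\mathcal{G}$'s sequence reproduce the negation of $\mathcal{F}$'s. The self-similarity is thus \emph{mutual} between two distinct sequences, and the twist matters: if $\mathcal{F}$'s sequence $(f_m)$ has eventual period $T$ (necessarily even, by the alternation in Proposition \ref{prop words w at odd indices}), what you deduce for $\mathcal{G}$'s sequence $(g_m)$ is $g_{m+T/2}=(-1)^{T/2}g_m$ for large $m$ --- a genuine period only when $T/2$ is even, and an \emph{anti-period} when $T/2$ is odd. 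Your descent ``even period $\to$ odd period $\to$ contradiction with the alternation'' never meets the anti-period case, so the induction as described does not close. It can be repaired --- an odd anti-period forces the even positions of $(g_m)$ to alternate, which via the twist forces $(f_m)$ to be eventually constant, contradicting its alternation at odd positions; in the other case the minimal periods strictly descend through $T_g\le T_f/2$ and $T_f\le T_g/2$ --- but as written the load-bearing lemma is unproved. Your fallback is exactly what the paper itself calls ``the standard approach'' before side-stepping it; it does work, but it too is only sketched: ``poles at all $2^k$-th roots of $-1$'' must be turned into the coprime-denominator analysis the paper later carries out for $\mathcal{I}$ and $\mathcal{J}$, since a priori the poles of $\mathcal{F}(x)$ and of $x^2\mathcal{F}(x^4)$ in Theorem \ref{theorem Mahler equations words Cofrac rho} could cancel.
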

\begin{proof}
By Theorem \ref{theorem rho and c equal}, $\rho(8) = 2\lambda(\frac{1}{2})$, which by \cite[Theorem 1.2]{adamczewski2010non} is transcendental, and hence not a quadratic irrational. Then any continued fraction of $\rho(8)$ is not eventually periodic by \cite[Theorem 2.48]{borwein2014neverending}. Then the coefficients of $8\mathcal{F}(x)$ are not eventually periodic and $8\mathcal{F}(x)$ is not rational and thus transcendental by \cite[Theorem 1.3]{nishioka2006mahler}. A similar argument is used for $\mathcal{G}$.\end{proof}
Such tricks are of little use when tackling the algebraic independence of $\mathcal{F}$ and $\mathcal{G}$, so more sophisticated methods have to be used. To start, tweak the formulas slightly and change the sign of $\mathcal{G}$. Namely, define
$$
\tilde{\mathcal{F}}(x):=\mathcal{F}(x)-\frac x{1+x^2}\quad\text{and}\quad
\tilde{\mathcal{G}}(x):=-\mathcal{G}(x)+\frac x{1+x^2}.
$$
Feeding in this new pair into Theorem \ref{theorem Mahler equations words Cofrac rho} gives
$$
\tilde{\mathcal{F}}(x)=x^2\tilde{\mathcal{G}}(x^2)+\frac1{1+x^4}\quad\text{and}\quad
\tilde{\mathcal{G}}(x)=\tilde{\mathcal{F}}(x^2)-\frac{x^2}{1+x^4}.
$$
These equalities imply that both $\tilde{\mathcal{F}}(x)$ and $\tilde{\mathcal{G}}(x)$ are even functions. This allows us to write $\mathcal{I}(x^2) = x\tilde{\mathcal{F}}(x^3)$ and $\mathcal{J}(x^2) = x^2\tilde{\mathcal{G}}(x^3)$, leading to the system of two $\{0,\pm1\}$-series:
\begin{align}\label{Mahler equations I and J}
    \mathcal{I}(x)=\mathcal{J}(x^2)+\frac{x}{1+x^6}\quad\text{and}\quad
\mathcal{J}(x)=\mathcal{I}(x^2)-\frac{x^5}{1+x^6}.
\end{align}

By reversing the construction, $\mathcal{I}(x)$ and $\mathcal{J}(x)$ being algebraically independent over $\mathbb{C}(x)$ would imply the same for $\mathcal{F}(x)$ and $\mathcal{G}(x)$. As most theorems for algebraic independence of Mahler functions desire homogeneous equations or that the coefficients in front of the functions are all constants, the entire route to $\mathcal{I}$ and $\mathcal{J}$ has to be taken. A simplified version of a theorem by Nishioka will be used:

\begin{theorem}[Nishioka, Theorem 3.2.2 in \cite{nishioka2006mahler}]\label{theorem Nishioka constants for functions}
Let $m \ge 1$ and $f_1(x),\dots,f_n(x) \in \mathbb{C}[[x]]$ satisfy
\begin{align*}
    \begin{pmatrix}
    f_1(x)\\\vdots\\f_n(x)
    \end{pmatrix}=A    
    \begin{pmatrix}
    f_1(x^k)\\\vdots\\f_n(x^k)
    \end{pmatrix}+
    \begin{pmatrix}
    g_1(x)\\\vdots\\g_n(x)
    \end{pmatrix}
\end{align*}
where $k \ge 2$, $A$ is an $n \times n$ matrix over $\mathbb{C}$ and $g_i(x) \in \mathbb{C}(x)$. If $f_1,\dots,f_m$ are algebraically dependent over $\mathbb{C}$, there are $c_1,\dots,c_m \in \mathbb{C}$ not all all zero such that $\sum_{i=0}^m c_i\cdot f_i(x) \in \mathbb{C}(x)$. 
\end{theorem}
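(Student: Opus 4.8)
Since this is quoted verbatim as Nishioka's Theorem 3.2.2, in the present text it is invoked rather than re-derived; the plan below records the route I would follow to reconstruct a genuine proof, which proceeds by Mahler's method. First I would \textbf{homogenise} the system. The inhomogeneous pieces $g_i\in\mathbb{C}(x)$ are removed by adjoining the constant function $1$ (which satisfies $1=1$) to the vector and absorbing the $g_i$ into a first column; this turns the affine system into a homogeneous linear Mahler system
\[
\mathbf{f}(x)=B(x)\,\mathbf{f}(x^k),\qquad \mathbf{f}={}^t(1,f_1,\dots,f_n),
\]
with $B(x)$ over $\mathbb{C}(x)$. The crucial structural feature inherited from the hypothesis is that the $n\times n$ block of $B$ is the \emph{constant} matrix $A$; only the first column carries denominators. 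Passing to $\overline{\mathbb{C}}$, I would put $A$ into Jordan form so that, up to a constant change of basis, each eigen-component transforms as $h(x)=\lambda\,h(x^k)+r(x)$ with $\lambda$ an eigenvalue of $A$ and $r\in\mathbb{C}(x)$.

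The heart of the matter is to show that algebraic dependence can only come from a \emph{linear} relation. Suppose $P(f_1,\dots,f_m)=0$ for a nonzero polynomial $P$. Applying $x\mapsto x^k$ and substituting $\mathbf{f}(x^k)=B(x)^{-1}\mathbf{f}(x)$ transports this into a new relation whose variables are transformed by $B(x)^{-1}$ and whose coefficients are pulled back along $x\mapsto x^k$. Iterating, the defining ideal of the $f_i$ over $\mathbb{C}(x)$ is stable under this twisted substitution, and I would examine its leading form together with the valuation of its coefficients at $x=0$. Because the matrix block is constant, the eigenvalues $\lambda$ do not themselves vary with $x$, and a growth/valuation comparison along the orbit $\{x^{k^N}\}$ accumulating at the origin forces the total degree of $P$ down to $1$. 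This reduces the situation to a $\mathbb{C}(x)$-linear relation among $1,f_1,\dots,f_n$; a final step, again exploiting that $A$ is constant, improves the rational coefficients to constant ones, yielding $\sum_i c_i f_i\in\mathbb{C}(x)$ with the $c_i\in\mathbb{C}$ not all zero.

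The main obstacle is precisely this degree-reduction step: ruling out genuinely nonlinear algebraic relations. This is the deep content of Nishioka's method and, in full generality, rests on a Nesterenko-type zero (multiplicity) estimate controlling auxiliary polynomials along the Mahler orbit. The constancy of $A$ is exactly the hypothesis that keeps this estimate manageable, which is why the statement is advertised as a \emph{simplified} version; by contrast, the homogenisation and the final upgrade of rational coefficients to constants are comparatively routine bookkeeping once the estimate is available.
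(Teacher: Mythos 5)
The paper itself offers no proof of this statement: it is imported as Theorem 3.2.2 of Nishioka's monograph and used as a black box, so your choice to invoke it rather than re-derive it is exactly what the paper does, and at that level there is nothing to compare.

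Your reconstruction sketch, however, should not be read as a proof, and its account of where the difficulty lies is off target. The homogenisation and the final descent from rational to constant coefficients are indeed the routine parts. But the pivotal claim --- that a growth/valuation comparison along the orbit $\{x^{k^N}\}$ ``forces the total degree of $P$ down to $1$'' --- is asserted, not argued, and attributing this step to a Nesterenko-type zero (multiplicity) estimate confuses two different parts of Mahler's method. Multiplicity estimates are the engine for algebraic independence of the \emph{values} of Mahler functions at algebraic points; the statement at hand is purely functional, concerning algebraic dependence over $\mathbb{C}(x)$, and its proof (Kubota's argument, as presented by Nishioka) is algebraic: one takes a nonzero relation of minimal degree, transports it using the substitution $x \mapsto x^k$ together with the system --- the constancy of $A$ is precisely what guarantees the transported relation keeps the same shape --- and plays the two relations against each other via minimality until only an affine-linear relation with constant coefficients survives; no analytic growth arguments or zero estimates occur. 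So your skeleton (homogenise, reduce algebraic dependence to linear dependence, descend to constants) is the right one, but the mechanism you propose for the crucial middle step is not the one that works, and as written that step is a genuine gap.
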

The theorem is unhelpful when dealing with the Mahler equations \eqref{Mahler equations I and J}, so we iterate them once:
$$
\mathcal{I}(x)=\mathcal{I}(x^4)+\frac{x}{1+x^6}-\frac{x^{10}}{1+x^{12}}, \quad
\mathcal{J}(x)=\mathcal{J}(x^4)-\frac{x^5}{1+x^6}+\frac{x^2}{1+x^{12}}.
$$
\begin{theorem}
$\mathcal{I}(x)$ and $\mathcal{J}(x)$ are algebraically independent over $\mathbb{C}(x)$.
\end{theorem}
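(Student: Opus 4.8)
The plan is to assume that $\mathcal{I}$ and $\mathcal{J}$ are algebraically dependent over $\mathbb{C}(x)$ and reach a contradiction. The once-iterated equations $\mathcal{I}(x)=\mathcal{I}(x^4)+g_1(x)$ and $\mathcal{J}(x)=\mathcal{J}(x^4)+g_2(x)$, with $g_1,g_2\in\mathbb{C}(x)$, are already in the \emph{diagonal} shape required by Theorem \ref{theorem Nishioka constants for functions}: take $n=m=2$, $k=4$, and $A$ the $2\times2$ identity matrix. Hence the theorem (together with the standard fact that, for solutions of such systems, algebraic independence over $\mathbb{C}$ and over $\mathbb{C}(x)$ coincide) reduces everything to the purely combinatorial claim that \emph{no nontrivial $\mathbb{C}$-linear combination $L:=c_1\mathcal{I}+c_2\mathcal{J}$ lies in $\mathbb{C}(x)$}.

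To attack this I would first record a rationality criterion adapted to the $\{0,\pm1\}$-nature of the series in \eqref{Mahler equations I and J}: a power series whose coefficients take only finitely many values and which represents a rational function must have an eventually periodic coefficient sequence (a Fatou-type statement, provable by splitting over a $\mathbb{Q}$-basis of the value set and appealing to the integer case). Since the coefficients $I_n,J_n$ of $\mathcal{I},\mathcal{J}$ lie in $\{0,\pm1\}$, the coefficients of any $L$ lie in the finite set $\{c_1a+c_2b:a,b\in\{0,\pm1\}\}$, so rationality of $L$ would force eventual periodicity of $(c_1I_n+c_2J_n)_n$. The structural input is read directly off \eqref{Mahler equations I and J}: because $\tfrac{x}{1+x^6}$ and $\tfrac{x^5}{1+x^6}$ contribute only odd powers while $\mathcal{J}(x^2),\mathcal{I}(x^2)$ contribute only even powers, one gets $I_{2m}=J_m$ and $J_{2m}=I_m$, whereas the odd-index subsequences of $\mathcal{I}$ and $\mathcal{J}$ are themselves periodic (with period $12$ in the index) and non-constant.

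The even-index swap $I_{2m}=J_m,\ J_{2m}=I_m$ drives the contradiction. If $L$ is eventually periodic then so is its even-index subsequence $L_{2m}=c_1J_m+c_2I_m$, which is precisely the coefficient sequence of the swapped combination $\widehat{L}:=c_2\mathcal{I}+c_1\mathcal{J}$; hence $\widehat{L}$ is eventually periodic and therefore rational as well. When $c_1^2\neq c_2^2$ the pair $(L,\widehat{L})$ determines $(\mathcal{I},\mathcal{J})$ by an invertible linear transformation, so $\mathcal{I},\mathcal{J}\in\mathbb{C}(x)$, contradicting the irrationality of $\mathcal{I}$ and $\mathcal{J}$ inherited from the transcendence of $\mathcal{F}$ and $\mathcal{G}$. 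There remain the two diagonal cases $c_2=\pm c_1$, i.e.\ the claim that $\mathcal{I}+\mathcal{J}$ and $\mathcal{I}-\mathcal{J}$ are irrational.

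For these I would argue directly with $S^{\pm}_n:=I_n\pm J_n$. The swap relations give the self-similarities $S^+_{2m}=S^+_m$ and $S^-_{2m}=-S^-_m$, while the odd-index values of $S^{\pm}$ form a concrete, non-constant sequence of period $12$. I expect this to be the main obstacle: one must show these two features are incompatible with eventual periodicity. The mechanism is that the relation $S^+_{2m}=S^+_m$ lets one replace any assumed period $p$ by its odd part, after which comparing an even index $2m$ with the odd index $2m+p$ forces the whole sequence to inherit the period of the odd-index pattern; but that pattern genuinely has period $12$ and is incompatible with the halving relation (its values at indices $1,7,13,\dots$ are not constant), which gives the contradiction, and an analogous sign-bookkeeping handles $S^-$. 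Once all cases are excluded, no nontrivial combination is rational, and the reduction of the first paragraph yields the algebraic independence of $\mathcal{I}$ and $\mathcal{J}$ over $\mathbb{C}(x)$.
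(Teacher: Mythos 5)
Your proposal is correct in outline and, after checking, completable in detail --- but it settles the key step by a genuinely different method from the paper's. The two arguments share only the opening move: iterate \eqref{Mahler equations I and J} once into a diagonal system with $k=4$ and invoke Theorem \ref{theorem Nishioka constants for functions} to reduce everything to the non-rationality of every nontrivial combination $c_1\mathcal{I}+c_2\mathcal{J}$ (you are in fact more careful than the paper in flagging the passage between dependence over $\mathbb{C}$ and over $\mathbb{C}(x)$). The paper then argues algebraically: writing $c_1\mathcal{I}+c_2\mathcal{J}=a(x)/b(x)$ in lowest terms, it turns the functional equation into a polynomial identity and runs a divisibility-and-degree analysis --- $b(x)\mid 1-x^6$, root counting at the zeros of $(1+x^6)(1+x^{12})$, forcing $b(x)=1-x^6$, and finally the evaluation $a(1)=4a(1)$ contradicting coprimality --- handling all $(c_1,c_2)$ in one sweep. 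You instead argue on coefficients: rationality of a series with finitely many coefficient values forces eventual periodicity; the swap $I_{2m}=J_m$, $J_{2m}=I_m$ shows rationality of $c_1\mathcal{I}+c_2\mathcal{J}$ implies rationality of $c_2\mathcal{I}+c_1\mathcal{J}$, which for $c_1^2\neq c_2^2$ makes $\mathcal{I},\mathcal{J}$ themselves rational (impossible), leaving only $\mathcal{I}\pm\mathcal{J}$; these are killed by the self-similarity $S^{\pm}_{2m}=\pm S^{\pm}_m$ colliding with the non-constant period-$12$ odd-index pattern. Your sketched endgame does close: the halving relation pushes an eventual period down to all indices and lets you replace the period by its odd part $p$; then $S_{2m}=S_{2m+p}$ with $2m+p$ odd gives $\pm S_m$ equal to the pattern value at $2m+p\bmod 12$ for every $m$, and since $2n+p\bmod 12$ depends only on $n\bmod 6$, the pattern values at $1$ and $7$ must agree --- but they are $1$ and $-1$. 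Two small repairs: your suggested proof of the Fatou-type lemma (splitting over a $\mathbb{Q}$-basis and citing the integer case) is shaky, because a $\mathbb{C}$-linear recurrence does not obviously project onto $\mathbb{Q}$-components; the clean proof is pigeonhole on the finitely many possible state vectors of the recurrence satisfied by the coefficients of a rational function. Also, the case $c_1^2\ne c_2^2$ needs the irrationality of $\mathcal{I}$ and $\mathcal{J}$ individually; this does follow, as you say, from the paper's transcendence of $\mathcal{F}$ and $\mathcal{G}$, or alternatively from your own diagonal argument (if both were rational, so would be $\mathcal{I}+\mathcal{J}$). As for what each route buys: the paper's argument is insensitive to the size of the coefficients and would survive changes to the inhomogeneous terms $g_1,g_2$, while yours exploits the $\{0,\pm1\}$ structure to replace all polynomial bookkeeping with elementary periodicity combinatorics, treats the whole two-parameter family at once via the swap trick, and makes the true obstruction --- the incompatibility of the dyadic self-similarity with any eventual period --- visible.
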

\begin{proof}
Assume the opposite. Apply Nishioka's theorem (Theorem \ref{theorem Nishioka constants for functions}) with $m=2$, $k=4$, 
$$A=\begin{pmatrix}1&0\\0&1\end{pmatrix}, \quad g_1(x) = \frac{x}{1+x^6}-\frac{x^{10}}{1+x^{12}}\quad\text{and}\quad g_2(x) = -\frac{x^5}{1+x^6}+\frac{x^2}{1+x^{12}}.$$
Then there are $c_1, c_2 \in \mathbb{C}$, not both zero, and coprime polynomials $a(x)$ and $b(x)$ such that $c_1\mathcal{I}(x)+c_2\mathcal{J}(x) = \frac{a(x)}{b(x)}$. In other words, $$\frac{a(x^4)}{b(x^4)} = \frac{a(x)}{b(x)} + c_1g_1(x) + c_2g_2(x).$$
The equation can be transformed into an equation of polynomials: $A(x) = B(x) + C(x)$, where 
\begin{align*}
    A(x) &= a(x^4)b(x)(1+x^6)(1+x^{12}), \\
    B(x) &= a(x)b(x^4)(1+x^6)(1+x^{12})\quad \text{and}\\
    C(x) &= b(x)b(x^4)\big( c_1(x-x^{10}+x^{13}-x^{16}) + c_2(x^2-x^5+x^8-x^{17})\big)\\
    &=b(x)b(x^4)x(1-x^3)\big( c_1(1+x^3+x^6+x^{12}) + c_2x(1+x^6+x^9+x^{12})\big)\\
    &= b(x)b(x^4)x(1-x^3)c(x).
\end{align*}
Using that $a(x)$ and $b(x)$ are coprime and thus $a(x^4)$ and $b(x^4)$ are coprime as well, we deduce that $b(x)\mid b(x^4)(1+x^6)(1+x^{12})$ and $b(x^4)\mid b(x)(1+x^6)(1+x^{12})$.\medskip

As $b(x^4)\mid b(x)(1+x^6)(1+x^{12})$, $x$ cannot divide $b(x)$, and so $b(0) \ne 0$. Assume $\zeta$ is a root of $b(x)$ that is not a root of $1-x^6$. Then, as $\zeta \ne 0$ and $\zeta^6\ne1$, the four fourth roots of $\zeta$ are zeros of $b(x^4)$ but not of $(1+x^6)(1+x^{12})$. Using that $b(x^4)\mid b(x)(1+x^6)(1+x^{12})$, these four roots are all roots of $b(x)$. Iterating this process gives an unbounded number of roots of $b(x)$. This contradiction means that $b(x)\mid (1-x^6)^k$ for some $k \ge 0$. By the same argument, $b(x)$ cannot have two identical roots that are zeros of $1-x^6$. Thus, $b(x)\mid (1-x^6)$ and $b(x^4) \mid (1-x^{24})$.\medskip

From $b(x)\mid (1-x^6)$ and $b(x)\mid b(x^4)(1+x^6)(1+x^{12})$, it follows that $b(x)\mid b(x^4)$. Combine this with $b(x^4)\mid b(x)(1+x^6)(1+x^{12})$ to conclude that, for $y \in \mathbb{C}$, $b(y) = 0$ if and only if $b(y^4) = 0$ and $(1+y^6)(1+y^{12})\ne 0$. In particular, for $y =-\zeta_3^k$, where $\zeta_3 = e^{\frac{2\pi i}{3}}$ and $k=0,1,2$, we have $b(\zeta_3^k) = 0$ if and only if $b(-\zeta_3^k)=0$. Thus, $\deg{b(x)}$ is even and $\deg{b(x^4)}$ is divisible by 8.\medskip

Let $\zeta$ be a root of $(1+x^6)(1+x^{12})$. If one of the multiplicities $\operatorname{ord}_{\zeta}(A), \operatorname{ord}_{\zeta}(B)$ and $\operatorname{ord}_{\zeta}(C)$ is smaller than the others and $m = \min(\operatorname{ord}_{\zeta}(A), \operatorname{ord}_{\zeta}(B), \operatorname{ord}_{\zeta}(C))$, the limit
\begin{align*}
    \lim_{x \to \zeta}\bigg( \frac{A(\zeta)}{(x - \zeta)^m} - \frac{B(\zeta)}{(x - \zeta)^m} - \frac{C(\zeta)}{(x - \zeta)^m} \bigg)
\end{align*}
 involves two zero terms and one non-zero term so cannot be zero. This contradicts $A(x) + B(x) =C(x)$. Let us apply this fact a few times. Since $b(x)\mid (1-x^6)$, $b(\zeta) \ne 0$ and $\zeta(1-\zeta^3) \ne 0$ as well.
\begin{itemize}
    \item If $b(\zeta^4) = 0$, then $a(\zeta^4) \ne 0$ as $a(x^4)$ and $b(x^4)$ are coprime. Therefore, $\operatorname{ord}_{\zeta}(A) = 1$ and $\operatorname{ord}_{\zeta}(B) \ge 2$. Then $\operatorname{ord}_{\zeta}(C) = 1$, and hence, $c(\zeta) \ne 0$ as $b(\zeta^4) = 0$.
    \item If $b(\zeta^4) \ne 0$, then $\operatorname{ord}_{\zeta}(C) = \operatorname{ord}_{\zeta}(c)$. As $\operatorname{ord}_{\zeta}(A) \ge 1$ and $\operatorname{ord}_{\zeta}(B)\ge1$, it follows that $\operatorname{ord}_{\zeta}(C)\ge 1$. Thus, $c(\zeta) = 0$ as $b(\zeta^4) \ne 0$.
\end{itemize}
To summarise, exactly one of $b(\zeta^4)$ and $c(\zeta)$ is zero. If $c(\zeta) = 0$ for such a root $\zeta$ of $(1+x^6)(1+x^{12})$, then if $\zeta$ is a root of $1+x^{12}$ or $1+x^6$, respectively,  
$$
c_1(\zeta^3+\zeta^6) =- c_2\zeta(\zeta^6+\zeta^9) \quad \text{and} \quad c_1(1+\zeta^{9}) =- c_2\zeta(\zeta^3+\zeta^{12}).
$$
In both cases, using $\zeta \ne 0$, $1+\zeta^3 \ne 0$ and $1+\zeta^9 \ne 0$, we conclude that $c_1 =- c_2\zeta^4$. Then $c(x)$ has at most four roots in common with $(1+x^6)(1+x^{12})$ as $c_1$ and $c_2$ are constants, not simultaneously zero. Then $b(x^4)$ has at least $18-4=14$ roots in common with $(1+x^6)(1+x^{12})$, and so $\deg{b(x)} \ge \big\lceil\frac{14}{4}\big\rceil = 4$. As $b(x) \mid  b(x^4)$ and $b(x)$ and $(1+x^6)(1+x^{12})$ are coprime, $\deg{b(x^4)}\ge18$. Since $\deg{b(x^4)}$ is divisible by 8, it is at least 24. As $b(x^4)\mid (1-x^{24})$ this implies that $b(x^4) = 1-x^{24}$ and $b(x) = 1-x^6$. Now divide the equation $A(x) = B(x) + C(x)$ by $b(x^4)$ to obtain
\begin{align*}
    a(x^4)= a(x)(1+x^6)(1+x^{12})+ x(1-x^3)(1-x^6)c(x).
\end{align*}
Thus, $a(1) = 4a(1)$, and so $a(1) = 0$. A contradiction, as $a(x)$ and $b(x)$ are coprime and $b(1) = 0$. Thus, $a(x)$ and $b(x)$ do not exist, implying that $\mathcal{I}(x)$ and $\mathcal{J}(x)$ are algebraically independent.
\end{proof}
\begin{corollary}
$\mathcal{F}(x)$ and $\mathcal{G}(x)$ are algebraically independent over $\mathbb{C}(x)$.
\end{corollary}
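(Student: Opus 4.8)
The plan is to run the construction that produced $\mathcal{I},\mathcal{J}$ from $\mathcal{F},\mathcal{G}$ \emph{backwards}, noting that each step in that construction preserves the transcendence degree over $\mathbb{C}(x)$ of the pair involved. Since the preceding theorem shows $\mathcal{I},\mathcal{J}$ have transcendence degree $2$ over $\mathbb{C}(x)$, i.e.\ are algebraically independent, reversing the chain forces the same for $\mathcal{F},\mathcal{G}$. Two elementary principles make every step reversible. \emph{Field principle:} two pairs generating the same subfield of $\mathbb{C}((x))$ over $\mathbb{C}(x)$ have the same transcendence degree over $\mathbb{C}(x)$, so adding a rational function to a function or multiplying it by a nonzero element of $\mathbb{C}(x)$ does not affect algebraic independence. \emph{Substitution principle:} for any integer $k\ge1$, the pair $f(x^k),g(x^k)$ is algebraically independent over $\mathbb{C}(x)$ if and only if $f(x),g(x)$ is; indeed independence over $\mathbb{C}(x)$ is equivalent to independence over $\mathbb{C}(x^k)$ because $\mathbb{C}(x)/\mathbb{C}(x^k)$ is a finite algebraic extension and transcendence degree is additive in towers, and then the isomorphism $\mathbb{C}(x^k)\cong\mathbb{C}(t)$ turns the substitution into a renaming of the variable.

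With these in hand I would proceed in three moves. First, since $\mathcal{F}(x)=\tilde{\mathcal{F}}(x)+\tfrac{x}{1+x^2}$ and $\mathcal{G}(x)=-\tilde{\mathcal{G}}(x)+\tfrac{x}{1+x^2}$, the field principle gives that $\mathcal{F},\mathcal{G}$ are algebraically independent exactly when $\tilde{\mathcal{F}},\tilde{\mathcal{G}}$ are. Second, the functional equations force $\tilde{\mathcal{F}},\tilde{\mathcal{G}}$ to be even, so I may write $\tilde{\mathcal{F}}(x)=\Phi(x^2)$ and $\tilde{\mathcal{G}}(x)=\Psi(x^2)$ for power series $\Phi,\Psi$; the substitution principle with $k=2$ then shows $\tilde{\mathcal{F}},\tilde{\mathcal{G}}$ are independent over $\mathbb{C}(x)$ iff $\Phi,\Psi$ are. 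Third, the correct unfolding relations, which one verifies against \eqref{Mahler equations I and J} using the equations for $\tilde{\mathcal{F}},\tilde{\mathcal{G}}$, read $\mathcal{I}(x)=x\,\Phi(x^3)$ and $\mathcal{J}(x)=x^2\,\Psi(x^3)$; these exhibit $\mathcal{I},\mathcal{J}$ as unit multiples of $\Phi(x^3),\Psi(x^3)$, so the substitution principle with $k=3$ together with the field principle shows $\mathcal{I},\mathcal{J}$ are independent over $\mathbb{C}(x)$ iff $\Phi,\Psi$ are. Chaining these equivalences with the preceding theorem yields the corollary.

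I expect the only real obstacle to be bookkeeping around the substitutions rather than anything structural. One must pin down the unfolding relations correctly: the displayed $\mathcal{I}(x^2)=x\tilde{\mathcal{F}}(x^3)$ cannot be literally right, since its left side is even in $x$ while its right side is odd, and passing to the even parts yields the consistent form $\mathcal{I}(x)=x\Phi(x^3)$, $\mathcal{J}(x)=x^2\Psi(x^3)$ used above. Beyond that, the substitution principle must be applied as a genuine ``if and only if'': the easy direction (independence over the larger field descends to the smaller) is immediate, while the converse relies precisely on $\mathbb{C}(x)/\mathbb{C}(x^k)$ being algebraic, so that a relation over $\mathbb{C}(x)$ cannot lower the transcendence degree already witnessed over $\mathbb{C}(x^k)$. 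Once these points are secured, every equivalence in the chain is routine.
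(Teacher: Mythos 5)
Your proposal is correct and is exactly the argument the paper intends: the corollary is stated there without its own proof, resting on the earlier remark that ``by reversing the construction'' the algebraic independence of $\mathcal{I}(x)$ and $\mathcal{J}(x)$ implies that of $\mathcal{F}(x)$ and $\mathcal{G}(x)$, and your field principle plus substitution principle supply precisely the missing details of that reversal. Your side observation is also right: the paper's displayed relation $\mathcal{I}(x^2)=x\tilde{\mathcal{F}}(x^3)$ has a parity mismatch and should read $\mathcal{I}(x^2)=x^2\tilde{\mathcal{F}}(x^3)$ (equivalently your $\mathcal{I}(x)=x\Phi(x^3)$, and likewise $\mathcal{J}(x^2)=x^4\tilde{\mathcal{G}}(x^3)$), which is the form consistent with the system \eqref{Mahler equations I and J}.
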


\section{Algebraic folded continued fractions}
In the previous section, we gave an example of a folded continued fraction that is a quotient of two $2$-Mahler functions. In this section, folded continued fractions are defined more generally, several examples are given and different choices are discussed. In particular, algebraic functions produced by a folded continued fraction are examined with the help of a few examples.
\subsection{Defining folded continued fractions}
The concept of folded continued fraction can be moulded in many ways, and any such definition is more or less temporarily useful. In this thesis, a relatively small area is chosen where interesting examples can turn up. Other cases like $\sum_{n=0}^{\infty} 2^{-F_n}$ are fascinating but of a different nature. 
\begin{definition}
A \textbf{recursion of words} is an infinite sequence $(\boldsymbol{w}_n)_{n=0}^{\infty}$ of finite sequences $\boldsymbol{w}_n$ called \textbf{words} such that:
\begin{itemize}
    \item Each word $\boldsymbol{w}_n$ is finite, and all its letters are $x$ or $-x$.
    \item There is a decent amount of convergence so that taking a limit $\lim_{n \to \infty} \textbf{w}_n$ is meaningful.
    \item A recursion itself means the following: There are $r, N \ge 0$ such that for each $n \ge N$, 
    \begin{align*}
        \boldsymbol{w}_n = [\boldsymbol{v}_{n, 1},\: \boldsymbol{v}_{n, 2},\: \dots,\: \boldsymbol{v}_{n, r}],
    \end{align*}
    where each $\boldsymbol{v}_{n, i}$ is equal to
    \begin{align*}
        x,\: -x,\:\boldsymbol{w}_{n-1},\: -\boldsymbol{w}_{n-1},\: \overleftarrow{\boldsymbol{w}_{n-1}},\: -\overleftarrow{\boldsymbol{w}_{n-1}},\: \dots,\: \boldsymbol{w}_{n-N},\: -\boldsymbol{w}_{n-N},\: \overleftarrow{\boldsymbol{w}_{n-N}}\text{ or } -\overleftarrow{\boldsymbol{w}_{n-N}}.   
    \end{align*}
    Here, $x$ and $-x$ are called \textbf{constant words} and those depending on $\boldsymbol{w}_{n-i}$ \textbf{non-constant words}.
\end{itemize}
\end{definition}
As before, we associate a continued fraction with each word, and the paperfolding sequence $(\boldsymbol{p}_n)_{n=0}^{\infty}$ is a recursion of words. Clearly, there is much of a personal taste in this definition, as it does not include $(-1)^n\boldsymbol{w}_{n-1}$. Thus, $(\boldsymbol{w}_n)_{n=0}^{\infty}$ for $\rho$ is not a recursion of words, but as shown below, that does not exclude $\rho$ entirely. Despite the strict definition, many examples are quite similar.
\begin{example}
Recall that the original paperfolding sequence satisfies $\boldsymbol{p}_n = [\boldsymbol{p}_{n-1},\: x,\: -\overleftarrow{\boldsymbol{p}_{n-1}}]$ for $n \ge 1$ and $\boldsymbol{p}_0 = [\:]$. Consider a few variations: 
\begin{itemize}
    \item Let $\boldsymbol{q}_0 = [x]$ and $\boldsymbol{q}_n = [\boldsymbol{q}_{n-1},\: x,\: -\overleftarrow{\boldsymbol{q}_{n-1}}]$, then $\boldsymbol{q}_{n+1} = \boldsymbol{p}_n$, it is just a shift.
    \item If $\boldsymbol{q}_0 = [\:]$ and $\boldsymbol{q}_n = [\boldsymbol{q}_{n-1},\: -x,\: -\overleftarrow{\boldsymbol{q}_{n-1}}]$, then $\boldsymbol{q}_n = -\boldsymbol{p}_n$.
    \item If $\boldsymbol{q}_0 = [\:]$ and $\boldsymbol{q}_n = [\boldsymbol{q}_{n-1},\: x,\: -\overleftarrow{\boldsymbol{q}_{n-1}},\: x]$, then $[x;\:\boldsymbol{q}_n] = \boldsymbol{p}_{n+1}$.
    \item If $\boldsymbol{q}_0 = [\:]$, $\boldsymbol{q}_1 = [\:]$ and $\boldsymbol{q}_n = [\boldsymbol{q}_{n-2},\: x,\: -\overleftarrow{\boldsymbol{q}_{n-2}}]$, then $\boldsymbol{q}_{2n} = \boldsymbol{q}_{2n-1} = \boldsymbol{p}_{n}$. Thus, twice as many iterations are needed to reach the same continued fraction.
    \item If $\boldsymbol{q}_0 = [\:]$ and $\boldsymbol{q}_n = [\boldsymbol{q}_{n-1},\: x,\: -\overleftarrow{\boldsymbol{q}_{n-1}},\: x,\: \boldsymbol{q}_{n-1},\: -x,\: -\overleftarrow{\boldsymbol{q}_{n-1}}]$. Then only half the number of iterations is needed to achieve the same continued fraction $\boldsymbol{p}$.
\end{itemize}\end{example}
In the same way, the folding sequence for $\rho$ can be written without the $(-1)^n$ term. Recall that $\boldsymbol{w}_n = [\boldsymbol{w}_{n-2},\: (-1)^nx,\: -\overleftarrow{\boldsymbol{w}_{n-2}},\: (-1)^nx,\: \boldsymbol{w}_{n-1}]$. To remove the $(-1)^n$ from the recursion, define $\boldsymbol{w}_n^+ = (-1)^n\boldsymbol{w}_n$. It satisfies $\boldsymbol{w}_n = [\boldsymbol{w}_{n-2}^+,\: x,\: -\overleftarrow{\boldsymbol{w}_{n-2}^+},\: x,\: -\boldsymbol{w}_{n-1}^+]$. As such, disallowing $(-1)^nx$ as a possible term in the recursion does not exclude $\rho$ from the definition.\medskip

Many recursions of words are uninteresting. When only including constant words, a rational function is obtained. With precisely one non-constant word, the limit is eventually periodic, hence a quadratic irrationality. There are also other ways to obtain eventually periodic words, for example by defining $\boldsymbol{w}_0 = [x]$ and $\boldsymbol{w}_n = [\boldsymbol{w}_{n-1},\: \boldsymbol{w}_{n-1}]$, which converges to $[x;\:x,\:x,\:x,\:x,\:x,\:\dots]$.\medskip

Let $k_n$ be $\#\boldsymbol{w}_n-1$. Then in the case of at least two non-constant words, the degrees of the polynomials $p_{k_n}(x)$ and $q_{k_n}(x)$ increase exponentially with each iteration. If $p_{k_n}(x)$ and $q_{k_n}(x)$ converge (partially) to power series, they fall into two categories:
\begin{enumerate}
    \item \textbf{Fast converging} recursions. These are recursions for which the number of correct coefficients of $\lim_{n \to \infty} p_{k_n}$ and $\lim_{n \to \infty} q_{k_n}$ after $n$ iterations grows exponentially with respect to $n$. The power series for $\rho$ is an example of this type.
    \item \textbf{Slowly converging} recursions have, instead of exponential, only a linear in $n$ number of correct coefficients. The next subsection contains an example of this type.
\end{enumerate}
Although countless slowly converging recursions of words exist, the set of fast converging recursions of words appears to be limited. All examples we have found are related to either $x\sum_{n =0}^{\infty}x^{2^n}$ or $\frac{H(x)}{H(x^2)}$ with slight variations in signs or linear combinations.
\subsection{A cubic folded continued fraction}\label{cubic example subsection}
In this subsection, a folded continued fraction is shown to be a root of a cubic equation over $\mathbb{Q}[x]$. First, the matrix of continuants of $\boldsymbol{w}_n$ is shown to satisfy a specific recursion. Then it is established that a particular power series satisfies the same recursion, leading to the equality.\medskip

Define the recursion of words $(\boldsymbol{w}_n)_{n=0}^{\infty}$ by $\boldsymbol{w}_0 = [\:]$ and $\boldsymbol{w}_n = [\boldsymbol{w}_{n-1}, \:\boldsymbol{w}_{n-1},\: x,\: -\overleftarrow{\boldsymbol{w}_{n-1}},\: -\overleftarrow{\boldsymbol{w}_{n-1}}]$. Furthermore, let $k_n := \#\boldsymbol{w}_n -1$.

\begin{lemma}\label{lemma matrix relation p, q}
Let $n \ge 1$ and $m = k_n$. Then
\begin{align*}
    \begin{pmatrix}
    p_{k_{n+1}} & p _{k_{n+1}-1} \\ q _{k_{n+1}} & q _{k_{n+1}-1}
    \end{pmatrix} = 
    \begin{pmatrix}
    x(p_m^2+ p_{m-1}q_m)^2 & 1-xq_m(p_m+ q_{m-1})(p_m^2+p_{m-1}q_m) \\
    1+xq_m(p_m+ q_{m-1})(p_m^2+p_{m-1}q_m)  & -xq_m^2(p_m+q_{m-1})^2
    \end{pmatrix}.
\end{align*} 
\end{lemma}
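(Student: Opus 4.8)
The plan is to read the continuant matrix of $\boldsymbol{w}_{n+1}$ off the Key Lemma (Lemma~\ref{lemma key lemma continued fractions}). Concatenating words multiplies their continuant matrices, and $\boldsymbol{w}_{n+1} = [\boldsymbol{w}_{n},\: \boldsymbol{w}_{n},\: x,\: -\overleftarrow{\boldsymbol{w}_n},\: -\overleftarrow{\boldsymbol{w}_n}]$, so writing $M := \begin{pmatrix} p_m & p_{m-1} \\ q_m & q_{m-1}\end{pmatrix}$ for the continuant matrix of $\boldsymbol{w}_n$ and $X := \begin{pmatrix} x & 1 \\ 1 & 0\end{pmatrix}$, the matrix of $\boldsymbol{w}_{n+1}$ equals $M^2 X R^2$, where $R$ is the continuant matrix of $-\overleftarrow{\boldsymbol{w}_n}$. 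The whole statement then reduces to determining $R$ and multiplying out five $2 \times 2$ matrices.

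To compute $R$, I would use that each elementary factor $\begin{pmatrix} a & 1 \\ 1 & 0\end{pmatrix}$ is symmetric, so that reversing a word transposes its continuant matrix, while negating every letter conjugates the matrix by $D := \operatorname{diag}(1, -1)$ and contributes an overall factor $(-1)^{\#\boldsymbol{w}_n} = (-1)^{m+1}$. Combining these through $-\overleftarrow{\boldsymbol{w}_n} = \overleftarrow{-\boldsymbol{w}_n}$ (Lemma~\ref{Lemma Negative Continued Fraction}) gives
\[
    R = (-1)^{m+1}\begin{pmatrix} p_m & -q_m \\ -p_{m-1} & q_{m-1}\end{pmatrix},
\]
and since $R$ occurs squared the global sign is immaterial.

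Next I would abbreviate $\alpha := p_m^2 + p_{m-1}q_m$, $\delta := p_{m-1}q_m + q_{m-1}^2$ and $s := p_m + q_{m-1}$, so that a short computation yields
\[
    M^2 = \begin{pmatrix} \alpha & p_{m-1}s \\ q_m s & \delta\end{pmatrix}
    \quad\text{and}\quad
    R^2 = \begin{pmatrix} \alpha & -q_m s \\ -p_{m-1}s & \delta\end{pmatrix},
\]
and then carry out $M^2 X R^2$. The diagonal entries collapse immediately to $x\alpha^2$ and $-x q_m^2 s^2$, which are exactly the claimed $(1,1)$ and $(2,2)$ entries. In each off-diagonal entry the cross terms assemble into $\pm x \alpha q_m s$ plus the constant $\alpha\delta - p_{m-1}q_m s^2$; the decisive identity is
\[
    \alpha\delta - p_{m-1}q_m s^2 = (p_m q_{m-1} - p_{m-1}q_m)^2,
\]
which I would check by expanding both sides. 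By the cross-product relation (Lemma~\ref{lemma cross product p_n and q_n}) one has $p_m q_{m-1} - p_{m-1}q_m = (-1)^{m+1}$, so this constant equals $1$, turning the off-diagonal entries into $1 - x q_m s \alpha$ and $1 + x q_m s\alpha$, as required.

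The main obstacle is purely the bookkeeping: pinning down $R$ correctly, with the transposition coming from the reversal and the diagonal conjugation and sign coming from the negation, and then simplifying the two cross terms without sign slips by way of the determinant identity. As a sanity check I would test the base case $n = 1$, where $\boldsymbol{w}_1 = [x]$ forces $m = k_1 = 0$ and $(p_0, p_{-1}, q_0, q_{-1}) = (x, 1, 1, 0)$, against the continuant matrix of $\boldsymbol{w}_2 = [x,\:x,\:x,\:-x,\:-x]$ computed directly.
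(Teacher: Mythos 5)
Your proof is correct and is essentially the paper's own: both read the continuant matrix of $\boldsymbol{w}_{n+1}$ off the Key Lemma (Lemma \ref{lemma key lemma continued fractions}) as the five-fold product $M\,M\,X\,R\,R$, compute $R$ by the same reversal-equals-transpose and negation-equals-conjugation-by-$\operatorname{diag}(1,-1)$ reasoning, and settle the off-diagonal constant with the cross-product relation (Lemma \ref{lemma cross product p_n and q_n}); note that your $R$ matches the matrix the paper actually multiplies with (the $R$ displayed in the paper's proof has its off-diagonal entries transposed, evidently a typo). The only substantive difference is the bracketing $(M^2)\,X\,(R^2)$ versus the paper's $M\,(MXR)\,R$, and it buys a small genuine simplification: since $R$ and the determinant $p_mq_{m-1}-p_{m-1}q_m$ then enter only squared, you never need the evenness of $m=k_n$, which the paper invokes twice (once to fix the sign of $R$, once to evaluate $p_{m-1}q_m-p_mq_{m-1}=1$).
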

\begin{proof}
The recursion for $\boldsymbol{w}_n$ gives $k_{n+1} = \#\boldsymbol{w}_{n+1} - 1 = 4\#\boldsymbol{w}_{n} = 4k_n-4$ is even for all $n \ge 1$. Thus, $p_{m-1}q_m - p_mq_{m-1} = (-1)^m = 1$ by Lemma \ref{lemma cross product p_n and q_n}. Recall that the matrix of continuants of $-\overleftarrow{\boldsymbol{w}_n}$ is
\begin{align*}
\begin{pmatrix}
        (-1)^{m+1}p_m & (-1)^mp_{m-1}\\
    (-1)^mq_m & (-1)^{m-1}q_{m-1}
\end{pmatrix} = 
\begin{pmatrix}
        -p_m & p_{m-1}\\
    q_m & -q_{m-1}
\end{pmatrix}.
\end{align*}
Next, the $[\boldsymbol{w}_{n-1},\: x,\: -\overleftarrow{\boldsymbol{w}_{n-1}}]$ part of the recursion is computed like for paperfolding sequence: 
\begin{align*}
    &\begin{pmatrix}
         p_m & p_{m-1} \\ q_m & q_{m-1}
    \end{pmatrix}
    \begin{pmatrix}
        x & 1 \\ 1 & 0
    \end{pmatrix}
    \begin{pmatrix}
        -p_m & q_m \\ p_{m-1} & -q_{m-1}
    \end{pmatrix} 
    = 
        x\begin{pmatrix}
        -p_m^2 &  p_mq_m\\
        -p_mq_m  & q_m^2
    \end{pmatrix} +
    \begin{pmatrix}
    0 & 1 \\ 1 & 0 
    \end{pmatrix}.
\end{align*}
These cancellations make the rest of the computation slightly easier. We obtain,
\begin{align*}
    &    \begin{pmatrix}
    p_{k_{n+1}} & p _{k_{n+1}-1} \\ q _{k_{n+1}} & q _{k_{n+1}-1}
    \end{pmatrix} = \begin{pmatrix}
         p_m & p_{m-1} \\ q_m & q_{m-1}
    \end{pmatrix}^2
    \begin{pmatrix}
        x & 1 \\ 1 & 0
    \end{pmatrix}
    \begin{pmatrix}
        -p_m & q_m \\ p_{m-1} & -q_{m-1}
    \end{pmatrix}^2\\&\quad=
      x \begin{pmatrix}
         p_m & p_{m-1} \\ q_m & q_{m-1}
    \end{pmatrix}
    \begin{pmatrix}
        -p_m^2 &  p_mq_m\\ -p_mq_m  & q_m^2
    \end{pmatrix}
    \begin{pmatrix}
        -p_m & q_m \\ p_{m-1} & -q_{m-1}
    \end{pmatrix}\\ &\quad +\begin{pmatrix}
         p_m & p_{m-1} \\ q_m & q_{m-1}
    \end{pmatrix}
    \begin{pmatrix}
        0 &  1\\ 1  & 0
    \end{pmatrix}
    \begin{pmatrix}
        -p_m & q_m \\ p_{m-1} & -q_{m-1}
    \end{pmatrix}.
\end{align*}
Computing the two parts separately gives the required result, since
\begin{align*}
    &x \begin{pmatrix}
         p_m & p_{m-1} \\ q_m & q_{m-1}
    \end{pmatrix}
    \begin{pmatrix}
        -p_m^2 &  p_mq_m\\ -p_mq_m  & q_m^2
    \end{pmatrix}
    \begin{pmatrix}
        -p_m & q_m \\ p_{m-1} & -q_{m-1}
    \end{pmatrix} \\&\quad=
    x\begin{pmatrix}
         p_m & p_{m-1} \\ q_m & q_{m-1}
    \end{pmatrix}
    \begin{pmatrix}
    p_m(p_m^2+p_{m-1}q_m)  &
    -p_mq_m(p_m+q_{m-1})  \\
    q_m(p_m^2 +p_{m-1}q_m) &
    - q_m^2(p_m+q_{m-1})
    \end{pmatrix}\\&\quad=
    x\begin{pmatrix}
    (p_m^2+ p_{m-1}q_m)^2 & -q_m(p_m^2+p_{m-1}q_m)(p_m +q_{m-1}) \\
    q_m(p_m+ q_{m-1})(p_m^2+p_{m-1}q_m)  & -q_m^2(p_m+q_{m-1})^2
    \end{pmatrix}
\end{align*} 
and
\begin{align*}
&\begin{pmatrix}
         p_m & p_{m-1} \\ q_m & q_{m-1}
    \end{pmatrix}
    \begin{pmatrix}
        0 &  1\\ 1  & 0
    \end{pmatrix}
    \begin{pmatrix}
        -p_m & q_m \\ p_{m-1} & -q_{m-1}
    \end{pmatrix}=
    \begin{pmatrix}
         p_m & p_{m-1} \\ q_m & q_{m-1}
    \end{pmatrix}   
    \begin{pmatrix}
        p_{m-1} & -q_{m-1} \\ -p_m & q_m
    \end{pmatrix} \\ &\quad=
    \begin{pmatrix}
    p_mp_{m-1}-p_mp_{m-1} & -p_mq_{m-1}+p_{m-1}q_m \\
    q_mp_{m-1}-p_mq_{m-1} & -q_{m-1}q_m + q_mq_{m-1}
    \end{pmatrix}=
    \begin{pmatrix}
    0 & 1 \\ 1 & 0
    \end{pmatrix}.\qedhere
\end{align*}\end{proof}
The limits of $p_{k_n}$, $p_{{k_n}-1}$, $q_{k_n}$ and $q_{{k_n}-1}$ are observed to converge to power series; explicitly, to
\begin{align*}
    A(x) &= \sum_{n=0}^{\infty}a_nx^n = x + 2 x^3 + 7 x^5 + 30 x^7 + 143 x^9 + 728 x^{11} +\dotsm,\\
    B(x) &= \sum_{n=0}^{\infty}b_nx^n =1 - x^2 - 3 x^4 - 12 x^6 - 55 x^8 - 273 x^{10}-1428x^{12}+\dotsm,\\
    C(x) &= \sum_{n=0}^{\infty}c_nx^n =1+ x^2+ 3 x^4 + 12 x^6 + 55 x^8 + 273 x^{10}+1428x^{12}+\dotsm,\\
    D(x) &= \sum_{n=0}^{\infty}d_nx^n =-x^3 - 4 x^5 - 18 x^7 - 88 x^9 - 455 x^{11} + \dotsm.
\end{align*}
Here, $a_n$ and $d_n$ vanish for even $n$ while $b_n$ and $c_n$ vanish for odd $n$. A well-educated guess suggests that the non-zero terms of $A(x)$, $C(x)$ and $D(x)$ are given by A006012, A001764 and the negation of A006629 on the OEIS \cite{OEIS}, respectively. Then $a_{2n+1} = \frac{1}{n+1}\binom{3n+1}{n}$, $c_{2n} = \frac{1}{2n+1}\binom{3n}{n}$ and $d_{2n+3} = -\frac{2}{n+2}\binom{3n+3}{n}$. Meanwhile, $B(x)=2 - C(x)$. These formulas are actually used to define $A(x),\dots,D(x)$. The OEIS lists the following results: $xC(x)^2 = A(x)$, $D(x) = -x^3C(x)^4$, $C(x) = 1 + x^2C(x)^3$ \cite{OEIS}. This implies that $D(x) = -xA(x)^2$. A few more identities are needed.
\begin{lemma}\label{lemma Combining xC = A + D}
$D(x) = xC(x)-xC(x)^2$, $xC(x) = A(x) + D(x)$ and $A(x)^2 + B(x)C(x) = C(x)$. 
\end{lemma}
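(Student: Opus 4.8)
The plan is to treat $A$, $B$, $C$, $D$ as formal power series and to derive all three identities purely algebraically from the relations already recorded just above the lemma, namely $A(x) = xC(x)^2$, $D(x) = -x^3C(x)^4$, $B(x) = 2 - C(x)$, together with the functional equation $C(x) = 1 + x^2C(x)^3$. The single workhorse is the rearranged form $x^2C(x)^3 = C(x) - 1$ of the last equation; each of the three claimed identities reduces to a one-line substitution of this relation. Since $C(0) = 1$, the series $C$ is a unit in $\mathbb{C}[[x]]$, so no division issues arise, and every equality can be checked by manipulating these series as elements of the ring $\mathbb{C}[[x]]$.

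First I would establish $D(x) = xC(x) - xC(x)^2$. Writing $D(x) = -x^3C(x)^4 = -x\cdot\big(x^2C(x)^3\big)\cdot C(x)$ and inserting $x^2C(x)^3 = C(x) - 1$ gives $D(x) = -xC(x)\big(C(x) - 1\big) = xC(x) - xC(x)^2$, which is the first claim. Next, the identity $xC(x) = A(x) + D(x)$ follows immediately by combining this with $A(x) = xC(x)^2$, since $A(x) + D(x) = xC(x)^2 + \big(xC(x) - xC(x)^2\big) = xC(x)$.

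Finally, for $A(x)^2 + B(x)C(x) = C(x)$ I would substitute $A(x) = xC(x)^2$ and $B(x) = 2 - C(x)$ to obtain $A(x)^2 + B(x)C(x) = x^2C(x)^4 + 2C(x) - C(x)^2$. Using $x^2C(x)^4 = C(x)\cdot x^2C(x)^3 = C(x)\big(C(x) - 1\big) = C(x)^2 - C(x)$ collapses the right-hand side to $C(x)^2 - C(x) + 2C(x) - C(x)^2 = C(x)$, as required.

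There is no genuine obstacle here beyond bookkeeping: the content of the lemma is entirely carried by the cubic functional equation for $C$, and the three identities are merely its most useful linear and quadratic consequences. The only point that warrants a word of care is that all three manipulations are legitimate over $\mathbb{C}[[x]]$, which is guaranteed by $C$ being a unit; one need never appeal to the explicit binomial formulas for the coefficients of $A$, $C$ and $D$.
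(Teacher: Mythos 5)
Your proof is correct and is precisely what the paper intends: its own proof consists of the single sentence that the identities ``follow easily by manipulating the identities found on the OEIS,'' and your substitutions using $x^2C(x)^3 = C(x)-1$ are exactly those manipulations, written out in full.
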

\begin{proof}
All three identities follow easily by manipulating the identities found on the OEIS.
\end{proof}
Write $A$ for $A(x)$, $B$ for $B(x)$ etc. Now we prove that $A, B, C$ and $D$ indeed relate to $\lim_{n \to \infty} \boldsymbol{w}_n$.
\begin{lemma}\label{lemma matrix relation A,B,C,D}
We have
\begin{align*}
    \begin{pmatrix}
A & B\\ C&D
\end{pmatrix} = 
\begin{pmatrix}
x(A^2+BC)^2 & 1-xC(A+D)(A^2+BC) \\
1+xC(A+D)(A^2+BC) & -xC^2(A+D)^2
\end{pmatrix}.
\end{align*}
\end{lemma}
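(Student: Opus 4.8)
The plan is to verify the claimed matrix identity entry by entry, treating it as a purely algebraic identity among the power series $A, B, C, D$ rather than passing to any limit. The crucial observation is that the two composite expressions occurring throughout the right-hand side, namely $A^2 + BC$ and $A + D$, both collapse to something elementary by Lemma \ref{lemma Combining xC = A + D}: there we have $A^2 + BC = C$ and $A + D = xC$. Substituting these two identities turns each of the four entries into one of the defining relations of $A$, $B$, $C$, $D$ taken from the OEIS, i.e.\ $A = xC^2$, $C = 1 + x^2C^3$, $D = -x^3C^4$ and $B = 2 - C$.

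Concretely, I would carry out the four substitutions as follows. For the $(1,1)$-entry, $x(A^2 + BC)^2 = xC^2 = A$. For the $(2,1)$-entry,
\begin{align*}
    1 + xC(A+D)(A^2 + BC) = 1 + xC\cdot xC\cdot C = 1 + x^2C^3 = C.
\end{align*}
For the $(1,2)$-entry the same product appears with the opposite sign, so $1 - xC(A+D)(A^2+BC) = 1 - x^2C^3 = 2 - (1 + x^2C^3) = 2 - C = B$. Finally, for the $(2,2)$-entry, $-xC^2(A+D)^2 = -xC^2(xC)^2 = -x^3C^4 = D$. Since all four entries agree with $A$, $B$, $C$, $D$ respectively, the identity follows.

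There is essentially no hard step here; the work has been front-loaded into Lemma \ref{lemma Combining xC = A + D}, whose two identities are precisely tailored to the two composite factors that show up in the continuant recursion of Lemma \ref{lemma matrix relation p, q}. The only point worth flagging is conceptual rather than computational: this lemma is a statement about the series $A, B, C, D$ as defined by their closed forms, not yet the assertion that the continuants $p_{k_n}, p_{k_n-1}, q_{k_n}, q_{k_n-1}$ actually converge to them. One could alternatively obtain the same identity by letting $n \to \infty$ in Lemma \ref{lemma matrix relation p, q}, but that route would first require justifying the ($x$-adic) convergence of the continuants to $A, B, C, D$, which the direct algebraic verification sidesteps entirely.
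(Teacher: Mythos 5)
Your proof is correct and is essentially the paper's own argument: the paper likewise verifies the identity entry by entry, substituting $A^2+BC=C$ and $A+D=xC$ from Lemma \ref{lemma Combining xC = A + D} and then invoking the defining relations $A=xC^2$, $C=1+x^2C^3$, $B=2-C$, $D=-x^3C^4$, only written as a single condensed matrix equation. Your spelled-out version (and the remark separating this algebraic identity from the convergence of the continuants, which the paper defers to the following theorem) matches the paper's intent exactly.
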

\begin{proof} We have 
\begin{align*}
    &\begin{pmatrix}
x(A^2+BC)^2 & 1-x^2C(A+D)(A^2+BC) \\
xC(A+D)(A^2+BC) + 1 & -xC^2(A+D)^2
\end{pmatrix} \\&\quad=
\begin{pmatrix}
xC^2 & -xC^3+ 1 \\
1+x^2C^3 & -x^3C^4
\end{pmatrix} =     \begin{pmatrix}
A & B\\ C&D
\end{pmatrix}.\qedhere
\end{align*}\end{proof}
\begin{theorem}
Near the origin, the continued fraction $\boldsymbol{w}_n$ converges to $\frac{A(x)}{C(x)}$ as $n \to \infty$.
\end{theorem}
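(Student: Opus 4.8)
The plan is to read Lemma~\ref{lemma matrix relation p, q} and Lemma~\ref{lemma matrix relation A,B,C,D} together as the single assertion that the matrix of continuants and the candidate limit are governed by one and the same recursion. Writing $M_n = \begin{pmatrix} p_{k_n} & p_{k_n-1} \\ q_{k_n} & q_{k_n-1}\end{pmatrix}$ and $M_\infty = \begin{pmatrix} A & B \\ C & D\end{pmatrix}$, and letting $\Phi$ be the matrix map whose entries are the right-hand sides in those lemmas, namely $\Phi\big(\begin{smallmatrix} p & p' \\ q & q'\end{smallmatrix}\big) = \begin{pmatrix} x(p^2+p'q)^2 & 1 - xq(p+q')(p^2+p'q) \\ 1 + xq(p+q')(p^2+p'q) & -xq^2(p+q')^2\end{pmatrix}$, we have $M_{n+1} = \Phi(M_n)$ for $n \ge 1$ and $M_\infty = \Phi(M_\infty)$. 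The theorem then reduces to showing that iterating $\Phi$ drives $M_n$ to the fixed point $M_\infty$ in $\mathbb{C}[[x]]$, since the value of the continued fraction $\boldsymbol{w}_n$ is exactly $p_{k_n}/q_{k_n}$.

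First I would record the structural feature of $\Phi$ that powers the argument: each of the four entries above is a constant ($0$ or $1$) plus $x$ times a polynomial in $p,p',q,q'$. Hence, for any two matrices $M,M'$ over $\mathbb{C}[[x]]$, the constant parts cancel and $\Phi(M) - \Phi(M') = x\cdot\big(P(M) - P(M')\big)$ entrywise, where $P$ is a fixed polynomial map with constant coefficients. Since a polynomial difference lies in the ideal generated by the entrywise differences, $P(M)-P(M')$ has $x$-adic order at least $\operatorname{ord}_0(M-M')$, where $\operatorname{ord}_0$ denotes the minimum order at $x=0$ over the four entries. This yields the contraction estimate $\operatorname{ord}_0\big(\Phi(M) - \Phi(M')\big) \ge \operatorname{ord}_0(M - M') + 1$.

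Next I would supply the base case and iterate. Because $\boldsymbol{w}_1 = [x]$, its matrix of continuants is $M_1 = \begin{pmatrix} x & 1 \\ 1 & 0\end{pmatrix}$, and comparing with $A = x + O(x^3)$, $B = 1 + O(x^2)$, $C = 1 + O(x^2)$, $D = O(x^3)$ gives $\operatorname{ord}_0(M_1 - M_\infty) \ge 2$. Feeding this into the contraction estimate with $M_{n+1} = \Phi(M_n)$ and $M_\infty = \Phi(M_\infty)$ gives, by induction, $\operatorname{ord}_0(M_n - M_\infty) \to \infty$; in particular $p_{k_n} \to A$ and $q_{k_n} \to C$ coefficientwise. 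Since $C(0) = 1 \ne 0$, every $q_{k_n}$ is invertible in $\mathbb{C}[[x]]$ and inversion is $x$-adically continuous, so $\boldsymbol{w}_n = p_{k_n}/q_{k_n} \to A/C$, a power series analytic in a neighbourhood of the origin, which is the asserted convergence.

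The main obstacle is not a single hard estimate but the bookkeeping of the contraction: one must verify that all four entries of $\Phi$ genuinely carry the ``constant $+\,x\cdot\text{polynomial}$'' shape, so that no entry fails to gain a power of $x$, and one must read ``convergence near the origin'' as stabilisation of Taylor coefficients, which is legitimate precisely because $C(0)\neq0$. A secondary point worth stating cleanly is that the recursion makes $\boldsymbol{w}_n$ a prefix of $\boldsymbol{w}_{n+1}$, so the truncations are genuinely nested; this confirms that we are comparing successive convergents of one infinite continued fraction rather than unrelated rational functions, reconciling the coefficientwise limit with the continued-fraction value.
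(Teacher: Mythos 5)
Your proof is correct and follows essentially the same route as the paper: both combine Lemma \ref{lemma matrix relation p, q} (the recursion for the matrix of continuants) with Lemma \ref{lemma matrix relation A,B,C,D} (the fixed-point property of $\begin{pmatrix} A & B\\ C & D\end{pmatrix}$) and conclude by induction that each iteration produces one more correct Taylor coefficient. Your $x$-adic contraction estimate $\operatorname{ord}_0\bigl(\Phi(M)-\Phi(M')\bigr)\ge\operatorname{ord}_0(M-M')+1$ is simply a precise formulation of the paper's terser claim that ``each iteration gives one extra term in the $x$-expansions of the matrix entries,'' together with an explicit treatment of the final division by $q_{k_n}$, which the paper leaves implicit.
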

\begin{proof}
We aim at proving by induction that, after $n$ recursions, the first $n$ coefficients of $p_{k_n}$, $p_{k_n-1}$, $q_{k_n}$ and $q_{k_n-1}$ are the same as the first $n$ coefficients of $A(x), B(x), C(x)$ and $D(x)$. Since $\boldsymbol{w}_0 = [\:]$, the corresponding matrix is $\begin{pmatrix}0 & 1 \\ 1 & 0\end{pmatrix}$ as required. If $n \ge 1$, Lemma \ref{lemma matrix relation p, q} and Lemma \ref{lemma matrix relation A,B,C,D} show that each iteration gives one extra term in the $x$-expansions of the matrix entries.
\end{proof}
\begin{corollary}
Near the origin, the continued fraction $\boldsymbol{w}_n$ converges to $xC(x)$ as $n \to \infty$.
\end{corollary}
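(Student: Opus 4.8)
The plan is to derive this corollary as a one-line consequence of the preceding theorem together with the algebraic relations already recorded for $A(x),\dots,D(x)$. That theorem asserts that, near the origin, $\boldsymbol{w}_n \to \frac{A(x)}{C(x)}$ in the sense that the first $n$ coefficients stabilise after $n$ recursions; hence it suffices to verify the single identity $\frac{A(x)}{C(x)} = xC(x)$ as formal power series.

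First I would invoke the relation $A(x) = xC(x)^2$, which is among the OEIS identities quoted just above (equivalently, the closed forms for $a_{2n+1}$ and $c_{2n}$ are tied together by $xC^2 = A$). Since $C(0) = 1$, the series $C(x)$ is a unit in $\mathbb{C}[[x]]$, so dividing the relation $A(x) = xC(x)^2$ by $C(x)$ is legitimate and yields $\frac{A(x)}{C(x)} = xC(x)$ at once. Combining this with the theorem gives $\boldsymbol{w}_n \to xC(x)$ near the origin, exactly as claimed.

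There is genuinely no obstacle at this final step: the substantive work — identifying the limit as $A/C$ and establishing the cubic and product relations among $A,B,C,D$ — has already been carried out. If one wished to avoid quoting $A = xC^2$ outright, an alternative is to route through Lemma \ref{lemma Combining xC = A + D}, which supplies $xC = A + D$ together with $D = -xA^2$; but unwinding this still rests on $A = xC^2$, so the direct division by the unit $C(x)$ remains the cleanest path and I would present it that way.
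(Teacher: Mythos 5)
Your proposal is correct and matches the paper's own proof, which is exactly the one-line computation $\frac{A(x)}{C(x)} = \frac{xC(x)^2}{C(x)} = xC(x)$ using the quoted OEIS identity $A = xC^2$. Your additional remark that $C(0)=1$ makes $C(x)$ a unit in $\mathbb{C}[[x]]$ is a harmless justification the paper leaves implicit.
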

\begin{proof}
$\frac{A(x)}{C(x)} = \frac{xC(x)^2}{C(x)} =xC(x)$.
\end{proof}

As $C(x) = 1 + x^2C(x)^3$, we get $\big(xC(x)\big)^3-xC(x)+x = 0$, hence the continued fraction $\boldsymbol{w}$ corresponds to a root of the polynomial $t^3-t+x \in \mathbb{C}[x][t]$. For real $|x| < \sqrt{\frac{4}{27}}$, this polynomial has three real roots, and the continued fraction represents the middle one,
\begin{align*}
    \sqrt[3]{3} \bigg(\frac{\sqrt[3]{2\sqrt{81x^2-12}-18x}}{3} + \frac{1}{\sqrt[3]{2\sqrt{81x^2-12}-18x}}\bigg).
\end{align*}
Sadly, such identities hold only within a certain domain of the complex plane without any nonzero integers. The series for $C(x)$ converges within the disk $|x| < \sqrt{\frac{4}{27}}$ \cite{OEIS}, and experimental evidence supports that  the continued fraction is not any more algebraic for rational $|x| \ge \sqrt{\frac{4}{27}}$. For example, when $x = 1$, the continued fraction gives $1.43229968255951445829\dots$, which does not fit any polynomial equation up to degree 60 with reasonably sized coefficients. Moreover, $x = \pm \sqrt{\frac{4}{27}}$ is also exactly the point where $t^3-t+x$ has a double root, so this point is not a surprise. Inside the radius of convergence, the convergence of the continued fraction is slow. While the number of terms of the continued fraction grows fourfold with each iteration, the number of correct digits increases at a constant rate. In particular, the example is a slowly converging folded continued fraction. In the domain $|x|>\sqrt{\frac{4}{27}}$, the continued fraction seems to be a smooth function with a power series expansion at infinity: 
\begin{align*} y^{-1} (1 + y^2 - y^4 + y^8 - 2y^{12} + 2y^{14} + y^{16} - 4y^{18} + O(y^{20})), \end{align*}
where $y = x^{-1}$.  The convergence towards this power series is fast: the number of correct coefficients in the power series grows as quickly as the number of terms in the continued fraction. The folding curve of this recursion is boring: It is self-crossing, has many gaps and is less self-similar than the dragon curve or the curve for $\rho$ as Figure \ref{fig:Curve cubic example} illustrates. 
\begin{figure}[H]
 \centering
  \includegraphics[width=80mm]{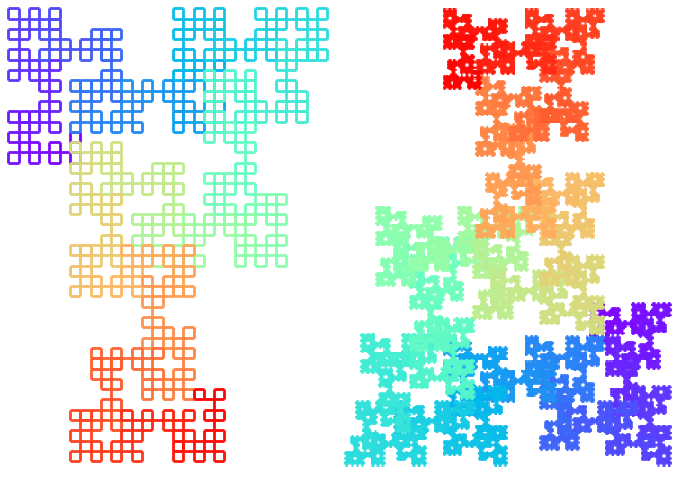}
  \caption{The folding curve of the cubic folded continued fraction starts purple and slowly fades to red following the colours of the rainbow. The left curve is drawn after 6 iterations and the right after 8 iterations.}
  \label{fig:Curve cubic example}
\end{figure}
Many recursions of words relate to this cubic example. An alternative recursion, $\boldsymbol{v}_0 = [\:]$ and $\boldsymbol{v}_n = [\boldsymbol{v}_{n-1},\: -\overleftarrow{\boldsymbol{v}_{n-1}},\: -\overleftarrow{\boldsymbol{v}_{n-1}},\: \boldsymbol{v}_{n-1},\: x]$, gives the exact same cubic and power series $A, B, C$ and $D$ as introduced earlier. Both converge to the same values within the disk $|x| < \sqrt\frac{4}{27}$, where the matrix converges, but differ outside this disk. For example,
\begin{align*}
    \boldsymbol{w}(1) &= 1.4322996825595144583\dots=[ 1, 2, 3, 5, 5, 3, 5, 4, 2, 5,3, \dots]; \\
    \boldsymbol{v}(1) &= 0.60281150560716119896 \dots =[ 0, 1, 1, 1, 1, 13, 1, 1, 1, 2, 6,\dots];\\ 
    \boldsymbol{w}(10) &=10.099000099980200960\dots=[ 10, 10, 9, 1, 9, 9, 1, 9, 10, 9, 1,\dots];\\ 
    \boldsymbol{v}(10) &=9.9009999000199950014 \dots=[ 9, 1, 9, 9, 1, 9, 9, 1, 9, 9, 1,\dots].
\end{align*}
The general pattern of $\boldsymbol{w}$ and $\boldsymbol{v}$ also differ wildly as can be seen in Figures \ref{fig:Curve cubic example} and \ref{fig: Alternative cubic curve}.
\begin{figure}[H]
 \centering
  \includegraphics[width=150mm]{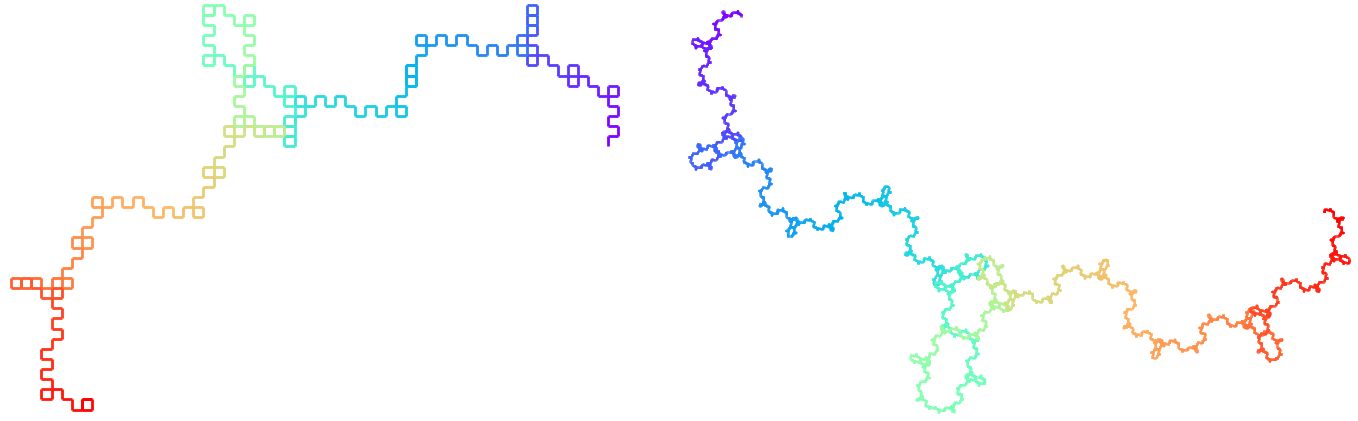}
  \caption{The folding curve of the alternative cubic folded continued fraction after 5 and 8 iterations starts purple and slowly fades to red following the colours of the rainbow.}
  \label{fig: Alternative cubic curve}
\end{figure}
Only this folded continued fraction does not `really' converge: We have checked the matrix convergence only along $(k_n)_{n=0}^{\infty}$. Another choice for such a subset, say $k_0-1, k_1-1, k_2-1,\dots$, leads to another function for $|x| <\sqrt{\frac{4}{27}}$, namely, to $\frac{B(x)}{D(x)}$ in this case.
\subsection{Other examples of folded continued fraction identities}
The behaviour of the folded continued fraction converging to the cubic function is, in contrast with $\rho$ and $\boldsymbol{p}$, not rare. The continued fractions of many more recursions of words tend to algebraic functions.
\begin{example}\label{example rational function folding word}
Take the recursion of words given by $\boldsymbol{w}_0 = [\:]$ being empty and $$\boldsymbol{w}_n = [\boldsymbol{w}_{n-1},\:-\overleftarrow{\boldsymbol{w}_{n-1}},\:x,\: -x,\: -\overleftarrow{\boldsymbol{w}_{n-1}}, \:\boldsymbol{w}_{n-1},\:x].$$ Then, letting $k_n = \#\boldsymbol{w}_n -1$, experimentally gives
\begin{align*}
    \lim_{n \to \infty} p_{k_n} &= 2x\sum_{n=0}^{\infty}x^{2n} = \frac{2x}{1-x^2} \quad \text{and} \\
    \lim_{n \to \infty} q_{k_n} &= 1+ 2x^2\sum_{n=0}^{\infty}x^{2n} = \frac{1+x^2}{1-x^2}.
\end{align*}
Thus, the continued fraction approaches $\frac{2x}{1+x^2}$, and it is natural to expect that its radius of convergence is 1. However, this turns out to be false as the convergence holds for $|x| < 0.22\dots$. The best explanation we have for this phenomenon is that the power series has many large incorrect terms that dominate the slow convergence.\medskip

The folding curve this sequence induces is self-intersecting but, more interestingly, looks globally like the cubic in Figure \ref{fig: Alternative cubic curve}. Only on a local scale, the curves differ. The theme of globally similar folding curves appears more often when searching graphical illustrations of underlying examples. Even when, on first glance, two folded continued fractions differ, their folding curves can be quite similar.
\begin{figure}[H]
 \centering
  \includegraphics[width=150mm]{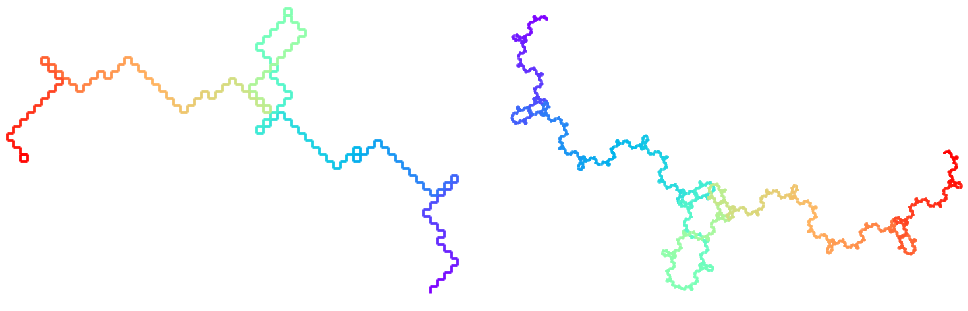}
  \caption{The curve as defined in Example \ref{example rational function folding word} after 4 and 8 iterations. It starts purple and slowly fades to red following the colours of the rainbow.}
  \label{fig:Curve rational example}
\end{figure}
This example also shows that analytically continuing such relations beyond the radius of convergence of the power series for $\lim_{n \to \infty} {p_{k_n}}$ and $\lim_{n \to \infty} {q_{k_n}}$ is impossible. For a nonzero integer $x$, the rational limit $\frac{2x}{1+x^2}$ has a finite continued fraction and differs from the infinite continued fraction $\boldsymbol{w}$. 
\end{example}
\begin{example}\label{CurveExampleQuintic}
The recursion defined by $\boldsymbol{w}_0$ being the empty word and $$\boldsymbol{w}_n = [\boldsymbol{w}_{n-1},\:\boldsymbol{w}_{n-1},\: \boldsymbol{w}_{n-1},\:-x,\:-\overleftarrow{\boldsymbol{w}_{n-1}},\:-\overleftarrow{\boldsymbol{w}_{n-1}},\:-\overleftarrow{\boldsymbol{w}_{n-1}},\:x]$$ looks like an even further exaggeration of the original folded continued fraction and the cubic folded continued fraction. For some reason, many terms cancel out again, giving a relationship. The partial quotients, $p(x)$ and $q(x)$, converge to the two power series
\begin{align*}
    p(x) &= -1 - 2x^2 - 17x^4 - 195x^6 - 2570x^8 - 36720x^{10} - 553168x^{12} - 8650756x^{14} -  O(x^{16}),\\
    q(x) &= -x - 6x^3 - 61x^5 - 756x^7 - 10406x^9 - 152880x^{11} - 2348164x^{13} - 37250298x^{15} + O(x^{17}).
\end{align*}
Both sequences of coefficients do not show up on the OEIS in any form and have no cheap closed form. Call their quotient $t(x)$. Then $t(x)$ has a Laurent expansion with a simple pole around 0:
\begin{align*}
    t(x) = x^{-1} - 4x - 20x^3 - 197x^5 - 2410x^7 - 32939x^9 - 481780x^{11} - 7377385x^{13} - O(x^{15});
\end{align*}
it is recognisable as an algebraic function of degree 5 because it satisfies
\begin{align}\label{Quintic polynomial}
    xt(x)^5-t(x)^4+2xt(x)^3+2t(x)^2-3xt(x)+x^2-1=0.
\end{align}
The continued fraction convergences to the corresponding root of this polynomial within a disk of approximate radius 0.230 around the origin. As polynomial \eqref{Quintic polynomial} is quintic over $\mathbb{Z}[t(x)]$, we note that its Galois group is isomorphic to the the symmetric group $S_5$ for generic rational values of $x$ and thus non-solvable. Therefore, there is no finite expression in radicals for $t(x)$.\medskip

As a variant, define $\boldsymbol{v}_n = [\boldsymbol{w}_n,\: \boldsymbol{w}_n,\: \boldsymbol{w}_n]$ such that  $\boldsymbol{w}_n = [ \boldsymbol{v}_{n-1},\: -x,\:-\overleftarrow{ \boldsymbol{v}_{n-1}},\: x]$ and
\begin{align*}
    \boldsymbol{v}_n = [\boldsymbol{v}_{n-1},\: -x,-\overleftarrow{ \boldsymbol{v}_{n-1}},\: x, \boldsymbol{v}_{n-1},\: -x,\:-\overleftarrow{ \boldsymbol{v}_{n-1}},\: x,\: \boldsymbol{v}_{n-1},\: -x,\:-\overleftarrow{ \boldsymbol{v}_{n-1}},\: x].
\end{align*}
Clearly, $\boldsymbol{v}_n$ is the beginning of $\boldsymbol{w}_{n-1}$ and $\boldsymbol{w}_n$ is the beginning of $\boldsymbol{v}_n$, so that $\lim_{n\to \infty}\boldsymbol{w}_n = \lim_{n\to \infty}\boldsymbol{v}_n$. However, the partial quotients of $\boldsymbol{w}$ converge to different power series than $\boldsymbol{v}$ around the origin; $\boldsymbol{w}$ is a root of a quintic polynomial and $\boldsymbol{v}$ of a cubic. In contrast with the cubic example, the folding curve of $\boldsymbol{w}$ is regular, see Figure \ref{fig:Curve Quintic example}. It is not self-crossing and has transparent self-symmetry.
\begin{figure}[H]
 \centering
  \includegraphics[width=120mm]{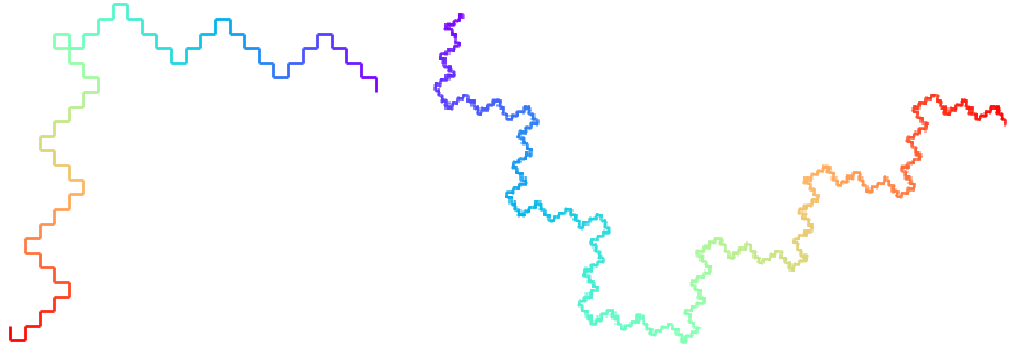}
  \caption{The curve as defined in Example \ref{CurveExampleQuintic} after 3 and 6 iterations starts purple and slowly fades to red following the colours of the rainbow.}
  \label{fig:Curve Quintic example}
\end{figure}
\end{example}
Many examples of such continued fractions exist. From above, one may falsely suspect that some sort of balance in the number of terms $\boldsymbol{w}_n$ and $-\overleftarrow{\boldsymbol{w}_n}$ is required. This is not the case: The recursion  
\begin{align*}
    \boldsymbol{w}_n = [\boldsymbol{w}_{n-1},\:x,\:\boldsymbol{w}_{n-1},\:x,\:-x,\:\boldsymbol{w}_{n-1},\:-x,\boldsymbol{w}_{n-1}]
\end{align*}
gives a continued fraction whose convergent relates to the cubic example in Subsection \ref{cubic example subsection}.
\section{An analogue of a result of Cohn}
In \cite{cohn1996symmetry}, Cohn explores the fact that the function $F(x) = \sum_{n=0}^{\infty} {x^{-2^n}}$ has a folded continued fraction form from a different perspective: All partial quotients of the regular infinite continued fraction of $F(x)$ are in $\mathbb{Z}[x]$. Moreover, the terms $x^{2^n}$ can also be interpreted as an iteration of the function $f(x) = x^2$. Let $f^n(x)$ denote $f(f^{n-1}(x))$ for $n \ge 2$; then $F(x) = \sum_{n=0}^{\infty} \frac{1}{f^n(x)}$.
\begin{definition}
A regular continued fraction with partial quotients in $\mathbb{C}(x)$ is called \textbf{specializable} if all partial quotients are polynomials in $\mathbb{Z}[x]$. \end{definition}
Now, the question is for which polynomials $f(x) \in \mathbb{Z}[x]$ the continued fraction $F(x) = \sum_{n=0}^{\infty} \frac{1}{f^n(x)}$ is specializable. The answer to this question is given by the following theorem:
\begin{theorem}[Cohn \cite{cohn1996symmetry}]\label{Theorem Cohn Spec}
For each polynomial $f \in \mathbb{Z}[x]$ of degree at least two, $\sum_{n=0}^{\infty} \frac{1}{f^n(x)}$ has a specializable continued fraction if and only if $f$ satisfies one of the following fourteen congruences:
\begin{multicols}{2}
\begin{enumerate}
    \item $f(x) \equiv 0 \mod x^2$;
    \item $f(x) \equiv -x \mod x^2$;
    \item $f(x) \equiv 1 \mod x^2(x-1)$;
    \item $f(x) \equiv -1 \mod x^2(x+1)$;
    \item $f(x) \equiv x^3-x^2-x+1 \mod x^2(x-1)^2$;
    \item $f(x) \equiv -x^3+2x^2-x+1 \mod x^2(x-1)^2$;
    \item $f(x) \equiv -x^3+3x^2-2x+1 \mod x^2(x-1)^2$;
    \item $f(x) \equiv x^3+x^2-x-1 \mod x^2(x+1)^2$;
    \item $f(x) \equiv -x^3-2x^2-x-1 \mod x^2(x+1)^2$;
    \item $f(x) \equiv -x^3-3x^2-2x-1 \mod x^2(x+1)^2$;
    \item $f(x) \equiv x^2-x+1 \mod x^2(x-1)^2$;
    \item $f(x) \equiv x^2-2x+1 \mod x^2(x-1)^2$;
    \item $f(x) \equiv -x^2-x-1 \mod x^2(x+1)^2$;
    \item $f(x) \equiv -x^2-2x-1 \mod x^2(x+1)^2$.
\end{enumerate}
\end{multicols}
\end{theorem}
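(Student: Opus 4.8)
The plan is to study the rational truncations $s_N(x) = \sum_{n=0}^{N} 1/f^n(x)$ and to show that the infinite continued fraction of $F$ is assembled from them by iterated application of the Folding Lemma. Since $\deg f \ge 2$, the iterates $f^n(x)$ have strictly increasing degrees, so $F(x)$ is a well-defined Laurent series at infinity and each $s_N$ has an honest regular continued fraction $[a_0;\boldsymbol{w}_N]$ over $\mathbb{C}(x)$. Everything is driven by two recursions, the \emph{additive} one $s_{N+1}(x) = s_N(x) + 1/f^{N+1}(x)$ and the \emph{substitution} one $s_{N+1}(x) = x^{-1} + s_N(f(x))$; specializability is precisely the statement that at every stage $N$ all partial quotients lie in $\mathbb{Z}[x]$.

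First I would turn the Folding Lemma into the engine of an induction. Writing $s_N = p_{k_N}/q_{k_N}$ for the convergent attached to $[a_0;\boldsymbol{w}_N]$ and using Lemma~\ref{lemma cross product p_n and q_n} to fix signs, the additive recursion together with the Folding Lemma identifies the continued fraction of $s_{N+1}$ with $[a_0;\boldsymbol{w}_N,\,t_N,\,-\overleftarrow{\boldsymbol{w}_N}]$, where the \emph{fold parameter} is
\[
t_N = \frac{(-1)^{k_N}\,f^{N+1}(x)}{q_{k_N}(x)^2}.
\]
The construction stays specializable exactly when, at each stage, $t_N$ reduces to a polynomial and every degree-zero or negative entry produced along the way can be absorbed over $\mathbb{Z}[x]$ using Lemma~\ref{Lemma Negative Continued Fraction} and the ripple identity of Lemma~\ref{lemma Ripple 1}. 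This reduces the theorem to a divisibility-and-integrality question about the reduced denominator $q_{k_N}$ of $s_N$, essentially whether $q_{k_N}^2$ divides $f^{N+1}$ up to a $\mathbb{Z}$-unit after clean-up.

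The crux is to control $q_{k_N}$, and here I would argue that its factorisation is governed by the dynamics of $f$ at the points where the poles of the $1/f^n(x)$ can collide, i.e. along the orbit of $x=0$. Working over $\mathbb{Z}$ rather than $\mathbb{C}$ forces every finite value taken along this orbit to be a unit of $\mathbb{Z}$, hence one of $0,1,-1$, and forces the associated multiplier $f'$ at each such point to be $0$ or $\pm1$; this localises all obstructions to $x=0,1,-1$. The admissible local patterns are then: $0$ is a fixed point (first-order data at $0$, giving the congruences modulo $x^2$, cases 1--2); $0$ maps to a fixed point $\pm1$ (data at $0$ together with a value at $\pm1$, giving the congruences modulo $x^2(x\mp1)$, cases 3--4); or $0$ feeds into second-order behaviour at $\pm1$, including the $2$-cycle on $\{1,-1\}$ (value-and-derivative data at two points, giving the congruences modulo $x^2(x\mp1)^2$, cases 5--14). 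Enumerating the finitely many such patterns is what produces exactly the stated fourteen congruences.

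For the two directions I would proceed as follows. For the ``if'' direction, fix one congruence class and prove by induction on $N$ that $t_N\in\mathbb{Z}[x]$ and that each collapse step forced by an emerging degree-zero quotient can be resolved inside $\mathbb{Z}[x]$, so that $[a_0;\boldsymbol{w}_N]$ stays specializable in the limit. For the ``only if'' direction, show that if $f$ lies in none of the fourteen classes, then at the first index $N$ where the orbit of $0$ either leaves $\{0,\pm1\}$ or violates a multiplier condition, $t_N$ acquires a genuine pole or a non-integer low-order coefficient that no ripple identity can remove. The hard part will be the bookkeeping that tracks $q_{k_N}$ through the cancellations in the sum, and above all the analysis at the special points $x=\pm1$, where $q_{k_N}$ can acquire repeated factors and the collapse identities must be applied exactly so as to preserve integrality; proving that the resulting list of orbit-and-multiplier possibilities is neither shorter nor longer than the fourteen cases is the genuine combinatorial heart of the argument.
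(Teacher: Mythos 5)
First, a point of order: the paper gives no proof of Theorem~\ref{Theorem Cohn Spec} at all --- it is imported verbatim from Cohn \cite{cohn1996symmetry} and used as a black box --- so your attempt can only be compared with Cohn's original argument. Your skeleton is in the right family: Cohn too works with the truncations $s_N$, uses the Folding Lemma with fold parameter $\pm f^{N+1}(x)/q_{k_N}(x)^2$ as the inductive engine, and cleans up stray constant or sign-flipped entries with identities of the type of Lemmas \ref{Lemma Negative Continued Fraction} and \ref{lemma Ripple 1}. The reduction of the ``if'' direction to divisibility of $f^{N+1}$ by $q_{k_N}^2$ (suitably corrected at the junctions) is sound in outline, although Cohn's sufficiency proof is a genuinely case-by-case induction in which the fold must be supplemented by explicit manipulation of the few partial quotients at the splice point, done differently in each congruence class; a single uniform antisymmetric fold does not cover all fourteen cases.

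The genuine gap is your classification mechanism, and it is refutable concretely in both directions. You localise the obstructions by demanding that the orbit of $0$ stay inside $\{0,\pm1\}$ with multiplier $f'$ equal to $0$ or $\pm1$ at each orbit point, and you assert that enumerating these patterns ``produces exactly the stated fourteen congruences''. It does not. Cases 7, 10, 12 and 14 of the theorem all have $f'(0)=-2$: for instance case 12 reads $f\equiv x^2-2x+1 \bmod x^2(x-1)^2$, which forces $f(0)=1$ and $f'(0)=-2$, so your multiplier condition would wrongly exclude these classes. Conversely, $f\equiv x \bmod x^2$ (take $f(x)=x^2+x$: fixed point $0$ with multiplier $+1$) passes your test yet appears nowhere in the list --- only $f\equiv 0$ and $f\equiv -x$ occur modulo $x^2$, and cases 3--14 all have $f(0)=\pm1$ --- so specializability genuinely fails there, and the sign of the multiplier matters in a way your scheme cannot detect. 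Since your ``only if'' direction rests entirely on this criterion, it collapses with it. There is a second, structural, problem with necessity: non-polynomiality of the fold parameter $t_N$ does not by itself show non-specializability, because the folded expression is only one representation of $s_{N+1}$; the regular continued fraction is the canonical one obtained after collapsing, and one must show that the \emph{actual} partial quotients acquire non-integral coefficients. Cohn establishes necessity not by an orbit heuristic but by explicitly computing the continued fractions of the first few partial sums and forcing the congruence conditions modulo $x^2$, $x^2(x\mp1)$ and $x^2(x\mp1)^2$ from integrality; that computation is where the fourteen cases actually come from.
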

Because $\rho_n(x)$ is $1+ \pfrac{x}{1} + \pfrac{f(x)}{1} + \pfrac{f^2(x)}{1} + \dots  +\pfrac{f^n(x)}{1}$ with $f(x) = x^2$, $\rho$ is specializable by Theorem \ref{theorem Continued Fraction rho x, -x}. This motivates asking Cohn's question for an irregular continued fraction shape of $F(x)$.
\begin{obs}
Let $f$ be a polynomial in $\mathbb{Z}[x]$ of degree at least two. Then the irregular continued fraction $x + \pfrac{f(x)}{1} + \pfrac{f^2(x)}{1} + \dots  +\pfrac{f^n(x)}{1}$ has a specializable continued fraction for all $n \ge 0$ if $f$ satisfies one of the following conditions:
\begin{enumerate}
    \item $f(x)  \equiv 0 \mod x^2$;
    \item $f(x)  \equiv -1 \mod x(x+1)$;
    \item $f(x)  \equiv -x-1 \mod x(x+1)$;
    \item $f(x) = x^2-2$.
\end{enumerate}
\end{obs}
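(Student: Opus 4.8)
The plan is to imitate the proof of Theorem \ref{theorem Continued Fraction rho x, -x} almost verbatim, replacing the special map $x\mapsto x^2$ there by the general iteration $f$. Writing the truncation as $S_n(x)=x+\pfrac{f(x)}{1}+\pfrac{f^2(x)}{1}+\dots+\pfrac{f^n(x)}{1}$, its irregular convergents obey $p_n=p_{n-1}+f^n p_{n-2}$ and $q_n=q_{n-1}+f^n q_{n-2}$ with $p_{-1}=1,\ p_0=x,\ q_{-1}=0,\ q_0=1$, so $S_n=p_n/q_n$ is a fixed rational function in $\mathbb{Z}(x)$. Specializability is the assertion that the \emph{regular} continued fraction of $p_n/q_n$, obtained by the polynomial Euclidean algorithm, has every partial quotient in $\mathbb{Z}[x]$; equivalently that the successive remainders can be kept with unit (i.e.\ $\pm1$) leading coefficient. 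I would prove this by exhibiting the regular continued fraction of $S_n$ as a folded word $[a_0;\,\boldsymbol{w}_n]$ satisfying a two-step folding recursion of the shape $\boldsymbol{w}_n=[\boldsymbol{w}_{n-2},\,t_n,\,-\overleftarrow{\boldsymbol{w}_{n-2}},\,t_n',\,\boldsymbol{w}_{n-1}]$, exactly as in \eqref{recursion w_n}, and then showing that each glue term $t_n,t_n'$, together with every entry of $\boldsymbol{w}_n$, lies in $\mathbb{Z}[x]$ (applying Lemma \ref{lemma Ripple 1} afterwards if one wishes to normalise signs while staying in $\mathbb{Z}[x]$).

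The engine is the Key Lemma (Lemma \ref{lemma key lemma continued fractions}) and the matrix manipulation of the proof of Theorem \ref{theorem Continued Fraction rho x, -x}. There the fold was driven by two auxiliary identities: a ``nonlinear'' cross-product relation (Lemma \ref{lemma H_n nonlinear relation}) that evaluated $q_m p_\ell-p_m q_\ell$ to the \emph{monomial} $(-1)^n x^{2^{n-1}-1}$, and a ``combinatorial'' relation (Lemma \ref{Lemma Hn Combinatorial}) of the form $H_n=H_{n-1}+x^{2^{n-1}}H_{n-2}$. For general $f$ I would re-derive the analogous pair: a cross-product $q_mp_\ell-p_mq_\ell=\pm\prod_k f^{k}$ and a combinatorial relation $P_n=P_{n-1}+(f^{\,n-1}/f^{\,n-2}\text{-type factor})\,P_{n-2}$. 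The partial quotients produced by the fold are precisely the glue terms $t_n,t_n'$ and these combinatorial multipliers, so specializability reduces to the single question of whether these data lie in $\mathbb{Z}[x]$, and that is exactly what the four hypotheses on $f$ control.

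The case analysis then isolates why each hypothesis suffices. Condition 1, $f\equiv0\bmod x^2$, is the direct generalisation of $\rho$ (the instance $f(x)=x^2$): writing $f(x)=x^2h(x)$ gives $f(y)\equiv0\bmod y^2$, hence $f^{\,k-1}\mid f^{\,k}$ for every $k$, so the consecutive-iterate ratios appearing as combinatorial multipliers are genuine polynomials and the cross-product carries unit leading coefficient, keeping the whole word in $\mathbb{Z}[x]$; one checks this against worked values such as $x+\pfrac{2x^2}{1}+\pfrac{8x^4}{1}=[x;\,4x^2,\,2x^2]$. Conditions 2 and 3 fix the values $f(0)$ and $f(-1)$ in $\{0,-1\}$: these are exactly the points at which the Folding-Lemma correction terms get reduced, so pinning $f\bmod x(x+1)$ forces $t_n,t_n'$ to be integral, just as Cohn's mod-$x^2(x\pm1)^2$ congruences (Theorem \ref{Theorem Cohn Spec}) govern the specializability of $\sum 1/f^n$, the milder modulus here reflecting the weaker demand of an irregular shape. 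Condition 4, $f(x)=x^2-2$, is the Chebyshev exception and is not a congruence; here I would instead substitute $x=t+t^{-1}$, use the closed form $f^{\,n}(t+t^{-1})=t^{2^n}+t^{-2^n}$ to telescope $S_n$ explicitly, and read off a folded regular continued fraction with $\mathbb{Z}[x]$ entries directly.

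The main obstacle is the second step: carrying the matrix computation of Theorem \ref{theorem Continued Fraction rho x, -x} through for arbitrary $f$ and proving that the glue terms and combinatorial multipliers stay in $\mathbb{Z}[x]$. For $\rho$ the glue was the monomial $(-1)^n x$ and the cross-product a single power of $x$; for general $f$ these become genuine polynomials whose integrality is the entire content of the statement, so the delicate part is establishing the two auxiliary identities in a form whose leading coefficients are units, and checking condition by condition that this unit-leading-coefficient property propagates through the induction without a degenerate zero-denominator division of the kind seen in Example \ref{counterexample C_n zeta_12 undefined}. Over the indeterminate $x$ such degeneracies do not occur, but verifying that conditions 1--4 are precisely the ones preserving integrality, and dispatching the Chebyshev case 4 by its own closed-form argument, is where essentially all the work lies.
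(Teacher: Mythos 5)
You should know at the outset that the paper offers no proof of this statement: it is an \emph{Observation}, the environment this thesis reserves for experimentally found facts (compare the observations on non-self-crossing folding curves and on the zeros of $\rho$). The surrounding text makes the empirical character explicit: the claim is deliberately not an ``if and only if'', candidate polynomials were tested by computing truncations, and the author remarks that a naive test involves polynomials of degree $(\deg f)^n$, so that even $n=10$ is out of reach for quartic $f$. There is therefore no proof in the paper to compare yours with; the only question is whether your argument would itself establish the claim.

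It would not. The structural core of your plan --- that the regular continued fraction of every truncation $T_n$ of the irregular continued fraction arises from a two-step fold $\boldsymbol{w}_n = [\boldsymbol{w}_{n-2},\: t_n,\: -\overleftarrow{\boldsymbol{w}_{n-2}},\: t_n',\: \boldsymbol{w}_{n-1}]$ ``exactly as in \eqref{recursion w_n}'', with the four hypotheses entering only through integrality of the glue terms --- is false for conditions 2 and 3. Take $f = x^2+x-1$, so that $f(y)+1 = y(y+1)$ and condition 2 holds, and write $f_k$ for the $k$-th iterate. The congruence forces the divisibility cascade $f_k(f_k+1) \mid f_{k+1}+1$, and this collapses the continued fractions: one computes $T_2 = x + 1/(x^2+x) = [x;\: x^2+x]$, $T_3 = [x+f;\: -(f+1),\: -f]$ and $T_4 = [x;\: f+1,\: -f^2(f+1),\: -(f+1),\: f]$ (here $f^2$ is the square, not the iterate), so the word lengths are $1$, $2$, $4$; your fold forces $k_n = k_{n-1}+2k_{n-2}+2$, hence length $3$ and then at least $6$. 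No fold of this shape, with integral glue terms or otherwise, produces these words: the true mechanism in cases 2 and 3 is telescoping divisibility, and the correct proof is an induction along that cascade, which your plan never touches --- the assertion that pinning $f$ modulo $x(x+1)$ ``forces $t_n, t_n'$ to be integral'' is vacuous because those terms do not exist. In the one case where the fold shape does appear experimentally, condition 1 (for $f=x^3$ one finds $\boldsymbol{w}_4 = [\boldsymbol{w}_2,\: x^{42},\: -\overleftarrow{\boldsymbol{w}_2},\: x^3,\: \boldsymbol{w}_3]$), the analogues of Lemmas \ref{lemma H_n nonlinear relation} and \ref{Lemma Hn Combinatorial} that would have to drive the induction are only postulated, and the unbounded degree of the glue ($x^{42}$ already at the fourth step) shows they cannot be a verbatim transcription of the $\rho$ argument; case 4 likewise receives only the suggestion of the Chebyshev substitution. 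You concede that this is ``where essentially all the work lies''; that is accurate, and it means the proposal proves none of the four cases, while its one structural claim made in any generality is refuted by two of them.
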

The observation, in contrast with Theorem \ref{Theorem Cohn Spec}, does not contain not an `if and only if' condition. For some polynomials, a large iteration for $n$ is needed to discard them, and those that remain do not belong to obvious classes. A naive computation requires dealing with polynomials of degree $(\deg{f})^n$, so even testing $n=10$ is hard for polynomials of degree 4. All polynomials that may produce specializable continued fractions are $-x-2 \mod 2x(x+2)$, $-2x-2 \mod 2x(x+2)$, or $-x \mod x^2$.\medskip

The continued fraction for $f = x^2-2$ is quite remarkable. It appears to be the only polynomial of degree at least 2 and not a multiple of $x^2$ such that the corresponding regular continued fraction is specializable with partial quotients equal to linear polynomials. Like $\rho$, it has two limits. These are
\begin{align*}
[1 + x,\: -x,\: x,\: -x,\: x,\: -x,\: \dots] \quad \text{and}\quad [1,\: -x,\: x,\: -x,\: x,\: -x,\: \dots].
\end{align*}
Save for the first terms, the partial quotients alternate between $x$ and $-x$. Thus, both parity partial limits of the continued fraction are quadratic elements of $\mathbb{Q}[x]$. Due to these results, iterating polynomials is natural in combination with folded continued fractions. To strengthen this claim, we study a special class of recursions of words. 
\begin{definition}
Let $(\boldsymbol{w}_n)_{n=0}^{\infty}$ be a recursion of words defined by $\boldsymbol{w}_0=[\:]$ and the recursion
\begin{align*}
    \boldsymbol{w}_n = [\boldsymbol{w}_{n-1},\: \pm x,\: -\overleftarrow{\boldsymbol{w}_{n-1}},\: \pm x,\: \boldsymbol{w}_{n-1},\: \pm x,\: -\overleftarrow{\boldsymbol{w}_{n-1}},\: \pm x,\: \dots,\: \pm x,\: \boldsymbol{w}_{n-1} \text{ or } -\overleftarrow{\boldsymbol{w}_{n-1}}],
\end{align*}
where each sign is chosen independently and the parts $\boldsymbol{w}_{n-1}$ and $-\overleftarrow{\boldsymbol{w}_{n-1}}$ alternate. Then $(\boldsymbol{w}_n)_{n=0}^{\infty}$ is called \textbf{special}.
\end{definition}
To understand why these special recursions are indeed special, we give a few examples where  $k_n = \#\boldsymbol{w}_n-1$ and $\tilde{p}$ and $\tilde{q}$ denote $p_{k_{n-1}}$ and $q_{k_{n-1}}$, respectively. For $\boldsymbol{w}_n=[\boldsymbol{w}_{n-1},\: x,\: -\overleftarrow{\boldsymbol{w}_{n-1}}]$,
\begin{align*}
    p_{k_n} = x\tilde{p}^2 \quad\text{and} \quad q_{k_n} = x\tilde{p}\tilde{q}+1.
\end{align*}
For $\boldsymbol{w}_n=[\boldsymbol{w}_{n-1},\: x,\: -\overleftarrow{\boldsymbol{w}_{n-1}},\: x,\: \boldsymbol{w}_{n-1}]$,
\begin{align*}
    p_{k_n} = \tilde{p}(-x^2\tilde{p}^2+1)\quad \text{and}\quad q_{k_n} = \tilde{q}(-x^2\tilde{p}^2+1) +x\tilde{p}.
\end{align*}
For $\boldsymbol{w}_n=[\boldsymbol{w}_{n-1},\: x,\: -\overleftarrow{\boldsymbol{w}_{n-1}},\: x,\: \boldsymbol{w}_{n-1},\: x,\: -\overleftarrow{\boldsymbol{w}_{n-1}}]$,
\begin{align*}
    p_{k_n} = \tilde{p}(x^3\tilde{p}^3+2x\tilde{p}) \quad\text{and}\quad q_{k_n} = \tilde{q}(x^3\tilde{p}^3+2x\tilde{p}) +x^2\tilde{p}^2+1.
\end{align*}
For $\boldsymbol{w}_n=[\boldsymbol{w}_{n-1},\: x,\: -\overleftarrow{\boldsymbol{w}_{n-1}},\: x,\: \boldsymbol{w}_{n-1},\: x,\: -\overleftarrow{\boldsymbol{w}_{n-1}},\: x,\: \boldsymbol{w}_{n-1}]$,
\begin{align*}
    p_{k_n} = \tilde{p}(x^4\tilde{p}^4-3x^2\tilde{p}^2+1) \quad\text{and}\quad q_{k_n} = \tilde{q}(x^4\tilde{p}^4-3x^2\tilde{p}^2+1)- 3x^3\tilde{p}^3+2x\tilde{p}.
\end{align*}
For $\boldsymbol{w}_n=[\boldsymbol{w}_{n-1},\: x,\: -\overleftarrow{\boldsymbol{w}_{n-1}},\: x,\: \boldsymbol{w}_{n-1},\: x,\: -\overleftarrow{\boldsymbol{w}_{n-1}},\: x,\: \boldsymbol{w}_{n-1},\: x,\: -\overleftarrow{\boldsymbol{w}_{n-1}}]$,
\begin{align*}
    p_{k_n} = \tilde{p}(x^5\tilde{p}^5+4x^3\tilde{p}^3+3x\tilde{p}) \quad\text{and}\quad q_{k_n} = \tilde{q}(x^5\tilde{p}^5+4x^3\tilde{p}^3+3x\tilde{p}) +x^4\tilde{p}^4+3x^2\tilde{p}^2+1.
\end{align*}
Observe that the equation for $p_{k_n}$ only contains $\tilde{p}$, and the equation for $q_{k_n}$ only contains $\tilde{p}$ and $\tilde{q}$ and is linear in the latter. Furthermore, both $p_{k_n}$ and $q_{k_n}$ have a common term, which is a polynomial in $x\cdot\tilde{p}$ multiplied by $\tilde{p}$ and $\tilde{q}$, respectively; $p_{k_n}$ has no other terms while $q_{k_n}$ has an extra term, which is a polynomial in $x\cdot\tilde{p}$. These observations can be summarised as follows: 

\begin{obs}
A recursion of words that only contains $x, -x, \boldsymbol{w}_{n-1}, -\boldsymbol{w}_{n-1}, \overleftarrow{\boldsymbol{w}_{n-1}}$ and $-\overleftarrow{\boldsymbol{w}_{n-1}}$ and satisfies $\boldsymbol{w}_0 = [\:]$ gives a recursion for $p_{k_n}$ only in terms of $x$ and $p_{k_{n-1}}$ if and only if the recursion of words is special. Moreover, if the recursion of words is special, there are polynomials $P, Q \in \mathbb{Z}[y]$ such that $k_n = \#\boldsymbol{w}_n-1$, $p_{k_n} = p_{k_{n-1}}P(xp_{k_{n-1}})$ and $q_{k_n} = q_{k_{n-1}}P(xp_{k_{n-1}}) + Q(xp_{k_{n-1}})$ for $n \ge 0$, $p_{-1}=0$ and $q_{-1}=1$. 
\end{obs}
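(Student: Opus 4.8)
The plan is to reduce everything to the multiplicative structure of matrices of continuants and to isolate the single algebraic phenomenon—the ``fold''—that makes the primed continuants $p_{k_{n-1}-1},q_{k_{n-1}-1}$ disappear. First I would record the transformation rules for a word $\boldsymbol{u}$: since each elementary matrix $\begin{pmatrix} a & 1 \\ 1 & 0\end{pmatrix}$ is symmetric, the Key Lemma (Lemma~\ref{lemma key lemma continued fractions}) gives $M(\overleftarrow{\boldsymbol{u}}) = M(\boldsymbol{u})^{T}$, and writing $J = \operatorname{diag}(-1,1)$ one checks $\begin{pmatrix} -a & 1 \\ 1 & 0\end{pmatrix} = -J\begin{pmatrix} a & 1 \\ 1 & 0\end{pmatrix}J$, hence $M(-\boldsymbol{u}) = (-1)^{\#\boldsymbol{u}}\,J\,M(\boldsymbol{u})\,J$. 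Abbreviating $W := M(\boldsymbol{w}_{n-1}) = \begin{pmatrix} \tilde p & \tilde p' \\ \tilde q & \tilde q'\end{pmatrix}$ with $\tilde p = p_{k_{n-1}}$, $\tilde q = q_{k_{n-1}}$ and $\epsilon := \det W = \tilde p\tilde q' - \tilde p'\tilde q = (-1)^{\#\boldsymbol{w}_{n-1}}\in\{\pm1\}$ (Lemma~\ref{lemma cross product p_n and q_n}), this expresses each of the six admissible building blocks—$X_{\pm x} = \begin{pmatrix} \pm x & 1 \\ 1 & 0\end{pmatrix}$ together with the matrices of $\pm\boldsymbol{w}_{n-1}$ and $\pm\overleftarrow{\boldsymbol{w}_{n-1}}$—entirely through $\tilde p,\tilde p',\tilde q,\tilde q',x$ and $\epsilon$.

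The engine of the forward (``if'') direction is the fold identity. A direct computation, using $\tilde p\tilde q' - \tilde p'\tilde q = \epsilon$, gives
\[
W\,X_{\pm x}\,M\big(-\overleftarrow{\boldsymbol{w}_{n-1}}\big) = R + \epsilon(\pm x)\,v\,w^{T}, \qquad R = \begin{pmatrix}0&1\\1&0\end{pmatrix},\ v = \begin{pmatrix}\tilde p\\\tilde q\end{pmatrix},\ w = \begin{pmatrix}\tilde p\\-\tilde q\end{pmatrix},
\]
a matrix free of the primed continuants. In a special recursion the non-constant parts alternate $\boldsymbol{w}_{n-1},-\overleftarrow{\boldsymbol{w}_{n-1}},\boldsymbol{w}_{n-1},\dots$ with exactly one $\pm x$ between consecutive ones, so the product $M(\boldsymbol{w}_n)$ groups into consecutive fold blocks $C_j = R + \epsilon t_{2j-1}\,v w^{T}$ separated by single constants $X_{t_{2j}}$, with a trailing factor $W$ left over precisely when the number of non-constant parts is odd, which enters the first column only through $We_1 = v$. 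I would then run an induction on the number of fold blocks, tracking the first column in the form $a\,v + b\,e_2$: a short calculation shows that prepending $C_j X_{t_{2j}}$ sends $(a,b)$ to $a' = (1\pm\epsilon y^{2})a \pm\epsilon y\,b$ and $b' = \pm y\,a + b$ with $y = x\tilde p$, so starting from $C_k e_1 = (\epsilon t_{2k-1}\tilde p)v + e_2$ (or from $v$ in the odd case) the coefficients stay in $\mathbb{Z}[y]$. This yields the first column $P(x\tilde p)v + Q(x\tilde p)e_2$, i.e.\ $p_{k_n} = \tilde p\,P(x\tilde p)$ and $q_{k_n} = \tilde q\,P(x\tilde p) + Q(x\tilde p)$; a mod-$2$ check on $\#\boldsymbol{w}_n = r\#\boldsymbol{w}_{n-1} + (r-1)$ shows $\epsilon$ is eventually constant, so $P,Q$ are independent of $n$, and the cases $n = 0,1$ are verified by hand against the stated conventions $p_{-1} = 0$, $q_{-1} = 1$.

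For the converse (``only if'') I would treat $\tilde p,\tilde q,\tilde p',\tilde q'$ as indeterminates subject only to $\tilde p\tilde q' - \tilde p'\tilde q = \epsilon$ and expand the $(1,1)$-entry of $M(\boldsymbol{w}_n)$ as a polynomial in them and $x$; the task is to show that this entry lies in $\mathbb{Z}[x,\tilde p]$ only for the special arrangements. The strategy is to prove that the fold above is the \emph{only} mechanism removing the primed continuants from the first column: reading the product from the right, the seed $N_r e_1$ already carries $\tilde p'$ whenever the last block is $\overleftarrow{\boldsymbol{w}_{n-1}}$ or $-\overleftarrow{\boldsymbol{w}_{n-1}}$, and each deviation from the alternating single-separator pattern—two adjacent non-constant parts, a doubled or leading constant, or a block of type $-\boldsymbol{w}_{n-1}$ or $\overleftarrow{\boldsymbol{w}_{n-1}}$—can be shown to leave an uncancelled $\tilde p'$, $\tilde q'$, or a non-$x\tilde p$ occurrence of $\tilde q$ in $p_{k_n}$. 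I expect this necessity part to be the main obstacle, since one must rule out \emph{accidental} cancellations arising from other combinations of $-\boldsymbol{w}_{n-1}$ and $\overleftarrow{\boldsymbol{w}_{n-1}}$; the cleanest route is probably to detect any surviving dependence by specialising $(\tilde p,\tilde q,\tilde p',\tilde q')$ to well-chosen numerical values consistent with $\det = \epsilon$, reducing the structural claim to a finite check on the possible block patterns.
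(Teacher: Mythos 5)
Before anything else: the paper contains no proof of this statement to compare yours against. It is labelled an \emph{Observation}, it is distilled from the five worked examples preceding it, and the paragraph immediately after it begins ``Assuming the observation\dots''. So you are supplying an argument where the author supplied none, and it has to be judged on its own. Your sufficiency half is correct and I could verify it in full: the fold identity $W X_t M(-\overleftarrow{\boldsymbol{w}_{n-1}}) = R + \epsilon t\, v w^{T}$ holds with $\epsilon = \det W = (-1)^{\#\boldsymbol{w}_{n-1}}$; the update $(a,b)\mapsto\bigl((1\pm\epsilon y^{2})a\pm\epsilon y\,b,\ \pm y\,a+b\bigr)$ is exactly what prepending $C_j X_{t_{2j}}$ does to a first column written as $a v + b e_2$; and $\#\boldsymbol{w}_n = r^{n}-1$, so $\epsilon$ is constant from $n=2$ on and the induction does produce $P,Q\in\mathbb{Z}[y]$. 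Two caveats you will run into: when the number $r$ of non-constant blocks is even, $\epsilon=-1$ for all $n\ge2$, so the true recursions carry signs that the paper's example formulas suppress (for $\boldsymbol{w}_n=[\boldsymbol{w}_{n-1},\,x,\,-\overleftarrow{\boldsymbol{w}_{n-1}}]$ the matrix of continuants gives $p_{k_2}=-x^{3}$, not $x\tilde p^{2}=x^{3}$), and the stated convention $p_{-1}=0$, $q_{-1}=1$ does not reproduce the step $n=1$ (the standard $p_{-1}=1$, $q_{-1}=0$ does). Your proof therefore ends up repairing the statement rather than confirming it literally; that is fine, but it should be said explicitly.

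The genuine gap is the necessity half, and the situation is worse than the ``main obstacle'' you anticipate: that half is false as stated, so neither your specialisation plan nor any other argument can close it. Take $\boldsymbol{w}_0=[\:]$ and $\boldsymbol{w}_n=[-\boldsymbol{w}_{n-1},\,x,\,\overleftarrow{\boldsymbol{w}_{n-1}}]$. This recursion uses only allowed parts and is not special: its non-constant blocks are $-\boldsymbol{w}_{n-1}$ and $\overleftarrow{\boldsymbol{w}_{n-1}}$, neither of which lies in the special set $\{\boldsymbol{w}_{n-1},\,-\overleftarrow{\boldsymbol{w}_{n-1}}\}$, and it generates the words $[x]$, $[-x,x,x]$, $[x,-x,-x,x,x,x,-x],\dots$, which already at the second step differ from every special recursion (whose template signs are fixed once and for all). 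Yet precisely your style of computation gives
\begin{align*}
M(-\boldsymbol{w}_{n-1})\,X_x\,M(\overleftarrow{\boldsymbol{w}_{n-1}})\,e_1
=\begin{pmatrix}\epsilon x\tilde p^{\,2}\\[2pt] 1-\epsilon x\tilde p\tilde q\end{pmatrix},
\qquad \epsilon=(-1)^{\#\boldsymbol{w}_{n-1}},
\end{align*}
so $p_{k_n}=-x\,p_{k_{n-1}}^{2}$ for every $n\ge2$ (concretely $p_{k_2}=-x^{3}$, $p_{k_3}=-x^{7}$): a recursion in $x$ and $p_{k_{n-1}}$ alone, free of $\tilde p'$, $\tilde q'$ and $\tilde q$. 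The flaw in your planned dichotomy is that the ordered pair $(-\boldsymbol{w}_{n-1},\,\overleftarrow{\boldsymbol{w}_{n-1}})$ satisfies its own fold identity --- the transpose of yours, $M(-\boldsymbol{u})X_tM(\overleftarrow{\boldsymbol{u}})=R+\epsilon t\,wv^{T}$ --- so blocks of type $-\boldsymbol{w}_{n-1}$ or $\overleftarrow{\boldsymbol{w}_{n-1}}$ do not automatically strand a primed continuant in the first column; they can cancel it among themselves. Consequently the ``only if'' can hold at most up to the symmetries $\boldsymbol{u}\mapsto-\boldsymbol{u}$ and $\boldsymbol{u}\mapsto\overleftarrow{\boldsymbol{u}}$ of the building blocks. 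If you want a provable theorem, reformulate the necessity as ``the recursion becomes special after applying one of these symmetries'' (equivalently, classify all fold-type cancellations), and aim your finite check at that corrected statement rather than at the literal one.
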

Assuming the observation, the corresponding continued fraction is a sum of an iterated polynomial:
\begin{align*}
    \frac{q_{k_n}}{p_{k_n}} = \frac{\tilde{q}P(x\tilde{p})+Q(x\tilde{p})}{\tilde{p}P(x\tilde{p})} = \frac{\tilde{q}}{\tilde{p}}+\frac{Q(x\tilde{p})}{\tilde{p}P(x\tilde{p})}= \frac{q_{k_{n-1}}}{p_{k_{n-1}}}+\frac{Q(x\tilde{p})}{\tilde{p}P(x\tilde{p})}. 
\end{align*}
Hence the continued fraction can be written as
\begin{align*}
\lim_{n \to \infty} \boldsymbol{w}_n = \Big(\lim_{n \to \infty} \frac{q_{k_n}}{p_{k_n}}\Big)^{-1} =
    \bigg(\sum_{n=0}^{\infty}\frac{Q\big(x\widehat{P}^{n}(1)\big)}{\widehat{P}^{n+1}(1)} \bigg)^{-1},
\end{align*}
where $\widehat{P}(y) = yP(x y)$. For example, if $\boldsymbol{w}_n=[\boldsymbol{w}_{n-1}, x, -\overleftarrow{\boldsymbol{w}_{n-1}}, x, \boldsymbol{w}_{n-1}, x, -\overleftarrow{\boldsymbol{w}_{n-1}}, x, \boldsymbol{w}_{n-1}]$, then $\widehat{P}(y) = x^4y^5 - 3x^2y^3 +y$ and $Q(y) = -3y^3+2y$. As $\deg{P} \ge \deg{Q}$, this sum converges for all $|x|>1$.\medskip

Special folded continued fractions also have a very clear connection with real-life paper folding, like the paperfolding dragon. The paper is folded in a number of pieces equal to the number of non-constant terms. The constant terms direct how the paper is folded.\medskip

Finally, when all $\pm x$ are chosen to be $x$, $P$ and $Q$ assume a simple form, save for perhaps signs. Coefficients of equal parity are either zero or of the form $\binom{n-k}{k}$ and $\binom{n-k-1}{k}$, where $n$ is the number of non-constant terms in the recursion. For example, when there are $n=20$ non-constant terms,
\begin{align*}
     P(y) &= y^{19} + 18y^{17} + 136y^{15} + 560y^{13} + 1365y^{11} + 2002y^9 + 1716y^7 + 792y^5 + 165y^3 + 10y,\\
     Q(y) &= y^{18} + 17y^{16} + 120y^{14} + 455y^{12} + 1001y^{10} +1287y^8 + 924y^6 + 330y^4 + 45y^2 + 1.
\end{align*}
The non-zero coefficients of ${P}(y)$ and the coefficients of $Q(y)$ correspond to $\binom{20-k}{k}$ for $k = 0,1,\dots,9$ and $\binom{19-k}{k}$ for $k = 0,1,\dots,9$, respectively. Such a connection could be established by induction, but we will not do this in the current thesis.
\section{Mahler functions, remarkable identities and Fibonacci numbers}
We now move on to a somewhat different theme. Let $F_n$ denote the $n$th Fibonacci number, $L_n$ the $n$th Lucas number and $\phi \approx 1.618\dots$ the golden ratio. Then
\begin{align*}
    F_0 = 0, \quad F_1 = 1\quad\text{and}\quad F_n &= F_{n-2}+ F_{n-1} \quad \text{for all $n\ge2$} \quad \text{and} \\
    L_0 = 2, \quad L_1 = 1\quad\text{and}\quad L_n &= L_{n-2}+ L_{n-1} \quad \text{for all $n\ge2$}. 
\end{align*}
Binet's formula is an elementary tool to work with Fibonacci and Lucas numbers.
\begin{theorem}[Binet's formula]
 For all $n \ge 0$, $F_n = \frac{\phi^n - (-\phi^{-1})^n}{\sqrt{5}}$ and $L_n = \phi^n + (-\phi)^{-n}$.
\end{theorem}
It implies that
\begin{align*}
    F_{2^n} = \begin{cases}
    1 &  \text{if } n = 0 \\
    \frac{1 - \big((\phi^{-1})^{2^n}\big)^2}{\sqrt{5}(\phi^{-1})^{2^n}} & \text{if } n \ge 1,
    \end{cases} \quad \text{and} \quad     L_{2^n} = \begin{cases}
    1 &  \text{if }n = 0 \\
    \frac{1 + \big((\phi^{-1})^{2^n}\big)^2}{(\phi^{-1})^{2^n}} & \text{if }n \ge 1.
    \end{cases}
\end{align*}
In 1974, I. J. Good \cite{good1974reciprocal} published a remarkable identity involving the Fibonacci numbers, namely
\begin{align*}
    \sum_{n = 0}^{\infty} \frac{1}{F_{2^n}} = \frac{7 - \sqrt{5}}{2}.
\end{align*}
In particular, this is an algebraic number. Meanwhile, Becker and T\"opfer \cite{becker1994transcendency} proved that $\sum_{n = 0}^{\infty} \frac{1}{F_{2^n+1}}$ is transcendental and Nishioka \cite{nishioka1997algebraic} showed that the numbers $\sum_{n = 0}^{\infty} \big(\frac{1}{F_{2^n+1}}\big)^l$ for $l \ge 1$ are all algebraically independent of each other. The powers of 2 in the indices of the Fibonacci numbers already suggest that Good's identity has something to do with Mahler functions. Such identities have been fairly popular, being cited dozens of times, and articles are still published about these sums four decades later. Good's identity can be constructed in the following way:
Let $f(x) = \frac{\sqrt{5}x}{1-x^2}$. Then $$\frac{\sqrt{5}}{1-x} = \frac{\sqrt{5}}{1-x^2} + f(x) =  \frac{\sqrt{5}}{1-x^4} + f(x) + f(x^2) =  \frac{\sqrt{5}}{1-x^8} + f(x) + f(x^2)+f(x^4) = \dotsm,$$
and so 
$$\frac{\sqrt{5}}{1-x} = \lim_{n\to\infty} \frac{\sqrt{5}}{1-x^{2^n}} + \sum_{m=0}^{n-1} f\big(x^{2^m}\big) = \sqrt{5}+ \sum_{n=0}^{\infty} f\big(x^{2^n}\big) \quad \text{if}\quad |x|<1.$$ 
Evaluating $\sum_{n=0}^{\infty} f\big(x^{2^n}\big)$ at $\phi^{-1}$ gives the desired result using Binet's formula.\medskip

More general, one finds these identities by choosing a $k \ge 2$ and a rational function $f(x)$. By taking $g(x) := f(x) - f(x^k)$, it follows from telescoping that $f(x) - \lim_{n \to \infty} f\big(x^{k^n}\big) = \sum_{n=0}^{\infty}g\big(x^{k^n}\big)$. If almost all of $g(x), g(x^k), g\big(x^{k^2}\big), \dots$ evaluate to a combination of Fibonacci and Lucas numbers by Binet's formula, and the limit and sum converge, a new identity is born.
\begin{example}
Let $f(x) = \frac{1}{\sqrt{5}(1+x^2)}$. Then $g(x) = f(x) - f(x^2) = \frac{x^2(x^2-1)}{\sqrt{5}(1+x^2)(1+x^4)} = \frac{x^2-1}{x\sqrt{5}}\frac{x^2}{1+x^4}\frac{x}{1+x^2}$ can be evaluated at $\phi^{-1}$ to get the infinite sum
$$
\sum_{n=0}^{\infty}\frac{F_{2^n}}{L_{2^n}L_{2^{n+1}}} = \frac{3\sqrt{5}+5}{30} = \frac{\sqrt{5}}{10} + \frac{1}{6}.
$$
\end{example}
This construction does not only work for sums. For example, defining $g(x)$ via $f(x) = g(x)f(x^k)$ leads to closed forms for infinite products, while extracting $g(x)$ and $h(x)$ from $f(x) = g(x)f(x^k) + h(x)$ evaluates infinite sums of finite products. For the last mentioned type, both $g\big(x^{k^n}\big)$ and $h\big(x^{k^n}\big)$ have to assume nice forms using Binet's formula for almost all $n$. We have the following example:
\begin{example}[Example of \cite{Hideyuki}]
Let $k=2$, $f(x) = 1-x^2$ and $g(x) = \frac{-x^2}{1-x^4}$. Then $f(x) = g(x)f(x^2) + 1$, implies that 
$$f(x) = \sum_{n=0}^{\infty}\prod_{m=0}^{n-1} g\big(x^{2^m}\big).$$ 
Using that $\phi^{-2^n} = -\sqrt{5}F_{2^{n+1}}$, we evaluate at $x = \phi^{-1}$ to get
\begin{align*}
    \frac{\sqrt{5}-3}{2} = f(\phi^{-1})-1 = \sum_{n=0}^{\infty} \frac{1}{g(\phi^{-1}) g(\phi^{-2}) \dotsm g\big(\phi^{-2^{n-1}}\big)}-1 
    = \sum_{n=1}^{\infty} \frac{1}{\big(-\sqrt{5}\big)^n F_2 F_4\dotsm F_{2^n}}.
\end{align*}\end{example}
This construction uses an elementary technique and produces a short proof. Different methods were used in \cite{Hideyuki}. Similarly, identities with irregular continued fractions can be constructed, which seem to be not yet discovered. The set up is the following: Let $f(x)$, $g(x)$ and $h(x)$ be rational functions satisfying $f(x) = g(x) + \frac{h(x)}{f(x^2)}$. Then fix $f(x)$ and one of $g(x)$ and $h(x)$, compute the other of $g(x)$ and $h(x)$, and test whether Binet's formula can be applied to $g\big(x^{k^n}\big)$ and $h\big(x^{k^n}\big)$ for almost all $n$. Any continued fraction with a unique limit then produces an identity.\medskip

For $h(x)=1$, multiple rational functions appear as solutions, but they always seem to have two limits, and the trick fails. For example, $f(x) = \frac{x+1}{x-1}$ and $g = \frac{2x}{x^2-1}$ gives 
$$
\lim_{n \to \infty } \frac{2}{\sqrt{5}F_1} + \pfrac{1}{\frac{2}{\sqrt{5}F_2}} + \pfrac{1}{\frac{2}{\sqrt{5}F_4}} +\dots+ \pfrac{1}{\frac{2}{\sqrt{5}F_{2^{n-1}}}}+ \pfrac{1}{\frac{2}{\sqrt{5}F_{2^n}}}.
$$
The $\frac{2}{\sqrt{5}F_{2^n}}$ is small, making $\frac{2}{\sqrt{5}F_{2^{n-1}}} + \big(\frac{2}{\sqrt{5}F_{2^n}}\big)^{-1}$ large etc. Therefore, depending on the parity of the continued fraction, the result is larger or smaller than 1, hence, the convergence fails.\medskip

On the other hand, setting $f(x) = \frac{(1+x)^2}{x}$, $g(x) = \frac{1+x^2}{x}$ and $h(x)=2\frac{(1+x^2)^2}{x^2}$ we obtain
$$
1 + \frac{2}{\sqrt{5}} = f(\phi) = g(\phi)+\pfrac{h(\phi)}{L_2} + \pfrac{2L_2^2}{L_4} + \pfrac{2L_4^2}{L_8}+\pfrac{2L_8^2}{L_{16}}+ \dotsm,
$$
which can be alternatively written as 
$$
\frac{7}{5} = L_1+\pfrac{2L_1^2}{L_2} +  \pfrac{2L_2^2}{L_4} + \pfrac{2L_4^2}{L_8}+\pfrac{2L_8^2}{L_{16}}+ \dotsm.
$$
Below is a table with a few continued fractions that converge to some number in $\mathbb{Q}$. Far more continued fractions do not converge in $\mathbb{Q}$ but in $\mathbb{Q}(\sqrt{5})$, but due to the supply of such identities, only these six examples are shown which have `nicer' values.
\begin{center}
 \begin{tabular}{||c| c| c| c| c||} 
 \hline
 $f(x)$ & $g(x)$ & $h(x)$ & Continued Fraction & Value \\ [0.5ex] 
 \hline\hline
    $\frac{(1-x)^2}{x}$ & $\frac{1+x^2}{x}$ & $-2\frac{(1-x^2)^2}{x^2}$ &  $L_1-\pfrac{10F_1^2}{L_2} -  \pfrac{10F_2^2}{L_4} - \dotsm$ & -9\\
  \hline
    $\frac{(1-x^2)^2}{x^2}$ & $\frac{(1+x^2)^2}{x^2}$ & $-4\frac{(1-x^4)^2}{x^4}$ &  $L_1^2-\pfrac{20F_2^2}{L_2^2} -  \pfrac{20F_4^2}{L_4^2} - \dotsm$ &-3\\
 \hline
   $\frac{1+x^4}{x^2}$ & $\frac{(1+x^2)^2}{x^2}$ & $-2\frac{1+x^8}{x^4}$ &  $L_1^2-\pfrac{2L_4}{L_2^2} -  \pfrac{2L_8}{L_4^2} - \dotsm$ &-1\\
 \hline
 $\frac{(1+x)^2}{x^2+1}$ & $1+\frac{1+x^2}{x}$& $\frac{(1+x^2)^2}{x^2}$ & $L_1+1+\pfrac{L_1^2}{1+L_2} +  \pfrac{L_2^2}{1+L_4} +\dotsm$ &2.2 \\
 \hline
   $\frac{1+x^4}{x^2}$ & $\frac{(1-x^2)^2}{x^2}$ & $2\frac{1+x^8}{x^4}$ &  $5F_1^2+\pfrac{2L_4}{5F_2^2} +  \pfrac{2L_8}{5F_4^2} + \dotsm$ &7\\
 \hline
   $\frac{(1+x^2)^2}{x^2}$ & $1+\frac{(1-x^2)^2}{x^2}$ & $3\frac{(1+x^4)^2}{x^4}$ &  $1+5F_1^2+\pfrac{3L_2^2}{1+5F_2^2} +  \pfrac{3L_4^2}{1+5F_4^2} + \dotsm$& 9\\ 
 [1ex] 
 \hline
\end{tabular}
\end{center}
\chapter{Hadamard products of Mahler functions}\label{Chapter Hadamard}
The space of $k$-Mahler functions is known to be closed under several operations, for example addition and multiplication. In this section, its closeness under another operation, the Hadamard product, is studied. To do this efficiently, we show that the usual definition of a Mahler function can be weakened. Recall that Mahler function were assumed to be analytical and thus have a power series expansion around the origin.
\begin{theorem}\label{Theorem Weaker condition Mahler}
    The condition that $A_0(q)A_d(q) \ne 0$ in the definition of a $k$-Mahler function $f(q) \in \mathbb{C}[[q]]\setminus\{0\}$ (Definition \ref{definition Mahler function}) can be weakened to ``not all $A_i(q)$ are zero''.
\end{theorem}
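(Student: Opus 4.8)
The plan is to begin from a ``weak'' equation
\[
A(q) + A_0(q)f(q) + A_1(q)f(q^k) + \dots + A_d(q)f\big(q^{k^d}\big) = 0 \qquad (\star)
\]
in which merely \emph{not all} of $A_0,\dots,A_d$ vanish, and to manufacture from it a genuine $k$-Mahler equation whose leading and trailing coefficients are both nonzero. Trailing zeros are harmless: if $A_d=0$ one simply lowers $d$ until the top coefficient is nonzero, so I may assume $A_d\ne0$. The only real issue is the possibility $A_0=0$. Let $m\ge1$ be the smallest index with $A_m\ne0$ (it exists since not all $A_i$ vanish), so that $(\star)$ reads $A(q)+\sum_{i=m}^{d}A_i(q)f\big(q^{k^i}\big)=0$.

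First I would substitute $q\mapsto q^{1/k^m}$, interpreting the identity in the Puiseux series field $\bigcup_{N\ge1}\mathbb{C}\big(\big(q^{1/N}\big)\big)$, on which this substitution is a field homomorphism and hence preserves the relation. Since each $f\big(q^{k^i}\big)=\sum_n c_n q^{k^i n}$ is sent to $\sum_n c_n q^{k^{i-m} n}=f\big(q^{k^{i-m}}\big)$, an ordinary power series because $i\ge m$, while $A_i(q)\mapsto A_i\big(q^{1/k^m}\big)$, I obtain
\[
A\big(q^{1/k^m}\big)+\sum_{i=m}^{d}A_i\big(q^{1/k^m}\big)\,f\big(q^{k^{i-m}}\big)=0. \qquad (\star\star)
\]
The coefficient multiplying $f(q)$ is now $A_m\big(q^{1/k^m}\big)\ne0$, so the leading term has been resurrected; the price is that the coefficients are polynomials in the fractional power $q^{1/k^m}$.

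To clear the fractional powers I would project $(\star\star)$ onto a single residue class of exponents modulo $k^m$. Writing every exponent as $t/k^m$ with $t\in\mathbb{Z}$ and grouping the terms of $(\star\star)$ according to $r:=t\bmod k^m$, the whole Puiseux series vanishes if and only if each residue block vanishes, since distinct $r\in\{0,\dots,k^m-1\}$ contribute monomials with disjoint exponent supports. Now choose a nonzero coefficient $a_{m,s^\ast}\ne0$ of $A_m(q)=\sum_s a_{m,s}q^s$ and set $r^\ast:=s^\ast\bmod k^m$. Extracting the block $r=r^\ast$ and dividing through by $q^{r^\ast/k^m}$ leaves an honest identity in $\mathbb{C}[[q]]$,
\[
\tilde A(q)+\sum_{j=0}^{d-m}\tilde A_{j+m}(q)\,f\big(q^{k^{j}}\big)=0, \qquad \tilde A_{i}(q):=\sum_{\ell}a_{i,\,k^m\ell+r^\ast}\,q^{\ell},
\]
in which the coefficient of $f(q)$ is $\tilde A_{m}(q)\ne0$, as it retains the monomial coming from $s^\ast$. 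Discarding any trailing zeros of this new equation then yields a $k$-Mahler equation with both leading and trailing coefficients nonzero, as required. The main obstacle is exactly this passage from $(\star\star)$ back to integer powers without killing the $f(q)$-coefficient, and the point that makes it work is that the residue class $r^\ast$ can always be matched to a monomial actually present in $A_m$, which guarantees $\tilde A_m\ne0$ and hence a bona fide nonzero leading coefficient.
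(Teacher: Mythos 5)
Your proof is correct, and its engine is the same as the paper's: split the relation according to exponent residues modulo $k^m$, keep a single residue block, strip off the common fractional power of $q$, and rescale the variable. The packaging differs in the order of operations (the paper projects the original equation onto the summands of $\mathbb{C}[[q]]=\bigoplus_{j=0}^{k^m-1}q^j\,\mathbb{C}[[q^{k^m}]]$ and substitutes $q\mapsto q^{1/k^m}$ at the very end, whereas you substitute first inside the Puiseux field and then project; these are equivalent), but the genuine difference lies in how each argument guarantees that the surviving block carries a nonzero coefficient in front of $f(q)$. The paper argues that, since $d$ is minimal, the projected relations are linearly independent and hence ``all but one is zero,'' then works with the unique nonzero projection; this claim is stronger than needed and its justification is murky, and even granting it one would still have to check that the surviving projection retains a nonzero coefficient on $f\big(q^{k^m}\big)$. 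You sidestep this entirely by the explicit choice of the residue class $r^*=s^*\bmod k^m$ of a monomial $q^{s^*}$ actually present in $A_m$, which forces $\tilde A_m\ne0$ by construction; you also explicitly discard trailing zeros at the end to make the top coefficient nonzero, a point the paper leaves implicit, and you never need the hypothesis $f\ne0$, which the paper's projection argument invokes. In short, yours is a cleaner and more robust execution of the same decomposition idea, and it repairs the one shaky step in the paper's own proof.
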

\begin{proof}
Without loss of generality, $A_d(q)$ can be assumed to be 0, and if $A_0(q)=0$, then by the new condition, there is a smallest number $0 \le e < d$ such that $A_{e}(q) \ne 0$. As such, we have the equation
\begin{align}\label{Mahler relation theorem exisitence}
    A(q) + A_e(q)f(q^{k^e}) + A_{e+1}(q)f(q^{k^{e+1}}) + \dots + A_d(q)f(q^{k^d}) = 0.
\end{align}
Note that $\mathbb{C}[[q]]$ can be written as a direct sum of $\mathbb{C}[[q^{k^e}]]$, $q\mathbb{C}[[q^{k^e}]],\dots,q^{k^e-1}\mathbb{C}[[q^{k^e}]]$. As $ 0 \ne f(q) \in \mathbb{C}[[q]]$, $f(q^{k^e})$ is in the first of these otherwise disjoint vector spaces and in none of the others, projecting the Mahler equation \eqref{Mahler relation theorem exisitence} onto each of these vector spaces gives a new relation for $f(q)$. As $d$ is minimal, the relations are linearly independent, and hence, as \eqref{Mahler relation theorem exisitence} is another relation for $f(q)$, all but one is zero. Let this sole projection be to $q^j\mathbb{C}[[q^{k^e}]]$ for a $0 \le j < k^e$. Then $q^j$ divides the polynomial coefficients of the Mahler equation, and so by updating the relation by dividing the relation by $q^j$, all new polynomial coefficients are in $\mathbb{C}[q^{k^e}]$. Evaluating the new equation in $q^{\frac{1}{k^e}}$ gives a usual $k$-Mahler equation.
\end{proof}
\begin{corollary}\label{corollary dimension is sufficient}
If $f$ is analytic and $\dim_{\mathbb{C}(q)}(f(q), f(q^k), f(q^{k^2}), \dots)$ is finite, then $f$ is Mahler.
\end{corollary}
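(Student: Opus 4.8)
The plan is to convert the finite-dimensionality hypothesis into a single homogeneous linear relation among the iterates $f(q^{k^i})$ and then invoke the weakening established in Theorem \ref{Theorem Weaker condition Mahler}. First I would dispose of the trivial case $f = 0$, which satisfies the degree-$0$ relation $f(q) = 0$; so assume $f \ne 0$. I would work inside the field $\mathbb{C}((q))$ of formal Laurent series, which is a $\mathbb{C}(q)$-vector space containing every $f(q^{k^i})$, each being an honest power series since $f \in \mathbb{C}[[q]]$. This is the natural ambient space, since $\mathbb{C}[[q]]$ itself is not a $\mathbb{C}(q)$-vector space.

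Let $d := \dim_{\mathbb{C}(q)}\big(f(q), f(q^k), f(q^{k^2}), \dots\big)$, which is finite by hypothesis. Then the $d+1$ elements $f(q), f(q^k), \dots, f(q^{k^d})$ lie in a $\mathbb{C}(q)$-vector space of dimension $d$ and are therefore linearly dependent: there exist rational functions $r_0(q), \dots, r_d(q) \in \mathbb{C}(q)$, not all zero, with $\sum_{i=0}^d r_i(q)\, f(q^{k^i}) = 0$. Multiplying through by a common denominator of the $r_i$ produces polynomials $A_0(q), \dots, A_d(q) \in \mathbb{C}[q]$, not all zero, such that
$$
A_0(q) f(q) + A_1(q) f(q^k) + \dots + A_d(q) f\big(q^{k^d}\big) = 0.
$$

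This is exactly an instance of the defining relation \eqref{GeneralMahlerForm} with inhomogeneous term $A(q) = 0$, except that we cannot guarantee the boundary condition $A_0(q) A_d(q) \ne 0$ demanded by Definition \ref{definition Mahler function}: some of the extreme coefficients may vanish. This is precisely the gap closed by Theorem \ref{Theorem Weaker condition Mahler}. Since not all of the $A_i(q)$ are zero, that theorem upgrades the relation above to a genuine $k$-Mahler equation, and hence $f$ is a $k$-Mahler function. The point worth stressing is that there is no real obstacle remaining at this stage: all of the substantive work has been absorbed into Theorem \ref{Theorem Weaker condition Mahler}, and the only thing this corollary contributes is the elementary linear-algebra observation that finite $\mathbb{C}(q)$-dimension of the family $\{f(q^{k^i})\}_{i \ge 0}$ forces a nontrivial vanishing linear combination of finitely many consecutive iterates.
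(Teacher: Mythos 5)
Your proposal is correct and follows essentially the same route as the paper: extract a nontrivial $\mathbb{C}(q)$-linear relation among finitely many iterates $f(q^{k^i})$ from the finite-dimensionality hypothesis, clear denominators, and invoke Theorem \ref{Theorem Weaker condition Mahler} to upgrade the relation (which may have $A_0(q)A_d(q)=0$) to a genuine $k$-Mahler equation. The only additions beyond the paper's one-line argument are bookkeeping details (the trivial case $f=0$ and the choice of $\mathbb{C}((q))$ as ambient vector space), which are harmless and, if anything, make the argument more complete.
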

\begin{proof}
By the assumption, there is an equation like \eqref{Mahler relation theorem exisitence}, and so $f$ is Mahler by Theorem \ref{Theorem Weaker condition Mahler}.
\end{proof}
This corollary enables us to show the space of $k$-Mahler functions is closed under several operations.
\begin{example}
That the space of $k$-Mahler functions is closed under addition, can be seen as follows. For a $k$-Mahler function $f$ of degree $d_f$, the vector space $\mathbb{C}(q)[1, f(q), f(q^k), f\big(q^{k^2}\big), \dots]$ has dimension at most $d_f+1$. If $g$ is a $k$-Mahler function of degree $d_g$, then
\begin{align*}
    &\dim_{\mathbb{C}(q)}\big(1, (f+g)(q), (f+g)(q^k), (f+g)\big(q^{k^2}\big),\dots \big)  \\
    &\quad\le\dim_{\mathbb{C}(q)} \big(1, f(q), f(q^k), f\big(q^{k^2}\big), \dots, g(q), g(q^k), g\big(q^{k^2}\big), \dots \big) \le d_f + d_g + 1
\end{align*}
is finite. Thus, $f+g$ satisfies a $k$-Mahler equation by corollary \ref{corollary dimension is sufficient}.
\end{example}
\begin{definition}
The \textbf{Hadamard product} of two power series $f(q) = \sum_{n=0}^{\infty}c_nq^n$ and $g(q) = \sum_{n=0}^{\infty}d_nq^n$ in $\mathbb{C}[[q]]$ is defined as $\sum_{n=0}^{\infty} c_nd_nq^n$ and denoted by $f \star g$.
\end{definition}
Several well-known subsets of $\mathbb{C}[[q]]$ are closed under the Hadamard product: the spaces of monomials, polynomials, rational functions \cite[Theorem 7]{jungen1931series} and D-functions (functions that satisfy a linear differential equation with polynomial coefficients) \cite[Theorem 9]{jungen1931series}. To the best of our knowledge, this question has not yet been answered for the entire space of $k$-Mahler functions, but Allouche and Shallit proved it for a subspace of so-called $k$-regular functions \cite{allouche1992ring}. This construction is studied in Subsection \ref{k-regular functions subsection}. To start, recall a simple observation:
\begin{prop}\label{prop Mahler recursion}
Let $f(q) = \sum_{n=0}^{\infty}x_nq^n$ be a $k$-Mahler function that satisfies the $k$-Mahler equation
\begin{align*}
    A(q) + A_0(q)f(q) + A_1(q)f(q^k)+ \dots + A_d(q)f\big(q^{k^d}\big) = 0
\end{align*}
for some $A(q),A_0(q),\dots,A_d(q) \in \mathbb{C}[q]$ with $A_0(q)A_d(q) \ne 0$ and $N = \max(\deg{A_0}(q),\dots,\deg{A_d(q)})$. If $A_i(q) = \sum_{j=0}^Na_{i, j}q^j$ and $n > \deg{A}$, then we have $$a_{0, 0}x_n + \dots + a_{0, N}x_{n-N} + a_{1,0}x_{\frac{n}{k}} + \dots + a_{1, N}x_{\frac{n-N}{k}} + \dots + a_{d, N} + \dots + a_{1, N} x_{\frac{n-N}{k^d}} = 0.$$
\end{prop}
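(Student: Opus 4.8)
The plan is to extract the claimed coefficient recursion directly from the Mahler equation by reading off the coefficient of $q^n$ on both sides, for $n$ large enough that the inhomogeneous term $A(q)$ no longer contributes. This is a routine bookkeeping argument, so the main work is simply to track how each product $A_i(q)f(q^{k^i})$ contributes to the coefficient of $q^n$.

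First I would expand each term. Writing $f(q^{k^i}) = \sum_{m=0}^\infty x_m q^{k^i m}$ and $A_i(q) = \sum_{j=0}^N a_{i,j} q^j$, the product is
\begin{align*}
A_i(q) f(q^{k^i}) = \sum_{j=0}^N \sum_{m=0}^\infty a_{i,j}\, x_m\, q^{j + k^i m}.
\end{align*}
The coefficient of $q^n$ in this product collects all pairs $(j,m)$ with $j + k^i m = n$, i.e. $m = (n-j)/k^i$ whenever this is a nonnegative integer. Thus the contribution of the $i$-th term to the coefficient of $q^n$ is $\sum_{j=0}^N a_{i,j}\, x_{(n-j)/k^i}$, with the convention that $x_{(n-j)/k^i}$ is read as $0$ unless $(n-j)/k^i$ is a nonnegative integer. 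This produces exactly the block $a_{i,0} x_{n/k^i} + a_{i,1} x_{(n-1)/k^i} + \dots + a_{i,N} x_{(n-N)/k^i}$ appearing in the statement.

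Next I would handle the inhomogeneous term. Since the full equation reads $A(q) + \sum_{i=0}^d A_i(q) f(q^{k^i}) = 0$, comparing coefficients of $q^n$ gives $[q^n]A(q) + \sum_{i=0}^d \bigl(\text{$i$-th contribution}\bigr) = 0$. For $n > \deg A$ the term $[q^n]A(q)$ vanishes, so the sum of the contributions is zero, which is precisely the asserted identity once the $i$ ranging from $0$ to $d$ are written out. I would note that the indices $(n-j)/k^i$ need not all be integers; the convention that $x_r = 0$ for non-integral or negative $r$ makes every written term meaningful and silently discards the spurious ones, so no case distinction on divisibility is needed in the final display.

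The only genuine subtlety—hardly an obstacle—is justifying that one may legitimately compare coefficients termwise, i.e. that the displayed sum over $i$ and $j$ can be reorganized into a single coefficient extraction. Since each $A_i$ is a polynomial (finitely many $j$) and the power series $f(q^{k^i})$ are all formal elements of $\mathbb{C}[[q]]$, the products are well-defined formal power series and the rearrangement is the standard Cauchy-product identity with no convergence concerns. Hence the entire argument is a direct coefficient comparison, and I expect no real difficulty beyond carefully matching the notation $a_{i,N} x_{(n-N)/k^i}$ in the conclusion to the index bookkeeping above.
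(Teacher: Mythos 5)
Your proposal is correct, and it is exactly the routine coefficient-extraction argument the paper has in mind: the paper states this proposition without proof, introducing it with ``recall a simple observation,'' so your write-up simply supplies the verification left implicit there. Your explicit convention that $x_{(n-j)/k^i}$ is read as $0$ when the index is not a nonnegative integer is the right way to make the paper's display (which also contains a few typographical slips, e.g.\ the term written $a_{d,N}$ without its factor $x_{n/k^d}$) literally meaningful.
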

A rational function $f(q) = \frac{r(q)}{s(q)}$ is $k$-Mahler as $s(q)f(q)-r(q) = 0$.
Moreover, the Hadamard product is commutative, associative and distributive over addition. Its identity is the rational function $1 + q + q^2 + \dots = \frac{1}{1-q}$. Only, the Hadamard product of $k$-Mahler functions is not always $k$-Mahler.
\begin{example}\label{CounterExample}
Let $k=2$, $f(q) = q + q^2 + q^4 + q^8 + \dots$ and $g(q) = \frac{1}{1-2q} = 1 + 2q + 4q^2 + 8q^3 + \dotsm$. Then $f$ satisfies $f(q) = q + f(q^2)$, so both are 2-Mahler. Their Hadamard product is
\begin{align*}
    \sum_{n=0}^{\infty} x_nq^n := (f \star g)(q) = 2q + 4q^2 + 16q^4 + 256q^8 + \dots + 2^{2^n}q^{2^n} + \dotsm.
\end{align*}
This means that
\begin{align*}
    x_n = \begin{cases}
    2^n &\text{if $n$ is a power of 2},\\
    0 &\text{otherwise}.
    \end{cases}
\end{align*}
If $f \star g$ is 2-Mahler, Proposition \ref{prop Mahler recursion} gives a recursion for some $N$ and $d$. If $n$ is a large power of 2, $0 \le i \le N$ and $0 \le j \le d$, then $\frac{n-i}{2^j}$ is a power of 2 if and only if $i = 0$. Thus, for large $m$ only a small number of non zero terms remain, and the recursion becomes
\begin{align*}
    a_{0, 0}x_{2^m} + a_{1, 0}x_{2^{m-1}} + \dots + a_{d, 0}x_{2^{m-d}} = 0. 
\end{align*}
As such, the sequence $(x_{2^m})_{m=0}^{\infty} = \big(2^{2^m}\big)_{m=0}^{\infty}$ satisfies a linear recursion while $\frac{x_{2^m}}{x_{2^{m-1}}} = 2^{2^{m-1}}$ tends to infinity. A contradiction. Thus, $f \star g$ is not 2-Mahler.
\end{example}
Any $f(q)$ with arbitrary long sequences of zeros in its $q$-expansion does the job in Example \ref{CounterExample}, so $F$, $G$, $H$ and $I$ from Chapter 1 work as well. One could argue that $\frac{1}{1-2q}$ is not a `legitimate' Mahler function as it is not defined in the entire unit disk. However, examples without this drawback exist. 
\begin{example}\label{Extended counter example}
If $k \ge 2$, $\alpha$ is non-zero and not a root of unity, $g(q) = \frac{1}{1-\alpha q}$ and $f = q + q^k + q^{k^2} + \dotsm$ satisfies $f(q) = q + f(q^k)$, then $(f \star g)(q) = \alpha q+\alpha^kq^k + \alpha^{k^2}q^{k^2} + \dotsm$ and the recursion
\begin{align*}
    a_{0, 0}x_{k^m} + a_{1, 0}x_{k^{m-1}} + \dots + a_{d, 0}x_{k^{m-d}} = 0. 
\end{align*}
is obtained as in Example \ref{CounterExample} for large $m$. For such large $m$, $\alpha^{k^{m-d}}$ is a root of $a_{0, 0}q^{k^d}+ a_{1, 0}q^{k^{d-1}} + \dots + a_{d, 0}$ because $x_{k^m} = \alpha^{k^m}$. As $\alpha$ is not a root of unity, all $\alpha^{k^{m-d}}$ differ from each other, giving that the polynomial has an infinite number of roots and hence is the zero polynomial. Thus, the recursion is trivial and $f \star g$ is not $k$-Mahler.\medskip

Not all numbers on the unit circle are roots of unity or transcendental. For example, Lehmer's polynomial of degree 10 \cite{lehmer1933factorization} has 8 roots on the unit circle that are not roots of unity. In general, the projection of an algebraic number onto the unit circle is algebraic but often not a root of unity.
\end{example}
Meanwhile, there are many pairs Mahler whose Hadamard product is again Mahler. 
\begin{example}
We call a $k$-Mahler function \textbf{special} if, as defined in Proposition \ref{prop Mahler recursion}, all $A_i(q) = a_i$ are constant and $A(q) = \sum_{i=0}^Mb_iq^i$ with $b_i = 0$ if $i$ is not a $k$th power. Examples of special Mahler functions are $f$ in Example \ref{CounterExample} and $g(q) = q + q^2 + 2q^4 + 3q^8 + \dots + F_{n+1}q^{2^n} + \dotsm $, where $F_n$ is the $n$th Fibonacci number, which satisfies $g(q) = g(q^2)+g(q^4) + q$.\medskip

There is a natural bijection between the special $k$-Mahler functions and rational functions defined at $q=0$ that preserves the Hadamard product: $x_1q + x_kq^k + x_{k^2}q^{k^2} + \dots\mapsto x_1 + x_{k}q + x_{k^2}q^2+\dotsm$. Thus, the recursion for $x_n$ transforms into a linear recursion. As the rational functions are closed under the Hadamard product \cite[Theorem 7]{jungen1931series}, the special $k$-Mahler functions are as well. 
\end{example}
\section[The space of k-regular functions]{The space of $\boldsymbol{k}$-regular functions}\label{k-regular functions subsection}
Much larger spaces of Mahler functions that are closed under the Hadamard product exist. In this section, we study a space introduced by Allouche and Shallit \cite{allouche1992ring} and Becker \cite{becker1994k}.
\begin{definition}
The \textbf{$\boldsymbol{k}$-kernel} of a sequence $(x_n)_{n=0}^{\infty}$ is the set of all the subsequences of the form $(x_{k^en + r})_{n=0}^{\infty}$ where $e \ge 0$ and $0 \le r \le k^e-1$. 
\end{definition}
\begin{definition}
A sequence $(x_n)_{n=0}^{\infty}$ is called \textbf{$\boldsymbol{k}$-automatic} if its $k$-kernel is finite and \textbf{$\boldsymbol{k}$-regular} if its $k$-kernel is finitely generated.
\end{definition}
Naturally, the definitions of $k$-automatic and $k$-regular sequences translate to their generating functions. We give a few examples. The $k$-kernel of $\frac{1}{1-q}$ consists of one element: $(1,1,\dots)$. So $\frac{1}{1-q}$ is $k$-automatic and $k$-regular. Next, $\frac{q}{(1-q)^2}$ has a $k$-kernel generated by $(1,1,\dots)$ and $(0,1,2,\dots)$ and is  $k$-regular but not $k$-automatic. Lastly, $\frac{1}{1-2q}$ is neither $k$-automatic nor $k$-regular. By \cite[Theorem 2.10]{allouche1992ring}, the coefficients of a $k$-regular sequence have to grow polynomially.\medskip

More generally, a $k$-regular sequence is $k$-automatic if and only if it assumes a finite number of values \cite[Theorem 2.3]{allouche1992ring}. Moreover, both the sum and Hadamard product of two $k$-regular sequences are $k$-regular \cite[Theorem 2.5]{allouche1992ring}. There does not exist an easy characterisation of $k$-regular functions in terms of $k$-Mahler functional equations, but at least there is a one-sided relationship.
\begin{theorem}[Theorem 1 in \cite{becker1994k}]\label{Becker1}
A $k$-regular function satisfies a homogeneous $k$-Mahler equation.
\end{theorem}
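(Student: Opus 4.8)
The plan is to convert the combinatorial finiteness of the $k$-kernel into a finite-dimensional system of functional equations and then to extract a single scalar \emph{homogeneous} relation for $f$ by a nesting-of-subspaces argument, sidestepping any need to invert a matrix.

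First I would set up the system. Write $f(q) = \sum_{n \ge 0} x_n q^n$. Since $(x_n)_{n=0}^{\infty}$ is $k$-regular, the $\mathbb{C}$-vector space $V$ spanned by its $k$-kernel is finite-dimensional; I would pick sequences $v_1,\dots,v_m$ whose generating functions $\phi_1,\dots,\phi_m$ form a basis of the corresponding space $W$ of generating functions, noting that $f$ itself lies in $W$ because $(x_n) = (x_{k^0 n + 0})$ is a kernel element. The kernel is closed under each decimation operator $D_j\colon (y_n)\mapsto (y_{kn+j})$ for $0 \le j \le k-1$, so $D_j v_i \in V$ and hence $D_j \phi_i = \sum_{l=1}^m c_{ijl}\phi_l$ with constants $c_{ijl}\in\mathbb{C}$. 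The reconstruction identity $\phi_i(q) = \sum_{j=0}^{k-1} q^j (D_j\phi_i)(q^k)$, valid for any power series, then gives $\phi_i(q) = \sum_{l=1}^m P_{il}(q)\,\phi_l(q^k)$ where $P_{il}(q) = \sum_{j=0}^{k-1} c_{ijl} q^j \in \mathbb{C}[q]$, that is $\boldsymbol{\phi}(q) = P(q)\,\boldsymbol{\phi}(q^k)$ in matrix form.

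Next I would iterate this relation while tracking spans rather than solving for higher shifts. Substituting $q\mapsto q^{k^s}$ yields $\phi_i(q^{k^s}) = \sum_l P_{il}(q^{k^s})\,\phi_l(q^{k^{s+1}})$, and since each coefficient $P_{il}(q^{k^s})$ lies in $\mathbb{C}[q]\subseteq\mathbb{C}(q)$, the $\mathbb{C}(q)$-subspace $U_s := \operatorname{span}_{\mathbb{C}(q)}\{\phi_1(q^{k^s}),\dots,\phi_m(q^{k^s})\}$ satisfies $U_s \subseteq U_{s+1}$. Thus $U_0 \subseteq U_1 \subseteq \cdots$ is an increasing chain of subspaces each of dimension at most $m$, so it stabilises and its union $U = \bigcup_{s\ge0} U_s$ is a $\mathbb{C}(q)$-space of dimension at most $m$ containing every $\phi_l(q^{k^s})$; since $f \in U_0$, it contains $f(q^{k^i})$ for all $i\ge0$. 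Consequently the $m+1$ functions $f(q), f(q^k),\dots,f(q^{k^m})$ are $\mathbb{C}(q)$-linearly dependent, and clearing denominators in a nontrivial dependence produces polynomials $A_0,\dots,A_m$, not all zero, with $\sum_{i=0}^m A_i(q) f(q^{k^i}) = 0$. This relation carries no additive term, so it is homogeneous, and Theorem \ref{Theorem Weaker condition Mahler} (compare Corollary \ref{corollary dimension is sufficient}) rewrites it in the standard shape without ever introducing an inhomogeneous term.

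I expect the main obstacle to be the natural but misguided urge to invert $P(q)$ in order to express the higher iterates $f(q^{k^i})$ through the lower ones: the chosen $\phi_l$ are only $\mathbb{C}$-independent, not necessarily $\mathbb{C}(q)$-independent, so $\det P(q)$ may vanish identically and $P$ need not be invertible over $\mathbb{C}(q)$. The decisive observation that rescues the argument is that the substitution $q\mapsto q^{k^s}$ already forces the spans $U_s$ to \emph{increase}, so that the mere boundedness of $\dim_{\mathbb{C}(q)} U_s$ by $m$ supplies the finiteness we need, with no invertibility hypothesis on $P$ whatsoever.
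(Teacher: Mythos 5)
Your proof is correct. Note that the thesis itself offers no proof of this statement: it is quoted verbatim as Theorem 1 of Becker \cite{becker1994k}, so the only comparison available is with Becker's original argument, which your proof essentially reproduces --- a basis $\phi_1,\dots,\phi_m$ of the $\mathbb{C}$-span of the $k$-kernel, closure under the decimation operators $D_j$, the reconstruction identity giving the matrix relation $\boldsymbol{\phi}(q)=P(q)\,\boldsymbol{\phi}(q^k)$, and forward substitution to trap all the $f\big(q^{k^i}\big)$ in one $\mathbb{C}(q)$-space of dimension at most $m$. Two small remarks. First, the chain-stabilisation step is more machinery than you need: the inclusions $U_0\subseteq U_1\subseteq\dots\subseteq U_m$ already place the $m+1$ functions $f(q),f(q^k),\dots,f\big(q^{k^m}\big)$ inside $U_m$, whose dimension is at most $m$, so the dependence follows without passing to the union. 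Second, your closing appeal to Theorem \ref{Theorem Weaker condition Mahler} is legitimate for the reason you state: its proof (projection onto a summand $q^j\mathbb{C}[[q^{k^e}]]$ followed by the substitution $q\mapsto q^{1/k^e}$) sends a zero inhomogeneous term to a zero inhomogeneous term, so homogeneity is preserved in the passage to an equation with $A_0(q)A_d(q)\ne 0$. Your final observation is also the decisive one in Becker's argument: $\det P(q)$ may well vanish identically, and the proof only ever uses the one-directional inclusions, never invertibility of $P$.
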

The inclusion is strict, as $f(q) = \frac{1}{1-2q}$ demonstrates. It satisfies $(1-2q)f(q)-(1-2q^2)f(q^2) = 0$, but it is not $k$-regular as noted above. Becker showed a partial inverse:
\begin{definition}
A $k$-Mahler function as in Proposition \ref{prop Mahler recursion} with $A(q) = 0$ (so homogeneous) and $A_0(q) = 1$ is called a \textbf{$\boldsymbol{k}$-Becker function} or $k$-Becker for short.
\end{definition}
\begin{theorem}[Theorem 2 in \cite{becker1994k}]\label{Becker2}
A $k$-Becker function is $k$-regular.
\end{theorem}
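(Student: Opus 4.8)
The plan is to realise the $k$-kernel of $f$ inside a fixed finite-dimensional $\mathbb{C}$-vector space of power series, using section operators together with the Becker equation. For $0 \le r < k$ let $\Lambda_r$ be the $\mathbb{C}$-linear operator on $\mathbb{C}[[q]]$ defined by $\Lambda_r\big(\sum_n c_n q^n\big) = \sum_n c_{kn+r}\,q^n$. Composing $e$ of these, a direct exponent count shows that $\Lambda_{s_e}\cdots\Lambda_{s_1}$ extracts exactly the subsequence $(x_{k^e n + r})_n$ with $r = s_1 + ks_2 + \dots + k^{e-1}s_e$, so as the words $(s_1,\dots,s_e)$ vary the power series $\Lambda_{s_e}\cdots\Lambda_{s_1}f$ run through precisely the generating functions of the $k$-kernel of $(x_n)$. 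Thus it suffices to exhibit a finite-dimensional $\mathbb{C}$-subspace $V \subseteq \mathbb{C}[[q]]$ with $f \in V$ and $\Lambda_r V \subseteq V$ for every $r$: then every kernel element lies in $V$, the module it generates is a subspace of $V$, and finite-dimensionality over the field $\mathbb{C}$ gives finite generation, i.e.\ $k$-regularity.

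Write the defining equation of a $k$-Becker function as $f(q) = -\sum_{i=1}^d A_i(q)\,f(q^{k^i})$, set $N = \max_i \deg A_i$, and take
$$V := \mathrm{span}_{\mathbb{C}}\big\{\, q^j f\big(q^{k^i}\big) : 0 \le i \le d-1,\ 0 \le j \le N \,\big\},$$
a space of dimension at most $d(N+1)$ containing $f$ (the generator $i=j=0$). The computation driving everything is the action of $\Lambda_r$ on a generator with $i \ge 1$: writing $f(q^{k^i}) = g(q^k)$ with $g = f(q^{k^{i-1}})$ and matching exponents gives
$$\Lambda_r\big(q^j f(q^{k^i})\big) = \begin{cases} q^{\lfloor j/k\rfloor}\, f\big(q^{k^{i-1}}\big) & \text{if } j \equiv r \pmod k,\\[0.2em] 0 & \text{otherwise,} \end{cases}$$
which lowers $i$ by one and replaces $j$ by $\lfloor j/k\rfloor \le j$, hence keeps us inside $V$.

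It remains to treat the generators with $i = 0$, and this is where homogeneity and the normalisation $A_0 = 1$ are essential: they let us substitute $q^j f(q) = -\sum_{i=1}^d \big(q^j A_i(q)\big) f(q^{k^i})$ before taking any section. Each coefficient polynomial $q^j A_i(q)$ has degree at most $j + N \le 2N$, so expanding it into monomials and applying the displayed section formula (now legitimately, since every surviving term has $i \ge 1$) rewrites $\Lambda_r\big(q^j f(q)\big)$ as a $\mathbb{C}$-combination of terms $q^{\lfloor l/k\rfloor} f(q^{k^{i-1}})$ with $0 \le i-1 \le d-1$ and $l \le 2N$. Since $k \ge 2$, we have $\lfloor l/k\rfloor \le \lfloor 2N/k\rfloor \le N$, so these too lie in $V$, and therefore $\Lambda_r V \subseteq V$ for all $r$, completing the argument. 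The only genuine obstacle is the degree bookkeeping in this last step: the cut-off $B$ for $j$ (here $B=N$) must be chosen so that both the halving $j \mapsto \lfloor j/k\rfloor$ coming from the section formula and the increase $j \mapsto j+N$ coming from multiplication by $A_i$ close up under $\Lambda_r$, and the inequality $\lfloor 2N/k\rfloor \le N$ for $k \ge 2$ is exactly what makes $B = N$ work.
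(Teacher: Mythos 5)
Your proof is correct, but there is nothing in the thesis to compare it against: the statement is imported verbatim as Theorem 2 of Becker \cite{becker1994k}, and no proof is given in the paper itself. Your argument is in substance the classical one (Becker's, in its modern formulation via Cartier operators): the generating functions of the $k$-kernel of $f$ are exactly the images of $f$ under finite compositions $\Lambda_{s_e}\cdots\Lambda_{s_1}$ of section operators, so it suffices to trap $f$ in a finite-dimensional $\Lambda_r$-invariant subspace. Both computations that carry your proof check out: the identity $\Lambda_r\big(q^jg(q^k)\big)=q^{\lfloor j/k\rfloor}g(q)$ when $j\equiv r\pmod{k}$ (and $=0$ otherwise), which handles the generators $q^jf\big(q^{k^i}\big)$ with $i\ge1$ while only decreasing $j$ and $i$; and, for $i=0$, the substitution $q^jf(q)=-\sum_{i=1}^d q^jA_i(q)f\big(q^{k^i}\big)$ --- legitimate precisely because the equation is homogeneous with $A_0=1$ --- followed by the bound $\lfloor(j+N)/k\rfloor\le\lfloor 2N/k\rfloor\le N$ for $k\ge2$, which keeps all exponents below the cut-off. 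Hence $\Lambda_r V\subseteq V$, the span of the kernel lies in the at most $d(N+1)$-dimensional space $V$, and finite generation follows. One remark worth adding: your argument extends with no extra effort to \emph{inhomogeneous} Becker equations, by adjoining to $V$ the monomials $q^l$ with $0\le l\le\max(N,\deg A)$, since $\Lambda_r\big(q^jA(q)\big)$ is again a polynomial of degree at most this bound; the thesis instead derives that case in the proposition that follows this theorem, via a three-step reduction to the homogeneous case, so your method would also shortcut that later argument.
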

Again, the inclusion is strict. Take $f(q) = q$, which is $k$-regular. If $f$ was $k$-Becker, the linear term of $A_i(q)f\big(q^{k^i}\big) = A_i(q)q^{k^i}$ would be zero and $A_0(q)f(q) = q$ would not cancel out in the Mahler equation. An \textbf{inhomogeneous $\boldsymbol{k}$-Becker function} is a not necessarily homogeneous $k$-Becker function.
\begin{prop}
An inhomogeneous $k$-Becker function is $k$-regular.
\end{prop}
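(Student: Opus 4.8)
The plan is to eliminate the inhomogeneous term by \emph{enlarging} the equation rather than by trying to cancel it, so that (the vector form of) Becker's Theorem~\ref{Becker2} applies. Write the defining relation of an inhomogeneous $k$-Becker function $f$ in the normalised form
\[
f(q) = A(q) + \sum_{i=1}^{d} A_i(q)\, f\big(q^{k^i}\big), \qquad A, A_i \in \mathbb{C}[q].
\]
First I would record the obstruction to a naive argument. One cannot absorb $A(q)$ into a homogeneous \emph{scalar} $k$-Becker equation for $f$: any elimination of $A$, for instance by cross-multiplying this relation with its $q\mapsto q^{k}$ shift, yields a homogeneous $k$-Mahler equation whose coefficient of $f(q)$ is a multiple of $A(q^{k})$, hence non-constant. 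The outcome is Mahler but not Becker, and homogeneous Mahler does not imply $k$-regular, as the excerpt itself notes for $\tfrac{1}{1-2q}$. This is consistent with the remark that $f(q)=q$ is $k$-regular but not $k$-Becker: the polynomial inhomogeneity is of exactly this regular-but-not-Becker type, so it has to be carried along, not removed.

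The key step is therefore to homogenise by augmenting $f$ with the constant function $\mathbf 1(q)\equiv 1$, which obeys the homogeneous Becker relation $\mathbf 1(q)=\mathbf 1(q^{k})$. Using $\mathbf 1(q^{k})=1$ I rewrite the inhomogeneous term as $A(q)=A(q)\,\mathbf 1(q^{k})$, so that the vector $\mathbf v = (f,\mathbf 1)^{\mathsf T}$ satisfies the purely homogeneous matrix equation
\[
\mathbf v(q) = \begin{pmatrix} A_1(q) & A(q)\\ 0 & 1\end{pmatrix}\mathbf v(q^{k}) + \sum_{i=2}^{d}\begin{pmatrix} A_i(q) & 0\\ 0 & 0\end{pmatrix}\mathbf v\big(q^{k^i}\big).
\]
Every entry is a polynomial, and every term on the right-hand side carries a factor $\mathbf v(q^{k^i})$ with $i\ge 1$; thus, in standard Mahler form, the coefficient matrix of $\mathbf v(q)$ is the identity. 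This is precisely the matrix analogue of a $k$-Becker equation, and it is the point where augmenting by the constant is essential, since $\mathbf 1$ contributes no polynomial leading factor.

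To finish I would invoke the vector-valued form of Becker's Theorem~\ref{Becker2}: the components of a solution of a homogeneous matrix $k$-Mahler system whose leading coefficient is the identity are $k$-regular. Concretely this is the correspondence of Allouche and Shallit \cite{allouche1992ring} between such systems (``linear representations'') and finitely generated $k$-kernels; one first reduces to first order by stacking $f(q),f(q^{k}),\dots,f(q^{k^{d-1}})$ into a system $\mathbf w(q)=C(q)\mathbf w(q^{k})$, and then reads off that the $k$-kernel of the coefficient sequence of each component is finitely generated. Since $\mathbf 1$ is plainly $k$-regular, so is the remaining component $f$. The main obstacle is the justification of this last step: it genuinely needs the matrix version of Becker's result \cite{becker1994k} rather than the scalar statement quoted in the text, because (as the $q$-example shows) $f$ need not satisfy any homogeneous scalar Becker equation. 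A self-contained alternative, which I would keep in reserve, is to argue directly from the homogeneous matrix recursion that the $k$-kernel of the coefficient sequence $(x_n)$ of $f$ lies in the finite-dimensional $\mathbb{C}$-space spanned by the sections of the two components of $\mathbf v$, giving $k$-regularity by definition.
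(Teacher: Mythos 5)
Your proof is correct, but it follows a genuinely different route from the paper's. The paper never leaves the scalar setting: it reduces to the quoted Theorem~\ref{Becker2} in three steps --- decompose $A(q)$ into monomials (writing $f$ as a sum of solutions of equations with monomial inhomogeneity), pull the power of $q$ out of each such equation so that the inhomogeneous term becomes a constant $c$, and then subtract the $q \mapsto q^k$ shift of the resulting equation, which kills $c$ while leaving the coefficient of $f(q)$ equal to $1$ --- and it then reassembles $f$ using closure of $k$-regular sequences under addition and under multiplication by the polynomial $q^n$ \cite{allouche1992ring}. Note that this sidesteps exactly the obstruction you record at the outset: your cross-multiplication remark shows that elimination fails for \emph{non-constant} $A$, and the paper's whole point is that one may first force $A$ to be constant, after which shift-subtraction does produce a scalar Becker equation. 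Your homogenisation by adjoining $\mathbf{1}$ is slicker and avoids this bookkeeping, but, as you yourself identify, it shifts the burden onto a matrix strengthening of Theorem~\ref{Becker2} that is stated in neither \cite{becker1994k} nor \cite{allouche1992ring}, so it must be proved, not invoked. That lemma is true --- Becker's proof runs verbatim with matrix coefficients --- but your reserve sketch needs one repair: the relevant finite-dimensional space is \emph{not} spanned by the ``sections'' of the two components of $\mathbf{v}$ (read literally this is circular, since the sections of $f$ are exactly its $k$-kernel), but by the shifts $q^j w_l(q)$ with $0 \le j \le \lceil \deg C/(k-1)\rceil$, where $w_1,\dots,w_{2d}$ are the components of the stacked vector $\big(\mathbf{v}(q), \mathbf{v}(q^k),\dots,\mathbf{v}\big(q^{k^{d-1}}\big)\big)$ and $C(q)$ is the polynomial companion-block matrix of the first-order system $\mathbf{w}(q)=C(q)\mathbf{w}(q^k)$. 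This space is stable under the Cartier operators $\Lambda_r\colon \sum a_nq^n \mapsto \sum a_{kn+r}q^n$, because $\Lambda_r\big(B(q)\,g(q^k)\big)=\Lambda_r(B)(q)\,g(q)$ divides degrees by $k$, and it contains the entire $k$-kernel of $f$; this is what makes ``reading off'' regularity from the Allouche--Shallit correspondence legitimate. In exchange for writing out that lemma, your argument is shorter than the paper's, works uniformly in $A$ without any case splitting, and establishes a fact of independent interest (regularity of all components of polynomial Becker-type systems), whereas the paper's route stays entirely within the scalar results it has already quoted.
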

\begin{proof}
Let $f$ be an inhomogeneous $k$-Becker function that satisfies the Mahler functional equation
\begin{align*}
    f(q) + A_1(q)f(q^k) + A_2(q)f\big(q^{k^2}\big) + \dots + A_d(q)f\big(q^{k^d}\big) + A(q) = 0
\end{align*}
for some polynomials $A_1,A_2,\dots,A_d,A \in \mathbb{C}[q]$ and $A_d \ne 0$. If $A(q) = 0$, we are done by Theorem \ref{Becker2}. Recall that polynomials are $k$-regular, and that the space of $k$-regular sequences is closed under addition and multiplication \cite[Theorem 3.1]{allouche1992ring}. Our goal is to prove that $f$ is a sum of $k$-regular functions and thus $k$-regular with several reductions.


\textit{Step 1.} If $e = \deg{A}$ and $A(q) = \sum_{n=0}^{e} a_nq^n$, there are Mahler functions $g_0(q),\dots,g_a(q)$ such that
\begin{align*}
    g_0(q) + A_1(q)g_0(q^k) + A_2(q)g_0\big(q^{k^2}\big) + \dots + A_d(q)g_0\big(q^{k^d}\big) + a_0 &= 0 \\
    \vdots\qquad\qquad &\quad \vdots \\
    g_e(q) + A_1(q)g_e(q^k) + A_2(q)g_e\big(q^{k^2}\big) + \dots + A_d(q)g_e\big(q^{k^d}\big) + a_n &= 0 
\end{align*}
and $f(q) = \sum_{n=0}g_e(q)$, and so, without loss of generality, $A(q)$ can be assumed to be a monomial. 

\textit{Step 2.} If $A(q) = cq^n $ is a monomial for some $c \in \mathbb{C} \setminus \{0\}$ and $n\ge 0$, then 
\begin{align*}
    q^{-n}f(q) + q^{-n}A_1(q)f(q^k) + q^{-n}A_2(q)f\big(q^{k^2}\big) + \dots + q^{-n}A_d(q)f\big(q^{k^d}\big) + c = 0.
\end{align*}
Now $A(q)$ can be assumed to be constant due to the map $f(q)$ to $g(q) := f(q)q^n$ which gives
\begin{align*}
    g(q) + q^{nk-n}A_1(q)g(q^k) + q^{nk^2-n}A_2(q)f\big(q^{k^2}\big) + \dots + q^{nk^d-n}A_d(q)f\big(q^{k^d}\big) + c = 0.
\end{align*}

\textit{Step 3.} Assume that $A(q) = c$, a non-zero constant. Then
\begin{align*}
    f(q) + A_1(q)f(q^k) + A_2f\big(q^{k^2}\big) + \dots + A_df\big(q^{k^d}\big) + c &= 0 \quad\text{and} \\
    f(q^k) + A_1(q^k)f\big(q^{k^2}\big) + A_2(q^k)f\big(q^{k^3}\big) + \dots + A_d(q^k)f\big(q^{k^{d+1}}\big) + c &= 0.
\end{align*}
Subtracting these two equations gives a homogeneous equation for $f$. This completes the reduction.\medskip

Thus, an inhomogeneous $k$-Becker function can be written as a sum of homogeneous $k$-Becker functions scaled by powers of $q$. If $n \ge 1$ and $f$ $k$-Becker, then $f$ is $k$-regular by Theorem \ref{Becker2}. As $q^n$ is a polynomial, it is $k$-regular, and so their product $q^nf(q)$ is $k$-regular. This completes the proof.\end{proof}
Again, this characterisation is incomplete. Take $P(q)$ as introduced in Subsection \ref{Subsection where P(x) is introduced}. It satisfies $(1+q^2)P(q) = (q+q^3)P(q^2) + 1$ and is $2$-automatic by Theorem 6.5.4 of \cite{allouche2003automatic} and thus $k$-regular, but it is unclear whether $P(q)$ is (inhomogeneous) $k$-Becker.
\section{Complete Hadamard functions}
The identity function with respect to the Hadamard product is $\frac{1}{1-q}$, so the Hadamard product of any $k$-Mahler function and $\frac{1}{1-q}$ is again $k$-Mahler. Such functions are quite rare. We name them:
\begin{definition}
A \textbf{complete Hadamard function} is a function $f(q) \in \mathbb{C}[[q]]$ such that the Hadamard product of $f$ and every $k$-Mahler function is $k$-Mahler.
\end{definition}
Simple examples of complete Hadamard functions are polynomials as their Hadamard product with a power series is again a polynomial. In this section, an elegant classification of the rational functions with this property is given.
\begin{theorem}\label{theorem complete Hadamard rational functions}
A rational function $f(q) = \frac{r(q)}{s(q)}$ with $r, s \in \mathbb{C}[q]$ coprime and $s(0) \ne 0$ is complete Hadamard if and only if all roots of $s(q)$ are roots of unity.\end{theorem}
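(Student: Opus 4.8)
The plan is to read everything off the coefficient sequence $c_n$ of $f=r/s$. Since $s(0)\neq 0$, partial fractions over $\mathbb{C}$ give $c_n=\sum_t P_t(n)\alpha_t^{\,n}$ for all large $n$, where the $\alpha_t$ are the reciprocals of the roots of $s$ and the $P_t$ are nonzero polynomials; thus \emph{all roots of $s$ are roots of unity} is equivalent to \emph{every $\alpha_t$ is a root of unity}. I would prove the two implications separately, using throughout that $k$-Mahler functions are closed under addition, under multiplication by $\mathbb{C}(q)$, and under $q\mapsto q^k$, together with the finite-dimension criterion of Corollary \ref{corollary dimension is sufficient}.

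For sufficiency, suppose all roots of $s$ are roots of unity, so $s\mid(1-q^M)^E$ and $c_n=\pi_{\,n\bmod M}(n)$ is a polynomial in $n$ depending only on $n\bmod M$. Writing $c_n d_n=\sum_{r=0}^{M-1}[\,n\equiv r\ (M)\,]\,\pi_r(n)\,d_n$, it suffices by linearity of $\star$ to establish two closure facts for a $k$-Mahler $g=\sum d_n q^n$: first, that $\theta:=q\frac{d}{dq}$ preserves $k$-Mahlerness, which follows by differentiating a homogeneous equation $\sum_j A_j(q)g(q^{k^j})=0$ once and using $\theta[g(q^{k^j})]=k^j(\theta g)(q^{k^j})$, so that $g$ and $\theta g$ satisfy a closed first-order system and $\operatorname{span}_{\mathbb{C}(q)}\{g(q^{k^j}),(\theta g)(q^{k^j})\}$ is finite dimensional; and second, that the section $\sigma_{M,r}g:=\sum_{n\equiv r\,(M)}d_n q^n$ is $k$-Mahler. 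Then $\pi_r(\theta)\,\sigma_{M,r}g$ is $k$-Mahler and $f\star g=\sum_r \pi_r(\theta)\,\sigma_{M,r}g$. For the section I would use the root-of-unity filter $\sigma_{M,r}g=\frac1M\sum_{j}\omega^{-jr}g(\omega^j q)$ with $\omega=e^{2\pi i/M}$, reducing everything to the claim that $g(\zeta q)$ is $k$-Mahler for each root of unity $\zeta$. To see this, set $T=\langle\zeta\rangle$ and $G_\tau(q):=g(\tau q)$; substituting $q\mapsto\tau q$ in the Mahler equation yields, in companion (vector) form, a coupled system $G_\tau(q)=C(\tau q)\,G_{\tau^{k}}(q^{k})$ indexed by $\tau\in T$. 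On the cycle of the map $\tau\mapsto\tau^k$ this matrix is invertible, so those $G_\tau$ are $k$-Mahler, and pulling back the finitely many preperiodic steps of the forward orbit of $\zeta$ (each step multiplying by $C(\tau q)\in\mathbb{C}(q)$ and applying $q\mapsto q^k$) shows $G_\zeta=g(\zeta q)$ is $k$-Mahler.

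For necessity, suppose some root of $s$ is not a root of unity. Splitting off the principal parts at the non-root-of-unity poles writes $f=f_{\mathrm{good}}+f_{\mathrm{bad}}$, where $f_{\mathrm{good}}$ is complete Hadamard by the sufficiency just proved; hence $f$ is complete Hadamard iff $f_{\mathrm{bad}}$ is, and I may assume every $\alpha_t$ is a non-root-of-unity. Testing against the $k$-Mahler function $\Lambda(q)=\sum_{i\ge0}q^{k^i}$ (which satisfies $\Lambda(q)=q+\Lambda(q^k)$) gives $f\star\Lambda=\sum_i c_{k^i}q^{k^i}$, and exactly as in Examples \ref{CounterExample} and \ref{Extended counter example}, specialising the recursion of Proposition \ref{prop Mahler recursion} at $n=k^m$ shows that if $f\star\Lambda$ were $k$-Mahler then $(c_{k^i})_i$ would satisfy a nontrivial constant-coefficient linear recurrence. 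It then remains to contradict this for $c_{k^i}=\sum_t P_t(k^i)\alpha_t^{\,k^i}$: when the $\alpha_t$ of extremal modulus are unique, a growth argument suffices, since a sequence satisfying a constant-coefficient recurrence grows at most singly exponentially whereas a single dominant term $P_{t_0}(k^i)\alpha_{t_0}^{\,k^i}$ grows or decays doubly exponentially; for the equal-modulus case I would invoke the Skolem--Mahler--Lech theorem, which forces the sparse set $\{k^i\}$ into the eventually periodic zero set of the exponential polynomial $\sum_t P_t(n)\alpha_t^{\,n}$, and then restrict to the residues hit by $k^i$ and separate the $\alpha_t$ by a common power to deduce all $P_t\equiv0$.

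The main obstacle is precisely this last step of the necessity: when several non-root-of-unity poles share the same modulus—and their pairwise ratios may themselves be roots of unity—the dominant part of $c_{k^i}$ can a priori cancel, so growth estimates alone fail and one must genuinely control the non-vanishing of the lacunary exponential sum $\sum_t P_t(k^i)\alpha_t^{\,k^i}$ along $\{k^i\}$; this is where the Skolem--Mahler--Lech input together with the root-of-unity bookkeeping does the real work. By contrast the sufficiency direction is comparatively routine once the two closure lemmas are in place, its only delicate point being the orbit/cycle bookkeeping for $g(\zeta q)$ when $\gcd(k,\operatorname{ord}\zeta)>1$.
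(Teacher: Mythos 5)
Your sufficiency direction is essentially correct and, despite the different packaging, rests on the same two ingredients as the paper's: the root-of-unity filter $\sigma_{M,r}g=\frac1M\sum_j\omega^{-jr}g(\omega^jq)$ together with a derivative-type closure lemma (the paper's Lemma \ref{lemma Derivative is complete Hadamard} uses $f'$ where you use $\theta=q\frac{d}{dq}$). The paper handles pole multiplicities by the nested substitution $\bigl(\bigl(\frac{1}{1-q^m}\star g\bigr)(q^{1/m})\star\frac{1}{(1-q)^l}\bigr)(q^m)$ in Proposition \ref{prop complete Hadamard all roots of unity}, whereas you apply $\pi_r(\theta)$ to sections; both work, and your explicit orbit/cycle treatment of $g(\zeta q)$ when $\gcd(k,\operatorname{ord}\zeta)>1$ actually spells out a point that the paper's Lemma \ref{prop p(q)/(1-cq)} leaves implicit.

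The necessity direction, however, has a genuine gap, located exactly where you flagged the ``main obstacle'' -- and your proposed resolution cannot close it. After the additive splitting you are left with an $f_{\mathrm{bad}}$ that may still have several poles, and you test it only against $\Lambda(q)=\sum_{i\ge0}q^{k^i}$. Cancellation along powers of $k$ can make this test vacuous: take $k=2$, $\alpha$ not a root of unity, and
\begin{align*}
f(q)=\frac{2\alpha q}{1-\alpha^2q^2}=\frac{1}{1-\alpha q}-\frac{1}{1+\alpha q}=\sum_{n\ge0}\bigl(\alpha^n-(-\alpha)^n\bigr)q^n.
\end{align*}
Both poles $\pm\alpha^{-1}$ are non-roots of unity, yet $c_{2^i}=\alpha^{2^i}-\alpha^{2^i}=0$ for all $i\ge1$, so $f\star\Lambda=2\alpha q$ is a polynomial, hence $2$-Mahler, and no contradiction exists to be extracted. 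Skolem--Mahler--Lech cannot rescue this: it only tells you that the zero set of $(c_n)$ contains the even integers, and after restricting to even $n$ the characters $\alpha$ and $-\alpha$ merge into the single character $\alpha^2$ whose combined coefficient is $1-1=0$; one can never conclude ``all $P_t\equiv0$'' because here they genuinely are not all zero. So the statement you aim to contradict -- that $(c_{k^i})_i$ satisfies a constant-coefficient recurrence -- is simply true for this $f$, and the failure is of the approach, not the write-up. The paper's Proposition \ref{prop complete Hadamard not root of unity} avoids multi-pole cancellation by a \emph{multiplicative}, not additive, reduction: if $f=r/s$ were complete Hadamard, then so would be the polynomial multiple $f(q)\cdot\frac{s(q)}{1-\alpha^{-1}q}=\frac{r(q)}{1-\alpha^{-1}q}$ by Lemma \ref{lemma small tricks compelte Hadamard}, whose coefficients are a \emph{single} nonzero multiple of $\alpha^{-n}$ up to a finite correction (nonzero by coprimality of $r$ and $s$); with one geometric term no cancellation is possible, and the $\Lambda$-test then produces the polynomial-with-infinitely-many-roots contradiction of Example \ref{Extended counter example}. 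Your argument could alternatively be repaired by varying the test function -- in the example above, $q\Lambda(q)=\sum_i q^{2^i+1}$ picks out the coefficients $2\alpha^{2^i+1}$ and does yield a contradiction -- but proving that a suitable shift always exists is extra work, and the single-pole isolation is the clean missing idea.
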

Note that the condition $s(0) \ne 0$ implies that $f$ has a power series expansion. To start, we give a few basic observations on complete Hadamard functions:
\begin{lemma}\label{lemma small tricks compelte Hadamard}
Let $f(q)$ be complete Hadamard and $A \in \mathbb{C}[q]$. Then
\begin{enumerate}
    \item $f(q)$ is $k$-Mahler;
    \item The space of complete Hadamard functions forms a ring with multiplication being defined the Hadamard product and the multiplicative identity being $\frac{1}{1-q}$.
    \item $A(q)f(q)$ is complete Hadamard.
\end{enumerate}\end{lemma}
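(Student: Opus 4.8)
The three parts build on one another, so the plan is to handle them in order. Part (1) is immediate: $\frac{1}{1-q}$ is rational and hence $k$-Mahler, and it is the identity for the Hadamard product, so $f = f \star \frac{1}{1-q}$ is $k$-Mahler because $f$ is complete Hadamard. For part (2) I would verify the ring axioms directly, using that the Hadamard product is commutative, associative and distributive over addition and that the $k$-Mahler functions are closed under addition. Closure under addition of complete Hadamard functions follows from $(f+g) \star h = f \star h + g \star h$, a sum of two $k$-Mahler functions; the zero series, the negatives and the constant multiples are handled identically (a constant times a $k$-Mahler function is $k$-Mahler, by scaling the inhomogeneous term in its Mahler equation), so the complete Hadamard functions form an abelian group under addition that is stable under scaling. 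Closure under the Hadamard product uses associativity: $(f \star g) \star h = f \star (g \star h)$, where $g \star h$ is $k$-Mahler since $g$ is complete Hadamard and then $f \star (g \star h)$ is $k$-Mahler since $f$ is. Finally $\frac{1}{1-q}$ is itself complete Hadamard and is the Hadamard identity, completing the ring structure.

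Part (3) is where the real work lies, and part (1) alone does not suffice, since knowing $A(q)f(q)$ is a polynomial times a $k$-Mahler function does not make it complete Hadamard. Instead I would exploit part (2). Writing $A(q) = \sum_m a_m q^m$, the complete Hadamard functions are closed under addition and constant scaling, so it suffices to prove that $q^m f(q)$ is complete Hadamard for each $m$, and by induction it suffices to treat $m = 1$. The key computation is that for any power series $h(q) = \sum_n d_n q^n$,
\[
(q f) \star h = q\,\bigl(f \star \tilde h\bigr), \qquad \tilde h(q) := \frac{h(q) - h(0)}{q},
\]
the backward shift of $h$; this follows by comparing coefficients of $q^n$, which read $c_{n-1} d_n$ on both sides.

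The crucial step, and the main obstacle, is to show that the backward shift $\tilde h$ of a $k$-Mahler function $h$ is again $k$-Mahler. I would prove this through the dimension criterion of Corollary \ref{corollary dimension is sufficient}. From the relation $\tilde h(q^{k^j}) = q^{-k^j}\bigl(h(q^{k^j}) - h(0)\bigr)$, each $\tilde h(q^{k^j})$ lies in the $\mathbb{C}(q)$-span of $\{h(q^{k^j}) : j \ge 0\} \cup \{1\}$, which is finite-dimensional because $h$ is $k$-Mahler; hence $\dim_{\mathbb{C}(q)}\bigl(\tilde h(q), \tilde h(q^k), \dots\bigr)$ is finite and $\tilde h$ is $k$-Mahler (it is manifestly an analytic power series). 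Granting this, $f \star \tilde h$ is $k$-Mahler because $f$ is complete Hadamard, and multiplying a $k$-Mahler function by $q$ keeps it $k$-Mahler, as one sees either by clearing the denominators in its Mahler equation or by noting that each $q^{k^j}(f \star \tilde h)(q^{k^j})$ lies in the $\mathbb{C}(q)$-span of the $(f \star \tilde h)(q^{k^j})$. Thus $(qf) \star h = q\,(f \star \tilde h)$ is $k$-Mahler for every $k$-Mahler $h$, so $qf$ is complete Hadamard, and the induction together with part (2) finishes part (3).
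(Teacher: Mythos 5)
Your proposal is correct and follows essentially the same route as the paper's proof: the same identity $(qf)\star h = q\,(f\star \tilde h)$ with the backward shift $\tilde h(q) = \frac{h(q)-h(0)}{q}$, the same associativity argument for closure under $\star$, and the same reduction of $A(q)f(q)$ to scalars and powers of $q$. The only difference is that you justify, via Corollary \ref{corollary dimension is sufficient}, the claim that $\tilde h$ is again $k$-Mahler, a step the paper asserts without proof; this is a worthwhile detail to make explicit.
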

\begin{proof}
Let $h(q)$ be $k$-Mahler.
\begin{enumerate}
    \item $f(q) = f(q) \star \frac{1}{1-q}$ is $k$-Mahler.
    \item Let $g(q)$ be complete Hadamard. The space of complete Hadamard functions is closed under both addition and the Hadamard product. Then $(f + g) \star h = f \star h + g \star h$ is the sum of two $k$-Mahler functions and thus $k$-Mahler. As $g$ is complete Hadamard, $g \star h$ is $k$-Mahler, and hence $f \star (g\star h) = (f \star g) \star h$ as well. The other axioms follow trivially.
    \item If $\alpha \in \mathbb{C}$, $(\alpha f) \star h = \alpha (f \star h)$ is also $k$-Mahler, and thus $\alpha f(q)$ is complete Hadamard.  If $\tilde{h}(q) := \frac{h(q)-h(0)}{q}$, $\tilde{h}(q)$ is Mahler and analytic. Then 
    $$(qf)\star(h) = (qf)\star(g(0) + q\tilde{h}) = (qf)\star(g(0)) + (qf)\star(q\tilde{h}) = 0 + q(f\star \tilde{h})$$
    is $k$-Mahler, and thus $qf(q)$ is complete Hadamard. As the ring of complete Hadamard functions is closed under addition, iterating these two facts gives that $A(q) f(q)$ is complete Hadamard. \qedhere
\end{enumerate}\end{proof}
A more involved complete Hadamard function is $\frac{1}{(1-q)^n}$ for every $n \ge 0$.
\begin{lemma}\label{lemma Derivative is complete Hadamard}
The derivative $f'(q)$ of a $k$-Mahler function $f$ is $k$-Mahler.
\end{lemma}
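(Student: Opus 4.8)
The plan is to avoid manipulating a single Mahler equation directly and instead invoke Corollary~\ref{corollary dimension is sufficient}: since $f'$ is analytic (it is the derivative of a convergent power series, hence again a power series), it suffices to prove that the $\mathbb{C}(q)$-vector space spanned by the iterates $f'(q), f'(q^k), f'(q^{k^2}), \dots$ is finite-dimensional.

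First I would record the finiteness coming from $f$ itself. Because $f$ is $k$-Mahler, solving its defining equation \eqref{GeneralMahlerForm} for the top iterate expresses $f(q^{k^d})$ as a $\mathbb{C}(q)$-linear combination of $1, f(q), \dots, f(q^{k^{d-1}})$; substituting $q \mapsto q^k$ and iterating then shows that
$$V := \operatorname{span}_{\mathbb{C}(q)}\big(\{1\} \cup \{f(q^{k^j}) : j \ge 0\}\big)$$
is finite-dimensional, of dimension at most $d+1$. I would fix a basis $v_1, \dots, v_m$ of $V$ and set $W := V + \operatorname{span}_{\mathbb{C}(q)}\{v_1', \dots, v_m'\}$, which is spanned by at most $2m$ functions and so is again finite-dimensional.

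The crucial observation is that differentiation carries $V$ into $W$: for $v = \sum_i c_i v_i$ with $c_i \in \mathbb{C}(q)$ the product rule gives $v' = \sum_i c_i' v_i + \sum_i c_i v_i' \in W$, since each $c_i'$ again lies in $\mathbb{C}(q)$. Applying the chain rule to the iterate $f(q^{k^j}) \in V$ yields
$$\frac{d}{dq} f(q^{k^j}) = k^j q^{k^j-1}\, f'(q^{k^j}),$$
so that $f'(q^{k^j}) = (k^j q^{k^j-1})^{-1}\,\tfrac{d}{dq} f(q^{k^j})$. The right-hand side is a $\mathbb{C}(q)$-multiple of an element of $W$, and as $W$ is a $\mathbb{C}(q)$-subspace it lies in $W$. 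Hence every iterate $f'(q^{k^j})$ belongs to the single finite-dimensional space $W$, the span of the $f'(q^{k^j})$ has dimension at most $\dim W$, and Corollary~\ref{corollary dimension is sufficient} then delivers the claim.

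I do not expect a serious obstacle; the only point needing care is the bookkeeping that prevents the dimension from exploding — namely the product-rule computation showing $\tfrac{d}{dq}V \subseteq W$ (so that I never need $W$ to be stable under arbitrarily many derivatives), together with the harmless fact that rescaling by $(k^j q^{k^j-1})^{-1} \in \mathbb{C}(q)$ keeps us inside the $\mathbb{C}(q)$-subspace $W$. An alternative, more computational route would differentiate the Mahler equation for $f$ to obtain an inhomogeneous relation coupling the iterates of $f$ and $f'$ and then eliminate the $f$-iterates using the original relations, but this is the same linear algebra written out explicitly and is messier to present.
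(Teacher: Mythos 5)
Your proof is correct and follows essentially the same route as the paper: both arguments combine differentiation (your chain-rule/product-rule bookkeeping versus the paper's explicit differentiation of the Mahler equation) with the finite-dimensionality criterion of Corollary~\ref{corollary dimension is sufficient}. The paper simply differentiates the $k$-Mahler equation once and reads off the bound $\dim_{\mathbb{C}(q)}\big(1, f(q), f'(q), f(q^k), f'(q^k), \dots\big) \le 2d+2$, which is the ``same linear algebra written out explicitly'' that you describe as the alternative at the end of your proposal.
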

\begin{proof}
Differentiating the $k$-Mahler equation of $f$ gives
\begin{align*}
    A'(q) + A_0'(q)f(q) + A_0(q)f'(q) + \dots + A_d'(q)f\big(q^{k^d}\big) + A_d(q)k^dq^{k^d-1}f'\big(q^{k^d}\big)= 0.
\end{align*}
Thus, $\dim_{\mathbb{C}(q)}(1, f(q), f'(q), f(q^k), f'(q^k), f\big(q^{k^2}\big), f'\big(q^{k^2}\big),\dots)\le 2d+2$, and so $f'(q)$ is $k$-Mahler by Corollary \ref{corollary dimension is sufficient} as $\dim_{\mathbb{C}(q)}(1, f'(q), f'(q^k),\dots )$ is finite.
\end{proof}
\begin{lemma}\label{lemma Derivative is complete Hadamard 2}
The derivative of a complete Hadamard function is complete Hadamard.
\end{lemma}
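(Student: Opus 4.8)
The plan is to reduce everything to the Euler operator $\theta := q\frac{d}{dq}$, which interacts far more cleanly with the Hadamard product than the plain derivative does, because $\theta$ preserves the degree of every monomial and so introduces no index shift. Writing $f = \sum_n c_n q^n$ and $g = \sum_n g_n q^n$ and comparing coefficients gives the basic identity $(\theta f)\star g = \theta(f\star g)$, since both sides have $n$th coefficient $n c_n g_n$. A second coefficient comparison yields the link to the ordinary derivative: for any $h$,
\[
q\,(f'\star h) = (\theta f)\star(qh) = \theta\big(f\star(qh)\big),
\]
where the last equality is the identity above applied with $g = qh$.

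Before using this, I would record two closure properties of the space of $k$-Mahler functions, both immediate from Corollary \ref{corollary dimension is sufficient}: it is closed under multiplication by $q$ and under division by $q$, whenever the quotient is still a power series. Indeed, multiplying the family $\{F(q^{k^i})\}_{i\ge0}$ by the units $q^{k^i}\in\mathbb{C}(q)$ does not change its $\mathbb{C}(q)$-span, so finiteness of the dimension transfers between $F$ and $qF$ in both directions; since $F$ analytic with finite $\dim_{\mathbb{C}(q)}$ implies Mahler, both $qF$ and (for analytic quotients) $q^{-1}F$ are Mahler whenever $F$ is. Combining closure under multiplication by $q$ with Lemma \ref{lemma Derivative is complete Hadamard} shows that $\theta = q\frac{d}{dq}$ maps $k$-Mahler functions to $k$-Mahler functions.

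With these tools the argument is short. Fix a $k$-Mahler function $h$. Then $qh$ is $k$-Mahler, and since $f$ is complete Hadamard, $f\star(qh)$ is $k$-Mahler; applying $\theta$ keeps it $k$-Mahler, so $\theta\big(f\star(qh)\big)$ is $k$-Mahler. By the displayed identity this equals $q\,(f'\star h)$, and $f'\star h$ is a power series analytic at the origin, being a Hadamard product of two functions analytic at $0$. Dividing by $q$ via the closure property shows that $f'\star h$ is $k$-Mahler. As $h$ was arbitrary, $f'$ is complete Hadamard.

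The only genuine obstacle is bookkeeping the one-step index shift that the ordinary derivative produces inside a Hadamard product; routing through $\theta$ isolates this shift into a single factor of $q$, after which the delicate point is the legitimacy of cancelling that $q$, i.e.\ closure of $k$-Mahler functions under division by $q$. I would make sure this is justified cleanly through the dimension criterion of Corollary \ref{corollary dimension is sufficient} rather than by manipulating Mahler equations directly.
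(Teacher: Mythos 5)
Your proof is correct and, once the Euler operator is unwound, is essentially the paper's own argument: your identity $q\,(f'\star h) = \theta\big(f\star(qh)\big)$ is exactly the paper's key identity $f'\star h = \frac{d}{dq}\big(f\star(qh)\big)$ multiplied by $q$, and both proofs then invoke the same closure facts (the complete Hadamard hypothesis applied to $qh$, and Lemma \ref{lemma Derivative is complete Hadamard}). The only difference is that routing through $\theta$ forces you to cancel the extra factor of $q$ at the end -- a step you justify correctly via Corollary \ref{corollary dimension is sufficient}, but which the paper avoids entirely by differentiating directly.
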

\begin{proof}
Let $f(q) = \sum_{n=0}^{\infty}a_nq^n$ be complete Hadamard and $g(q) = \sum_{n=0}^{\infty}b_nq^n$ be $k$-Mahler. Then
\begin{align*}
    f'(q) \star g(q) = \Big(\sum_{n=0}^{\infty}(n+1)a_{n+1}q^n\Big) \star \Big(\sum_{n=0}^{\infty}b_nq^n \Big) = \sum_{n=0}^{\infty}(n+1)a_{n+1}b_nq^n = \frac{d}{dq}(f(q)\star qg(q))
\end{align*}
is $k$-Mahler by applying Lemmas \ref{lemma small tricks compelte Hadamard} and \ref{lemma Derivative is complete Hadamard}.\end{proof}
\begin{lemma}
For all $n \ge 0$, $\frac{1}{(1-q)^n}$ is complete Hadamard.
\end{lemma}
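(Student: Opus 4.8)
The plan is to prove this by induction on $n$, leveraging the two structural facts already established: that complete Hadamard functions are closed under differentiation (Lemma \ref{lemma Derivative is complete Hadamard 2}) and under scalar multiplication (part 3 of Lemma \ref{lemma small tricks compelte Hadamard}, whose proof records precisely that $\alpha f$ is complete Hadamard whenever $f$ is and $\alpha \in \mathbb{C}$).

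First I would dispose of the base cases. For $n = 0$, the function $\frac{1}{(1-q)^0} = 1$ is a polynomial, hence complete Hadamard as noted at the start of the section. For $n = 1$, the function $\frac{1}{1-q}$ is the multiplicative identity for the Hadamard product, so $f \star \frac{1}{1-q} = f$ is trivially $k$-Mahler whenever $f$ is, making $\frac{1}{1-q}$ complete Hadamard.

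For the inductive step, I would suppose that $\frac{1}{(1-q)^n}$ is complete Hadamard for some $n \geq 1$. The key observation is the elementary identity $\frac{d}{dq}\frac{1}{(1-q)^n} = \frac{n}{(1-q)^{n+1}}$. By Lemma \ref{lemma Derivative is complete Hadamard 2}, the left-hand side is complete Hadamard, and hence so is $\frac{n}{(1-q)^{n+1}}$. Since $n \geq 1$ is a nonzero constant and scalar multiples of complete Hadamard functions are again complete Hadamard, multiplying by $\frac{1}{n}$ shows that $\frac{1}{(1-q)^{n+1}}$ is complete Hadamard, closing the induction.

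The argument is essentially routine once the preceding lemmas are in place, so there is no serious obstacle; the only point requiring care is to start the induction at $n \geq 1$ so that the constant $n$ produced by differentiation is nonzero and can be divided out, treating $n = 0$ separately as the polynomial case.
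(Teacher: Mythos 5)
Your proof is correct and follows essentially the same route as the paper: the paper simply takes the $(n-1)$st derivative of $\frac{1}{1-q}$ in one step, obtaining $\frac{(n-1)!}{(1-q)^n}$ via Lemma \ref{lemma Derivative is complete Hadamard 2}, whereas you unroll this into an induction one derivative at a time and divide out the constant $n$ at each step. Both arguments rest on the same two ingredients (closure under differentiation and under scalar multiples), so there is no substantive difference.
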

\begin{proof}
As $\frac{1}{1-q}$ is complete Hadamard, its $(n-1)$st derivative $\frac{(n-1)!}{(1-q)^n}$ is as well by Lemma \ref{lemma Derivative is complete Hadamard 2}.\end{proof}
Now Theorem \ref{theorem complete Hadamard rational functions} can be proven. First, one direction:
\begin{prop}\label{prop complete Hadamard not root of unity}
Let $r(q)$ and $s(q)$ be coprime polynomials and $f(q) = \frac{r(q)}{s(q)}$ such that $s(0) \ne 0$ and $s$ has a root $\alpha$ that is not a root of unity. Then $f(q)$ is not complete Hadamard.
\end{prop}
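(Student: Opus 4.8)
The plan is to assume, for contradiction, that $f$ is complete Hadamard and to exhibit a single $k$-Mahler function whose Hadamard product with $f$ is not $k$-Mahler. The auxiliary function will be the lacunary series $L(q)=\sum_{p\ge0}q^{k^p}$ from Example \ref{CounterExample}, which is $k$-Mahler because $L(q)=q+L(q^k)$. The whole difficulty is concentrated in the pole of $f$ at $\alpha$, so the first move is to reduce to a rational function whose \emph{only} pole is $\alpha$, thereby removing any interference from the other (possibly root-of-unity) poles of $s$.

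First I would perform this single-pole reduction. Write $s(q)=(q-\alpha)^m t(q)$ with $t(\alpha)\neq0$ and $m\ge1$. Since complete Hadamard functions form a ring under addition and are closed under multiplication by polynomials (Lemma \ref{lemma small tricks compelte Hadamard}), and since polynomials are complete Hadamard, the function $g:=t\cdot f=r/(q-\alpha)^m$ is complete Hadamard, and so is its proper part $g_1:=g-\pi$, where $\pi$ is the polynomial part of $g$. Because $\gcd(r,s)=1$ forces $r(\alpha)\neq0$, we get $g_1=\sum_{l=1}^m c_l/(q-\alpha)^l$ with $c_m\neq0$; its unique pole is $\alpha$, and its $n$-th Taylor coefficient is $x_n=P(n)\alpha^{-n}$ for a polynomial $P$ of degree exactly $m-1$.

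Next, since $g_1$ is complete Hadamard and $L$ is $k$-Mahler, the lacunary series $g_1\star L=\sum_{p\ge0}x_{k^p}q^{k^p}$ must be $k$-Mahler. As in Examples \ref{CounterExample} and \ref{Extended counter example}, I would feed a hypothetical $k$-Mahler equation for this series into the coefficient recursion of Proposition \ref{prop Mahler recursion}. After dividing that equation by the largest power of $q$ dividing all of its polynomial coefficients simultaneously (so that the constant terms $b_i$ of the resulting coefficients are not all zero, which is legitimate since that power also divides the inhomogeneous term), the only terms that survive for a large power $n=k^p$ are those coming from the constant terms; this yields a genuinely nontrivial constant-coefficient recurrence $\sum_{i=0}^{d}b_i\,x_{k^{p-i}}=0$ valid for all large $p$.

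Finally I would contradict this recurrence, which is where the real work lies. Substituting $x_{k^{p-i}}=P(k^{p-i})\alpha^{-k^{p-i}}$, the $i$-th term has magnitude $|P(k^{p-i})|\,|\alpha|^{-k^{p-i}}$. If $|\alpha|\neq1$, the factor $|\alpha|^{-k^{p-i}}$ separates the terms super-geometrically in $i$; if $|\alpha|=1$ but $m\ge2$, the factor $|P(k^{p-i})|\sim C\,k^{(m-1)(p-i)}$ still separates them geometrically. In both situations one extremal term dominates the entire sum, forcing its coefficient to vanish and, inductively, all $b_i=0$, a contradiction. The genuinely hard case is $|\alpha|=1$ with $m=1$: the unit-circle poles that are not roots of unity, which really do exist (for instance among the roots of Lehmer's polynomial, as noted after Example \ref{Extended counter example}). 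Here $x_{k^p}=c\,\alpha^{-k^p}$ has constant modulus and no domination is available, so instead I would run the algebraic argument of Example \ref{Extended counter example} verbatim: putting $\beta=\alpha^{-k^{p-d}}$, the recurrence says $\beta$ is a root of the fixed polynomial $\sum_{i=0}^d b_i X^{k^{d-i}}$, and since $\alpha$ is not a root of unity the values $\alpha^{-k^{p-d}}$ are pairwise distinct as $p$ grows, so this polynomial has infinitely many roots, hence vanishes identically, again forcing all $b_i=0$. This contradiction shows $g_1$, and therefore $f$, cannot be complete Hadamard.
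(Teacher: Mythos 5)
Most of your proof is sound, and its skeleton is the same as the paper's: clear away the irrelevant poles by polynomial multiplication and the ring structure of Lemma \ref{lemma small tricks compelte Hadamard}, take the Hadamard product with the lacunary series $L$, extract from Proposition \ref{prop Mahler recursion} a constant-coefficient recurrence $\sum_{i=0}^{d}b_i x_{k^{p-i}}=0$ at the indices $k^p$, and contradict it using that $\alpha$ is not a root of unity. Your normalization step (dividing the hypothetical Mahler equation by the largest common power of $q$ so that the $b_i$ are not all zero) is in fact \emph{more} careful than the paper, whose Examples \ref{CounterExample} and \ref{Extended counter example} leave this nontriviality issue implicit. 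The cases $|\alpha|\neq 1$ (genuine super-geometric domination, since $|\alpha|^{\pm(k-1)k^{p-i}}$ blows up in $p$) and $|\alpha|=1$, $m=1$ (the exact infinitely-many-roots argument) are handled correctly.

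However, there is a genuine gap in the case $|\alpha|=1$ with pole order $m\ge 2$. There the term magnitudes are $|b_i|\,|P(k^{p-i})|\approx |b_i|\,C\,k^{(m-1)(p-i)}$, so the ratio between consecutive terms is $\tfrac{|b_i|}{|b_{i+1}|}k^{m-1}$, a \emph{constant independent of $p$}: the separation does not grow, and no single term need dominate the sum of the others. For instance, if $b_i=k^{(m-1)i}$ then all $d+1$ terms have comparable modulus for every $p$, and the domination argument says nothing. What the recurrence really yields, after dividing by $k^{(m-1)p}$ and absorbing the lower-order part of $P$, is only $Q(\beta_p)\to 0$, where $Q(X)=\sum_i b_i k^{-(m-1)i}X^{k^{d-i}}$ and $\beta_p=\alpha^{-k^{p-d}}$ is unimodular; since a nonzero polynomial can certainly be small at unimodular points, ruling this out requires an additional argument (e.g.\ that the limit points of $(\beta_p)$ form a finite set invariant under $z\mapsto z^k$, hence consist of roots of unity, and that the orbit $\alpha^{-k^p}$ cannot accumulate only on roots of unity), which you do not supply. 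The clean repair is the paper's choice of multiplier: instead of clearing only the coprime factor $t(q)$, multiply $f$ by the polynomial $s(q)/(1-\alpha^{-1}q)$, which cancels the \emph{entire} denominator and leaves $r(q)/(1-\alpha^{-1}q)$ with a simple pole at $\alpha$ no matter what the multiplicity $m$ was. Its proper part has exactly geometric coefficients $r(\alpha)\alpha^{-n}$ with $r(\alpha)\neq 0$, your case distinction disappears, and the exact (not asymptotic) argument of Example \ref{Extended counter example}, which you already use for $m=1$, finishes the proof in all cases.
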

\begin{proof}
If $f(q)$ were complete Hadamard, then $\frac{p(q)}{1-\alpha^{-1}q} := f(q) \frac{s(q)}{1-\alpha^{-1}q}$ would be complete Hadamard by Lemma \ref{lemma small tricks compelte Hadamard} and as $\frac{s(q)}{1-\alpha^{-1}q}$ is a polynomial. Write
$$
\frac{p(q)}{1-\alpha^{-1}q} = \frac{p(\alpha^{-1})}{1 - \alpha^{-1}} + u(q) = p(\alpha^{-1})\sum_{n=0}^{\infty}\alpha^{-n}q^n + u(q)
$$
for some polynomial $u(q)$. Thus, like in Example \ref{Extended counter example}, the Hadamard product of $\sum_{n=0}^{\infty}p(\alpha^{-1})\alpha^nq^n$ with $g(q)$ satisfying $g(q) = g(q^k) + q$ gives rise to a linear recurrence and a polynomial with an infinite number of roots. Thus $f \star g$ is not Mahler and $f(q)$ cannot be complete Hadamard.
\end{proof}
And the other way around:
\begin{lemma}\label{prop p(q)/(1-cq)}
For all $m \ge 1$, $\frac{1}{1 - q^m}$ is complete Hadamard.
\end{lemma}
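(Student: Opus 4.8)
The plan is to reduce the statement to the single claim that dilation by a root of unity preserves the class of $k$-Mahler functions. Writing $g(q)=\sum_{n\ge0}b_nq^n$ for an arbitrary $k$-Mahler function, the geometric series $\frac{1}{1-\omega q}=\sum_{n\ge0}\omega^nq^n$ gives $\frac{1}{1-\omega q}\star g=\sum_{n\ge0}\omega^nb_nq^n=g(\omega q)$, so Hadamard multiplication by $\frac{1}{1-\omega q}$ is exactly the dilation $g(q)\mapsto g(\omega q)$. Using the root-of-unity filter with $\zeta_m=e^{2\pi i/m}$,
\begin{align*}
\frac{1}{1-q^m}\star g=\sum_{m\mid n}b_nq^n=\frac1m\sum_{j=0}^{m-1}g(\zeta_m^jq),
\end{align*}
since $\frac1m\sum_{j=0}^{m-1}\zeta_m^{jn}$ equals $1$ when $m\mid n$ and $0$ otherwise. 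Thus it suffices to show that each $g(\zeta_m^jq)$ is $k$-Mahler: the right-hand side is then a finite $\mathbb{C}$-linear combination of $k$-Mahler functions, hence $k$-Mahler by the closure under addition established after Corollary \ref{corollary dimension is sufficient}, and as $g$ is arbitrary this makes $\frac1{1-q^m}$ complete Hadamard.

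To prove the dilation claim, fix a root of unity $\omega$ of order $N$ and a $k$-Mahler equation $A(q)+\sum_{i=0}^dA_i(q)g(q^{k^i})=0$ with $A_0A_d\ne0$. Because $g(\omega q^{k^i})$ is the substitution $q\mapsto q^{k^i}$ applied to $g(\omega q)$, Corollary \ref{corollary dimension is sufficient} yields that $g(\omega q)$ is $k$-Mahler as soon as the $\mathbb{C}(q)$-span of $1$ and all $g(\omega q^{k^i})$, $i\ge0$, is finite-dimensional. The snag is that substituting $q\mapsto\omega q$ into the equation produces terms $g(\omega^{k^i}q^{k^i})$ rather than $g(\omega q^{k^i})$, so I enlarge the spanning set: let $S=\{\omega^{k^i}:i\ge0\}$ be the forward orbit of $\omega$ under $x\mapsto x^k$, a finite set of $N$-th roots of unity, and let $W$ be the $\mathbb{C}(q)$-span of $1$ together with all $g(\eta q^{k^i})$ for $\eta\in S$ and $i\ge0$. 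The goal is to prove $\dim_{\mathbb{C}(q)}W<\infty$.

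The relations come from substituting $q\mapsto\eta q^{k^s}$ into the equation, giving $\sum_{i=0}^dA_i(\eta q^{k^s})\,g(\eta^{k^i}q^{k^{i+s}})=-A(\eta q^{k^s})$, which is a dependence among generators indexed by $S$ since $S$ is closed under $x\mapsto x^k$. The set $S$ splits into a cyclic core $C$, on which $x\mapsto x^k$ permutes the elements, and a pre-periodic tail. On the core the map $x\mapsto x^{k^d}$ is invertible, so one solves the relation for its top term $g(\eta^{k^d}q^{k^{d+s}})$ just as in the ordinary Mahler reduction, showing that every core generator lies in the span of the finitely many with $i<d$. For a tail element $\eta$ one instead solves for the bottom term, which is legitimate because $A_0\ne0$; this expresses $g(\eta q^{k^i})$ through the generators $g(\eta^{k^{i'}}q^{k^{i+i'}})$, $1\le i'\le d$, whose indices $\eta^{k^{i'}}$ are strictly closer to the core. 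Inducting on the distance of $\eta$ to the core reduces every tail generator to core generators and to finitely many low-level ones, so $W$ is finite-dimensional and $g(\omega q)$ is $k$-Mahler.

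I expect the main obstacle to be exactly this pre-periodic behaviour, which appears precisely when $\gcd(k,N)>1$: a single substitution never returns relations purely among the $g(\omega q^{k^i})$, so passing to the orbit $S$ and using the two-directional reduction — solving for the top term on the core and for the bottom term on the tail — seems unavoidable. When $\gcd(k,N)=1$ the argument is much easier, since then $x\mapsto x^k$ is a bijection of the full group of $N$-th roots of unity and only the top-term reduction is needed.
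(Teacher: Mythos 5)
Your proof is correct, and its skeleton is the same as the paper's: the root-of-unity filter $\frac{1}{1-q^m}\star g=\frac{1}{m}\sum_{j=0}^{m-1}g(\zeta_m^jq)$ (which you state correctly, with the $\frac1m$ the paper omits), followed by a finite-dimensionality argument and Corollary \ref{corollary dimension is sufficient}. The difference lies in how the dilations $g(\zeta_m^jq)$ are handled, and here your treatment is genuinely stronger. The paper only records that for each $i$ the ``diagonal'' orbit $g(\zeta_m^iq),\,g\big((\zeta_m^i)^kq^k\big),\,g\big((\zeta_m^i)^{k^2}q^{k^2}\big),\dots$ spans a finite-dimensional space (the top-term reduction using $A_d\ne0$), and from this asserts that the whole family $\{g(\zeta_m^iq^{k^j}):i,j\ge0\}$ is finitely generated. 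That deduction is valid only when $\gcd(k,m)=1$, since only then do the $m$ diagonal orbits cover the whole family; if $\gcd(k,m)>1$, elements such as $g(\zeta_mq^k)$ lie in no diagonal orbit, and new orbits appear at every level $j$ --- exactly the pre-periodic phenomenon you isolate as ``the snag''. Your two-directional reduction --- solving for the top term on the periodic core of the orbit of $\omega$ under $x\mapsto x^k$ (legitimate because $A_d\ne0$ and $x\mapsto x^{k^d}$ is invertible on the core), and for the bottom term on the tail (legitimate because $A_0\ne0$), with induction on the distance to the core --- closes precisely this gap and proves in full generality that $g(\omega q)$ is $k$-Mahler for every root of unity $\omega$. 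So your argument not only proves the lemma but repairs the paper's own proof in the case $\gcd(k,m)>1$; the price is the extra bookkeeping with the orbit $S$ and the core/tail decomposition, which the paper's terser (and, as written, incomplete) argument avoids.
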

\begin{proof}
Assume that $g(q)  =\sum_{n = 0}^{\infty} c_n q^n$ is $k$-Mahler. Then $$\frac{1}{1-q^m} \star g(q) = \sum_{n = 0}^{\infty} \sum_{i = 0}^m \zeta_m^i \alpha^n q^n = \sum_{i = 0}^m g(\zeta_m^i q).$$ Thus, as for all $0 \le i \le m$ 
\begin{align*}
    \dim_{\mathbb{C}(q)}\big(1, g(\zeta_m^i q), g((\zeta_m^i)^k q^k), g((\zeta_m^i)^{k^2} q^{k^2}), \dots\big)
\end{align*}
is finite, $\{g(\zeta_m^i q^{k^j}) : i, j \ge 0\}$ is finitely generated. Now Corollary \ref{corollary dimension is sufficient} gives that $\frac{1}{1-q^m} \star g(q)$ is $k$-Mahler and thus the proposition.
\end{proof}

\begin{prop}\label{prop complete Hadamard all roots of unity}
Let $r(q)$ and $s(q)$ be coprime polynomials and $f(q) = \frac{r(q)}{s(q)}$ such that all roots of $s$ are roots of unity. Then $f$ is complete Hadamard.
\end{prop}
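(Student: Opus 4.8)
The plan is to reduce the general rational function to the single building block $\frac{1}{(1-q^m)^N}$ and then handle that block by climbing up the multiplicity $N$ using the closure properties already established. Since $s(0)\neq 0$ and every root of $s$ is a root of unity, choose $m$ to be a common multiple of the orders of the roots of $s$ and let $N$ be their largest multiplicity. Then every root of $s$, counted with multiplicity, is a root of $(1-q^m)^N$, so $s(q)\mid(1-q^m)^N$ in $\mathbb{C}[q]$; writing $(1-q^m)^N=s(q)u(q)$ gives $f(q)=\frac{r(q)u(q)}{(1-q^m)^N}$, a polynomial multiple of $\frac{1}{(1-q^m)^N}$. By Lemma \ref{lemma small tricks compelte Hadamard}(3) it therefore suffices to show that $\frac{1}{(1-q^m)^N}$ is complete Hadamard.

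For this I would induct on $N$, proving the slightly stronger claim that $\frac{q^j}{(1-q^m)^N}$ is complete Hadamard for every $j\ge 0$. The base case $N=1$ is Lemma \ref{prop p(q)/(1-cq)} followed by multiplication by the polynomial $q^j$ (Lemma \ref{lemma small tricks compelte Hadamard}(3)). For the inductive step I differentiate and use that the derivative of a complete Hadamard function is complete Hadamard (Lemma \ref{lemma Derivative is complete Hadamard 2}):
\begin{align*}
\frac{d}{dq}\,\frac{q^j}{(1-q^m)^N}=\frac{j\,q^{j-1}}{(1-q^m)^N}+\frac{Nm\,q^{j+m-1}}{(1-q^m)^{N+1}}.
\end{align*}
The first summand is complete Hadamard by the inductive hypothesis, so subtracting it (the complete Hadamard functions form a ring, Lemma \ref{lemma small tricks compelte Hadamard}(2)) shows that $\frac{q^{t}}{(1-q^m)^{N+1}}$ is complete Hadamard for every $t\ge m-1$.

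It remains to fill in the low numerators $0\le t\le m-2$ at level $N+1$. For this I would use the elementary identity
\begin{align*}
\frac{q^s}{(1-q^m)^{N+1}}=\frac{q^{s+m}}{(1-q^m)^{N+1}}+\frac{q^s}{(1-q^m)^{N}},
\end{align*}
valid for every $s\ge 0$ since it is merely $q^s(1-q^m)=q^s-q^{s+m}$. Its right-hand side is the sum of a level-$(N+1)$ term with exponent $s+m\ge m-1$, already known to be complete Hadamard, and a level-$N$ term covered by the inductive hypothesis; hence $\frac{q^s}{(1-q^m)^{N+1}}$ is complete Hadamard for all $s\ge 0$, which closes the induction and in particular gives $\frac{1}{(1-q^m)^{N+1}}$. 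The divisibility reduction and these algebraic identities are routine; the genuine content, and the step I expect to require the most care, is precisely the passage from $N$ to $N+1$, because differentiation unavoidably introduces the factor $q^{m-1}$ and the small-exponent numerators must be recovered separately. A slicker alternative, which I would record only as a remark, is to note that $\frac{1}{(1-q^m)^N}\star g$ is obtained from $\frac{1}{1-q^m}\star g$ by applying a degree-$(N-1)$ polynomial in the operator $q\frac{d}{dq}$, since the coefficient $\binom{n+N-1}{N-1}$ is a polynomial in $n$, and this operator preserves the Mahler property by Lemma \ref{lemma Derivative is complete Hadamard}.
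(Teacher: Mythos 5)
Your proof is correct, but at the decisive step it takes a genuinely different route from the paper. The reduction is identical: both you and the paper observe that $s(q)\mid(1-q^m)^N$ and use Lemma \ref{lemma small tricks compelte Hadamard} to reduce the whole statement to the single block $\frac{1}{(1-q^m)^N}$. From there, however, the paper does not induct on the multiplicity. Instead it proves the identity
\begin{align*}
    \bigg(\frac{1}{(1-q^m)^l} \star g\bigg)(q) = \Bigg(\bigg(\bigg(\frac{1}{1-q^m} \star g\bigg)\big(q^{\frac{1}{m}}\big)\bigg) \star \frac{1}{(1-q)^l}\Bigg)(q^m),
\end{align*}
justifies the substitution $q \mapsto q^{\frac1m}$ by noting, via the coefficient recursion of Proposition \ref{prop Mahler recursion}, that the Mahler equation of $\frac{1}{1-q^m}\star g$ may be taken with coefficients in $\mathbb{C}[q^m]$, and then invokes Lemma \ref{prop p(q)/(1-cq)} together with its earlier (unlabelled) lemma that $\frac{1}{(1-q)^l}$ is complete Hadamard. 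Your induction on $N$ via differentiation replaces that substitution trick entirely: you use only the ring structure and polynomial-multiple closure of Lemma \ref{lemma small tricks compelte Hadamard}, the base case Lemma \ref{prop p(q)/(1-cq)}, and closure under derivatives (Lemma \ref{lemma Derivative is complete Hadamard 2}). In effect you extend the paper's own derivative argument for $\frac{1}{(1-q)^n}$ to general $m$; the price, which you handle correctly, is the bookkeeping forced by the factor $q^{m-1}$ that differentiation introduces, namely the separate recovery of the numerators $q^s$ with $0\le s\le m-2$ through the identity $\frac{q^s}{(1-q^m)^{N+1}}=\frac{q^{s+m}}{(1-q^m)^{N+1}}+\frac{q^s}{(1-q^m)^{N}}$. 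What each approach buys: yours is more self-contained and avoids the fractional-power substitution, which is arguably the least transparent point of the paper's proof; the paper's is shorter once its identity is written down, since it reuses the $\frac{1}{(1-q)^l}$ lemma verbatim and needs no case analysis on exponents. Your closing remark about realising $\frac{1}{(1-q^m)^N}\star g$ as a polynomial in the operator $q\frac{d}{dq}$ applied to $\frac{1}{1-q^m}\star g$ is also sound and would serve as a third, even cleaner proof of the key step.
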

\begin{proof}
If all roots of $s(q)$ are roots of unity, let $m$ be the least common multiple of these orders. Then $s(q)$ divides $(1-q^m)^l$ for some $l \ge 1$. If $l=1$, then this case has been treated in the proof of Lemma \ref{prop p(q)/(1-cq)}. If $l > 1$ and $g$ is $k$-Mahler, then $\frac{1}{1-q^m} \star g$ has a $k$-Mahler equation and its coefficients follow a recursion by Proposition \ref{prop Mahler recursion}. As only the coefficients $x_{\frac{n-i}{k^j}}$ matter when $i$ is a multiple of $m$, it is seen that all $A_i(q)$ and $A(q)$ are polynomials in $\mathbb{C}[q^m]$. Then $(\frac{1}{1-q^m} \star g)(q^{\frac{1}{m}})$ is also $k$-Mahler and
\begin{align*}
    \bigg(\frac{1}{(1-q^m)^l} \star g\bigg)(q) = \Bigg(\bigg(\bigg(\frac{1}{1-q^m} \star g\bigg)(q^{\frac{1}{m}})\bigg) \star \frac{1}{(1-q)^l}\Bigg)(q^m)
\end{align*}
is $k$-Mahler, and thus $\frac{1}{(1-q^m)^l}$ is complete Hadamard. Thus, $\frac{r(q)}{s(q)} = \frac{r(q)}{(1-q^m)^l} \frac{(1-q^m)^l}{s(q)}$ is complete Hadamard by Lemma \ref{lemma small tricks compelte Hadamard} as $\frac{(1-q^m)^l}{s(q)}$ is a polynomial by construction.\end{proof}
\begin{proof}[Proof of Theorem \ref{theorem complete Hadamard rational functions}]
Combine Propositions \ref{prop complete Hadamard not root of unity} and \ref{prop complete Hadamard all roots of unity}.
\end{proof}
The natural follow-up question is whether there are irrational complete Hadamard functions, but their existence is highly dubious. 


\bibliographystyle{plain}
\clearpage
\addcontentsline{toc}{chapter}{Bibliography}
\bibliography{main}

\end{document}